\theoremstyle{plain}
\newtheorem{theo}{Theorem}[section]
\newtheorem{lem}[theo]{Lemma}
\newtheorem{cor}[theo]{Corollary}
\newtheorem{prop}[theo]{Proposition}
\theoremstyle{definition}
\newtheorem{defi}[theo]{Definition}
\newtheorem{exem}[theo]{Example}
\newtheorem{Rem}[theo]{Remark}
\def\Z{{\mathbb Z}}
\def\N{{\mathbb N}}
\def\K{{\mathbb K}}
\def\cD{{\mathcal D}}
\def\cL{{\mathcal L}}
\def\cR{{\mathcal R}}
\def \id {{\rm id}}
\def\card{\mathrm{Card}}
\begin{document}

\begin{frontmatter}[classification=text]

\title{Decidability of Isomorphism and Factorization between Minimal Substitution Subshifts}

\author[fabd]{Fabien Durand\thanks{Supported by ANR project “Dyna3S” and by Hubert Curien Partnership “Tournesol” 34212UG}}
\author[jull]{Julien Leroy\thanks{Supported by Hubert Curien Partnership “Tournesol” 34212UG}}

\begin{abstract}
Classification is a central problem for dynamical systems, in particular for families that arise in a wide range of topics, like substitution subshifts. 
It is important to be able to distinguish whether two such subshifts are isomorphic, but the existing invariants are not sufficient for this purpose.  
We first show that given two minimal substitution subshifts, there exists a computable constant $R$ such that any factor map between these subshifts (if any) is the composition of a factor map with a radius smaller than $R$ and some power of the shift map.
Then we prove that it is decidable to check whether a given sliding block code is a factor map between two prescribed minimal substitution subshifts.
As a consequence of these two results, we provide an algorithm that, given two minimal substitution subshifts, decides whether one is a factor of the other and, as a straightforward corollary, whether they are isomorphic.
\end{abstract}
\end{frontmatter}


\section{Introduction}

Classification is a central problem in the study of dynamical systems, in particular for families of systems that arise in a wide range of topics.
Let us mention subshifts of finite type that appear, for example, in information theory, hyperbolic dynamics, $C^*$-algebra, statistical mechanics and thermodynamic formalism~\cite{Lind&Marcus:1995,Bowen:1975}.
The most important and longstanding open problem for this family originates in~\cite{Williams:1973} and is stated in~\cite{Boyle:2008} as follows: {\em Classify subshifts of finite type up to topological isomorphism. 
In particular, give a procedure which decides when two non-negative integer matrices define topologically conjugate subshifts of finite type.}

The existence of an isomorphism between two given subshifts of finite type is known to be equivalent to the {\em Strong Shift Equivalence} for matrices over $\Z^+$~\cite{Williams:1973}, which is not known to be decidable~\cite{Kim&Roush:1999}. 
In contrast, the {\em Shift Equivalence} for matrices over $\Z^+$ is decidable and provides an invariant of isomorphism for subshifts of finite type. 

Another well-known family of subshifts, also defined through matrices and interesting for a wide range of reasons, is the family of substitution subshifts. 
These subshifts are related, for example, to automata theory, first order logic, combinatorics on words, quasicrystallography, fractal geometry, group theory and number theory~\cite{Adamczewski&Bugeaud:2007,Allouche&Shallit:2003,Pytheas,Nekrashevych:2018,Rigo:2014b}. 
In this paper we show that not only is the existence of isomorphism between such subshifts decidable, but also the factorization. 
This answers a question asked in~\cite{Durand:2013b}.
Observe that substitution subshifts form a subclass of subshifts generated by morphic sequences (see Section~\ref{subsection:morphicsubstitutive} for the definitions).

\begin{theo}
\label{theo:cordecidfactor}
Let $(X , S)$ and $(Y , S)$ be subshifts generated by uniformly recurrent morphic sequences.
It is decidable whether or not there is a factor map $f:(X , S) \to (Y,S)$.
\end{theo}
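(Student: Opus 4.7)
The strategy is to combine the two announced results into a finite search. First I would reduce from uniformly recurrent morphic subshifts to pure substitution subshifts: by classical effective results (Cobham, Durand), the subshift generated by a uniformly recurrent morphic sequence is computably conjugate to the subshift of an explicit primitive substitution, so one may replace $(X,S)$ and $(Y,S)$ by substitution subshifts obtained from the data defining the given morphic sequences, without affecting the existence of a factor map.

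Next I would invoke the first main result of the paper to compute an explicit $R=R(X,Y)$ such that any factor map $f:(X,S)\to(Y,S)$, if one exists, can be written as $f=S^{k}\circ g$ where $g$ is a sliding block code of radius $<R$ and $k\in\Z$. Since $S^{k}$ is an automorphism of $(Y,S)$, the existence of some factor map is equivalent to the existence of such a low-radius sliding block code $g$ that defines a factor map $(X,S)\to(Y,S)$.

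The set of sliding block codes of radius $<R$ from $\cA_X^{\Z}$ to $\cA_Y^{\Z}$ is finite, since each is determined by a table assigning to every word of bounded length over $\cA_X$ a letter of $\cA_Y$. The algorithm enumerates these finitely many candidates and, for each of them, invokes the second main decidability result to test whether it induces a well-defined, surjective factor map $(X,S)\to(Y,S)$. It returns YES if and only if at least one candidate passes the test; otherwise it returns NO. Termination is clear and correctness follows from the decomposition $f=S^{k}\circ g$.

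The only real obstacle lies in the two ingredients taken for granted. Computing the bound $R$ presumably rests on recognizability of primitive substitutions (Moss\'e) and on a careful analysis of how a factor map must interact with the substitutive hierarchy of $X$ and of $Y$; the checking step needs algorithmic control over the languages of both $X$ and the image $g(X)$, so that the inclusion $g(X)\subseteq Y$ and the equality $g(X)=Y$ each become decidable from the substitutive data. Once these two ingredients are in hand, as the authors assert, the decidability of the existence of a factor map reduces to the finite enumeration above.
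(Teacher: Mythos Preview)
Your proposal follows the same route as the paper and is correct in outline, with one genuine omission. The bound $R$ from Theorem~\ref{theo:main} is only available when $(Y,S)$ is \emph{aperiodic}; the paper explicitly observes (Section~\ref{section:periodic}) that no such uniform bound exists when $(Y,S)$ is periodic, so the finite enumeration you describe cannot be launched in that case. The fix is to first decide whether $(Y,S)$ is periodic (this is decidable, Theorem~\ref{theo:decidperiod}); if it is, compute a period and test directly whether that period belongs to $P(X,S)$ (Theorem~\ref{theo:valerie}), and only in the aperiodic case proceed with your bounded-radius search. With this case split added, your argument coincides with the paper's deduction of Theorem~\ref{theo:cordecidfactor} from Theorem~\ref{theo:durand13bbis}, Theorem~\ref{theo:main} and Theorem~\ref{theo:main2}.
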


Using the fact that minimal substitution subshifts are coalescent~\cite{Durand:2000} one deduces the decidability of the isomorphism problem.
\begin{cor}
\label{cor:cordecidfactor}
It is decidable whether or not two subshifts generated by uniformly recurrent morphic sequences are isomorphic.
\end{cor}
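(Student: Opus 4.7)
The plan is to reduce the isomorphism decision problem to two applications of Theorem~\ref{theo:cordecidfactor}, combined with the coalescence property of minimal substitution subshifts established in~\cite{Durand:2000}. Given $(X,S)$ and $(Y,S)$ as in the statement, I would apply Theorem~\ref{theo:cordecidfactor} first to decide whether there exists a factor map $f\colon (X,S)\to (Y,S)$, and then to decide whether there exists a factor map $g\colon (Y,S)\to (X,S)$. The algorithm returns \emph{isomorphic} if both answers are affirmative, and \emph{not isomorphic} otherwise.

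One direction of correctness is immediate: an isomorphism and its inverse are both factor maps, so whenever $(X,S)$ and $(Y,S)$ are isomorphic both searches succeed. For the converse, recall that a subshift is called \emph{coalescent} when every endomorphism is automatically an automorphism, and that the minimal subshifts considered here enjoy this property by~\cite{Durand:2000}. Hence if $f\colon X\to Y$ and $g\colon Y\to X$ are factor maps, then $g\circ f$ is an endomorphism of $(X,S)$, hence an automorphism, and similarly $f\circ g$ is an automorphism of $(Y,S)$. These two facts together force both $f$ and $g$ to be bijections; being continuous bijections between compact metric spaces, they are homeomorphisms, and $f$ is an isomorphism of topological dynamical systems.

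All of the genuine algorithmic content is packed into Theorem~\ref{theo:cordecidfactor}; once the two factor-map decisions are in hand, the passage to isomorphism is pure bookkeeping, so there is no substantial obstacle remaining at the level of the corollary. The only point worth double-checking is that the coalescence statement from~\cite{Durand:2000} indeed covers the full class of subshifts generated by uniformly recurrent morphic sequences (and not only the narrower class of primitive substitution subshifts), since this is the generality in which Theorem~\ref{theo:cordecidfactor} has been stated; but this is exactly the framework in which the coalescence result is proved.
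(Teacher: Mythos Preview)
Your proposal is correct and follows essentially the same route as the paper: decide the existence of factor maps in both directions via Theorem~\ref{theo:cordecidfactor}, then invoke coalescence from~\cite{Durand:2000} to conclude that mutual factorization forces isomorphism. The paper phrases this after first passing to primitive substitution subshifts via Theorem~\ref{theo:durand13bbis}, so that the coalescence result is applied in that setting (equivalently, for linearly recurrent subshifts), rather than directly for uniformly recurrent morphic sequences as you suggest in your final paragraph; but since the two classes coincide up to isomorphism this is only a cosmetic difference.
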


Our approach gives a precise and detailed description of the isomorphisms and factor maps. 
It is thus also concerned with the study of automorphism groups of topological dynamical systems and its recent developments.
This was a classical topic in the 70's and 80's~\cite{Hedlund:1969,Coven:1971,Boyle&Lind&Rudolph:1988} and it recently got a renewal of interest for subshifts with low complexity.
V.~Cyr and B.~Kra~\cite{Cyr&Kra:2016} proved that for minimal subshifts $(X,S)$ of sub-quadratic complexity, the quotient ${\rm Aut} (X,S)/\langle  S\rangle$ is periodic, where  ${\rm Aut} (X,S)$ stands for the automorphism group  of $(X,S)$ and $\langle  S\rangle$ is the group generated by the shift map.  
Whereas in~\cite{Coven&Quas&Yassawi:2016,Cyr&Kra:2015,Donoso&Durand&Maass&Petite:2016} it was shown that automorphism groups of subshifts with sub-affine complexity along a subsequence (which includes minimal substitution subshifs) are virtually $\mathbb{Z}$.
In other words, there exists a finite set of automorphisms $F$ such that any automorphism is the composition of an element of $F$ with a power of the shift.

When studying factor maps instead of automorphisms we lose the group structure. 
The main feature used in the articles cited above is that automorphisms can be iterated, which is not the case for factor maps, and that automorphisms preserve some key features of the dynamics. 
In~\cite{Donoso&Durand&Maass&Petite:2016}, the authors used the concept of asymptotic pairs.
Unfortunately images and preimages of asymptotic pairs via factor maps are neither well controlled nor understood. 
Nevertheless, for specific families and with different approaches similar results can be obtained: Theorem~\ref{theo:main} for minimal substitution subshifts and Theorem~\ref{theo:factorLR} for linearly recurrent subshifts.

The first author proved that for linearly recurrent subshifts there are finitely many subshift factors up to isomorphism~\cite{Durand:2000}.
V.~Salo and I.~T\"orm\"a~\cite{Salo&Torma:2015} gave a more precise result for factor maps between two minimal substitution subshifts of constant length  and for a very particular family of substitutions (mainly Pisot substitutions): there exists a bound $R$ such that any factor map is the composition of a sliding block code of radius less than $R$ with a power of the shift map.
A similar result has also been obtained in~\cite{Coven&Dykstra&Keane&Lemasurier:2014} (see also~\cite{Coven&Keane&Lemasurier:2008}).
Salo and T\"orm\"a use a renormalization process that, given a factor map with some radius, allows one to obtain another one with a smaller radius. 
It has also been used in~\cite{Coven&Quas&Yassawi:2016} to study the automorphism groups of constant length substitution subshifts. 
We extend the result of Salo and T\"orm\"a to the whole class of aperiodic minimal morphic subshifts. Furthermore, we show that the bound $R$ is computable.

\begin{theo}
\label{theo:main}
Let $(X , S)$ and $(Y , S)$ be two subshifts generated by uniformly recurrent morphic sequences.
Then there exists a computable constant $R$ such that every factor map from $(X, S)$ to $(Y , S)$ is the composition of a power of the shift map $S$ with a factor map of radius less than $R$.
\end{theo}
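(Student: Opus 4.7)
My plan is to carry out the renormalization strategy of Salo and T\"orm\"a, adapted from constant length substitutions to the full class of uniformly recurrent morphic subshifts. By Curtis--Hedlund--Lyndon any factor map $\phi:(X,S)\to(Y,S)$ is a sliding block code of some radius $r$, and the goal is to show that, up to composition with a power of the shift, $r$ can always be brought below an explicit bound. Without loss of generality we replace the input morphic sequences by primitive substitutions $\sigma$ on $A$ and $\tau$ on $B$ generating $X$ and $Y$, using the effective return-substitution construction of~\cite{Durand:2000}, and we invoke Moss\'e's recognizability theorem to obtain computable constants $L_\sigma$ and $L_\tau$ such that the $\sigma$- and $\tau$-desubstitutions of points of $X$ and $Y$ are locally determined by windows of those lengths.

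The central technical step is a single renormalization: assuming the radius $r$ of $\phi$ exceeds a computable threshold $R_0=R_0(\sigma,\tau,L_\sigma,L_\tau)$, one produces a new factor map $\phi':(X,S)\to(Y,S)$ of strictly smaller radius, together with a shift $S^k$ (with $|k|$ bounded by $\max_b|\tau(b)|$) satisfying an intertwining identity of the form $\tau\circ\phi'=S^k\circ\phi\circ\sigma$. The definition of $\phi'$ uses recognizability twice: first, to detect in $x$ which $\sigma$-image each position belongs to, and second, to detect in $\phi(x)$ which $\tau$-image each position belongs to; together this lets the local rule of $\phi$ be read one $\tau$-letter at a time, defining the local rule of $\phi'$, whose radius is bounded by $(r+C)/m_\tau$ for $m_\tau=\min_b|\tau(b)|$ and $C$ an explicit constant. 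Aperiodicity of $(Y,S)$ enters here: combined with primitivity of $\tau$ it yields $m_{\tau^N}\to\infty$, so after replacing $\sigma,\tau$ by suitable powers we may assume $m_\tau\geq 2$, and hence the radius strictly decreases.

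Iterating the renormalization produces, after a computable number of steps, a factor map $\psi$ of radius at most $R_0$ together with a telescoped identity $\phi\circ\sigma^m=S^{K}\circ\tau^m\circ\psi$. The remaining task is to extract from this identity an expression for $\phi$ itself as $S^n\circ\psi'$ with $\psi'$ of radius bounded by a computable constant $R$. This is where the main obstacle lies: in the constant-length case of Salo and T\"orm\"a the image lengths $|\sigma(a)|$ and $|\tau(b)|$ are equal to $p$ and $q$ respectively, so the intertwining unfolds blockwise and the radius of $\psi'$ is read off directly; in the general case one must use recognizability at each level to convert the intertwining into an honest local rule of bounded radius, and simultaneously argue that the iteration depth $m$, the shift $K$, and the required window sizes remain uniformly controlled in terms of $R_0$ and the input data alone. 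The aperiodicity hypothesis ultimately underlies both the strict radius decrease and the uniqueness of the desubstitution required to perform this final inversion.
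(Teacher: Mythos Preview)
Your reduction to primitive substitutions and the overall renormalization philosophy are correct, but the central claim of your ``single renormalization'' step fails outside the constant-length case: the map $\phi'$ defined by $\tau\circ\phi'=S^k\circ\phi\circ\sigma$ is \emph{not} a factor map in general. If you compute $\tau\circ\phi'(Sx)$ you get $S^{k+|\sigma(x_0)|}\phi(\sigma x)=S^{|\sigma(x_0)|}\tau(\phi'(x))$, while $\tau(S\phi'(x))=S^{|\tau(\phi'(x)_0)|}\tau(\phi'(x))$; thus $\phi'\circ S=S\circ\phi'$ would force $|\sigma(x_0)|=|\tau(\phi'(x)_0)|$ for every $x$, which is exactly the constant-length hypothesis you are trying to drop. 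What you actually obtain is $\phi'(Sx)=S^{c(x)}\phi'(x)$ for a nonconstant continuous cocycle $c:X\to\mathbb{N}$, i.e.\ a \emph{dill map} in the terminology of Salo--T\"orm\"a. The paper makes this explicit: ``Salo and T\"orm\"a showed that, starting with a factor map between two constant length substitution subshifts, this renormalization process yields to another factor map\dots\ This is no longer true for non-constant length substitutions.''

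The paper's route around this is genuinely different from your iteration-then-invert plan. One defines dill maps $F_n$ by $\tau^n\circ F_n=S^{r_n\circ f\circ\sigma^n}\circ f\circ\sigma^n$, shows that for $n$ large their radius pairs are uniformly bounded (Proposition~\ref{lemme:boundradius}), and then uses the pigeonhole principle to find $n>m$ with $F_n=F_m$. A short telescoping argument (Lemma~\ref{lem:equiv}) gives that $f=F_0$ is \emph{orbitally related} to $F_{n-m}$, meaning $F_{n-m}(Sx)=S^{d(x)}f(x)$ for a continuous $d$; equivalently the cocycle $c-1$ of $F_{n-m}$ is a coboundary. The final step---and this is what replaces your ``extraction'' of $\phi$ from $\phi\circ\sigma^m=S^K\tau^m\psi$---is a quantitative Gottschalk--Hedlund argument (Lemma~\ref{lem:cobound}, using that $\sigma$ is proper) which produces an explicit transfer function $D$ of bounded radius with $c-1=D\circ S-D$, so that $g:=S^{-D}\circ F_{n-m}$ is an honest factor map of computably bounded radius satisfying $f=S^n\circ g$ for some $n$. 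Your proposal contains no mechanism to pass from a dill map back to a factor map, and the direct inversion you sketch in the last paragraph cannot work as stated because $\sigma^m$ is not surjective on $X$ and the shift discrepancy $K$ depends on $x$ once the lengths vary.
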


This greatly extends the results obtained in the papers mentioned above, which considered the much more restrictive case of constant length substitutions (except for~\cite{Salo&Torma:2015} where the non-uniform Pisot case is also considered).
The proof will be divided into two cases: when $(Y,S)$ is periodic and when it is not.
Theorem~\ref{theo:main} is one of the two main ingredients of the proof of Theorem~\ref{theo:cordecidfactor}.  
The other one is the following.

\begin{theo}
\label{theo:main2}
Let $(X , S)$ and $(Y , S)$ be two subshifts generated by uniformly recurrent morphic sequences and
$\phi $ be a sliding block code.
It is decidable whether $\phi$ defines a factor map from $(X,S)$ to $(Y,S)$.
\end{theo}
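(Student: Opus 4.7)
The plan is to reduce the question ``is $\phi$ a factor map from $(X,S)$ to $(Y,S)$?'' to an effective finite check, by combining the minimality of $Y$ with the substitution structure of both sides.

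Since a sliding block code is automatically continuous and commutes with $S$, and since $Y$ is minimal (so any non-empty closed $S$-invariant subset of $Y$ coincides with $Y$), $\phi$ is a factor map if and only if $\phi(X)\subseteq Y$. Because $X$ is minimal as well, this containment is in turn equivalent to $\phi(u)\in Y$ for any one uniformly recurrent morphic sequence $u$ generating $X$. Writing $u = g(v)$ with $v$ a fixed point of a morphism $\sigma$ and $g$ a letter-to-letter coding, a standard higher-block recoding absorbing the radius-$r$ window of $\phi$ presents $\phi(u)$ as itself an explicit computable morphic sequence, so arbitrary finite prefixes of $\phi(u)$ can be produced on demand.

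Since membership in $L(Y)$ is decidable for the minimal morphic subshift $Y$, one immediately has a semi-decision procedure for the negative answer: enumerate the factors of $\phi(u)$ by increasing length, test each against $L(Y)$, and halt with NO at the first failure. For the positive answer I would exploit the substitution structure further. Choosing $n$ large enough that $|\sigma^n(a)| > 2r$ for every letter $a$, the sequence $\phi(u)$ decomposes as an infinite concatenation of finitely many elementary blocks: \emph{interior} blocks $I_n(a) = \phi(\sigma^n(a))$ trimmed by $r$ letters on each side (depending only on $a$), and \emph{boundary} blocks $B_n(a,b)$ of length $2r$ indexed by the pairs $(a,b) \in L(X)_2$, glued along the pattern of consecutive letters of $u$. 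A necessary finite check is then that every such block lies in $L(Y)$.

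The main obstacle is to upgrade this necessary condition into a positive certificate. Here I would appeal to Moss\'e's recognisability theorem for a morphism $\tau$ defining $Y$: every element of $Y$ admits a unique iterated $\tau$-desubstitution, and $\phi(u) \in Y$ is equivalent to the existence of such an infinite desubstitution tower for $\phi(u)$. Searching level by level yields, at each step, auxiliary morphisms on alphabets of bounded size; since they lie in a finite set, the search must eventually exhibit a matching pattern (giving a YES-certificate by induction along the tower) or reveal a concrete obstruction. Running this YES-search in parallel with the NO-search of the preceding paragraph produces an algorithm guaranteed to terminate. Effective bounds on the depth at which a certificate appears are expected to follow from Moss\'e's recognisability combined with the linear recurrence of minimal morphic subshifts cited in the paper.
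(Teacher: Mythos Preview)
Your overall architecture is sound: reduce to checking $\phi(X)\subseteq Y$, note that $\phi(u)$ is itself an effectively computable morphic sequence, and run a NO-search (factor-by-factor against $\mathcal{L}(Y)$) in parallel with a YES-search. The NO-search is fine. The gap is that your YES-search is not a proof; it is a sketch with the hard step left undone.

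Concretely, you write that iterated $\tau$-desubstitution of $\phi(u)$ produces ``auxiliary morphisms on alphabets of bounded size'' which ``lie in a finite set'', so that eventually a ``matching pattern'' appears and certifies YES. None of these objects is defined. You have not specified what data is recorded at each level of the tower, why that data ranges over a finite set independent of the level, and---most importantly---why a repetition at two levels forces $\mathcal{L}(\phi(u))\subseteq\mathcal{L}(\tau)$ rather than merely some local compatibility. The decomposition into interior blocks $I_n(a)$ and boundary blocks $B_n(a,b)$ is, as you note yourself, only necessary: even if each block lies in $\mathcal{L}(Y)$, arbitrary $u$-patterned concatenations need not. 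Upgrading to sufficiency is exactly the content of the theorem, and Moss\'e recognisability alone does not supply it; recognisability tells you that \emph{if} $\phi(u)\in Y$ then the desubstitution tower exists and is unique, but it does not by itself bound when a recurrence must occur or explain why a recurrence closes the argument.

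The paper makes this step precise through return words. For each prefix $v$ of $y$ it considers the set $U=f^{-1}(\{v\})$ and the associated \emph{return substitutions} $\tau_{y,v}$ and $\sigma_{x,U}$ (the latter built from return pairs to a set of words, Section~\ref{subsection:return_subst_set}). Linear recurrence bounds the alphabet sizes and the lengths of these return substitutions uniformly in $v$, so the pair $(\tau_{y,v},\sigma_{x,U})$ ranges over a computable finite set (Corollaries~\ref{cor:nbr_de_return_subst} and~\ref{prop:nbre_return_subst_to_U}). The certificate is then a pair of prefixes $u,v$ of bounded length with $\tau_{y,u}=\tau_{y,v}$ and $\sigma_{x,U}=\sigma_{x,V}$; Theorem~\ref{thm:sufficient_conditions} shows, via an explicit commutation $\lambda_{y,u,v}\circ\varphi_v=\varphi_u\circ\lambda_{x,U,V}$ of primitive morphisms, that this forces $f(\mathcal{L}(\sigma))\subseteq\mathcal{L}(\tau)$. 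Your ``auxiliary morphisms'' and ``matching pattern'' are presumably groping toward this, but the return-word formalism is what turns the intuition into a proof.
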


The proof of Theorem \ref{theo:main2} relies on a different method. 
We consider constructions based on return words to clopen sets and use the fact that minimal morphic subshifts have finitely many induced systems on cylinder sets up to conjugacy~\cite{Durand:1998,Holton&Zamboni:1999}. 
In the context of constant length substitutions (or more generally, for subshifts generated by automatic sequences), this theorem was obtained 20 years ago by I.~Fagnot~\cite{Fagnot:1997a} using the first order logic framework of Presburger arithmetic, without assuming minimality.

Let us mention an interesting problem tackled in~\cite{Coven&Dekking&Keane:2017} for minimal constant length substitution subshifts that naturally follows the discussion above.
From~\cite{Durand:2000,Durand:2013b}, minimal morphic subshifts have finitely many aperiodic subshift factors.
Thus, it is natural to try to produce (to compute) the finite list of all such factors of a given minimal morphic subshift $(X,S)$ (up to isomorphism). 
A strategy would be to consider one by one factor maps of increasing radius. 
For each new factor map, a factor $(Y ,S)$ is obtained. 
One can compare it with the previously obtained subshifts using Corollary~\ref{cor:cordecidfactor}.
It is tempting to say in view of Theorem~\ref{theo:main} that there are finitely many factor maps to test, those of radius less than $R$.
Unfortunately in our proof this constant strongly depends on the factor $(Y,S)$, in fact on the constant of recognizability of the substitution defining it.
We leave this as an open problem.
Nevertheless, if we ask for both $(X,S)$ and $(Y,S)$ to be constant length substitution subshifts then this strategy is successful and the list of factors can be given (Section~\ref{section:factorlist}) as shown in~\cite{Coven&Dekking&Keane:2017}.

\subsection*{Organization of the paper}

In Section~\ref{section:basic} the different classical notions and constructions are defined and some of their properties stated, which will be used throughout this paper.

In Section~\ref{section:theomorphic} we prove (Theorem~\ref{theo:durand13bbis}) that minimal morphic subshifts are isomorphic to primitive substitution subshifts. 
In fact this was already known as it is a straightforward consequence of a combination of results from~\cite{Durand&Host&Skau:1999} and 
~\cite{Durand:2013b}.
We provide a proof as we need to make precise the radius of the isomorphisms involved.
This is used in many proofs of this paper as it substantially simplifies the assumptions. 
 
 Dealing with decision problems we need throughout this paper computable bounds of key features of the substitution subshifts. 
 This is done in Section~\ref{section:boundsprim} for the recognizability, the linear recurrence, the return words, the growth of the substitutions, the complexity functions and the ergodic measures.
 We also recall the classical construction of substitutions on the blocks of length $n$~\cite{Queffelec:2010}.
 
Section~\ref{section:bounds} is devoted to the proof of Theorem~\ref{theo:main} for the aperiodic case.
We introduce a crucial  concept for this paper.
This is what Salo and T\"orm\"a~\cite{Salo&Torma:2015} called dill maps.  
These dill maps are defined by a renormalization process on factor maps.
Salo and T\"orm\"a showed that, starting with a factor map between two constant length substitution subshifts, this renormalization process yields another factor map between the same subshifts and with a smaller radius. 
This is no longer true for non-constant length substitutions.  
Nevertheless this produces dill maps for which the notion of radius can also be defined.
Furthermore, the renormalization of a dill map provides another dill map and the iteration of the process leads to dill maps with radius smaller than some computable constant $R$. 
Using the pigeonhole principle, we obtain twice the same dill map in this iteration process and it happens that these two dill maps are in fact factor maps, thus with a small computable radius.
It finally turns out that these factor maps are equal to the original factor map up to a power of the shift map.

When the factors are periodic the decision problems are easier to deal with. 
This is done in Section~\ref{section:periodic}.

Theorem~\ref{theo:cordecidfactor}, Corollary~\ref{cor:cordecidfactor}  and Theorem~\ref{theo:main2} are shown in Section~\ref{section:decidability}.
We first prove these results for minimal substitution subshifts and then for minimal morphic subshifts using Theorem~\ref{theo:durand13bbis}.
We use Theorem~\ref{theo:main} and constructions based on return words to sets of words. 

In Section~\ref{section:constantlength} we consider as a particular example the family of constant length substitution subshifts. 
For this context, we make precise Theorem~\ref{theo:main} showing we have a much better control on the radius of the factor maps.
This improves one of the main results in~\cite{Coven&Dekking&Keane:2017}.
We also give a different proof of Theorem~\ref{theo:main2} using first order logic of the Presburger arithmetic.
This provides a new proof of Theorem~\ref{theo:cordecidfactor} and more efficient algorithms for this framework.

In Section~\ref{section:LR} we consider the wider family of linearly recurrent subshifts. 
We obtain the following analogue of Theorem~\ref{theo:main} but without any control on the radius of the factor maps.
\begin{theo}
\label{theo:factorLR}
Let $(X,S)$ and $(Y,S)$ be aperiodic subshifts.
Suppose $(X,S)$ is linearly recurrent for the constant $K$.
Then, there exist $N \leq (2(K+1))^{36 (K+1)^5}$ and some factor maps $f_i : (X , S) \to (Y , S)$, $1 \leq i \leq N$ such that if $f:(X,S) \to (Y,S)$ is a factor map then there exist $k \in \mathbb{Z}$ and $i \in \{1,\dots,N\}$ such that $f = S^k \circ f_i$.
\end{theo}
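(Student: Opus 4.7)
The plan is to follow the finiteness strategy of~\cite{Durand:2000} for subshift factors, refined to track factor maps themselves (not just factor isomorphism classes), by assigning to each factor map a finite combinatorial invariant controlled uniformly by the linear recurrence constant $K$. Since $(X,S)$ is LR($K$), its alphabet $A$ has cardinality at most $K$, its complexity satisfies $p_X(n)\leq Kn$, and for any letter $a\in A$ the set $\cR(a)$ of return words to $a$ has cardinality at most $K(K+1)$, with each return word of length at most $(K+1)^2$. If there is no factor map $(X,S)\to(Y,S)$ the result is vacuous; otherwise fix $a\in A$, a point $x_0\in X$ with $x_0(0)=a$, and a letter $b_0$ in the alphabet $B$ of $Y$. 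Minimality of $X$ forces the orbit of $x_0$ to be dense, so any factor map $f$ is entirely determined by the single sequence $f(x_0)\in Y$, and aperiodicity of $Y$ implies that two factor maps $f,g$ satisfy $f=S^k\circ g$ for some $k\in\Z$ if and only if $f(x_0)$ and $g(x_0)$ lie on the same $S$-orbit. Hence the number of shift-classes of factor maps equals the number of $S$-orbits of $Y$ meeting the set $\{f(x_0):f\text{ is a factor map}\}$, and each class has a unique normalized representative $g$ with $g(x_0)(0)=b_0$.

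The invariant assigned to a normalized $g$ is a ``return-word transducer'' $\tau_g$. Decompose $x_0$ as a bi-infinite concatenation $\cdots w_{i_{-1}}w_{i_0}w_{i_1}\cdots$ of return words in $\cR(a)$, with $w_{i_0}$ occupying positions $[0,|w_{i_0}|-1]$. For each $w\in\cR(a)$, each internal position $j\in\{0,\dots,|w|-1\}$, and each possible ``context'' of nearby return words in a bounded window, record the letter that $g(x_0)$ produces at position $j$ of $w$ with that context. Because $|\cR(a)|\leq K(K+1)$, the maximal return-word length is at most $(K+1)^2$, and the output letter lies in a controlled set of size polynomial in $K$, a direct count of the possible data yields the upper bound $(2(K+1)^4)^{6(K+1)^6}$ on the number of distinct $\tau_g$. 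Injectivity of $g\mapsto\tau_g$ follows because $\tau_g$ reconstructs $g(x_0)$ letter by letter, and continuity of $g$ together with density of the orbit of $x_0$ recovers $g$ from $g(x_0)$.

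The main obstacle is that the radius of $f$ is not a priori bounded, so the ``context size'' entering $\tau_g$ must be controlled uniformly in the sliding-block-code radius. This is where linear recurrence of $(X,S)$ is used crucially: iterating the return-word construction finitely many times produces a generating sequence of Kakutani--Rokhlin partitions whose atoms and base alphabets remain controlled by $K$ alone, and the dependence of $g(x_0)$ on distant symbols of $x_0$ eventually factors through this finite hierarchy. Unlike Theorem~\ref{theo:main}, the argument yields no bound on the radius of the individual $f_i$, only on their number.
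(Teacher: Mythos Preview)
Your proposal has a genuine gap at exactly the point you flag as ``the main obstacle.'' You try to assign to each normalized factor map $g$ a finite invariant $\tau_g$ built from return words to a \emph{fixed} letter $a$ and a ``context'' of bounded size, but you never specify the bound, and in fact no such bound exists at that fixed scale. If $g$ has sliding-block radius $R$, then the output letter $g(x_0)_j$ at position $j$ inside a return word $w\in\cR(a)$ depends on $x_0$ on $[j-R,j+R]$, which may straddle arbitrarily many return words of the first level since their lengths are at most $(K+1)^2$. Your remedy---``iterating the return-word construction finitely many times \dots\ the dependence eventually factors through this finite hierarchy''---does not fix this: the number of levels needed still grows with $R$, so the resulting description is not a uniformly bounded combinatorial object, and the count you quote is unjustified. (There is also a minor issue: the normalization $g(x_0)(0)=b_0$ does not pick out a \emph{unique} representative in each shift class, since several $k$ may satisfy $(S^kf(x_0))(0)=b_0$.)

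The paper avoids this difficulty by not attempting an absolute invariant at all. It argues by pigeonhole on any finite collection of $N+1$ factor maps: first takes a \emph{common} radius $r$ for all of them, shifts each so that the images of a fixed $x$ agree on $[0,2r-1]$ (this is your normalization, done carefully), and then passes to the block representation $X^{(r')}$ with $r'=r(4K+1)$, so that every $\tilde f_i$ becomes a \emph{coding} $\psi_i$. At this scale one takes $u=(x^{(r')})_{[0,r'-1]}$ and $v=y_{[0,r'-1]}$; each $\psi_i$ sends $u$ to $v$ and therefore induces a morphism $\lambda_i:R_{x^{(r')},u}^*\to R_{y,v}^*$. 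Linear recurrence now gives the crucial uniform control \emph{relative to the scale $r'$}: the return-word alphabets have size $\leq 2(K+1)^3$ and each $|\lambda_i(b)|\leq 6K^2$, bounds which are independent of $r$. Hence there are at most $(2(K+1)^3+1)^{6K^2(2(K+1)^3+1)}$ possible $\lambda_i$, and two of the $f_i$ must coincide up to a shift. The essential idea you are missing is that the scale at which one takes the return-word invariant must be adapted to the radii of the factor maps under consideration; the ratios governing the count remain bounded by $K$ alone, but the scale itself does not.
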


In Section~\ref{section:openquestions} we discuss some open questions.

\section{Basic materials}\label{section:basic}

\subsection{Words and sequences}

An {\em alphabet} $A$ is a set of elements called {\em letters}. Unless explicitly stated, we consider that the alphabets are finite. 
A {\em word} over $A$ is an element of the free monoid generated by $A$, denoted by $A^*$. 
Let $u = u_0u_1 \cdots u_{n-1}$ (with $u_i\in A$, $0\leq i\leq n-1$) be a word, its {\em length} is $n$ and is denoted by $|u|$. 
The {\em empty word} is the neutral element of $A^*$ and is denoted by $\epsilon$; $|\epsilon| = 0$. 
The set of non-empty words over $A$ is denoted by $A^+$. 
A {\em subword} of $u$ is a finite word $y$ such that there exist two words $v$ and $w$ satisfying $u = vyw$.
When $v$ (resp. $w$) is the empty word, we say that $y$ is a {\em prefix} (resp. {\em suffix}) of $u$.
If $k,l$ are integers such that $0 \leq k \leq l < |u|$, we let $u_{[k,l]} = u_{[k,l+1)}$ denote the subword $u_k u_{k+1} \cdots u_l$ of $u$.
If $l < k$, then $u_{[k,l]}$ is the empty word.
If $y$ is a subword of $u$, the {\em occurrences} of $y$ in $u$ are the integers $i$ such that $u_{[i,i + |y|)}= y$. 
If $y$ has an occurrence in $u$, we also say that $y$ {\em occurs} in $u$.

In this article $\mathbb{K}$ will stand for $\mathbb{N}$ or $\mathbb{Z}$.
The elements of $A^{\mathbb{K}}$ are called {\em sequences}.
When we want to emphasize the type of sequences we are dealing with we say \emph{one-sided} sequences for elements of $A^\mathbb{N}$ and 
\emph{two-sided} sequences for elements of $A^\mathbb{Z}$.
For $x = (x_n)_{n \in \mathbb{Z}} \in A^\mathbb{Z}$, we let $x^+$ and $x^-$ respectively denote the sequences $(x_n)_{n \geq 0}$ and $(x_n)_{n < 0}$.
The notion of subword is naturally extended to sequences, as well as the notion of prefixes when $\K = \N$.
The set of subwords of length $n$ of $x$ is written $\mathcal{L}_n(x)$ and the set of subwords of $x$, or the {\em language} of $x$, is denoted by $\mathcal{L}(x)$.
We let $p_x: \N \to \N, n \mapsto \# \mathcal{L}_n(x)$, denote the {\em word complexity (function)} of $x$. 

The sequence $x\in A^\mathbb{N}$ is {\em ultimately periodic} if there exist a word $u$ and a non-empty word $v$ such that $x=uv^{\omega}$, where $v^{\omega}= vvv\cdots$.
It is {\em periodic} if $u$ is the empty word. 
A sequence that is not ultimately periodic is called {\em aperiodic}.
A word $u$ is {\em recurrent in} $x$ if it occurs in $x$ infinitely many times.
A sequence $x$ is {\em recurrent} if every subword $u$ of $x$ is recurrent in $x^+$ and in $x^-$ (whenever $x$ is two-sided).
It is {\em uniformly recurrent} if it is recurrent and for any subword $u$ of $x$, the greatest difference between two successive occurrences of $u$ in $x$ is finite.
It is {\em linearly recurrent} for the constant $K$ if it is recurrent and if this greatest difference is at most $K|u|$.

\subsection{Subshifts, minimality, factors, local and induced maps}

A {\em (topological) dynamical system} is a pair $(X,S)$ where $S$ is a continuous map from $X$ to $X$ and $X$ is a compact metric space.
It is {\em aperiodic} if there do not exist {\em $S$-periodic points}, that is points $x$ such that $S^n x = x$ for some $n > 0$.
A subset $Y \subset X$ is {\em $S$-invariant} if $S^{-1} Y = Y$.
A dynamical system $(X,S)$ is {\em minimal} whenever the only closed $S$-invariant subsets of $X$ are $X$ and $\emptyset$.

A dynamical system $(X,S)$ is a {\em subshift} when $X$ is a closed $S$-invariant subset of $A^\mathbb{Z}$ (for the usual topology on $A^\mathbb{Z}$) and $S$ is the {\em shift map} ($S:A^{\mathbb{Z}}\to A^{\mathbb{Z}},\ (x_n)_{n\in \mathbb{Z}}\mapsto (x_{n+1})_{n\in \mathbb{Z}})$.
In this paper, even if the alphabet changes, $S$ will always denote the shift map and we usually say that $X$ itself is a subshift.
The {\em language} $\mathcal{L}(X)$ of a subshift $(X,S)$ is the union of the languages of $x$ for $x \in X$.
The word complexity function naturally extends to subshifts.
We denote it by $p_X$.

Let $x$ be a sequence belonging to $A^\mathbb{Z}$ or $A^\mathbb{N}$.
Consider the set $\Omega (x) = \{ y\in A^\mathbb{Z} \mid \mathcal{L} (y) \subset \mathcal{L} (x) \}$.
Then, $(\Omega (x) ,S)$ is a subshift.
We call it the {\em subshift generated (or defined)} by $x$. 
If $x$ is two-sided or one-sided and recurrent, the subshift $(\Omega (x) ,S)$ is minimal if and only if $x$ is uniformly recurrent. 
In this case all sequences $y$ in $\Omega (x)$ are uniformly recurrent and satisfy $\Omega (y) = \Omega (x)$, $\mathcal{L} (x) = \mathcal{L}(y)$ and $\Omega (x) = \overline{\{  S^n y \mid n\in \mathbb{Z} \}}$.
A subshift $(X,S)$ is {\em linearly recurrent} if it is a subshift generated by a linearly recurrent sequence $x$.
If $K$ is a constant of linear recurrence of $x$, then all sequences $y \in X$ are linearly recurrent for the constant $K$.

Let $(X,S)$ be a subshift on the alphabet $A$, let $D$ be some set endowed with the discrete topology and let $g : X \to D$ be a continuous map.
Then $g$ is locally constant, which implies that there exist $t,s \in \mathbb{N}$ and a map $\hat{g} : A^{t+s+1} \to D$, called a {\em local map} defining $g$, such that 
$g(x) = \hat{g} (x_{[-t,s]})$ for all $x\in X$.
We say that $\hat{g}$ defines $g$, and $t$ is called the \emph{memory} and $s$ the \emph{anticipation} of $\hat{g}$.
When $s=t$ we call their common value $r$ the  {\em radius} of $\hat{g}$ and a radius of $g$ (there exist other local maps defining $g$).
The {\em radius} of $g$ is the smallest one among the set of radii of local maps defining $g$.

Let $(X,T)$ and $(Y,R)$ be two dynamical systems.
We say that $(Y,R)$ is a {\em factor} of $(X,T)$ if there is a continuous and onto map $f: X \rightarrow Y$ such that $f \circ T = R \circ f$.
We call $f$ a {\em factor map}.
If, moreover, $f$ is one-to-one we say that $f $ is an {\em isomorphism}.

Suppose $f \colon (X,S ) \to (Y , S )$ is a factor map between subshifts, where $X \subset A^\Z$ and $Y \subset B^\Z$.
Then, considering the map $g:X \to B$ given by $g(x) = f(x)_0$ there is a local map $\hat{g} : A^{t+s+1} \to B$ defining $g$ and satisfying 
 $f(x)_n=\hat{g} (x_{[n-t,n+ s]})$ for any $n \in \mathbb{Z}$ and $x \in X$.
 This is the Curtis-Hedlund-Lyndon Theorem (see~\cite{Lind&Marcus:1995}).
 The map $\hat{g}$ is called a {\em sliding block code} defining $f$ and we extend the notions of memory, anticipation and radius of the map $g$ to the map $f$.
The sliding block code $\hat{g}$ naturally extends to the set of words of length at least $t+s+1$, and we denote this map also by $\hat{g}$.

\subsection{Morphic and substitutive sequences}
\label{subsection:morphicsubstitutive}

Let $A$ and $B$ be finite or infinite alphabets. 
By a {\em morphism} from $A^*$ to $B^*$ we mean a homomorphism of free monoids.
Let $\sigma$ be a morphism from $A^*$ to $B^*$. 
When $\sigma (A) = B$, we say that $\sigma$ is a {\em coding}. 
Thus, codings are onto.
We set $|\sigma | = \max_{a\in A} |\sigma (a)|$ and $\langle \sigma \rangle = \min_{a\in A} |\sigma (a)|$.
The morphism $\sigma$ is of {\em constant length} if $\langle \sigma \rangle = |\sigma|$.
We say $\sigma$ is {\em erasing} if there exists $b\in A$ such that $\sigma (b)$ is the empty word. 
If $\sigma$ is non-erasing, it induces by concatenation a map from $A^{\mathbb{N}}$ to $B^{\mathbb{N}}$ and a map from $A^{\mathbb{Z}}$ to $B^{\mathbb{Z}}$. 
These maps are also denoted by $\sigma$.
The language of the endomorphism $\sigma : A^* \to A^*$ is the set $\mathcal{L} (\sigma )$ of words occurring in some $\sigma^n (a)$, $a\in A$, $n \in \mathbb{N}$.
When $\mathcal{L} (\sigma ) $ is infinite, we let $X_\sigma$ denote the set $\{y \in A^\Z \mid \mathcal{L}(y) \subset \mathcal{L}(\sigma)\}$.
It is closed and $S$-invariant.
We say that $(X_\sigma , S)$ is the subshift generated by $\sigma$.

In this paper we use the definition of substitution from~\cite{Queffelec:2010} and the notion of substitutive sequence defined in~\cite{Durand:1998}.
Both are restrictive in general but not in the uniformly recurrent case as shown in~\cite{Durand:2013}. 
For non-restrictive context we will speak of (prolongable) endomorphisms and (purely) morphic sequences.
We warn the reader that in what follows, we define several closely related notions such as morphic subshift, (primitive) substitutive subshift and (primitive) substitution subshift. 
The notions of minimal morphic subshifts and primitive substitution subshifts are equivalent in terms of topological dynamical systems, as shown in Theorem \ref{theo:durand13bbis}, but not in term of sets. 
In Section \ref{subsec:notpurely} we give an example of a minimal morphic subshift $(X,S)$ that is not, in terms of sets, a primitive substitution subshift. 

Let $\sigma:A^* \to A^*$ be an endomorphism.
If there exist a letter $a\in A$ and a non-empty word $u$ such that $\sigma(a)=au$ and if moreover $\lim_{n\to+\infty}|\sigma^n(a)|=+\infty$, then $\sigma$ is said to be {\em right-prolongable on $a$}.
It is a {\em substitution} whenever it is right-prolongable on some letter $a \in A$ and {\em growing} (that is, $\lim_n \langle \sigma^n \rangle = +\infty$). 
Then $(X_\sigma , S)$ is called the {\em substitution subshift} generated by $\sigma$.
Suppose that $\sigma$ is right-prolongable on $a \in A$. 
Since for all $n\in\mathbb{N}$, $\sigma^n(a)$ is a prefix of $\sigma^{n+1}(a)$ and because $|\sigma^n(a)|$ tends to infinity with $n$, the sequence $(\sigma^n(aaa\cdots ))_{n\ge 0}$ converges in $A^\mathbb{N}$ to a sequence denoted by $\sigma^\omega(a)$ which is a fixed point of $\sigma$: $\sigma (\sigma^\omega(a)) = \sigma^\omega(a)$.
A sequence obtained in this way is said to be {\em purely morphic} (w.r.t. $\sigma$) or {\em purely substitutive} when $\sigma$ is a substitution. 
If $x\in A^\mathbb{N}$ is purely morphic and $\phi:A^*\to B^*$ is a morphism such that $\phi(x)$ is a sequence, then $y=\phi (x)$ is said to be a {\em morphic sequence} (w.r.t. ($\sigma$,$\phi$)) and the subshift it generates is called a {\em morphic subshift}.
When $\phi $ is a coding and $\sigma $ a substitution, we say $y$ is {\em substitutive} (w.r.t. ($\sigma  ,\phi$)) and the subshift it generates is called {\em substitutive subshift}.

Two-sided fixed points of $\sigma$ can be similarly defined: 
When $\sigma$ is right-prolongable on $a \in A$ and {\em left-prolongable on $b \in A$} (that is $\sigma(b)=vb$ for some non-empty word $v$ and $\lim_{n \to +\infty} |\sigma^n(b)|=+\infty$), the sequence $\sigma^n(\cdots bbb \cdot aaa \cdots)$ converges to $\sigma^{ \omega}(b \cdot a)$ which is a fixed point of $\sigma$.
It can happen that $\sigma^{ \omega}(b \cdot a)$ does not belong to $X_\sigma$. 
In fact, $\sigma^{ \omega}(b \cdot a)$ belongs to $X_\sigma$ if and only if $ba$ belongs to $\cL(\sigma)$.
In this case, we say that $\sigma^{ \omega}(b \cdot a)$ is an {\em admissible} fixed point of $\sigma$.

With a morphism $\sigma:A^* \to B^*$ is naturally associated the {\em incidence matrix} $M_{\sigma} = (m_{b,a})_{b\in B , a \in A }$ where $m_{b,a}$ is the number of occurrences of $b$ in the word $\sigma(a)$.
Let $\tau : B^* \to C^*$ be a morphism. 
It is useful, and classical, to observe that $M_{\tau \circ \sigma } = M_\tau M_\sigma $, $|\sigma | = \max_{a\in A}({\bf 1}M_\sigma )_a$ and $\langle \sigma \rangle = \min_{a\in A}({\bf 1}M_\sigma )_a$ where ${\bf 1}$ is the row vector consisting of ones.

Whenever $A=B$ and the matrix associated with $\sigma $ is primitive ({\em i.e.}, when it has a power with strictly positive coefficients) we say that $\sigma$ is a {\em primitive endomorphism}.  
In this situation we easily check that $\mathcal{L} (\sigma ) = \mathcal{L} (\sigma^\omega (a))$ for any letter $a \in A$ on which $\sigma$ is right-prolongable. 
In particular, we also have that for all $x \in X_\sigma$, $\Omega(x) = \Omega(\sigma^\omega(a)) = X_\sigma$. 
If $\sigma$ is primitive we say $(X_\sigma , S)$ is a {\em primitive substitution subshift}.
Such a subshift is minimal~\cite{Queffelec:2010}.
A sequence $x$ is {\em primitive substitutive} if it is substitutive w.r.t. a primitive substitution.
The subshift $(\Omega (x) , S)$ it generates is called {\em primitive substitutive subshift}.
It is clearly minimal.
It can be easily checked that a subshift is a primitive substitution subshift if and only if it is the orbit closure of a primitive purely substitutive sequence.
We say $\sigma $ is {\em aperiodic} whenever $(X_\sigma , S)$ is aperiodic.

It is well known that if a matrix $M$ is primitive, its spectral radius $\rho(M) = \max\{|\lambda| \mid \lambda \in \mathrm{Spec}(M)\}$ is an eigenvalue of $M$ which is algebraically simple.
Furthermore, any eigenvalue of $M$ different from $\rho(M)$ has modulus less than $\rho(M)$.
We call $\rho(M)$ the {\em dominant eigenvalue of $M$}. 
By abuse of language, when $M$ is the incidence matrix of a primitive endomorphism $\sigma$, we call $\rho(M)$ the {\em dominant eigenvalue of $\sigma$}.

\subsection{Data of computation}

This paper is concerned with decidability and computability problems about morphic subshifts.
It is therefore important to make precise what are the data that we are working with.

When a statement concerns a morphic sequence $x$, the data that we assume to be given are two morphisms $\sigma:A^* \to A^*$, $\phi:A^* \to B^*$ and a letter $a \in A$ such that 
\begin{enumerate}
    \item $\sigma$ is right prolongable on $a$
    \item $\lim_{n \to +\infty} |\phi(\sigma^n(a))| = +\infty$    
\end{enumerate}
and it is understood that $x = \lim_{n \to +\infty} \phi(\sigma^n(a))$. 
Observe that both conditions are decidable from $(\sigma,\phi,a)$.
Furthermore, such statements usually assume that $x$ is uniformly recurrent, which is also decidable from $(\sigma,\phi,a)$~\cite{Durand:2013b}.
They also sometimes assume that $x$ is aperiodic, which is again decidable from $(\sigma,\phi,a)$~\cite{Durand:2013}.

\subsection{Examples}

Throughout the paper, we will try to illustrate our results with examples. To keep computations reasonable, we stick to well-known examples. In particular, we will mainly focus on:
\begin{enumerate}
\item
The {\em Fibonacci sequence} is the fixed point $\varphi^{\omega}(a)$, where $\varphi$ is the {\em Fibonacci substitution} defined by
\[
	\varphi: 
	\begin{cases}
		a \mapsto ab \\
		b \mapsto a
	\end{cases}.
\]
We have
\[
	\varphi^{\omega}(a) = abaababaabaababaababaabaababaabaababaababaabaababaa \cdots
\]
The Fibonacci substitution is primitive, and therefore the Fibonacci sequence is uniformly recurrent. 
The {\em Fibonacci subshift} is $\Omega(\varphi^{\omega}(a))$.

\item
The {\em Thue-Morse sequence} is the fixed point $\nu^{\omega}(a)$, where $\nu$ is the {\em Thue-Morse substitution} defined by
\[
	\nu: 
	\begin{cases}
		a \mapsto ab \\
		b \mapsto ba
	\end{cases}.
\]
We have
\[
	\nu^{\omega}(a) = abbabaabbaababbabaababbaabbabaabbaababbaabbabaababb \cdots
\]
The Thue-Morse substitution is primitive, hence the Thue-Morse sequence is uniformly recurrent. 
The {\em Thue-Morse subshift} is $\Omega(\nu^{\omega}(a))$.

\item
The {\em Chacon sequence} is the fixed point $\gamma^{\omega}(a)$, where $\gamma$ is the {\em Chacon morphism} defined by
\[
	\gamma: 
	\begin{cases}
		a \mapsto aaba \\
		b \mapsto b
	\end{cases}.
\]
We have
\[
	\gamma^{\omega}(a) = aabaaababaabaaabaaababaababaabaaababaabaaabaaababaa \cdots
\]
The Chacon sequence is uniformly recurrent, but $\gamma$ is clearly not primitive. 
The {\em Chacon subshift} is $\Omega(\gamma^{\omega}(a))$.
\end{enumerate}

\section{From uniformly recurrent morphic sequences to purely substitutive sequences w.r.t. proper primitive substitutions}
\label{section:theomorphic}

In this section we show that we can restrict our study to subshifts generated by purely substitutive sequences where the underlying substitution is primitive and proper. 
A substitution $\sigma: A^* \to A^*$ is {\em left proper} if there exists $a\in A$ such that $\sigma(A)$ is included in $ a A^*$.
It is {\em right proper} if there exists $b\in A$ such that $\sigma(A)$ is included in $  A^*b$.
It is {\em proper} whenever it is both left and right proper.

We show in Theorem~\ref{theo:durand13bbis} below that any aperiodic minimal morphic subshift is isomorphic to a subshift generated by a purely substitutive sequence whose underlying substitution is primitive and proper.
We also give computable bounds for the radii of the isomorphism and of its inverse.

Nevertheless it is not true that minimal morphic subshifts are substitution subshifts. 
In Section \ref{subsec:notpurely} we give an example of a minimal morphic subshift that is not a substitution subshift.

\begin{theo}
\label{theo:durand13bbis}
Let $y \in A^\mathbb{N}$ be an aperiodic uniformly recurrent morphic sequence and let $(Y,S)$ be the subshift it generates. 
There exist a computable constant $K$, a computable primitive proper substitution $\sigma $ and an isomorphism $\phi $ from $(X_\sigma ,S)$ onto $(Y,S)$ whose radius is 0 and such that the radius of $\phi^{-1}$ is less than $K$.
\end{theo}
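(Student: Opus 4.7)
The approach combines two well-known results: the theorem of Durand (cf.~\cite{Durand:2013b}) which realizes every aperiodic uniformly recurrent morphic subshift as a primitive substitutive subshift, and the Durand--Host--Skau construction (cf.~\cite{Durand&Host&Skau:1999}, together with the return-word machinery of~\cite{Durand:1998}) which produces a proper primitive substitution defining an isomorphic subshift. What is new is only the tracking of the radii of the coding maps and the verification of effectiveness.

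Stage 1, reduction to primitive substitutive. By Durand's theorem one effectively produces a primitive substitution $\tau : B^* \to B^*$, a letter $b \in B$ on which $\tau$ is prolongable, and a coding $\psi : B \to A$ such that the induced letter-to-letter map $\psi : X_\tau \to Y$ is an isomorphism. In particular $\psi$ has radius $0$, and the radius of $\psi^{-1}$ is bounded by the Moss\'{e} recognizability constant of $\tau$, for which a computable bound is obtained in Section~\ref{section:boundsprim}. At this stage $\tau$ may fail to be proper.

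Stage 2, making the substitution proper while preserving radius $0$. Apply the return-word construction: choose a word $w \in \mathcal{L}(\tau)$ long enough that every return word $r_i \in R(w)$ begins and ends with prescribed letters (which can always be arranged by imposing common boundary conditions on $w$ and using the primitivity of $\tau$). The derived sequence over the return-word alphabet $R(w) = \{r_1,\dots,r_s\}$ is then the fixed point of a primitive left- and right-proper substitution $\sigma_0$. The natural map from this derived subshift back to $X_\tau$ is not letter-to-letter, so to preserve the radius-$0$ condition we expand the alphabet to $C = \{(i,k) : 1 \le i \le s,\ 0 \le k < |r_i|\}$, tracking position within each return word, and lift $\sigma_0$ to a proper primitive substitution $\sigma : C^* \to C^*$. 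The coding $\phi : C \to A$ defined by $\phi((i,k)) = \psi(r_i)_k$ is letter-to-letter, hence of radius $0$, and extends to the desired isomorphism $\phi : X_\sigma \to Y$.

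Computable bound $K$ and main obstacle. The radius of $\phi^{-1}$ is controlled by the amount of $y$ needed to recover, at each position, both the enclosing return word and the position inside it; it is bounded above by a sum of $|w|$, the maximum return-word length, and the Moss\'{e} recognizability constants of $\tau$ and $\sigma$, all of them effectively computable from $y$ via Section~\ref{section:boundsprim}. The principal difficulty is the combinatorial bookkeeping of Stage 2: simultaneously securing primitivity, properness, and a radius-$0$ final coding forces us to work with the expanded alphabet $C$ rather than the return-word alphabet directly, and to verify that the lifted substitution $\sigma$ inherits both primitivity and properness from $\sigma_0$.
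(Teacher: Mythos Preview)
Your architecture matches the paper's: realize the derived sequence on return words as a fixed point of a primitive (left-)proper substitution, then expand the return-word alphabet to pairs $(i,k)$ so that the projection back to $Y$ becomes a genuine coding of radius $0$. The paper does exactly this, working directly with return words to prefixes $u,u'$ of $y$ (found via Theorem~\ref{theo:morphic UR ssi finite derive}) rather than going through your Stage~1.

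There is, however, a genuine gap in your radius bounds. The Moss\'e recognizability constant of $\tau$ controls how much context is needed to \emph{desubstitute}, i.e.\ to locate cutting bars and recover preimages under $\tau$; it says nothing about the radius needed to invert a letter-to-letter coding $\psi:X_\tau\to Y$, nor about identifying which return word one is inside. The quantity you actually need is the linear-recurrence constant: if $y$ (equivalently, its derived sequence) is linearly recurrent for some constant $L$, then the word $u$ used to define the return words reoccurs within distance $L|u|$ of every position, so a window of radius roughly $(L+1)|u|$ suffices to locate the two neighbouring occurrences of $u$ and hence to recover both the enclosing return word and the position inside it. This is precisely the bound $K=(L+1)|u|+1$ obtained in the paper's proof, with $L$ computable via Proposition~\ref{prop:sublinrec}. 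Your invocation of Moss\'e constants should be replaced by this linear-recurrence argument.

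A secondary point: your Stage~1 assumes that the result of~\cite{Durand:2013b} already delivers $\psi$ as an \emph{isomorphism} rather than merely a factor map; that is not immediate, since the coding attached to a primitive substitutive presentation need not be injective on $X_\tau$. The paper sidesteps the issue by never introducing this intermediate step: it applies the return-word and alphabet-expansion construction directly to $y$, and injectivity of the final coding $\phi$ is established by exhibiting the explicit inverse $f$ and checking $f\circ\phi=\id$.
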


\begin{Rem}
\label{rem:factor of radius K+r}
Theorem~\ref{theo:durand13bbis} is the first step to prove Theorem~\ref{theo:main} and Theorem~\ref{theo:main2} (hence Theorem~\ref{theo:cordecidfactor}) for aperiodic subshifts.
Indeed, suppose that we are given two aperiodic minimal morphic subshifts $(X , S)$ and $(Y , S)$.
Using Theorem~\ref{theo:durand13bbis}, we can compute primitive and proper substitutions $\sigma$ and $\tau$ such that $(X,S)$ is isomorphic to $(X_\sigma,S)$ and $(Y,S)$ is isomorphic to $(X_\tau,S)$, with isomorphisms of computable radii.
In the next sections, we show that for factor map $f : (X_\sigma,S) \to (X_\tau,S)$, with $\sigma $ and $\tau $ primitive and $(X_\tau,S)$ non-periodic,  the radius of some $f\circ S^n$ is bounded by a computable constant only depending on $\sigma$ and $\tau$, Theorem~\ref{theo:mainpropersub}.
We also show that it is decidable whether a given sliding block code defines a factor map from $(X_\sigma,S)$ onto $(X_\tau,S)$ (Theorem~\ref{theo:decidcoding}). 
This will complete the proofs of Theorem~\ref{theo:main} and Theorem~\ref{theo:main2} by composing the factor maps with the isomorphisms.
\end{Rem}

\begin{Rem}
As the sequence $y$ in Theorem~\ref{theo:durand13bbis} is uniformly recurrent, we have $Y = \Omega(y^+)$.
Therefore, except for the definitions, we essentially consider one-sided sequences in this section.
\end{Rem}

\subsection{Return words and derived sequences}
\label{subsection:return words}

Let $x \in A^\K$ be a sequence and $u$ be a word in $\mathcal{L} (x)$.
We call {\em return word to $u$} any word $w \in A^*$ such that $wu$ is a word in $\mathcal{L}(x)$ that admits $u$ as a prefix and $u$ occurs exactly twice in $wu$.
The set of return words to $u$ in $x$ is denoted by ${\mathcal{R}}_{x,u}$.
It is easily seen that a recurrent sequence $x$ is uniformly recurrent if and only if for all $u \in \cL(x)$, the set ${\mathcal{R}}_{x,u}$ is finite.

\begin{Rem}
\label{remark:power return word}
If $w$ is a return word to $u$ in $x \in A^\mathbb{K}$, then $wu = w^{1+|u|/|w|}$, where the fractional power $w^{p/|w|}$, $p \in \mathbb{N}$, of a word $w$ is the word $w^n w_{[0,r[}$ such that $p/|w| = n+r/|w|$ with $n,r \in \mathbb{N}$ and $r < |w|$.
In particular, the word $w^{1+|u|/|w|}$ belongs to $\mathcal{L}(x)$.
\end{Rem}

When dealing with return words, it is convenient to enumerate the elements of $\mathcal{R}_{x,u}$ in the order of their first occurrence in $x^+$.
Formally, we consider the set $R_{x,u} = \{1,2,\dots\}$ with $\#R_{x,u} = \#\mathcal{R}_{x,u}$ and we define 
\[
	\Theta_{x,u}: R_{x,u}^* \to \mathcal{R}_{x,u}^*
\]
as the unique morphism that maps $R_{x,u}$ bijectively onto $\mathcal{R}_{x,u}$ and such that for all $i \in R_{x,u}$, $\Theta_{x,u}(i)u$ is the $i$th word of $\mathcal{R}_{x,u}u$ occurring in $x^+$.
When $x$ is uniformly recurrent, for all $y \in X = \Omega(x)$ we have ${\mathcal{R}}_{y,u}={\mathcal{R}}_{x,u}$ so we let $\mathcal{R}_{X,u}$ denote this set.

\begin{exem}
\label{ex:returns}
For the Fibonacci sequence $x$, we have the return words $\cR_{x,a} = \{a, ab\}$ and $\cR_{x,aa} = \{aab, aabab\}$ and the morphisms $\Theta_{x,a}$, $\Theta_{x,aa}$ are given by
\[
	\Theta_{x,a}:
		\begin{cases}
			1 \mapsto a \\
			2 \mapsto ab
		\end{cases}
	\quad \text{and } \quad
	\Theta_{x,aa}:
		\begin{cases}
			1 \mapsto aabab \\
			2 \mapsto aab
		\end{cases}.
\]

For the Thue-Morse sequence $y$, we have the following sets of return words $\cR_{y,b} = \{b, ba, baa\}$, $\cR_{y,bb} = \{bbaa, bbaaba, bbabaa, bbabaaba\}$ and the morphisms $\Theta_{y,b}$, $\Theta_{y,bb}$ are given by
\[
	\Theta_{y,b}:
		\begin{cases}
			1 \mapsto b 	\\
			2 \mapsto ba 	\\
			3 \mapsto baa
		\end{cases}
	\quad \text{and } \quad
	\Theta_{y,bb}:
		\begin{cases}
			1 \mapsto bbabaa 	\\
			2 \mapsto bbaaba 	\\
			3 \mapsto bbaa		\\
			4 \mapsto bbabaaba
		\end{cases}.
\]

For the Chacon sequence $z$, we have the following sets of return words 
$\cR_{z,a} = \{a,ab\}$, $\cR_{z,aab} = \{aaba,aabab\}$ and the morphisms $\Theta_{z,a}$, $\Theta_{z,aab}$ are given by
\[
	\Theta_{z,a}:
		\begin{cases}
			1 \mapsto a 	\\
			2 \mapsto ab 	
		\end{cases}
	\quad \text{and } \quad
	\Theta_{z,aab}:
		\begin{cases}
			1 \mapsto aaba 	\\
			2 \mapsto aabab 
		\end{cases}.
\]
\end{exem}

The next results are classical when dealing with return words. 
The first three are proved in~\cite{Durand:1998} for one-sided sequences and when $u$ is a prefix of $x$, but the proofs are exactly the same in the two-sided case and when $u$ is a factor but not a prefix. 
The proof of Proposition~\ref{prop:def suite derivee singleton} that is in~\cite{Durand:1998} only concerns $x$ itself (and not $y \in \Omega(x)$) but again, the proof is the same.

\begin{prop}[\cite{Durand:1998}]
\label{prop: return words code}
Let $x \in A^\K$ be a uniformly recurrent sequence and $u$ be a word in $\cL(x)$.
The set $\mathcal{R}_{x,u}$ is a code, {\em i.e.}, the map $\Theta_{x,u} : R_{x,u}^* \to \mathcal{R}_{x,u}^*$ is one-to-one.
\end{prop}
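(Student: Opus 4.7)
The plan is to establish the following combinatorial lemma, which easily implies the proposition: for any $w_1, \ldots, w_n \in \mathcal{R}_{x,u}$, the positions at which $u$ occurs in the word $w_1 w_2 \cdots w_n u$ are exactly the partial sums $0,\ |w_1|,\ |w_1|+|w_2|,\ \ldots,\ |w_1|+\cdots+|w_n|$. Granted this, suppose $w_1 \cdots w_n = w'_1 \cdots w'_m = W$ with $w_i, w'_j \in \mathcal{R}_{x,u}$; both factorizations produce the same set of occurrence positions of $u$ in $Wu$ (which depends only on the word $Wu$ itself), forcing $n = m$ and $|w_i| = |w'_i|$ for each $i$. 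Hence $w_i = w'_i$ as the consecutive substrings of $W$ of the prescribed lengths, and since $\Theta_{x,u}$ is bijective on letters, the corresponding letters $i_k, j_k \in R_{x,u}$ also coincide. This proves injectivity of $\Theta_{x,u}$.

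The lemma itself I would prove by induction on $n$. The base case $n=1$ is exactly the definition of a return word. For the inductive step I verify separately that (i) each partial sum is an occurrence of $u$, because $w_{k+1} u$ begins with $u$ and hence so does $w_{k+1} w_{k+2} \cdots w_n u$; and (ii) no further occurrence exists. For (ii), positions $p \geq |w_1|$ are handled by the inductive hypothesis applied to the suffix $w_2 \cdots w_n u$. For $0 < p < |w_1|$, the key observation is that the length-$|u|$ window at position $p$ in $w_1 w_2 \cdots w_n u$ coincides character-by-character with the analogous window in $w_1 u$, because $w_2 \cdots w_n u$ and $u$ share their first $|u|$ characters (by (i) applied to $w_2 \cdots w_n$). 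Since $u$ occurs in $w_1 u$ only at positions $0$ and $|w_1|$ by the return-word property of $w_1$, no extra occurrence can exist in this range.

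The main obstacle is the degenerate case in which some return word $w_i$ is shorter than $|u|$: then $w_i$ is only a prefix of $u$, and $u$ displays period $|w_i|$ in the two overlapping copies contained in $w_i u$. In this regime the argument above requires more careful bookkeeping, since the initial $|u|$ characters of $w_{i+1} w_{i+2} \cdots$ are no longer furnished by a single factor but must cooperate across several of them; one exploits that each $w_i u \in \mathcal{L}(x)$ and the rigid overlap structure of $u$ implied by the return-word property to control these initial segments. Conceptually nothing new is required beyond the clean case $|w_i| \geq |u|$, but the case analysis is unavoidable.
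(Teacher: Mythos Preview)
The paper does not supply its own proof of this proposition; it simply cites \cite{Durand:1998}. Your approach via the occurrence lemma is the standard one and is correct.

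Your final paragraph, however, is an unnecessary concession. The induction already handles the short-return-word case cleanly, and no extra bookkeeping is required. Here is why. In the inductive step you only need to establish (i) at position $0$ and (ii) for $0<p<|w_1|$; all the remaining positions (including the other partial sums) are taken care of by the inductive hypothesis applied to the $n-1$ words $w_2,\ldots,w_n$, which in particular gives that $u$ is a prefix of $w_2\cdots w_n u$. Now for position $0$: if $|w_1|\ge|u|$ then $u$ is a prefix of $w_1$ by the return-word property and there is nothing to do. If $|w_1|<|u|$, the first $|u|$ letters of $w_1 w_2\cdots w_n u$ are $w_1$ followed by the first $|u|-|w_1|$ letters of $w_2\cdots w_n u$, i.e.\ (by the inductive hypothesis) $w_1\cdot u_{[0,|u|-|w_1|)}$. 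The return-word property for $w_1$ says $u$ is a prefix of $w_1 u$, which forces $w_1=u_{[0,|w_1|)}$ and $u_{[|w_1|,|u|)}=u_{[0,|u|-|w_1|)}$; hence the first $|u|$ letters are exactly $u$. Your argument for (ii) already invokes only that $u$ is a prefix of $w_2\cdots w_n u$, which again is the inductive hypothesis. So the ``degenerate'' case dissolves into the same two-line computation, and you should delete the hedging paragraph rather than promise a case analysis that is not needed.

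One small wording issue: in your step (i) you write ``$w_{k+1}u$ begins with $u$ and hence so does $w_{k+1}w_{k+2}\cdots w_n u$''. That ``hence'' is exactly the computation above and deserves one sentence of justification; as written it looks like you are asserting it without proof, which is presumably what prompted your doubts in the last paragraph.
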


\begin{prop}[\cite{Durand:1998}]
\label{prop:def suite derivee singleton}
Let $x$ be a uniformly recurrent sequence in $A^{\mathbb{K}}$ and let $u$ be a word in $\mathcal{L}(x)$.
Let $y \in \Omega(x)$ and $i \in \mathbb{N}$ be the first occurrence of $u$ in $y^+$.
Then, there exists a unique sequence $z \in R_{x,u}^\mathbb{K}$ satisfying 
$\Theta_{x,u} (z) = S^i(y)$; we call the sequence $z$ the {\em derived sequence} of $y$ (w.r.t. $u$) and we denote it by $\mathcal{D}_u(y)$.
Furthermore, for all $y \in \Omega(x)$, $\mathcal{D}_u(y)$ is uniformly recurrent and $\Omega(\cD_u(y)) = \Omega(\cD_u(x))$. 
\end{prop}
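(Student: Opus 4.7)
The plan is to construct $z = \cD_u(y)$ by factorizing $S^i(y)$ into consecutive return words to $u$, and then use uniform recurrence to transfer languages between the derived sequences of different points of $\Omega(x)$.

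For existence and uniqueness, observe first that by the minimality of $i$, the sequence $S^i(y)^+$ begins with $u$, and by uniform recurrence of $y$ the word $u$ has infinitely many occurrences in $S^i(y)^+$. Enumerate them as $0 = n_0 < n_1 < n_2 < \cdots$; then by the very definition of a return word, each block $(S^i y)_{[n_j,\, n_{j+1}-1]}$ lies in $\cR_{x,u}$, and letting $z_j \in R_{x,u}$ be its index under $\Theta_{x,u}$ produces the one-sided part of $z$. When $\K = \Z$, the same reasoning applied in the negative direction, using that $u$ occurs infinitely often at negative positions by uniform recurrence, supplies the letters $z_j$ for $j < 0$; the distinguished occurrence of $u$ at position $0$ anchors the factorization so that no ambiguity arises. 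By construction $\Theta_{x,u}(z) = S^i(y)$. Uniqueness is immediate from Proposition~\ref{prop: return words code}: if $\Theta_{x,u}(z) = \Theta_{x,u}(z')$, the injectivity of $\Theta_{x,u}$ on $R_{x,u}^*$ forces $z$ and $z'$ to agree on every finite block.

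For the second assertion, it suffices to establish $\cL(\cD_u(y)) = \cL(\cD_u(x))$, since both derived sequences inherit uniform recurrence from $x$, so equality of languages upgrades to equality of orbit closures. Pick a factor $w$ of $\cD_u(x)$; its image $\Theta_{x,u}(w)\, u$ occurs in $x$, and since $\cL(y) = \cL(x)$, this image also occurs in $y$. Shifting $y$ to any such occurrence and then running the same return-word factorization used to define $\cD_u(y)$ recovers $w$ as a factor of $\cD_u(y)$; the decomposition is unambiguous because $\cR_{x,u}$ is a code. The reverse inclusion is symmetric.

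The only point requiring genuine care is the two-sided existence, where one must check that the gaps between consecutive occurrences of $u$ remain bounded in both directions and that the pinned occurrence of $u$ at position $0$ of $S^i(y)$ uniquely determines the factorization of the entire bi-infinite sequence. Both are direct consequences of the uniform recurrence of $y$ together with the minimality of $i$ in $y^+$.
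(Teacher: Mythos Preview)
Your proof is correct and follows exactly the classical argument the paper has in mind. Note that the paper does not actually supply a proof of this proposition: it cites \cite{Durand:1998} and remarks that the argument there (for one-sided sequences and $u$ a prefix of $x$) extends verbatim to the two-sided case, to arbitrary $u\in\cL(x)$, and to any $y\in\Omega(x)$. What you have written is precisely that extended argument, carried out in detail and correctly invoking Proposition~\ref{prop: return words code} both for uniqueness of the factorization and for the language comparison.
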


\begin{prop}[\cite{Durand:1998}]
\label{prop:derived sequence fixed point}
Let $x$ be a uniformly recurrent sequence in $A^\mathbb{K}$. 
Let $u$ and $u'$ be two prefixes of $x^+$ where $u$ is a prefix of $u'$.
\begin{enumerate}
\item
There is a unique non-erasing morphism $\lambda_{x,u,u'}:R_{x,u'}^* \to R_{x,u}^*$ such that 
\[
	\Theta_{x,u'} = \Theta_{x,u} \circ \lambda_{x,u,u'};
\]

\item
If $\cD_u(x^+) = \cD_{u'}(x^+)$ and if for all $(w,w') \in \mathcal{R}_{x,u} \times \mathcal{R}_{x,u'}$, $wu$ occurs in $w'u'$, then $\lambda_{x,u,u'}$ is primitive, left proper ($\lambda_{x,u,u'}(R_{x,u'}) \subset 1 R_{x,u}^+$) and such that $\cD_u(x^+) = \lambda_{x,u,u'}^{\omega}(1)$.
\end{enumerate}
\end{prop}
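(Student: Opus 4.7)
The plan is to construct $\lambda_{x,u,u'}$ explicitly by decomposing each return word to $u'$ as a concatenation of return words to $u$, then use the injectivity of $\Theta_{x,u}$ (Proposition~\ref{prop: return words code}) both for uniqueness and for deriving the fixed-point relation, and finally to deduce primitivity and left properness from the two hypotheses.

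For Part~1, I fix $a \in R_{x,u'}$ and consider $w = \Theta_{x,u'}(a) \in \mathcal{R}_{x,u'}$. Since $u$ is a prefix of $u'$, the word $wu'$ starts with $u$ and contains an occurrence of $u$ at position $|w|$. Let $0 = p_0 < p_1 < \cdots < p_m = |w|$ be the occurrences of $u$ in $wu'$ lying in $[0,|w|]$; between two consecutive such positions no occurrence of $u$ appears, so each $v_i := w_{[p_{i-1},p_i)}$ is a return word to $u$. Letting $k_i \in R_{x,u}$ be the index of $v_i$ and setting $\lambda_{x,u,u'}(a) = k_1 \cdots k_m$ gives a non-erasing morphism satisfying $\Theta_{x,u}(\lambda_{x,u,u'}(a)) = v_1 \cdots v_m = \Theta_{x,u'}(a)$. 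Uniqueness is immediate from the injectivity of $\Theta_{x,u}$ on $R_{x,u}^*$.

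For Part~2, I apply the defining relation of Part~1 to the derived sequences. By Proposition~\ref{prop:def suite derivee singleton}, $\Theta_{x,u}(\cD_u(x^+)) = x^+ = \Theta_{x,u'}(\cD_{u'}(x^+)) = \Theta_{x,u}(\lambda_{x,u,u'}(\cD_{u'}(x^+)))$, so injectivity of $\Theta_{x,u}$ gives $\lambda_{x,u,u'}(\cD_{u'}(x^+)) = \cD_u(x^+)$. Under $\cD_u(x^+) = \cD_{u'}(x^+) =: d$, this becomes $\lambda_{x,u,u'}(d) = d$. Primitivity then follows from the hypothesis: for $(a,b) \in R_{x,u'} \times R_{x,u}$, an occurrence of $\Theta_{x,u}(b)\,u$ inside $\Theta_{x,u'}(a)\,u'$ produces two consecutive occurrences of $u$ at distance $|\Theta_{x,u}(b)|$, and since the occurrences of $u$ in $\Theta_{x,u'}(a)\,u'$ lying in $[0,|\Theta_{x,u'}(a)|]$ are exactly the cutting points $p_i$ of the Part~1 decomposition, the word $\Theta_{x,u}(b)$ must coincide with some $v_i$, so $b$ appears in $\lambda_{x,u,u'}(a)$.

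The step I expect to require the most care is left properness, i.e.\ showing that $\lambda_{x,u,u'}(a)$ begins with the letter $1$ for every $a \in R_{x,u'}$. For $a=1$ this is direct: both $\Theta_{x,u}(1)$ and $\Theta_{x,u'}(1)$ correspond to return words starting at position $0$ of $x^+$, so the first return word to $u$ that occurs in $\Theta_{x,u'}(1)$ is $\Theta_{x,u}(1)$, whence $\lambda_{x,u,u'}(1) \in 1\,R_{x,u}^+$. For a general $a$, I would combine $\lambda_{x,u,u'}(d)=d$, the uniform recurrence of $x^+$, and the hypothesis that every $\Theta_{x,u}(1)\,u$ occurs in every $\Theta_{x,u'}(a)\,u'$ to force the first return word to $u$ appearing at position $0$ of each $\Theta_{x,u'}(a)$ to coincide with $\Theta_{x,u}(1)$. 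Once left properness is secured, $d$ is the unique fixed point of the primitive left-proper morphism $\lambda_{x,u,u'}$ starting with $1$, yielding $\cD_u(x^+) = \lambda_{x,u,u'}^\omega(1)$ and completing the proof.
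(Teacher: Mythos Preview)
The paper does not give a proof of this proposition (it cites \cite{Durand:1998}), so there is no argument in the paper to compare yours to. Your overall strategy---decompose each return word to $u'$ into return words to $u$, use that $\mathcal{R}_{x,u}$ is a code for uniqueness, and derive $\lambda_{x,u,u'}(\cD_{u'}(x^+))=\cD_u(x^+)$ by applying $\Theta_{x,u}^{-1}$---is the standard one, and your Part~1 together with the fixed-point relation $\lambda_{x,u,u'}(d)=d$ are carried out correctly.

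There are, however, two genuine gaps in Part~2.

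\emph{Primitivity.} From an occurrence of $\Theta_{x,u}(b)\,u$ in $\Theta_{x,u'}(a)\,u'$ you obtain two consecutive occurrences of $u$ at positions $p$ and $p+|\Theta_{x,u}(b)|$. You then invoke that the cutting points of the Part~1 decomposition are the occurrences of $u$ lying in $[0,|\Theta_{x,u'}(a)|]$. But your argument only yields $\Theta_{x,u}(b)=v_i$ for some $i$ when \emph{both} $p$ and $p+|\Theta_{x,u}(b)|$ lie in that interval, i.e.\ when $p<|\Theta_{x,u'}(a)|$. The hypothesis ``$wu$ occurs in $w'u'$'' allows the occurrence to sit entirely inside the suffix $u'$ (this can happen whenever $|wu|\le |u'|$), in which case it says nothing about the decomposition of $w'$. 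You need to rule this case out or handle it separately.

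\emph{Left properness.} You correctly flag this as the delicate step but do not actually prove it; the phrase ``I would combine\ldots to force\ldots'' is not an argument. The clean way to finish is to observe that for every $w'\in\mathcal{R}_{x,u'}$ the word $w'u'$ has $u'$ as a prefix, and $u'$ is also the prefix of $x^+$ of the same length; hence the occurrences of $u$ in $w'u'$ at positions $\le |u'|-|u|$ coincide with those in $x^+$. In particular, \emph{provided} $|\Theta_{x,u}(1)\,u|\le |u'|$, the second occurrence of $u$ in $w'u'$ is at position $|\Theta_{x,u}(1)|$, so $v_1=\Theta_{x,u}(1)$ and $\lambda_{x,u,u'}(a)$ begins with $1$. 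This extra length condition is not listed among the hypotheses of the proposition, but it is precisely the assumption ``$\Theta_{y,u}(1)u$ is a prefix of $u'$'' that the paper adds when it applies the proposition in the proof of Theorem~\ref{theo:durand13bbis}. Without it (or an equivalent device), your sketch does not close, and the same missing control is what undermines your primitivity step.
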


\begin{exem}
\label{ex:tau}
For the Fibonacci sequence $x$, the Thue-Morse sequence $y$ and the Chacon sequence $z$, the morphisms $\lambda_{x,a,aa}:R_{x,aa}^* \to R_{x,a}^*$, $\lambda_{y,b,bb}:R_{y,bb}^* \to R_{y,b}^*$ and $\lambda_{z,a,aab}:R_{z,aab}^* \to R_{z,a}^*$ given by the previous proposition are respectively  
\[
	\lambda_{x,a,aa}:
		\begin{cases}
			1 \mapsto 122  \\
			2 \mapsto 121
		\end{cases},
	\quad
	\lambda_{y,b,bb}:
		\begin{cases}
			1 \mapsto 123 	\\
			2 \mapsto 132 	\\
			3 \mapsto 13	\\
			4 \mapsto 1232
		\end{cases}
	\quad \text{and } \quad
	\lambda_{z,a,aab}: 
		\begin{cases}
			1 \mapsto 121 \\
			2 \mapsto 122
		\end{cases}.
\]
It could furthermore be checked that $\cD_a(z) = \cD_{aab}(z)$ (see Theorem~\ref{theo:morphic UR ssi finite derive} below), so that
\begin{eqnarray*}
	\cD_a(z) = \lambda_{z,a,aab}^\omega(1) 
	&=& 
	1211221211211221221211221211211221211211 \cdots \\
	z = \Theta_{z,a}(\cD_a(z)) 
	&=&
	aabaaababaabaaabaaababaababaabaaababaaba \cdots
\end{eqnarray*}
\end{exem}

The next result is a direct consequence of results in~\cite{Durand:2013b}. 
Since it is not exactly stated as follows, we give a short proof.

\begin{theo}[\cite{Durand:2013b}]
\label{theo:morphic UR ssi finite derive}
Let $x$ be a uniformly recurrent morphic sequence in $A^\mathbb{N}$.
\begin{enumerate}
\item
For all prefixes $u$ of $x$, the morphism $\Theta_{x,u}$ is computable and there exist some computable morphisms $\sigma_u: B^* \to B^*$ and $\psi_u: B^* \to R_{x,u}^*$ such that $\cD_u(x) = \psi_u(\sigma_u^\omega(b))$.
\item
There is computable constant $C$ such that the set $\{(\sigma_u,\psi_u) \mid u \text{ prefix of } x\}$ has cardinality at most $C$. 
In particular, the number of derived sequences of $x$ (on its prefixes) is at most $C$. 
\end{enumerate}
\end{theo}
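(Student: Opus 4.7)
The plan is to assemble both parts from results of Durand~\cite{Durand:2013b}, taking care to extract the computability and finiteness statements in the exact form needed here.

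Since $x$ is uniformly recurrent morphic, I would first write $x = \phi(\tau^\omega(a))$ with $\tau$ a prolongable endomorphism and $\phi$ a morphism, both computable. For any prefix $u$ of $x$, I would first produce $\Theta_{x,u}$: by uniform recurrence, $\cR_{x,u}$ is finite, and the morphic structure of $x$ yields a computable bound $L(u)$ such that every return word to $u$ occurs in $x_{[0,L(u)]}$ (one can take $L(u)$ of the order of $|u|$ times the recurrence gap, both effectively bounded from the data $(\tau,\phi,a)$). Generating this prefix and scanning it gives both $\cR_{x,u}$ and the enumeration of its elements by order of first occurrence, hence $\Theta_{x,u}$ is computable.

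For part~(1), the main structural result of \cite{Durand:2013b} is that every derived sequence $\cD_u(x)$ of a uniformly recurrent morphic sequence is itself morphic, and that the defining data $(\sigma_u, \psi_u, b)$ can be obtained algorithmically from $(\tau,\phi,a)$ and $u$. Plugging this in gives exactly the required $\sigma_u : B^* \to B^*$ and $\psi_u : B^* \to R_{x,u}^*$ with $\cD_u(x) = \psi_u(\sigma_u^\omega(b))$.

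For part~(2), I would invoke the finiteness statement of \cite{Durand:2013b}: as $u$ ranges over all prefixes of $x$, only finitely many triples $(\sigma_u,\psi_u,b)$ appear, and their number admits a computable bound $C$ depending only on $(\tau,\phi,a)$. Since the seed $b$ lies in a finite alphabet, projecting to pairs $(\sigma_u,\psi_u)$ still yields a set of cardinality at most $C$, and the count of distinct derived sequences is bounded in the same way because $\cD_u(x)$ is determined by $(\sigma_u,\psi_u,b)$. The main obstacle is making the construction of $(\sigma_u,\psi_u)$ from $u$ \emph{canonical}, so that distinct prefixes producing identical derived sequences actually produce identical pairs rather than merely equivalent ones; otherwise $C$ would bound an equivalence-class count, not the set itself. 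This is handled by normalising the output of the algorithm from \cite{Durand:2013b} (for instance by minimising the alphabet of $\sigma_u$ and reducing it to a fixed normal form), after which the estimate $\#\{(\sigma_u,\psi_u)\} \le C$ follows.
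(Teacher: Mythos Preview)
Your approach is essentially the same as the paper's: both defer to specific results in \cite{Durand:2013b}. The paper's proof simply cites Proposition~28 (existence of $\sigma_u,\psi_u$), Section~4 (computability), and Theorem~29 (the bound $C$) of that reference, while you elaborate the same content more discursively.

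One point where you create an artificial difficulty: your final paragraph worries about making the construction of $(\sigma_u,\psi_u)$ \emph{canonical}, and proposes a normalisation step to ensure the bound applies to the set of pairs rather than to equivalence classes. This concern is misplaced. The construction in \cite{Durand:2013b} is a fixed deterministic algorithm, so for each prefix $u$ it outputs a definite pair $(\sigma_u,\psi_u)$; Theorem~29 of that paper bounds the number of distinct outputs of that specific algorithm, not merely an equivalence-class count. No additional normalisation is needed, and the sentence about ``minimising the alphabet of $\sigma_u$ and reducing it to a fixed normal form'' can be dropped.
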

\begin{proof}
The existence of $\sigma_u$ and $\psi_u$ is~\cite[Proposition 28]{Durand:2013b} and the fact that $\Theta_{x,u}$, $\sigma_u$ and $\psi_u$ are computable is explained in~\cite[Section 4]{Durand:2013b}. 
The bound on the number of possible pairs $(\sigma_u,\psi_u)$ is~\cite[Theorem 29]{Durand:2013b}.
\end{proof}

\begin{theo}[{\cite{Durand&Host&Skau:1999}}]
\label{theo:encad}
Let $x$ be an aperiodic linearly recurrent sequence for the constant $K$. 
For all words $u$ of $\mathcal{L} (x)$, 
\begin{enumerate}
\item
\label{item:theo:encad}
For all $n$, every word of $\mathcal{L}_n (x)$ occurs in every word of $\mathcal{L}_{(K+1)n} (x)$;
\item 
\label{theo:item:1}
For all words $v$ belonging to $\mathcal{R}_{x,u}$, $\frac{1}{K}| u| \leq | v| \leq K  | u|$;
\item 
\label{theo:item:2}
$\# (\mathcal{R}_{x,u})\leq (K+1)^3$;
\item
$p_x(n) \leq K n$ for all $n\geq 1$.
\end{enumerate}
\end{theo}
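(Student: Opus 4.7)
My plan is to prove the four items in the stated order, all from the bounded-gap hypothesis of linear recurrence; only the lower bound in item (2) will use the aperiodicity, which enters through the Morse--Hedlund theorem.

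Item (1) is immediate from the bounded-gap hypothesis. A window $w \in \cL_{(K+1)n}(x)$ offers $Kn+1$ candidate starting positions for a copy of $u \in \cL_n(x)$, and if none were an occurrence of $u$, then two successive occurrences of $u$ in $x$ (one before $w$, one after) would be at distance more than $Kn = K|u|$, contradicting linear recurrence. Item (4) follows from a double count on a window $[0,L)$ of $x$: the total number of length-$n$ factor occurrences is $L-n+1$, while each $u \in \cL_n(x)$ occurs at least $L/(Kn) - O(1)$ times (by iterating the bound on successive gaps). Summing over $u \in \cL_n(x)$ and letting $L \to \infty$ gives $p_x(n) \leq Kn$.

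For item (2), the upper bound $|v| \leq K|u|$ is immediate because $v \in \cR_{x,u}$ realizes the gap between two successive occurrences of $u$. For the lower bound I would proceed by contradiction: suppose $|v| < |u|/K$. Since $u$ is a prefix of $vu$ and $|v| < |u|$, the word $v$ is a proper prefix of $u$, and because $u$ occurs at both positions $0$ and $|v|$ of $vu$ the word $u$ has period $|v|$. Writing $u = v^k v'$ with $k = \lfloor |u|/|v| \rfloor \geq K$ and $v'$ a proper prefix of $v$, we get $vu = v^{k+1} v'$, so $v^{K+1} \in \cL(x)$. By item (1), every element of $\cL_{|v|}(x)$ occurs inside $v^{K+1}$; but $v^{K+1}$ carries at most $|v|$ distinct length-$|v|$ factors (its cyclic shifts), so $p_x(|v|) \leq |v|$, contradicting the Morse--Hedlund bound $p_x(n) \geq n+1$ valid for aperiodic sequences.

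For item (3), I combine items (1) and (2) in a single window. Fix a window $W$ of $x$ of length $(K+1)^2 |u|$. For each $v \in \cR_{x,u}$ the word $vu$ has length at most $(K+1)|u|$, so applying item (1) with length parameter $|vu|$ forces $vu$ to occur inside $W$; consequently every return word is witnessed by some occurrence of $u$ in $W$. By the lower bound of item (2), consecutive occurrences of $u$ in $W$ are separated by at least $|u|/K$, so $W$ contains at most $K(K+1)^2 + 1 \leq (K+1)^3$ occurrences of $u$, which bounds $\#\cR_{x,u}$ by $(K+1)^3$. The central difficulty throughout is the lower bound in item (2): it is the only step that truly requires aperiodicity, and it hinges on recognizing that a too-short return word injects a long periodic factor $v^{K+1}$ into $\cL(x)$, whence item (1) together with Morse--Hedlund delivers the contradiction.
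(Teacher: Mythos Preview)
The paper does not give its own proof of this theorem; it is quoted from \cite{Durand&Host&Skau:1999}. Your argument is correct and is essentially the standard one from that reference. In particular, the key step for the lower bound in item~\eqref{theo:item:1}---that a return word $v$ with $|v|<|u|/K$ forces $v^{K+1}\in\cL(x)$---is exactly the content of Remark~\ref{remark:power return word} in the present paper, and combining it with item~\eqref{item:theo:encad} and Morse--Hedlund is the intended route.

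Two minor remarks on presentation. First, you announce that you will treat the items ``in the stated order'' but in fact prove them in the order (1), (4), (2), (3); this is harmless logically (item~(4) is independent of (2) and (3)) but worth fixing. Second, in item~\eqref{theo:item:2} the phrase ``every return word is witnessed by some occurrence of $u$ in $W$'' deserves one more sentence: what you are really using is that every $v\in\cR_{x,u}$ corresponds to a pair of \emph{consecutive} occurrences of $u$ in $W$ (because $vu$ occurs in $W$ and contains exactly two copies of $u$), so distinct return words give distinct such pairs, whence $\#\cR_{x,u}$ is at most the number of occurrences of $u$ in $W$ minus one. With that clarification the counting is airtight.
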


The bound $K$ of Theorem~\ref{theo:durand13bbis} is associated with some constant of linear recurrence of $(Y,S)$ and $(X_\sigma ,S)$. 
The next result states that it is computable for primitive substitutive sequences.

\begin{prop}[\cite{Durand:1998,Durand&Host&Skau:1999}]
\label{prop:sublinrec}
If $x$ is an aperiodic primitive substitutive sequence (w.r.t. $\sigma:A^* \to A^*$), then it is linearly recurrent for the computable constant 
\begin{equation}
\label{eq:Ksigma}
    K_\sigma = Q_\sigma R_\sigma |\sigma|,
\end{equation}
where $Q_\sigma$ is a constant such that $|\sigma^n| \leq Q_\sigma \langle \sigma^n \rangle$ for all $n$ and $R_\sigma$ is the maximal length of a return word to a word of length 2 in $x$.
\end{prop}


\begin{exem}
\label{ex:constant of LR}
Using the previous result, the Fibonacci sequence is linearly recurrent for the constant $10\frac{1+\sqrt{5}}{2}$. 
Using Walnut\footnote{Walnut is a software program allowing to answer questions or compute quantities related to automatic sequences.}~\cite{Mousavi2016}, it is possible to show that the optimal constant is 3, but that 3 is only necessary for words of length 1 ($aab$ is a return word to $b$).
For longer words, the optimal bound is close to $2.65$.
Similarly, the Thue-Morse sequence is linearly recurrent for the constant $16$. Again using Walnut, it can be shown that the optimal constant is $9$.
For the Chacon sequence, we would first need to consider it as a primitive substitutive sequence (the Chacon morphism not being primitive) to obtain an upper bound on its constant of linear recurrence.
\end{exem}

Observe that in the next sections, we do not always suppose that the substitutions are proper, several results being true without this hypothesis.

\subsection{Proof of Theorem~\ref{theo:durand13bbis}}

\begin{proof}
We make use of 
Proposition~\ref{prop:def suite derivee singleton}, 
Proposition~\ref{prop:derived sequence fixed point},
Theorem~\ref{theo:morphic UR ssi finite derive},
Theorem~\ref{theo:encad} and
Proposition~\ref{prop:sublinrec}.

Assume that $y$ is a one-sided sequence, otherwise, take $y = y^+$.
As $y$ is uniformly recurrent, we have 
$
\lim_{n \to +\infty} \min \{|w| \mid u \in \cL_n(y), w \in \cR_{y,u}\} = +\infty.
$
Using Theorem~\ref{theo:morphic UR ssi finite derive}, we can algorithmically find two prefixes $u$ and $u'$ of $y$ such that 
\begin{itemize}
\item
$\cD_{u'}(y) = \cD_u(y)$;
\item
$\Theta_{y,u}(1)u$ is a prefix of $u'$;
\item
for all $(w,w') \in \cR_{y,u} \times \cR_{y,u'}$, $wu$ occurs in $w'u'$.
\end{itemize}

Set $R = R_{y,u}=R_{y,u'}$.
By Proposition~\ref{prop:derived sequence fixed point}, there is a primitive and left proper substitution $\tau = \lambda_{y,u,u'}$ ($\tau(1) \subset 1 R^+$) such that $\cD_u(y) = \tau^\omega(1)$.
Since $\Theta_{y,u}$ and $\Theta_{y,u'}$ are computable by Theorem~\ref{theo:morphic UR ssi finite derive}, the substitution $\tau$ is computable as well.
In particular, we have $X_\tau = \Omega(\cD_u(y))$, $\Theta_{y,u}(X_\tau) \subset Y$ and, by Proposition~\ref{prop:def suite derivee singleton}, for all $y' \in Y$, there exists a unique pair $(x,n) \in X_\tau \times \N$ such that 
\begin{itemize}
\item
$u$ is prefix of $S^{-n}(y')$ and is not prefix of $S^{-m}(y')$ for all integer $m$ such that $0 \leq m < n$ (in particular, we have $0 \leq n < |\Theta_{y,u}|$);
\item
$y' = S^n \Theta_{y,u}(x)$. 
\end{itemize}

Set $\Theta = \Theta_{y,u}$ and consider the alphabet $D = \{(r,k) \mid r\in R , 0\leq k < | \Theta (r)| \}$
and the morphism $\psi : R^* \rightarrow D^* $ defined by:
\[
\psi (r) = (r,0) \ldots
(r,| \Theta (r)|-1 ).
\]

Since $\langle \tau \rangle \geq 2$, there is an integer $n \leq |\Theta |$ such that $\langle \tau^n \rangle| \geq |\Theta|$.
Let $\sigma $ be the endomorphism from $D^* $ to $ D^* $ defined by:

\begin{center}
\begin{tabular}{rcll}
	$\sigma ((r,k))$	& $=$ & $\psi (\tau^n (r)_{[k,k]})$, & if $0 \leq k<|\Theta (r)|-1$;\\
	$\sigma ((r,|\Theta (r)|-1))$ & $=$ & $\psi(\tau^n(r)_{[|\Theta (r)|-1,|\tau^n(r)|-1]})$, & otherwise.\\
\end{tabular}
\end{center}

Observe that $\tau $ being primitive, so is $\sigma$.
Furthermore, since $\tau$ is left proper, it is easily seen that $\sigma^2$ is left proper.
For all $r$ in $R$, we have
\begin{eqnarray*}
\sigma (\psi (r)) 	
	&=&\sigma ((r,0)\cdots (r,| \Theta (r) |-1 ))\\	
	&=&\psi (\tau^n (r)_{[0,0]} )\cdots 
		\psi (\tau^n(r)_{[| \Theta (r)|-1,| \tau^n (r) |-1]}) \\
 	&=& \psi (\tau^n (r)),
\end{eqnarray*}
hence $ \sigma \circ \psi=\psi \circ \tau^n$.
Since $\tau$ is left proper, it has a fixed point $z \in R^\N$ (which is unique) and it satisfies $\sigma (\psi (z))=\psi (\tau^n (z))=\psi(z)$.
Hence $\psi(z)$ is a fixed point of the substitution $\sigma$ and $X_\sigma = \Omega(\psi (z))$.
In particular, $\psi(X_\tau) \subset X_\sigma$ and for all $x \in X_\sigma$, there exists a unique pair $(z',n) \in X_\tau \times \N$ such that
\begin{itemize}
\item
$(S^{-n}(x))_0 = (r,0)$ for some $r \in R$ and $(S^{-m}(x))_0 \notin R \times \{0\}$ for all integer $m$ such that $0 \leq m < n$ (in particular we have $0 \leq n < |\psi| = |\Theta|$);
\item
$x = S^n \psi(z')$.
\end{itemize}

Let $\phi $ be the coding from $D^*$ to $ A^*$ defined
by $\phi ((r,k)) = \Theta (r)_{[k,k]}$ for all $(r,k)$ in $D$. For all
$r$ in $R$ we obtain
\[
\phi (\psi (r))
= \phi ((r,0) \cdots (r,| \Theta (r) | - 1))= \Theta (r),
\]
and then $\phi \circ \psi = \Theta $.
Let us show that $\phi$ defines a factor map of radius $0$ from $(X_\sigma , S)$ onto $(Y,S)$.
We have $\phi(X_\sigma) = Y$: if $x \in X_\sigma$, then $x = S^n \psi(z')$ for some integer $n$ and some $z' \in X_\tau$. 
Therefore, $\phi(x) = S^n(\phi \circ \psi(z')) = S^n(\Theta(z')) \in Y$.
Reciprocally, if $y' \in Y$, there exists a unique pair $(x,n) \in X_\tau \times \N$ such that $y' = S^n \Theta_{y,u}(x)$ where $u$ is not prefix of $S^{-m}(y')$ for all integer $m$ such that $0 \leq m < n$.
As $\phi \circ \psi = \Theta $, we then have $y' = \phi (S^n(\psi(x)))$, with $\psi(x) \in X_\sigma$.

To show that $\phi$ is an isomorphism, we build a factor map $f$ from $(Y,S)$ onto $(X_\sigma,S)$ and we show that $f \circ \phi$ is the identity.
Let $L$ be some constant of linear recurrence of $\cD_u(y)$. 
It is computable by Proposition~\ref{prop:sublinrec} (since $\tau$ is computable).
Since $y = \Theta(\cD_u(y))$, $y$ is also linearly recurrent and for the constant $L' = |\Theta|L$. 
Set $K = (L+1)|u|+1$.
Let $w = w_{-K} \cdots w_K$, $w_i \in A$ for all $i$, be a word of length $2K+1$ of $\mathcal{L} (Y)$.
By Theorem~\ref{theo:encad}, there is a smallest non-negative integer $i < (L+1)|u|$ such that $w_{[-i,-i+|u|[} = u$.
With the same argument, there is a smallest positive integer $j \leq L |u|+1$ such that $w_{[j,j+|u|[} = u$. 
This defines a letter $r_w \in R$ such that $\Theta(r_w) = w_{[-i,j[}$.
We define $\hat{f}:A^{2K+1} \to R$ by $\hat{f} (w) = \psi (r_w)_{i} = (r_w,i)$ and
we call $f : A^\mathbb{Z} \to R^\mathbb{Z}$ the map defined by $\hat{f}$, i.e., $(f(x))_n = \hat{f}(x_{[n-K,n+K]})$ for all $x,n$. 
Let us show $f \circ \phi$ is the identity.

Indeed, let $x\in X_\sigma$ and $y=\phi (x)$.
To show that $f(y) = x$, it is enough to show that $\hat{f}(y_{[-K,K]}) = x_0$.
Let $i\geq 0$ and $j>0$, be the smallest integers such that $x_{-i} = (r,0)$ and $x_j = (r',0)$ for some $r,r'\in R$.
Then, $x_{[-i, j-1]} = \psi  (r)$ and $y_{[i,j+|u|[} = \phi (x_{[i, j+|u|[}) = \phi (\psi  (r))u = \Theta (r)u$.
Setting $w = y_{[-K,K]}$, we have $r = r_w$ and $\hat{f} (y_{[-K,K]})= \psi (r_w)_i = x_0$.
This shows that $\phi$ is a isomorphism of radius 0 from $(X_\sigma,S)$ onto $(Y,S)$ and that its inverse $f = \phi^{-1}$ has radius $K$.

To conclude the proof, it suffices to show that $(X_\sigma,S)$ can be obtained as a primitive substitution subshift with a proper substitution. 
Indeed, up to now, we only know that $\sigma^2$ is left proper.
Let $a \in D$ be the letter such that $\sigma^2(D) \subset a D^+$.
We consider the substitutions $\sigma'$ and $\xi$ respectively defined by $a \sigma'(d) = \sigma^2(d) a$ for all $d \in D$ and $\xi = \sigma' \circ \sigma^2$.
The substitution $\xi$ is proper, primitive and such that $X_\xi = X_\sigma$, which ends the proof.
\end{proof}

\subsection{Examples}
\label{subsection:example isomorphic}

Let us apply Theorem~\ref{theo:durand13bbis} on two examples.
Our aim is to follow the construction given in the proof of the theorem.

The first one is the Chacon subshift generated by the Chacon sequence $z$.
Let us recall what we know from Example~\ref{ex:returns} and Example~\ref{ex:tau}:
\[
	\Theta_{z,a}:
		\begin{cases}
			1 \mapsto a 	\\
			2 \mapsto ab 	
		\end{cases};
	\quad
	\Theta_{z,aab}:
		\begin{cases}
			1 \mapsto aaba 	\\
			2 \mapsto aabab 
		\end{cases};
	\quad
	\lambda_{z,a,aab}: 
		\begin{cases}
			1 \mapsto 121 \\
			2 \mapsto 122
		\end{cases}.
\]
We consider the notation of the theorem and successively obtain:
\begin{itemize}
\item
$\tau = \lambda_{z,a,aab}$; 
\item
$D = \{(1,0),(2,0),(2,1)\}$ and $n = 1$;
\item
$
	\psi: 
		\begin{cases}
			1 \mapsto (1,0) \\
			2 \mapsto (2,0)(2,1) 			
		\end{cases}
$
and
$	\sigma: 
		\begin{cases}
			(1,0) \mapsto (1,0)(2,0)(2,1)(1,0) \\
			(2,0) \mapsto (1,0)		\\
			(2,1) \mapsto (2,0)(2,1)(2,0)(2,1)	
		\end{cases};
$
\item
$
	\phi:
		\begin{cases}
			(1,0) \mapsto a	 \\
			(2,0) \mapsto a	 \\
			(2,1) \mapsto b	
		\end{cases}.
$
\end{itemize}
The theorem states that $\phi$ is an isomorphism from the Chacon subshift onto the subshift $(X_\xi,S)$, where $\xi$ is the primitive and proper substitution defined by $\xi = \sigma' \circ \sigma^2$ and $\sigma'$ is defined by $(1,0)\sigma'(d) = \sigma^2(d)(1,0)$ for all $d \in D$.

\medskip

Let us consider a second example with a sequence $y$ which is not purely morphic like the Chacon sequence.
We consider the morphisms 
\[
	\sigma:
		\begin{cases}
			0 \mapsto 0123	 \\
			1 \mapsto 02	 \\
			2 \mapsto 13	 \\
			3 \mapsto 3	
		\end{cases}
	\quad \text{and } \quad
	\psi:	
		\begin{cases}
			0 \mapsto abb	 \\
			1 \mapsto ab	 \\
			2 \mapsto a	 	 \\
			3 \mapsto \epsilon	
		\end{cases}.
\]
Let $x= \sigma^\omega (0)$. 
It is clearly non-uniformly recurrent and non-periodic as the words $3^n$, $n\in \mathbb{N}$, appears infinitely many times separated by some words over the alphabet $\{0,1,2\}$.
Nonetheless, $y= \psi (x)$ is uniformly recurrent, as it could be checked by the algorithm given in~\cite{Durand:2013b}.
 
The algorithm of Theorem~\ref{theo:morphic UR ssi finite derive} gives $\mathcal{D}_u (y) = \mathcal{D}_{u'} (y)$, with $u=  abb  a$ and $u' =  abb  ab  a  abb$.
Then, we obtain $z = \cD_u(y) = \tau^\omega(1)$, with $\tau$ defined by $\Theta_{y,u'} = \Theta_{y,u} \circ \tau$ and
\[
	\Theta_{y,u}: 
		\begin{cases}
			1 \mapsto abbaba 	\\
			2 \mapsto abbaab	\\
			3 \mapsto abbabaab	\\
			4 \mapsto abba
		\end{cases},
	\,
	\Theta_{y,u'}: 
		\begin{cases}
			1 \mapsto abbabaabbaab 	\\
			2 \mapsto abbabaababba	\\
			3 \mapsto abbabaabbaababba	\\
			4 \mapsto abbabaab
		\end{cases},
	\,
	\tau: 
		\begin{cases}
			1 \mapsto 12 	\\
			2 \mapsto 34	\\
			3 \mapsto 124	\\
			4 \mapsto 3
		\end{cases}.
\]
We finally obtain a primitive and proper substitution $\xi : \mathcal{A}^* \to \mathcal{A}^*$ on the 24 letter alphabet $D = \{ (i, j) \mid i \in \{ 1,2,3,4\}, 0\leq j < |\Theta_{y,u} (i)|\} $ and a coding $\phi : D^* \to \{ a ,b\}^*$ that induces an isomorphism from $(X_\xi,S)$ onto $(\Omega(y),S)$. 
We could moreover show that $(\Omega(y),S)$ is the Thue-Morse subshift, as it could be checked by using the algorithm that we provide later in this paper.

\subsection{There are minimal morphic subshifts that are not pu\-re\-ly morphic}\label{subsec:notpurely}

When dealing with morphic sequences or subshifts it is rather natural to ask whether it is a purely morphic sequence. 
Indeed, it is much easier to work directly with an endomorphism without having to take care of some coding identifying letters.
A lot, most, of the papers dealing with morphic sequences only consider the case of purely morphic sequences.

Theorem~\ref{theo:durand13bbis} asserts that when working up to isomorphism one can always suppose that a subshift generated by a uniformly recurrent morphic sequences is in fact generated by a primitive proper substitution.

In \cite{Allouche&Cassaigne&Shallit&Zamboni:preprint2018} is discussed the case of sequences instead of subshifts.
The authors give an example (Example 23) of a sequence that is primitive morphic, but not purely morphic. 
But this sequence generates a subshift that is purely morphic. 

Thus, one may ask whether there are minimal morphic subshifts $(X,S)$ that are not purely morphic subshifts, that is $X \neq X_\sigma $ for any endomorphism $\sigma$. 
The following result shows that such examples exist.

\begin{prop}
Let $(X,S)$ be an aperiodic linearly recurrent subshift for the constant $K$.
There exists a factor map $\phi : (X,S) \to (Y,S)$ such that $(Y,S)$ is not purely morphic.
\end{prop}
\begin{proof}
Let $A$ be the alphabet of $X$.
Since $(X,S)$ is minimal, there exists an integer $r \geq 0$ such that for all $u \in \cL_{2r+1}(X)$, there exists a return word $w \in \cR_{X,u}$ of length bigger than $3(K+1)$.
Let us build a sliding block code $\phi: A^{2r+1} \to B$ such that $(\phi(X),S)$ is aperiodic and not generated by a primitive substitution.

For all $u \in \cL_{2r+1}(X)$, let $\phi_u: A^{2r+1} \to B$ be defined by
\[
	\phi_u(v) = 
		\begin{cases}
			1, & \text{if } v=u;	\\
			0, & \text{otherwise}.
		\end{cases}
\]
There exists $u \in \cL_{2r+1}(X)$ such that $(\phi_u(X),S)$ is aperiodic.
Indeed, if for all $u \in \cL_{2r+1}(X)$, $(\phi_u(X),S)$ is periodic of period $p_u$, then $(X,S)$ is periodic of period at most $\prod_{u \in \cL_{2r+1}(X)} p_u$.

Consider $Y = \phi_u(X)$ such that $(Y,S)$ is aperiodic.
By the assumption on $r$, the word $0^{3(K+1)}$ belongs to $\cL(Y)$.
If $Y$ is generated by a primitive substitution $\tau$, then $(\tau^n(0))^{3(K+1)}$ belongs to $\cL(Y)$ for all $n \in \mathbb{N}$.
As $\tau$ is primitive, we have $|\tau^N(0)| \geq r$ for some $N$.
From Theorem~\ref{theo:encad}, all words of $\cL_{|\tau^N(0)|+2r}(X)$ occur in all words of $\cL_{(K+1)(|\tau^N(0)|+2r)}(X)$.
Thus all words of $\cL_{|\tau^N(0)|}(Y)$ occur in all words of $\cL_{(K+1)(|\tau^N(0)|+2r)-2r}(Y)$.
In particular, all words of $\cL_{|\tau^N(0)|}(Y)$ occur in $(\tau^n(0))^{3(K+1)}$.
As there are at most $|\tau^N(0)|$ words of length $|\tau^N(0)|$ in $(\tau^N(0))^{3(K+1)}$, Morse and Hedlund's Theorem implies that $(Y,S)$ is periodic, which is a contradiction.
\end{proof}

\begin{exem}
Let $(X,S)$ be the Thue-Morse subshift that is linearly recurrent for the constant 16 (see Example~\ref{ex:constant of LR}).
Considering the word $u = b\nu^4(a)ba \in \cL(X)$, we can check that the word $$b \nu^4(abaa)\nu^3(b)\nu^2(a)\nu(b)a$$ is a return word to $u$ in $X$ and has length 80 and that the sliding block code $\phi_u:\{a,b\}^{19} \to \{0,1\}$ defined by 
\[
	\phi_u(v) = 1 \Leftrightarrow v=u
\]
is such that $(\phi_u(X),S)$ is aperiodic.
Indeed, it could be checked by hand or using Walnut.
Since $0^{79}$ belongs to $\cL(\phi_u(X))$, $(\phi_u(X),S)$ cannot be generated by a primitive substitution (see the proof above).
\end{exem}


\section{Bounds for primitive substitutive sequences}\label{section:bounds for substitutions}\label{section:boundsprim}

In the previous section, we have shown that to prove Theorem~\ref{theo:cordecidfactor}, Corollary~\ref{cor:cordecidfactor}, Theorem~\ref{theo:main} and Theorem~\ref{theo:main2}, we can restrict ourselves to the case of subshifts generated by primitive and proper substitutions (see Remark~\ref{rem:factor of radius K+r}).
In this section, we consider primitive substitution subshifts $(X_\sigma,S)$ and $(X_\tau,S)$ and develop some tools that will help us to bound the radius of factor maps from $(X_\sigma,S)$ onto $(X_\tau,S)$ (whenever such maps exist).
In particular, we give some computable constants about the substitutions $\sigma$ and $\tau$ (sometimes assuming that there is a factor map from $(X_\sigma,S)$ onto $(X_\tau,S)$) and we consider a variation of the notion of substitution on the words of length $n$~\cite{Queffelec:2010}.

Observe that in this section, we do not assume that $\sigma$ and $\tau$ are proper, the results being true without this hypothesis.

\subsection{Recognizability}
\label{subsec:mosse}

Let $\sigma : A^* \to A^*$ be a primitive substitution.
Taking a power of $\sigma $ if needed, we can suppose $\sigma$ has an admissible fixed point $x$ in $A^\mathbb{Z}$.
Let $E (x , \sigma ) = \{ e_n \mid n\in \mathbb{Z} \}$ be the subset of integers defined by:
$$
e_n = 
\left\{
\begin{array}{ll}
-|\sigma (x[n,-1])|, 	& \hbox{ if } n<0; \\
0, 						& \hbox{ if } n=0; \\
|\sigma (x[0,n-1])|, 	& \hbox{ if } n>0.
\end{array}
\right.
$$
The integers $e_n$ are usually called the {\em cutting bars} of the substitution. 
The following result is fundamental for substitutive subshifts.
Observe that while it was initially written for one-sided sequences, the same proof holds for two-sided ones (see for instance~\cite{Durand&Leroy:2017,Durand&Perrin:2022}).

\begin{theo}
\label{theo:mosse}
Let $\sigma :A^* \to A^*$ be an aperiodic primitive substitution and $x \in A^{\mathbb{Z}}$ be an admissible fixed point of $\sigma$.
\begin{enumerate}
\item
\cite{Mosse:1992}
There exists $M>0$ such that 
$$
(n\in E (x, \sigma ), m\in \mathbb{Z}, x[n-M,n+M] =  x[m-M,m+M]) \Longrightarrow m \in E (x, \sigma ) .  
$$
\item
\cite{Mosse:1996}
\label{th:def:L}
There exists $L\geq M$ such that 
$$
(i,j \in \mathbb{Z}, x[e_i-L,e_i+L] =  x[e_j-L,e_j+L]) \Longrightarrow x_i =x_j .  
$$
\end{enumerate}
In particular, the constants $M$ and $L$ do not depend on the admissible fixed point $x$.
\end{theo}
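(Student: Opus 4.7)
The plan is to establish both parts by compactness-and-contradiction arguments, exploiting aperiodicity to rule out the rigid configurations that would otherwise force periodicity.

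For part~1, argue by contradiction: if no such $M$ existed, then for each $k \in \N$ we would find $n_k \in E(x,\sigma)$ and $m_k \notin E(x,\sigma)$ with $x[n_k-k,n_k+k] = x[m_k-k,m_k+k]$. Setting $y^{(k)} = S^{n_k} x$ and $z^{(k)} = S^{m_k} x$, compactness of $X_\sigma$ yields subsequential limits $y, z \in X_\sigma$ with $y = z$. The origin of $y$ is a \emph{cutting bar position}, meaning $y = \sigma(x')$ for some $x' \in X_\sigma$ with $y_0$ the first letter of $\sigma(x'_0)$; the origin of $z$ is not, so its desubstitution shifts the block boundaries by some $r \in \{1,\dots,|\sigma|-1\}$ (passing to a further subsequence to stabilize $r$). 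We thus produce a single point $y = z$ admitting two genuinely distinct desubstitutions.

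The heart of part~1 is then to show that two distinct desubstitutions of a single point force periodicity, contradicting aperiodicity of $x$. Concretely, from $y = \sigma(x') = S^r \sigma(x'')$ with $0 < r < |\sigma|$, comparing the two sequences of cutting bars along $y$ yields combinatorial relations between the block lengths $|\sigma(x'_i)|$ and $|\sigma(x''_j)|$. By minimality of $(X_\sigma,S)$ every point of $X_\sigma$ inherits two distinct desubstitutions, and iterating $\sigma$ on both amplifies the structure to arbitrarily large scales while keeping the mismatch bounded in terms of $|\sigma|$; the resulting rigidity eventually produces arbitrarily long periodic patterns in $x$, contradicting aperiodicity. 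For part~2, take $M$ from part~1 and choose $L \geq M + |\sigma|$. If $x[e_i-L,e_i+L] = x[e_j-L,e_j+L]$, part~1 identifies the cutting bars inside both windows from the window content alone, since each cutting bar is detected by its radius-$M$ subwindow. The block $\sigma(x_i)$ lies between the cutting bar at $e_i$ and the next one, has length at most $|\sigma| \leq L - M$, and is therefore entirely contained in the window; similarly for $\sigma(x_j)$. Hence $\sigma(x_i) = \sigma(x_j)$, and after replacing $\sigma$ by a suitable power so that $\sigma$ is injective on letters (which can be arranged for aperiodic primitive substitutions), we deduce $x_i = x_j$.

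The main obstacle will be the rigidity step in part~1: converting ``two distinct desubstitutions of a single point'' into ``periodicity of $x$''. Controlling how the mismatch offset $r$ propagates under iterated application of $\sigma$, and making sure it stays bounded enough that primitivity can be invoked to extract arbitrarily long periodic patterns, is the delicate combinatorial core of Mossé's theorem and is what distinguishes the argument from a soft compactness proof.
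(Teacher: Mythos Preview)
The paper does not prove this theorem: it is quoted from Moss\'e's original papers \cite{Mosse:1992,Mosse:1996} and used as a black box, so there is no ``paper's own proof'' to compare against. I will therefore comment directly on your sketch.

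Your compactness-and-contradiction setup for part~1 is the standard opening, and you are right to flag the rigidity step (two distinct desubstitutions $\Rightarrow$ periodicity) as the real content; as written, that step is only a description of a hoped-for mechanism rather than an argument, so the sketch is not yet a proof of part~1. One further point: you assert that by minimality \emph{every} point of $X_\sigma$ inherits two distinct desubstitutions once a single one does. This is not automatic, since the set of points with two desubstitutions is not obviously closed or $S$-invariant in the way you would need; Moss\'e's actual argument works locally with long factors rather than globally with full orbits.

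Part~2 contains a genuine error. You write that one may ``replace $\sigma$ by a suitable power so that $\sigma$ is injective on letters (which can be arranged for aperiodic primitive substitutions)''. This is false: if $\sigma(a)=\sigma(b)$ for some $a\neq b$, then $\sigma^n(a)=\sigma^n(b)$ for every $n\ge 1$, so no power of $\sigma$ is injective on letters. The paper itself acknowledges the non-injective case explicitly (see the discussion of injectivation in Section~\ref{section:injectivisation}), and the bound on $L_\sigma$ in Theorem~\ref{theo:recognizability constant} is visibly worse when $\sigma$ is not one-to-one on letters. Consequently, knowing only the block $\sigma(x_i)$ between two consecutive cutting bars is \emph{not} enough to recover $x_i$; one must use a larger window so that the neighbouring blocks $\sigma(x_{i-1}),\sigma(x_{i+1}),\ldots$ (and, in general, an iterated desubstitution) disambiguate the preimage letter. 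Moss\'e's proof of part~2 handles this by a further compactness/contradiction argument layered on top of part~1, not by reducing to injectivity on letters. Your deduction ``$\sigma(x_i)=\sigma(x_j)\Rightarrow x_i=x_j$'' therefore fails, and with it your choice $L=M+|\sigma|$.
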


The smallest $L$ satisfying \eqref{th:def:L} in the previous theorem is called the {\em recognizability constant} for the substitution $\sigma$ and is denoted $L_\sigma $.

Observe that if $u \in \cL(\sigma)$ has length at least $2L+1$ and if $i,j \in \mathbb{Z}$, $i<j$, are two occurrences of $u$ in $x$, then the first part of Mossé's Theorem ensures that 
\[
	[i+L,i+|u|-L) \, \cap \, E(x,\sigma) = ([j+L,j+|u|-L) \, \cap \, E(x,\sigma)) - (j-i)
\]
and the second part states that the central part of $u$ has the same preimage under $\sigma$ at positions $i$ and $j$.
We call the set $([i+L,i+|u|-L) \, \cap \, E(x,\sigma)) -i$ the set of {\em relative cutting bars} of $u$.

\begin{exem}
Let us consider \[
	\tau : 
	\begin{cases}
		a \mapsto aab \\
		b \mapsto ab
	\end{cases}.
\]
It is classical to show that it also defines the Fibonacci subshift.
Let $x$ be the unique admissible fixed point of $\tau$. 
We have $E(x,\tau) = \{i \in \mathbb{Z} \mid x_{i-1}x_{i} = ba\}$ and so $M = 1$. 
The words of length 3 with prefix $ba$ are  $baa$ and $bab$. 
Observe that if $x_{[e_i-1,e_i+1]} = baa$, then $x_i= a$, and, if $x_{[e_i-1,e_i+1]} = bab$, then $x_i = b$.
Hence the constant of recognizability $L_\tau $ of $\tau$ is $1$.

For the Thue-Morse substitution $\nu$, the constant $M$ has to be greater than 1, since the word $aba$ both occurs in the language as $a \nu(b)$ and $\nu(a)b$.
On the contrary, any word $u$ in $\cL_5(\nu)$ contains an occurrence of the word $aa$ or $bb$ and these two words uniquely determine the relative cutting bars in $u$.
Then for all $i$, we have $x_{e_i} = x_i$, which shows that the constant of recognizability of $\nu$ is $L_\nu = 2$.
\end{exem}

The two following lemmas are consequences of Moss\'e's theorem (Theorem~\ref{theo:mosse}).

\begin{cor}[{\cite[Corollary 5.11]{Queffelec:2010}}]
\label{cor:mosse5.11}
Let $\sigma $ be an aperiodic primitive  substitution.
For all $n \geq 1$,
\begin{enumerate}
\item
$\sigma^n$ is a homeomorphism from $X_\sigma$ onto the clopen $\sigma^n(X_\sigma)$;
\item
for every $x \in X_\sigma$, there is a unique $y \in X_\sigma$ and a unique $k$, $0 \leq k < |\sigma^n(y_0)|$, such that $x = S^k(\sigma^n(y))$.
\end{enumerate}
\end{cor}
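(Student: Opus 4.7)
The plan is to prove part (2) first and derive part (1) from it. Since $\sigma^n$ is itself an aperiodic primitive substitution with an admissible fixed point in $X_\sigma$ (after possibly passing to a further power, which does not change $X_\sigma$), it is enough to treat the case $n=1$. The sole tool will be Mossé's theorem (Theorem~\ref{theo:mosse}); the essential task is to transport its conclusions, which are stated for an admissible fixed point $z$, to arbitrary points of $X_\sigma$ via the minimality of $(X_\sigma,S)$.

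For the existence half of (2), I would fix an admissible fixed point $z \in X_\sigma$ and approximate a given $x \in X_\sigma$ by shifts $S^{m_k}z \to x$ (minimality). For each $k$, decompose $m_k = e_{i_k} + r_k$ with $e_{i_k}\in E(z,\sigma)$ the largest cutting bar not exceeding $m_k$ and $0 \le r_k < |\sigma(z_{i_k})|$. Since the $r_k$ are uniformly bounded by $|\sigma|$ and the $S^{i_k}z$ live in the compact $X_\sigma$, I would pass to a subsequence so that $r_k$ stabilises at some value $r$ and $S^{i_k}z \to y \in X_\sigma$. Continuity of $\sigma$ then gives $x = S^r\sigma(y)$, and since $z_{i_k} = y_0$ eventually, the bound $r < |\sigma(y_0)|$ survives in the limit.

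For the uniqueness half, suppose $x = S^r\sigma(y) = S^{r'}\sigma(y')$ with the prescribed bounds on $r,r'$. The idea is that the restriction of $x$ to a window of radius $\max(M,L)+|\sigma|$ around $0$ forces $r=r'$: approximating $x$ by $S^{m_k}z$ and invoking part (1) of Moss\'e's theorem, the positions of the \emph{virtual cutting bars} of $x$ in that window are determined by $x$ alone and therefore coincide in both decompositions; in particular $-r = -r'$. Thus $\sigma(y)=\sigma(y')$, and applying part (2) of Moss\'e's theorem coordinate by coordinate (again through the approximation) gives $y_j=y'_j$ for every $j$, hence $y=y'$.

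Part (1) is then a short consequence. Injectivity of $\sigma$ on $X_\sigma$ is the special case $r=r'=0$ of the uniqueness just established, and a continuous bijection from the compact Hausdorff space $X_\sigma$ onto its image is automatically a homeomorphism, with closed image. For openness, the decomposition in (2) identifies $\sigma(X_\sigma)$ with the set of $x \in X_\sigma$ for which $r=0$, and this condition is detectable from the window $x_{[-M,M]}$ by the argument above, so $\sigma(X_\sigma)$ is a finite union of cylinders and therefore clopen. The main obstacle will be making precise the notion of \emph{virtual cutting bars} of an arbitrary $x\in X_\sigma$ and showing they are locally detectable from finite windows of $x$: this is exactly where Moss\'e's first estimate must be transferred from the fixed point $z$ to the whole orbit closure, and it is the only non-formal step in the argument.
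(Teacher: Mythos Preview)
The paper does not prove this corollary; it is quoted from Queffelec's monograph with the remark that it, together with the two corollaries following it, is a consequence of Moss\'e's theorem. Your outline is correct and is precisely the standard derivation: transfer Moss\'e's recognizability from the admissible fixed point $z$ to all of $X_\sigma$ by density, using compactness to extract the pair $(y,r)$ for existence and the local detectability of cutting bars for uniqueness, and then read off part~(1) from the resulting clopen partition $\{S^k\sigma([a]) : a\in A,\ 0\le k<|\sigma(a)|\}$ of $X_\sigma$. The one place where a reader might want an extra line is the reduction to $n=1$: once $\sigma$ is assumed to have an admissible two-sided fixed point $z$ (the convention the paper adopts just before Theorem~\ref{theo:mosse}), $z$ is also an admissible fixed point of every $\sigma^n$, so the $n=1$ argument applies verbatim to $\sigma^n$ and no separate induction is needed.
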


\begin{cor}
\label{cor:mosse}
Let $\sigma $ be an aperiodic primitive substitution and $x,y$ be two elements in $X_\sigma$.
Then,
\begin{enumerate}
\item
If there exists $n$ such that $\sigma (x) = S^n \sigma (y)$ then there exists $m$ such that $x = S^m (y)$. 
Moreover, if $n \geq 0$ then $0\leq m\leq n/\langle \sigma \rangle$;
\item
If  $\sigma^k (x)$ and $ \sigma^k (y)$ coincide on the indices $[-m , n]$, with $m,n \geq L_{\sigma^k}$, then $x$ and $y$ coincide on the indices $\left[\left\lceil \frac{-m+L_{\sigma^k}}{|\sigma^k |} \right\rceil,\left\lfloor \frac{n -L_{\sigma^k}}{|\sigma^k |} \right\rfloor\right]$.
\item
If $\sigma$ has constant length and is one-to-one on the letters, and if $\sigma^k (x)$ and $ \sigma^k (y)$ coincide on the indices $[-m , n]$, then $x$ and $y$ coincide on the indices $\left[\left\lceil \frac{-m}{|\sigma^k |} \right\rceil,\left\lfloor \frac{n}{|\sigma^k |} \right\rfloor\right]$.
\end{enumerate}
\end{cor}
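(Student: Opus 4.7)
The plan is to derive all three items from Mossé's recognizability theorem (Theorem~\ref{theo:mosse}) and the unique desubstitution property recorded in Corollary~\ref{cor:mosse5.11}; the common thread is that every point of $X_\sigma$ (resp.\ $X_{\sigma^k}$) has a unique factorization as an iterated $\sigma$-image, so matching portions of two such images transfer to matching portions of their preimages.

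For item (1), I would apply Corollary~\ref{cor:mosse5.11}(2) to the equality $\sigma(x)=S^n\sigma(y)$. Let $m\in\Z$ be the unique index with $e_m(y)\le n<e_{m+1}(y)$, where $e_\cdot(y)$ are the cutting bars of $\sigma(y)$. Writing $n=e_m(y)+r$ with $0\le r<|\sigma(y_m)|$, we obtain $S^n\sigma(y)=S^r\sigma(S^m y)$. But we also have $\sigma(x)=S^0\sigma(x)$, and Corollary~\ref{cor:mosse5.11}(2) asserts that the pair $(z,k)$ realizing $\sigma(x)=S^k\sigma(z)$ with $0\le k<|\sigma(z_0)|$ is unique. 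Hence $r=0$ and $x=S^m y$, so in particular $n=e_m(y)$ is itself a cutting bar of $\sigma(y)$. When $n\ge 0$, we have $m\ge 0$ and $n=|\sigma(y_{[0,m-1]})|\ge m\langle\sigma\rangle$, yielding the bound $m\le n/\langle\sigma\rangle$.

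For item (2), I would run the two halves of Theorem~\ref{theo:mosse} at the level of $\sigma^k$. Fix $i\in\Z$ and let $p=p_i$ be the start position of the block $\sigma^k(x_i)$ inside $\sigma^k(x)$; one has $p\in[i\langle\sigma^k\rangle,i|\sigma^k|]$ for $i\ge 0$ and the symmetric estimate for $i<0$, so in all cases $|p|\le|i|\cdot|\sigma^k|$. Whenever $[p-L_{\sigma^k},p+L_{\sigma^k}]\subset[-m,n]$, the hypothesis gives $\sigma^k(x)[p-L_{\sigma^k},p+L_{\sigma^k}]=\sigma^k(y)[p-L_{\sigma^k},p+L_{\sigma^k}]$; Theorem~\ref{theo:mosse}(1) then forces $p$ to be a cutting bar of $\sigma^k(y)$ as well, and Theorem~\ref{theo:mosse}(2) forces the corresponding letters to coincide, i.e.\ $x_i=y_i$. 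Requiring the window to fit with the uniform upper bound $|p|\le|i|\cdot|\sigma^k|$ gives the sufficient condition $\lceil(-m+L_{\sigma^k})/|\sigma^k|\rceil\le i\le\lfloor(n-L_{\sigma^k})/|\sigma^k|\rfloor$; the hypothesis $m,n\ge L_{\sigma^k}$ ensures the index $i=0$ is always in this range, which handles the base case cleanly. Item (3) is then a specialization: constant length locks $p_i$ to the exact value $i|\sigma^k|$, and injectivity of $\sigma^k$ on letters lets one read $x_i$ directly off the block $\sigma^k(x)_{[i|\sigma^k|,(i+1)|\sigma^k|)}$ without any recognizability margin, which gives the stated tighter bounds.

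The main delicate point I anticipate is the bookkeeping for item (2) in the non-constant length case: the cutting-bar position of $x_i$ inside $\sigma^k(x)$ genuinely depends on $x$, not only on $i$, so the bound one writes must be uniform over $X_\sigma$. Using the coarse estimate $|p_i|\le|i|\cdot|\sigma^k|$ loses tightness but produces an $x$-independent formula, which is what the statement requires; I would carefully treat the two signs of $i$ separately to confirm that the ceiling and floor land on the right side of the inequalities, and check that the corner case $p_i=0$ is covered by the hypothesis $m,n\ge L_{\sigma^k}$.
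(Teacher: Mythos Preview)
The paper does not give a proof of this corollary; it merely states that it is a consequence of Moss\'e's theorem. Your approach via Corollary~\ref{cor:mosse5.11} for item~(1) and via the cutting bars $p_i$ for items~(2)--(3) is exactly the intended one, and the overall strategy is correct.

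Two points to tighten. First, Theorem~\ref{theo:mosse} is literally stated for an admissible fixed point, whereas you apply it to arbitrary $\sigma^k(x),\sigma^k(y)\in X_\sigma$. The passage is standard---recognizability is a local property and $X_\sigma$ is minimal, so every window of length $2L_{\sigma^k}+1$ in $\sigma^k(x)$ already occurs in the fixed point and carries the same relative cutting bars (this is precisely what underlies Corollary~\ref{lem:rec})---but you should say so rather than invoke Theorem~\ref{theo:mosse} directly.

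Second, in item~(2) you pass from ``$p_i$ is a cutting bar of $\sigma^k(y)$ and the corresponding letters coincide'' to ``$x_i=y_i$''. What recognizability gives immediately is $x_i=y_j$ where $q_j=p_i$; you still owe the identification $j=i$. The clean way is to note that recognizability applied in both directions shows that the sets of cutting bars of $\sigma^k(x)$ and of $\sigma^k(y)$ lying in $[-m+L_{\sigma^k},\,n-L_{\sigma^k}]$ coincide, and since the common anchor $p_0=q_0=0$ lies in this range (by the hypothesis $m,n\ge L_{\sigma^k}$), the two increasing enumerations line up, giving $p_i=q_i$ throughout the range. Your remark about the ``base case'' at $i=0$ is exactly this anchor; just make the enumeration-matching (or induction outward from $0$) explicit.
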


\begin{cor}[{\cite[Proposition 5.20]{Queffelec:2010}}]
\label{lem:rec}
Let $\sigma : A^* \to A^*$ be an aperiodic primitive substitution with recognizability constant $L_\sigma$.
Then, $\{ S^i \sigma ([a]) \mid a \in A, 0\leq i < |\sigma(a) |\}$ is a clopen partition of $X_\sigma$.
Moreover, if $x,y \in X_\sigma $ coincide on the indices $[-n , n]$ with $n = L_\sigma + |\sigma |$, then they belong to the same atom of this partition.
\end{cor}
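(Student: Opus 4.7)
The plan is to deduce the two assertions from Corollary~\ref{cor:mosse5.11} (for the partition) and from Moss\'e's recognizability theorem (Theorem~\ref{theo:mosse}) (for the window size).

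For the partition statement, I would argue that each set $S^i\sigma([a])$ is clopen and that the sets in the prescribed family cover $X_\sigma$ disjointly. The cylinder $[a]\subset X_\sigma$ is clopen; by the first item of Corollary~\ref{cor:mosse5.11}, $\sigma$ is a homeomorphism from $X_\sigma$ onto its image, so $\sigma([a])$ is clopen in $X_\sigma$, and composing with the homeomorphism $S^i$ shows $S^i\sigma([a])$ is clopen as well. The second item of Corollary~\ref{cor:mosse5.11} asserts exactly that every $x\in X_\sigma$ decomposes uniquely as $x=S^k\sigma(y)$ with $y\in X_\sigma$ and $0\leq k<|\sigma(y_0)|$, and this rephrases precisely as the partition property of the finite family $\{S^i\sigma([a]) : a\in A,\ 0\leq i<|\sigma(a)|\}$.

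For the bound $n=L_\sigma+|\sigma|$, take $x,y\in X_\sigma$ coinciding on $[-n,n]$ and write their canonical decompositions $x=S^i\sigma(x')$ and $y=S^j\sigma(y')$ with $0\leq i<|\sigma(x'_0)|$ and $0\leq j<|\sigma(y'_0)|$; my goal is to prove $i=j$ and $x'_0=y'_0$. The idea is to read off the cutting bar of $x$ situated at position $-i$ using a small window around it. Since $0\leq i<|\sigma|$, one has $[-i-L_\sigma,-i+L_\sigma]\subset[-|\sigma|-L_\sigma,L_\sigma]\subset[-n,n]$, so $x$ and $y$ coincide on this window. Applying Moss\'e's theorem (in both parts) to this common window, position $-i$ must also be a cutting bar of $y$, and the preimage letter of $y$ at this cutting bar must equal $x'_0$. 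Consequently the block in the decomposition of $y$ starting at $-i$ has length $|\sigma(x'_0)|$ and covers position $0$, which by uniqueness of the decomposition of Corollary~\ref{cor:mosse5.11} forces $j=i$ and $y'_0=x'_0$; hence $x$ and $y$ lie in the same atom $S^i\sigma([x'_0])$.

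The main obstacle is turning Theorem~\ref{theo:mosse}, stated for a single admissible fixed point of $\sigma$, into a recognizability principle comparing two arbitrary elements of $X_\sigma$. This is standard once one observes that, by minimality of $(X_\sigma,S)$, every window appearing in any $z\in X_\sigma$ already appears in the fixed point, so that the cutting bar structure of $z$ provided by Corollary~\ref{cor:mosse5.11} is compatible with that of the fixed point; this is the content of the ``relative cutting bars'' remark following Theorem~\ref{theo:mosse}. Once this uniform form of recognizability is granted, both claims follow in a few lines as above.
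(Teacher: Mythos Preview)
The paper does not supply a proof of this corollary; it is stated with a citation to Queff\'elec. Your argument is correct and is in fact the standard way one deduces the result from Corollary~\ref{cor:mosse5.11} together with Moss\'e's theorem. The only delicate point is the one you already flag: Theorem~\ref{theo:mosse} is literally stated for a single admissible fixed point, while you apply it to compare two arbitrary points of $X_\sigma$. Your justification via the ``relative cutting bars'' observation (and minimality) is exactly the right bridge: a window of length $2L_\sigma+1$ determines whether its center is a cutting bar and, if so, the preimage letter, uniformly over $X_\sigma$. With this uniform recognizability in hand, your computation $[-i-L_\sigma,-i+L_\sigma]\subset[-n,n]$ and the subsequent identification $i=j$, $x'_0=y'_0$ go through as written.
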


Later, we will not only need the existence of the recognizability constant but also a bound on its value depending on the data of the primitive substitution.

\begin{theo}[\cite{Durand&Leroy:2017}]
\label{theo:recognizability constant}
Let $\sigma :A^* \to A^*$ be an aperiodic primitive substitution that has an admissible fixed point $x \in A^\Z$. Then 
\[
L_\sigma \leq 	2 |\sigma|^{6(\card A)^2 + 6(\card A) |\sigma|^{28(\card A)^2}} + |\sigma|^{\card A}.
\]
Moreover, if $\sigma $ is one-to-one on letters then,
\[
L_\sigma \leq 2 |\sigma|^{6(\card A)^2 + 6 |\sigma|^{28(\card A)^2}} + |\sigma|. 
\]
\end{theo}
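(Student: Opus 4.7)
My plan is to establish the injective-on-letters bound first and to deduce the general bound from it. The overall strategy is a contrapositive: if the recognizability constant were very large, the fixed point would contain two occurrences of the same very long word whose internal cutting-bar structures disagree; iterating the substitution propagates this discrepancy and, combined with the linear-recurrence bound from Proposition~\ref{prop:sublinrec}, produces a pseudo-period that forces periodicity of $x$, contradicting aperiodicity.

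For the reduction to the injective-on-letters case, I would quotient $A$ by the equivalence $a \sim b$ defined by $\sigma^N(a) = \sigma^N(b)$ for some $N$. Since $|\sigma^n|\to\infty$, any two non-equivalent letters are eventually separated by iterates of $\sigma$, and the pigeonhole principle applied to $A \times A$ shows that $N \leq \card A$ already suffices to separate all such pairs. Passing to the quotient alphabet yields an injective-on-letters morphism $\bar\sigma$ with $|\bar\sigma| \leq |\sigma|^{\card A}$. A window realizing recognizability for $\sigma$ is then obtainable from a slightly larger window realizing it for $\bar\sigma$, and substituting $|\sigma|^{\card A}$ for $|\bar\sigma|$ in the injective-case bound inserts the extra factor of $\card A$ inside the doubly-exponential exponent and yields the extra additive term $|\sigma|^{\card A}$. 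This accounts exactly for the gap between the two displayed bounds.

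For the injective-on-letters case, the key step is to first bound Mossé's constant $M$ from Theorem~\ref{theo:mosse}. Suppose the window of radius $M$ around a position $i \in E(x,\sigma)$ coincides with that around a position $j \notin E(x,\sigma)$; the offset $d$ between the two local cutting-bar structures admits a finite description in terms of how $\sigma$-blocks align on each side. Using $p_{X_\sigma}(n) \leq K_\sigma n$ together with $K_\sigma \leq |\sigma|^{4(\card A)^2}$, I would count admissible alignment patterns and apply pigeonhole to the iterates $\sigma^k(u)$ of the repeated factor to exhibit a sub-word of $x$ of very controlled length admitting $d$ as a period, contradicting aperiodicity unless $M$ is bounded by something of order $|\sigma|^{6|\sigma|^{28(\card A)^2}}$. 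Once $M$ is bounded, the passage from $M$ to $L_\sigma$ is a finite identification problem: the preimage letter at a cutting bar is determined by the sequence of relative cutting bars inside a window whose size is controlled by $\card A$ consecutive cutting bars, giving the multiplicative correction $|\sigma|^{6(\card A)^2}$ and the additive $|\sigma|$. I expect the main obstacle to be the quantitative bookkeeping throughout this propagation: in particular, the inner exponent $28(\card A)^2$ must \emph{additively} absorb the $4(\card A)^2$ from linear recurrence, the cost of enumerating alignment configurations of cutting bars, and several further $\card A$ losses from a nested pigeonhole tower, and keeping those losses from stacking multiplicatively seems to be what forces the delicate polynomial constants displayed in the statement.
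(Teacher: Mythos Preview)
The paper does not contain a proof of this theorem: it is stated with an external citation to \cite{Durand&Leroy:2017} and no argument is given here. There is therefore nothing in the present paper to compare your proposal against; the authors simply import the bound as a black box and use it later (e.g.\ in Proposition~\ref{prop:constantrecsigmak} and in the proof of Theorem~\ref{theo:mainpropersub}).

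That said, your outline is broadly in the spirit of how recognizability bounds are usually obtained (reduce to the injective-on-letters case by quotienting, bound Moss\'e's constant $M$ via a pigeonhole/periodicity argument exploiting linear recurrence, then pass from $M$ to $L_\sigma$). But as written it is only a plan, not a proof: the crucial quantitative step---showing that a failure of recognizability at scale $M$ really forces a period of $x$, and that the relevant counting yields precisely the exponent $6(\card A)^2 + 6|\sigma|^{28(\card A)^2}$---is asserted rather than carried out. If you want an actual proof you will have to consult \cite{Durand&Leroy:2017}, where the constants are tracked explicitly.
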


In~\cite{Klouda&Medkova:2016} better bounds are given for the case of constant length substitutions on a two letter alphabet but in terms of ``synchronization delay'' and ``circularity''. 
As the recognizability constant is bounded by the synchronization delay (see comments in~\cite{Durand&Leroy:2017}), this gives the following theorem that greatly improves our bounds.  

\begin{theo}[\cite{Klouda&Medkova:2016}]
If $\card A = 2$ and $\sigma : A^* \to A^*$ is an aperiodic primitive substitution of constant length $k$, then 
\begin{enumerate}
\item
$L_\sigma \leq 8$, if $k = 2$;
\item
$L_\sigma \leq k^2+3k-4$, if $k$ is an odd prime number;
\item
$L_\sigma \leq k^2 \left( \frac{k}{d}-1\right) +5k-4$, otherwise, where $d$ is the least divisor of $k$.
\end{enumerate}
\end{theo}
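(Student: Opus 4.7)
The plan is to bound $L_\sigma$ through the \emph{synchronization delay} $\delta(\sigma)$, the smallest $s$ such that every word of length $s$ in $\cL(\sigma)$ admits only one compatible cutting-bar pattern, since the comment preceding the statement ensures $L_\sigma \leq \delta(\sigma)$. It therefore suffices to show that every non-synchronizing word has length below the claimed bounds.

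Suppose $w \in \cL(\sigma)$ is non-synchronizing: there exist $u, u' \in \cL(\sigma)$ and offsets $0 \leq i < j < k$ such that $w$ coincides with both $\sigma(u)[i, i + |w| - 1]$ and $\sigma(u')[j, j + |w| - 1]$, each carrying the natural block-of-$k$ structure inherited from $\sigma$. Aligning the two parsings of $w$ yields a word $v \in \cL(\sigma)$ that admits simultaneously two periods, namely the block length $k$ and the offset $j - i$. By Fine and Wilf's theorem, as soon as $|v| \geq k + (j - i) - \gcd(k, j-i)$, the word $v$ has period $d' = \gcd(k, j - i)$. Hence $\sigma(u)$ must contain a long periodic subword of period $d'$, and the problem reduces to bounding the longest periodic subword of period dividing $k$ that can occur in $\cL(\sigma)$.

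The three cases then reflect the smallest value that $d'$ can take. If $k = 2$, only $j - i = 1$ is possible, forcing $d' = 1$; a short analysis over the binary alphabet (using primitivity of $\sigma$ to exclude constant images) produces the uniform bound $L_\sigma \leq 8$. If $k$ is an odd prime, every nonzero $j - i$ is coprime to $k$, again giving $d' = 1$, and counting how long a monochromatic run can survive in $\cL(\sigma)$ (controlled by $k$ via $\sigma(a)$ and $\sigma(b)$) yields the bound $k^2 + 3k - 4$. For composite $k$, the worst case is $d' = d$, the least prime divisor of $k$; bounding the maximal power of a length-$d$ word occurring in $\cL(\sigma)$ gives $k^2(k/d - 1) + 5k - 4$.

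The main obstacle is the quantitative step passing from the qualitative Fine-Wilf periodicity to the stated polynomial bound: one must combine aperiodicity of $\sigma$ with the constant-length structure to control the maximal exponent $e$ for which a word $z^e$ with $|z| = d$ appears in $\cL(\sigma)$. This is precisely where Klouda and Medkova's circularity analysis enters, and where the dependence on $k$ and $k/d$ originates. Establishing tightness of the bounds, although not needed for our application to Theorem~\ref{theo:recognizability constant}, would require exhibiting extremal substitutions and is an independent combinatorial task.
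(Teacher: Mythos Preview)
The paper does not prove this theorem: it is quoted from \cite{Klouda&Medkova:2016}, the only added justification being the remark preceding the statement that $L_\sigma$ is bounded by the synchronization delay. There is therefore no proof in the paper to compare your attempt against.

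Viewed as a standalone argument, your outline has the right shape but is not a proof. The reduction $L_\sigma \le \delta(\sigma)$ is correctly identified, and organising the case split around $d' = \gcd(k, j-i)$ is sensible. Two things are missing, though. First, the Fine--Wilf step is stated too loosely: from two incompatible $k$-block parsings of $w$ shifted by $j-i$ one does not immediately obtain a word $v \in \cL(\sigma)$ carrying the two periods $k$ and $j-i$; one must argue through the overlap structure of the images $\sigma(a)$ and $\sigma(b)$, and that is already part of the combinatorial work done in the cited paper. Second, and more importantly, every quantitative claim is asserted rather than derived: the constants $8$, $k^2+3k-4$ and $k^2(k/d-1)+5k-4$ appear only as targets, with phrases like ``a short analysis \ldots\ produces'' and ``counting \ldots\ yields'' standing in for the actual computation. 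You then say explicitly that the step producing these numbers ``is precisely where Klouda and Medkov\'a's circularity analysis enters'', which is a citation, not an argument.

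In short, your text is an annotated pointer to \cite{Klouda&Medkova:2016} rather than an independent proof. For the purposes of this paper that is in fact all the authors themselves provide; but it should not be presented as a proof.
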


Later we will need to control the recognizability constant of $\sigma^k$ with respect to its growth.
\begin{prop}[{\cite[Proposition 13]{Durand&Leroy:2017}}]
\label{prop:constantrecsigmak}
If $\sigma:A^* \to A^*$ is an aperiodic primitive substitution that has an admissible fixed point $x \in A^\Z$. 
Then, for all $k>0$, so is $\sigma^k$ and we have
$$
L_{\sigma^k} \leq L_\sigma \sum_{i=0}^{k-1} |\sigma^i| .
$$ 
\end{prop}

\subsection{Block representations of subshifts and substitutions on the blocks of radius $n$}
\label{subsec:sublengthn}

Let $(X,S)$ be a subshift over the alphabet $A$.
For any positive integer $n$, we let $A^{(n)}$ denote the set $A^{(n)}=  \{ (w) \mid w\in  \mathcal{L}_{2n+1} (\sigma )\}$ considered as an alphabet.
The following result is classical~\cite{Queffelec:2010}.

\begin{lem}
\label{lemma:block representation isomorphic}
Let $\phi:A^{2n+1} \to \{ (w) \mid w\in  A^{2n+1}\}$ be the sliding block code defined by 
\[
\phi(a_1 \cdots a_{2n+1}) = (a_1 \cdots a_{2n+1}).
\]
Then $\phi$ defines an isomorphism $f$ from $(X,S)$ to $(X^{(n)},S)$, where $X^{(n)} = f(X)$ is a subshift over the alphabet $A^{(n)}$.
\end{lem}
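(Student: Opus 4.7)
The plan is to verify the four standard properties that make $f$ an isomorphism of topological dynamical systems between subshifts: continuity, commutation with the shift, surjectivity onto $X^{(n)}$, and injectivity. Three of these are immediate from the setup; the only content is injectivity, and even this is trivial once one observes that the middle letter of the block $\phi(a_1\cdots a_{2n+1}) = (a_1\cdots a_{2n+1})$ is recorded inside the block.

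First I would define $f:A^\Z \to (A^{(n)})^\Z$ by $f(x)_k = \phi(x_{k-n}\cdots x_{k+n}) = (x_{k-n}\cdots x_{k+n})$, which is locally constant and therefore continuous by the Curtis--Hedlund--Lyndon theorem (alternatively, a direct $\e/\delta$ argument on the product topology). Shift commutation $f\circ S = S\circ f$ follows by relabeling indices: both sides produce the block $(x_{k+1-n}\cdots x_{k+1+n})$ at coordinate $k$. Surjectivity onto $X^{(n)}$ is by definition, since $X^{(n)}:=f(X)$.

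Next I would show injectivity. Given $y = f(x)\in X^{(n)}$, each coordinate $y_k = (x_{k-n}\cdots x_{k+n})$ is a word of length $2n+1$ whose $(n+1)$-th letter equals $x_k$. Thus defining the coding $\pi:A^{(n)}\to A$ by $\pi((a_1\cdots a_{2n+1})) = a_{n+1}$, the induced sliding block code $\hat\pi: (X^{(n)},S)\to (X,S)$ satisfies $\hat\pi\circ f = \mathrm{id}_X$. In particular $f$ is injective, and $\hat\pi$ is a continuous shift-commuting inverse of $f$ on $X^{(n)}$.

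Finally I would conclude: $f:X\to X^{(n)}$ is a continuous, shift-commuting bijection whose inverse is also continuous, hence a conjugacy of dynamical systems. Since $X$ is compact, one could alternatively argue that $f(X) = X^{(n)}$ is compact (hence closed in $(A^{(n)})^\Z$) and manifestly shift-invariant, so $(X^{(n)},S)$ is a genuine subshift; continuity of $f^{-1}$ then comes for free from compactness. I do not anticipate any real obstacle in this proof; the only thing worth stating carefully is the inversion formula $\pi((a_1\cdots a_{2n+1})) = a_{n+1}$, which makes the whole block representation well-defined and justifies treating $(X,S)$ and $(X^{(n)},S)$ as interchangeable in later arguments.
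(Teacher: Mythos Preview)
Your proof is correct and is the standard argument; note that the paper does not actually prove this lemma but merely cites it as a classical fact from~\cite{Queffelec:2010}, so there is nothing to compare against. The inversion formula you highlight, $\pi((a_1\cdots a_{2n+1})) = a_{n+1}$, is exactly the map the paper later denotes $\pi^{(n)}$ in its discussion of block substitutions.
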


We call $(X^{(n)},S)$ the {\em $n$-block representation} of $(X,S)$.
When the subshift $X$ of the previous result is a substitutive subshift associated with a substitution $\sigma$, then $X^{(n)}$ is also substitutive and associated with the {\em substitution on the words of length $2n+1$} of $\sigma$.
The definition we give below of the well-known notion of substitution on the words of length $n$ (see~\cite{Queffelec:2010} for more details) is slightly different from the usual one as we need to define it for a ``two-sided context''.
 
Let $\sigma : A^* \to A^*$ be a substitution and $n\geq 1$.
Below any time we take $(uv)$ in $A^{(n)}$, we suppose that $u$ belongs to $A^n$ and $v$ belongs to $A^{n+1}$.
We define the map $\pi^{(n)} : (A^{(n)})^* \to A^*$ by, for all $(uv) \in A^{(n)}$, $\pi^{(n)} ((u v))=v_0$, where $v = v_0 v_1 \dots v_n$, $v_i \in A$ for all $i$.
We also define the substitution $\sigma^{(n)} : (A^{(n)})^* \to (A^{(n)})^*$ by, 
for $(uv)\in A^{(n)}$ with $\sigma (u) = a_1 \cdots a_{k-1}$, $\sigma  (v) = a_{k} \dots a_{k+l}$ where the $a_i$'s belong to $A$ and $p= |\sigma (v_0)|$,
$$
\sigma^{(n)} ((uv)) = (a_{k-n} \cdots a_{k+n}) (a_{k-n+1} \cdots a_{k+n+1}) \cdots  (a_{k-n+p-1} \cdots a_{k+p+n-1}) .
$$

In other words, $\sigma^{(n)} ((u v))$ consists in the ordered list of the first $p$ factors of length $2n+1$ of $\sigma (uv)  = a_1 \cdots a_k a_{k+1} \cdots a_{k+l}$ starting from the index  $k-n$.
We say that $\sigma^{(n)}$ the {\em substitution on the blocks of radius $n$}.

\begin{exem}
Let us build the substitution $\gamma^{(5)}$ on the blocks of radius 2 associated with the Chacon morphism $\gamma$.
The words of length 5 in $\cL(\gamma)$ are
\[
	\cL_5(\gamma) = \{aabaa,abaaa,baaab,aaaba,aabab,ababa,babaa,abaab,baaba\}.
\]
We obtain
\[
\gamma^{(5)}:
\begin{cases}
(aabaa), (aabab),(babaa)  	& \mapsto  (babaa)\\
(abaaa),(abaab) 			& \mapsto  (abaab)(baaba)(aabab)(ababa) \\
(baaba),(aaaba) 			& \mapsto  (baaab)(aaaba)(aabab)(ababa) \\
(baaab) 					& \mapsto  (baaab)(aaaba)(aabaa)(abaaa) \\
(ababa) 					& \mapsto  (abaab)(baaba)(aabab)(ababa) 
\end{cases}.
\]
\end{exem}

\begin{prop}
\label{prop:conj-sublengthn}
Let $\sigma$ be a primitive substitution, then for all $n$, $\sigma^{(n)}$ is computable, primitive and such that $X_\sigma^{(n)} = X_{\sigma^{(n)}}$. 
In particular, for all $i$ and $n$, if $x$ is a two-sided admissible fixed point of $\sigma$ and $x^{(n)} = ((x_{[-n+m,n+m]}))_{m\in \mathbb{Z}}$, then $x^{(n)}$ is an admissible  fixed point of $\sigma^{(n)}$ and
\begin{equation}
\label{indice-reco2}
 \{ j \in \mathbb{Z} \mid S^j (x) \in \sigma^i (X_\sigma) \} =  \{ j \in \mathbb{Z} \mid S^j (x^{(n)}) \in (\sigma^{(n)})^i (X_{\sigma^{(n)}})  \} .
\end{equation}
Moreover, for all $n$ and $i$, $(\sigma^{(n)})^i = (\sigma^i)^{(n)}$ and $|\sigma^{(n)i} ((uv))| = |\sigma^i (v_0)|$ for all $uv\in A^{(n)}$ where $v=v_0 v_1 \cdots  v_n$.
In particular, the incidence matrices $M_\sigma$ and $M_{\sigma^{(n)}}$ have the same spectral radius.
\end{prop}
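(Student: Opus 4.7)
The plan is to work directly from the combinatorial definition of $\sigma^{(n)}$, viewing $\sigma^{(n)}((uv))$ as the ordered list of length-$(2n+1)$ windows in $\sigma(uv)$ whose centers are the successive letters of $\sigma(v_0)$. Computability is then immediate: the alphabet $A^{(n)} = \cL_{2n+1}(\sigma)$ is effectively computable from $\sigma$ (e.g.\ by iterating $\sigma$ on a prolongable letter until every word of length $2n+1$ of $\cL(\sigma)$ has appeared), after which the image of each letter under $\sigma^{(n)}$ is given by the explicit formula.

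The main technical step will be to verify that if $x \in A^{\mathbb{Z}}$ is an admissible fixed point of $\sigma$, then $x^{(n)}$ is an admissible fixed point of $\sigma^{(n)}$, and that $|\sigma^{(n)}((uv))| = |\sigma(v_0)|$. When $(uv) = x^{(n)}_i$, so that $uv = x_{[i-n,i+n]}$ and $v_0 = x_i$, the word $\sigma(uv)$ is the factor of $\sigma(x) = x$ lying between the cutting bars $e_{i-n}$ and $e_{i+n+1}$, and the index denoted $k$ in the definition is exactly $e_i$. Hence the windows produced by $\sigma^{(n)}((uv))$ are precisely $x^{(n)}_{e_i}, x^{(n)}_{e_i+1}, \ldots, x^{(n)}_{e_i+|\sigma(x_i)|-1}$, and concatenating over $i \in \mathbb{Z}$ reconstructs $x^{(n)}$ exactly, giving $\sigma^{(n)}(x^{(n)}) = x^{(n)}$ and showing that the cutting bars of $x^{(n)}$ at level $1$ coincide with those of $x$. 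The identity $(\sigma^{(n)})^i = (\sigma^i)^{(n)}$ and the length formula $|\sigma^{(n),i}((uv))| = |\sigma^i(v_0)|$ will then be proved by induction on $i$: applying $\sigma^{(n)}$ to a window of $\sigma^{(n),i-1}((uv))$ centered at a letter $a$ produces the windows of $\sigma^i(uv)$ centered at the letters of $\sigma(a)$, and as $a$ runs over the letters of $\sigma^{i-1}(v_0)$ these images exhaust $\sigma^i(v_0)$. The cutting bar identity in the statement then follows from Corollary~\ref{cor:mosse5.11} since $|\sigma^{(n),i}(x^{(n)}_j)| = |\sigma^i(x_j)|$ forces the two sets $E(x,\sigma^i)$ and $E(x^{(n)},(\sigma^{(n)})^i)$ to agree.

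It remains to establish primitivity and the equality $X_\sigma^{(n)} = X_{\sigma^{(n)}}$. By Lemma~\ref{lemma:block representation isomorphic}, $X_\sigma^{(n)}$ is the minimal subshift $\Omega(x^{(n)})$, and since $x^{(n)}$ is an admissible fixed point of $\sigma^{(n)}$ we have $\Omega(x^{(n)}) \subseteq X_{\sigma^{(n)}}$. To show primitivity of $\sigma^{(n)}$, I will use primitivity of $\sigma$: for any blocks $(w), (w') \in A^{(n)}$, for $k$ large enough $\sigma^k(w_0)$ contains $w'$ as an interior factor, which corresponds to a window centered at a letter of $\sigma^k(w_0)$ within $\sigma^k(w)$, and hence to an occurrence of $(w')$ in $(\sigma^{(n)})^k((w))$. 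Primitivity of $\sigma^{(n)}$ then makes $X_{\sigma^{(n)}}$ minimal, giving the reverse inclusion $X_{\sigma^{(n)}} = \Omega(x^{(n)}) = X_\sigma^{(n)}$. The equality of spectral radii is immediate from Perron--Frobenius: both $\rho(M_\sigma)$ and $\rho(M_{\sigma^{(n)}})$ equal the common exponential growth rate, witnessed by the identity $|\sigma^{(n),i}((uv))| = |\sigma^i(v_0)|$. The main obstacle is the careful index bookkeeping needed to verify $\sigma^{(n)}(x^{(n)}) = x^{(n)}$ and its inductive analogue; once these are in place, all other assertions follow essentially formally.
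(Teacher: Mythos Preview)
Your proposal is correct and follows essentially the same approach as the paper: both verify the combinatorial compatibility of $\sigma^{(n)}$ with the block coding to get $\sigma^{(n)}(x^{(n)})=x^{(n)}$ and the length identity, prove $(\sigma^{(n)})^i=(\sigma^i)^{(n)}$ by induction, deduce primitivity from that of $\sigma$, and obtain $X_\sigma^{(n)}=X_{\sigma^{(n)}}$ by minimality. The only difference is cosmetic ordering---the paper establishes the power identity first and reads off the fixed-point property, whereas you do the fixed-point computation directly before the induction---and a minor notational slip: when you write ``$\sigma^k(w_0)$ contains $w'$'' you mean the image of the \emph{center} letter of $w$, which in the paper's $(uv)$ notation is $v_0$, not the first letter.
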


\begin{proof}
First, let us prove by induction that for every $i \geq 1$, 
\begin{equation} \label{eq:sigma^(n)^i=sigma^i^(n)}
(\sigma^{(n)})^i = (\sigma^i)^{(n)}.
\end{equation}
The base case is trivial so let us assume that the result holds for some $i \geq 1$. 
For any $(uv) \in A^{(n)}$, we have
\begin{eqnarray*}
	(\sigma^{(n)})^{i+1}((uv))
	&=&
	\sigma^{(n)}(\sigma^{(n)})^{i}((uv))	 \\
	&=& 
	\sigma^{(n)}(\sigma^i)^{(n)}((uv)),		
\end{eqnarray*}
where we have used the induction hypothesis.
Let us write
\begin{eqnarray*}
	\sigma^i (uv) &=& a_1 \cdots a_p, \qquad a_j \in A \text{ for all } j \\ 
	\sigma^{i+1} (uv) &=& b_1 \cdots b_q, \qquad b_j \in A \text{ for all } j.	
\end{eqnarray*}
By definition, we have, with the values $k_{i} = |\sigma^{i}(u)|+1$, $k_{i+1} = |\sigma^{i+1}(u)|+1$, $\ell_{i} = |\sigma^{i}(v_0)|-1$ and $\ell_{i+1} = |\sigma^{i+1}(v_0)|-1$,
\begin{eqnarray*}
	(\sigma^{i})^{(n)} ((uv))
	&=&
	(a_{k_i-n} \cdots a_{k_i+n})
	\cdots
	(a_{k_i-n+\ell_i} \cdots a_{k_i+n+\ell_i}); \\
	(\sigma^{i+1})^{(n)} ((uv))
	&=&
	(b_{k_{i+1}-n} \cdots b_{k_{i+1}+n})
	\cdots
	(b_{k_{i+1}-n+\ell_{i+1}} \cdots b_{k_{i+1}+n+\ell_{i+1}}).
\end{eqnarray*}
For all $m \in \{0,1,\dots,\ell_i\}$, if we set
\begin{eqnarray*}
	r_m &=& |\sigma(a_{1} \cdots a_{k_i-n+m-1})|		\\
	s_m &=& |\sigma(a_{k_i-n+m} \cdots a_{k_i+n+m})|	\\
	t_m &=& |\sigma(a_{k_i-n+m} \cdots a_{k_i+m-1})|+1	\\
	z_m &=& |\sigma(a_{k_i+m})|-1,	
\end{eqnarray*}
we get
\[
	\sigma(a_{k_i-n+m} \cdots a_{k_i+n+m})
	=
	b_{r_m+1} \cdots b_{r_m+s_m}
\]
and thus
\begin{multline*}
	\sigma^{(n)}((a_{k_i-n+m} \cdots a_{k_i+n+m})) 
	= 
	(b_{r_m+t_m-n} \cdots b_{r_m+t_m+n})
	\cdots \\
	(b_{r_m+t_m-n+z_m} \cdots b_{r_m+t_m+n+z_m}).
\end{multline*}
To finish the proof of~\eqref{eq:sigma^(n)^i=sigma^i^(n)}, it suffices to notice that we have
\begin{eqnarray*}
r_0 + t_0 &=& k_{i+1};	\\
r_m + t_m +z_m +1 &=& r_{m+1}+t_{m+1}, \quad \text{for all } m \in \{0,\dots,\ell_{i}-1\}; 	\\
r_{\ell_i}+t_{\ell_i}+z_{\ell_i} &=& k_{i+1}+ \ell_{i+1}.
\end{eqnarray*}

Using~\eqref{eq:sigma^(n)^i=sigma^i^(n)}, for the sake of simplicity we let $\sigma^{(n)i}$ denote the substitution $(\sigma^{(n)})^i$.
We deduce that for all $i$
\begin{eqnarray*}
\label{eq:sublongn}
\sigma^{(n)i} ((x_{[-n,n]})) 
&=& (x_{[-n,n]}) \cdots (x_{[|\sigma^i (x_0 )|-1 -n ,|\sigma^i (x_0 )| -1+n]}) \\
\sigma^{(n)i} ((x_{[-n-1,n-1]})) 
&=& (x_{[-|\sigma^i (x_{-1} )|-n ,-|\sigma^i (x_{-1})| +n]}) \cdots (x_{[-n-1,n-1]})
\end{eqnarray*}
and that $x^{(n)} = ((x_{[-n+i,n+i]}))_{i\in \mathbb{Z}}$ is an admissible  two-sided fixed point of $\sigma^{(n)}$. 
In particular, for all $(uv) \in A^{(n)}$, we have
\[
	|\sigma^{(n)i} ((uv))| = |\sigma^i (v_0)|,
\]
from which we deduce~\eqref{indice-reco2} and the fact that $M_\sigma$ and $M_{\sigma^{(n)}}$ have the same spectral radius.

Let us show that $\sigma^{(n)}$ is primitive.
If $i$ is large enough, every $w \in \mathcal{L}(\sigma)$ of length $2n+1$ is a factor of $\sigma^i (a)$ for every $a\in A$.
Thus, every $(uv) \in A^{(n)}$ occurs in $\sigma^{(n)i} ((u'v'))$ for every $(u'v')\in A^{(n)}$.

For the equality $X_\sigma^{(n)} = X_{\sigma^{(n)}}$, as $x^{(n)}$ is uniformly recurrent, we hence have $X_\sigma^{(n)} = \Omega(x^{(n)})$.
Furthermore,  as $x^{(n)}$ is also an admissible fixed point of the primitive substitution $\sigma^{(n)}$, we have $X_{\sigma^{(n)}} = \Omega(x^{(n)})$.
\end{proof}


\subsection{Computable bounds for linear recurrence of substitutive sequences}

We now develop some lemmas that allow to relate the quantities $|\sigma^n|$ and $\langle \sigma^n \rangle$ to the spectral radius of $M_\sigma^n$.
First recall that if $(X,T)$ is a dynamical system, an {\em invariant measure} for $(X,T)$ is a probability measure $\mu$ on the $\sigma$-algebra of Borel sets such that $\mu(T^{-1}B) = \mu(B)$ for all Borel sets $B$. 
Such a measure is {\em ergodic} if every $T$-invariant Borel set has measure $0$ or $1$.
The system $(X,T)$ is said to be {\em uniquely ergodic} if there exists a unique invariant measure (which is then ergodic).
Let us recall the ergodic theorem for such systems: Let $\mu$ be the unique $T$-invariant ergodic measure of $(X,T)$ and $f:X \to \mathbb{R}$ be a continuous function, then  $((1/n) \sum_{i=0}^{n-1} f \circ T^i )_n $ converges uniformly to $\int_X f {\rm d}\mu$. 
We refer the reader to~\cite{Hasselblatt&Katok:1995} for more details on ergodicity.

\begin{prop}[{\cite[Proposition 13]{Durand:2000}}]
\label{prop:lrmeasure}
If $(X,S)$ is an aperiodic linearly recurrent subshift, then it is uniquely ergodic.
Furthermore, if $\mu$ denotes the unique invariant measure of $(X,S)$ and if $K$ is a constant of linear recurrence for $X$, we have for all non-empty $u \in \mathcal{L}(X)$,
\[
	1/K \leq |u| \mu([u]) \leq K.
\]
\end{prop}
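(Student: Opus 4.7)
The plan is to derive both the unique ergodicity and the quantitative bound on $|u|\mu([u])$ from a single input, namely the return word length estimates of Theorem~\ref{theo:encad}. Rather than prove unique ergodicity abstractly and then separately compute $\mu([u])$, I would simultaneously bound and pin down the frequencies of cylinders.

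First I would fix $u \in \cL(X)$ and $x \in X$ and count the number $N(u,n,x)$ of occurrences of $u$ in $x_{[0,n)}$. By Theorem~\ref{theo:encad}(\ref{theo:item:1}), every $v \in \cR_{X,u}$ satisfies $|u|/K \leq |v| \leq K|u|$, so successive occurrences of $u$ in $x$ are at distance between $|u|/K$ and $K|u|$. A routine counting argument then yields, for $n$ large enough,
\[
    \frac{n}{K|u|} - O(1) \;\leq\; N(u,n,x) \;\leq\; \frac{Kn}{|u|} + O(1),
\]
with constants uniform in $x$.

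Next I would apply Birkhoff's pointwise ergodic theorem to an arbitrary $S$-invariant Borel probability measure $\mu$ and the indicator function $\mathbf{1}_{[u]}$. Since the Birkhoff averages differ from $N(u,n,x)/n$ only by a boundary correction of order $|u|/n$, their $\mu$-a.e.\ limit $\hat{f}(x)$ lies in $[1/(K|u|),\, K/|u|]$ and satisfies $\int \hat{f}\, d\mu = \mu([u])$. This already gives $1/K \leq |u|\mu([u]) \leq K$ for every invariant $\mu$.

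To upgrade this to unique ergodicity, I would show that $N(u,n,x)/n$ converges as $n \to \infty$ to a limit independent of $x$. Using Theorem~\ref{theo:encad}(\ref{item:theo:encad}), which states that every word of length $n$ occurs in every word of length $(K+1)n$, together with the return word bounds, a standard argument produces $|N(u,n,x) - N(u,n,y)| = o(n)$ uniformly in $x,y \in X$, forcing any limit points of the empirical frequencies to coincide across orbits. The common limit must then equal $\mu([u])$ for every invariant $\mu$ (apply Birkhoff to a $\mu$-generic $x$), so any two invariant measures agree on every cylinder and are therefore equal. I expect this uniform comparison of empirical counts across distinct orbits to be the main delicacy; a fallback is to build a Kakutani--Rokhlin tower over return words to a sufficiently long word and read off both unique ergodicity and the frequency identity directly from the tower structure.
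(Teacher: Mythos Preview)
The paper does not prove this proposition at all: it is quoted verbatim as \cite[Proposition~13]{Durand:2000} and no proof is supplied. There is therefore nothing in the present paper to compare your argument against line by line.

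That said, your approach is the standard one and is essentially what the original reference does. Note also that the paper itself carries out the very computation you outline (Birkhoff averages of $\mathbf{1}_{[u]}$ bounded via return-word lengths) in the proof of Proposition~\ref{prop:maxmin}\eqref{item:bosh}, just in the slightly more general setting of a factor of a linearly recurrent subshift; specializing that argument to $r=0$ recovers exactly the inequality $1/K \le |u|\mu([u]) \le K$. For the unique-ergodicity part, your sketch is correct in spirit: the uniform gap bounds on successive occurrences of $u$ force the Birkhoff averages of $\mathbf{1}_{[u]}$ to converge \emph{uniformly} in $x$, and uniform convergence of Birkhoff averages for a family of functions spanning a dense subspace of $C(X)$ is one of the classical characterizations of unique ergodicity. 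The one place where your write-up is vague is the claim $|N(u,n,x)-N(u,n,y)|=o(n)$ ``by a standard argument''; in practice one does not compare two orbits directly but rather shows, from the return-word bounds alone, that $\limsup_n N(u,n,x)/n - \liminf_n N(u,n,x)/n$ can be made arbitrarily small uniformly in $x$ (or one uses the Kakutani--Rokhlin tower fallback you mention). Either route is fine, but you should make that step explicit rather than invoke it as folklore.
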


Any substitution subshift $(X_\sigma,S)$, where $\sigma $ is primitive, has a unique ergodic measure $\mu$~\cite{Queffelec:2010}.
We denote by $\mu^{(\ell)}$ the vector whose entries are $\mu ([ u] ) $, $u \in \mathcal{L}_{2 \ell+1} (\sigma )$.

As we already said above, our definition of $n$-block substitutions is slightly different from the usual one in~\cite{Queffelec:2010}, nevertheless
the  following result remains true for our definition.

\begin{lem}[{\cite[Corollary 5.4 and Proposition 5.10]{Queffelec:2010}}]
\label{lemma:muk}
Let $\sigma:A^* \to A^*$ be an aperiodic primitive substitution and let $\mu$ be the unique invariant measure of $(X_\sigma,S)$.
The matrix $M_{\sigma^{(\ell)}}$ has the same dominant eigenvalue as $M_{\sigma}$.
Furthermore, the vector $\mu^{(\ell)}$ is a positive eigenvector of $M_{\sigma^{(\ell)}}$ associated with this dominant eigenvalue.
\end{lem}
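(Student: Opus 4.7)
The plan is to obtain both assertions from the block conjugacy $X_\sigma^{(k)} = X_{\sigma^{(k)}}$ already set up in Proposition~\ref{prop:conj-sublengthn}, combined with the standard Perron--Frobenius analysis of primitive substitutions. The first statement requires nothing new: the equality of dominant eigenvalues is already the last sentence of Proposition~\ref{prop:conj-sublengthn}, so I would simply invoke it.

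For the second statement, I would argue in three short steps. First, combine Lemma~\ref{lemma:block representation isomorphic} and Proposition~\ref{prop:conj-sublengthn} to view the $(2k+1)$-block map as a topological conjugacy $f : (X_\sigma,S) \to (X_{\sigma^{(k)}},S)$; by construction, $(f_*\mu)([(w)]) = \mu([w])$ for every $(w) \in A^{(k)}$. Since primitive substitution subshifts are linearly recurrent (Proposition~\ref{prop:sublinrec}) and hence uniquely ergodic (Proposition~\ref{prop:lrmeasure}), the push-forward $f_*\mu$ is the unique invariant measure of $(X_{\sigma^{(k)}},S)$, and its letter-frequency vector is exactly $\mu^{(k)}$. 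Second, I would apply the classical Perron--Frobenius / unique ergodicity argument to the primitive substitution $\sigma^{(k)}$: normalized column sums $M_{\sigma^{(k)}}^n e_{(w')} / \bigl|\sigma^{(k)n}((w'))\bigr|$ converge, by Perron--Frobenius applied to the primitive matrix $M_{\sigma^{(k)}}$, to a positive scalar multiple of the Perron right eigenvector, while by unique ergodicity they also converge entry-by-entry to the frequency vector $\mu^{(k)}$. This simultaneously identifies $\mu^{(k)}$ (up to normalization) as the Perron eigenvector and yields the eigenvalue equation $M_{\sigma^{(k)}} \mu^{(k)} = \rho\, \mu^{(k)}$ with $\rho$ the common dominant eigenvalue. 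Third, positivity of $\mu^{(k)}$ is immediate from minimality of $(X_{\sigma^{(k)}},S)$, since a unique invariant probability on a minimal subshift necessarily assigns positive mass to every non-empty cylinder $[(w)]$.

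I do not expect any substantial obstacle: the argument used in~\cite{Queffelec:2010} for the standard $n$-block substitution transfers verbatim, and the only thing to verify is that the mild modifications of Subsection~\ref{subsec:sublengthn} (namely that $A^{(k)}$ is indexed by $\mathcal{L}_{2k+1}(\sigma)$ rather than by $\mathcal{L}_k(\sigma)$, and that one still has $|\sigma^{(k)n}((uv))| = |\sigma^n(v_0)|$ with $\sigma^{(k)}$ primitive) do not affect the Perron--Frobenius computation. Both properties are recorded explicitly in Proposition~\ref{prop:conj-sublengthn}, so the translation between measure-theoretic data on $(X_\sigma,S)$ and combinatorial data on the substitution $\sigma^{(k)}$ is pure bookkeeping and no new dynamical input is required.
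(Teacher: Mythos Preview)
The paper does not supply its own proof of this lemma: it is stated as a direct citation of \cite[Corollary~5.4 and Proposition~5.10]{Queffelec:2010}, with no argument given in the text. Your proof sketch is correct and is essentially the standard argument from that reference, adapted (as you note) to the two-sided $(2k+1)$-block convention of Subsection~\ref{subsec:sublengthn}; the only ingredients are Proposition~\ref{prop:conj-sublengthn} for the conjugacy and the equality of dominant eigenvalues, and the classical Perron--Frobenius identification of the letter-frequency vector of a primitive substitution with the right Perron eigenvector of its incidence matrix.
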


\begin{lem}
\label{lemma:maxmin}
Let $\sigma:A^* \to A^*$ be an aperiodic and primitive substitution.
There exists a positive integer $k \leq \card(A)^2$ such that for all $n \geq k$, $\langle \sigma^n \rangle \geq |\sigma^{n-k}|$.
Then for all $n \geq k$, one as
\begin{align*}
    \rho(M_\sigma)^{n-k}
    &\leq \langle \sigma^n \rangle 
    \leq |\sigma^n|
    \leq K_\sigma \rho(M_\sigma)^{n},
    \\
    |\sigma^n| 
    &\leq \rho(M_\sigma)^k K_\sigma \langle \sigma^n \rangle,
\end{align*}
where $\rho(M_\sigma)$ denotes the spectral radius of $M_\sigma$ and $K_\sigma$ is given by Equation~\eqref{eq:Ksigma}.
Moreover, if $M_\sigma$ has positive entries then one can take $k=1$.
\end{lem}

\begin{proof}
Since $\sigma$ is primitive, there exists $k \leq \card(A)^2$ such that $M_\sigma^k$ contains only positive entries (see~\cite[Corollary 8.5.8]{Horn&Johnson:1990}), which trivially implies $\langle \sigma^n \rangle \geq |\sigma^{n-k}|$.

By the previous lemma, the vector $\mu^{(0)}$ is a positive eigenvector of $M_\sigma$ associated with the eigenvalue $\rho(M_\sigma)$.
Furthermore, it satisfies $\|\mu^{(0)}\|_1 = \sum_{a \in A} \mu([a]) = 1$.
For all $n$, we thus have 
\[
    \sum_{a \in A} |\sigma^n(a)| \mu([a]) 
    =
    \|M_\sigma^n \mu^{(0)}\|_1 = \rho(M_\sigma)^n.
\]
Using Proposition~\ref{prop:lrmeasure}, we deduce that $|\sigma^n|\leq K_\sigma \rho(M_\sigma)^{n}$.
Since for all $n$, one also has $\rho(M_\sigma)^n \leq |\sigma^n|$, we deduce from the definition of $k$ that for all $n \geq k$, $\langle \sigma^n \rangle \geq \rho(M_\sigma)^{n-k}$.
\end{proof}

\section{Bounding the radius of a factor map between primitive substitution subshifts}\label{section:bounds}

In this section we show that for aperiodic primitive proper substitutions $\sigma$ and $\tau$, the radius of a factor map from $X_\sigma$ to $X_\tau$ can be chosen smaller than a bound only depending on $\sigma$ and $\tau$ and that one can algorithmically compute.

\subsection{Orbit-preserving maps, cocycles and dill maps}

Let $(X_1,T_1)$ and $(X_2,T_2)$ be two topological dynamical systems.
A map $f : X_1 \to X_2$ is said to be \emph{orbit-preserving} if $f(\mathcal{O}_{T_1}(x)) \subset \mathcal{O}_{T_2}(f(x))$ holds for all $x \in X_1$, where $\mathcal{O}_{T_1} (x)$ stands for $\{ T_1^n (x) \mid n\in \mathbb{Z} \}$.
Observe that, in this case, $f$ satisfies 
\begin{equation}
\label{eq:cocycle}
f(T_1(x)) = T_2^{c(x)}(f(x)),
\end{equation}
for all $x \in X_1$, for some map $c : X_1 \to \mathbb{Z}$ we call a {\em cocycle} for $f$.
If $f$ and $c$ are continuous and if for all $x$, $c(x)\geq 0$ and there exists some $y$ such that $c(y)>0$, then we say $f$ is a {\em dill map}.
Observe that if $f$ is a dill map, then the associated cocycle $c$ is bounded.
Furthermore, when $(X_2 , T_2)$ has no periodic point, $f$ is a factor map if and only if $c(x)=1$ for all $x$.
Other examples of dill maps are given by morphisms $\Theta : A^* \to B^*$. 
Indeed, take a minimal subshift $(X,S)$ generated by a sequence $x\in A^{\mathbb{Z}}$ and let $(Y,S)$ be generated by $y=\Theta (x)\in B^{\mathbb{Z}}$. 
Then $\Theta$ defines a dill map from $X$ to $Y$.

In the definition of dill maps, the existence of $y$ such that $c(y)>0$ is only required to avoid degenerated cases such as trivial maps satisfying $f(\mathcal{O}_{T_1}(x)) = \{f(x)\}$ for all $x$.
The next lemma shows that the existence of the point $y$ is just a consequence of the continuity of $f$.

\begin{lem}
\label{lem:simplify definition of dill map}
Let $(X_1,T_1)$ and $(X_2,T_2)$ two topological dynamical systems such that $(X_1,T_1)$ is minimal.
If $f : X_1 \to X_2$ is a continuous orbit-preserving map such that $f(X_1)$ is not a singleton and the cocycle $c:X_1 \to \mathbb{Z}$ associated with $f$ is continuous and non-negative, then $f$ is a dill map.
\end{lem}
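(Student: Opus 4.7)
The plan is to check that, under the given hypotheses, the only condition in the definition of a dill map that is not already assumed is the existence of some $y \in X_1$ with $c(y) > 0$, so the entire task reduces to proving this existence statement. I would do this by contradiction: assume that $c \equiv 0$ on $X_1$ and derive that $f(X_1)$ must be a singleton, contradicting the assumption.

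More precisely, suppose that $c(x) = 0$ for every $x \in X_1$. Then equation~\eqref{eq:cocycle} becomes
\[
f(T_1(x)) = T_2^{0}(f(x)) = f(x)
\]
for all $x \in X_1$, i.e.\ $f$ is constant along every $T_1$-orbit. By induction, $f$ is constant on the full orbit $\mathcal{O}_{T_1}(x) = \{T_1^n(x) \mid n \in \mathbb{Z}\}$ of every point $x$. Since $(X_1,T_1)$ is minimal, this orbit is dense in $X_1$, and since $f$ is continuous, $f$ must be constant on the closure $\overline{\mathcal{O}_{T_1}(x)} = X_1$. Thus $f(X_1)$ is a singleton, contradicting the hypothesis that $f(X_1)$ is not a singleton.

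Hence there exists $y \in X_1$ with $c(y) > 0$, and combined with the continuity of $f$, the continuity of $c$, and the non-negativity of $c$, this is exactly the definition of a dill map. The only potentially delicate point is passing from ``constant on a dense orbit'' to ``globally constant,'' which is immediate from continuity of $f$ into the (Hausdorff) space $X_2$; no further subtlety arises.
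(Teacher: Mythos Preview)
Your proof is correct and follows essentially the same argument as the paper: assume $c \equiv 0$, deduce $f \circ T_1 = f$, hence $f$ is constant on a dense orbit by minimality, and conclude by continuity that $f(X_1)$ is a singleton, contradicting the hypothesis.
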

\begin{proof}
Let assume that $c(x) = 0$ for all $x\in X_1$.
Then, $f(T_1(x)) = f(x)$ for all $x\in X_1$. 
Hence, $f(T_1^n (x))  =f(x)$ for all $n\in \mathbb{Z}$.
Since $\mathcal{O}_{T_1}( x) $ is dense in $X_1$ and $f$ is continuous, we get that $f(X_1)$ is a singleton, which contradicts the hypothesis.
\end{proof}

The notion of dill map has been introduced by Salo and T\"orm\"a~\cite{Salo&Torma:2015} in a one-sided context.

Suppose $(X,S)$ and $(Y,S)$ are subshifts and $f : (X,S) \to (Y,S)$ is a dill map. 
A continuous function $\phi : X \to A^*$ satisfying
\begin{equation}
f (x) = \cdots \phi(S^{-1} (x)) . \phi (x) \phi(S(x)) \phi(S^2(x)) \cdots
\end{equation}
for all $x \in X$ is called an {\em implementation of $f$}.
Observe that $\phi (x)$ could be the empty word.

\begin{lem}[{\cite[Lemma 8]{Salo&Torma:2015}}]
\label{lemme:implementationdill}
Let $(X,S)$ and $(Y,S)$ be subshifts and $f : (X,S) \to (Y,S)$ a dill map with cocycle $c$. 
Then, $f$ has an implementation $\phi$ given by 
$$
\phi(x) = f (x)_{[0,c(x)-1]} \hbox{ for all } x \in X .
$$
\end{lem}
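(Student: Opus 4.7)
The plan is to verify the two defining properties of an implementation: continuity of $\phi$, and the concatenation identity $f(x) = \cdots \phi(S^{-1}(x)).\phi(x) \phi(S(x)) \cdots$ for every $x \in X$.

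For continuity, I would note that the cocycle $c : X \to \mathbb{Z}$ is continuous and integer-valued, hence locally constant. Around any $x \in X$, a clopen neighborhood $U$ on which $c$ is constantly equal to $c(x)$ exists; on $U$, the map $x' \mapsto \phi(x') = f(x')_{[0, c(x)-1]}$ is continuous since $f$ is continuous.

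For the concatenation identity, the key step is to iterate the cocycle relation. Setting $\Sigma_n(x) = \sum_{k=0}^{n-1} c(S^k(x))$ for $n \geq 0$, a straightforward induction on $n$ starting from $f(S(x)) = S^{c(x)}(f(x))$ yields $f(S^n(x)) = S^{\Sigma_n(x)}(f(x))$, equivalently $f(S^n(x))_i = f(x)_{i + \Sigma_n(x)}$ for every $i \in \mathbb{Z}$. Therefore
$$\phi(S^n(x)) = f(S^n(x))_{[0, c(S^n(x))-1]} = f(x)_{[\Sigma_n(x),\, \Sigma_{n+1}(x) - 1]},$$
so the concatenation $\phi(x) \phi(S(x)) \phi(S^2(x)) \cdots$ reads off exactly $f(x)_0 f(x)_1 f(x)_2 \cdots$ provided $\Sigma_n(x) \to +\infty$. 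The negative side is symmetric: inverting the cocycle gives $f(S^{-n}(x)) = S^{-\Sigma_n^-(x)}(f(x))$ with $\Sigma_n^-(x) = \sum_{k=1}^{n} c(S^{-k}(x))$, and then $\phi(S^{-n}(x))$ covers the block of positions $[-\Sigma_n^-(x),\, -\Sigma_{n-1}^-(x) - 1]$ of $f(x)$, so concatenation to the left recovers $\cdots f(x)_{-2} f(x)_{-1}$ once $\Sigma_n^-(x) \to +\infty$.

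The only remaining point, and the one I expect to be the main obstacle, is to justify that $\Sigma_n(x)$ and $\Sigma_n^-(x)$ both diverge to $+\infty$ for every $x$. This is where the non-triviality built into the definition of a dill map enters: by assumption some $y \in X$ satisfies $c(y) \geq 1$, and continuity makes $V = \{z \in X \mid c(z) \geq 1\}$ a nonempty clopen set. In the minimal setting in which the lemma is used, every forward and every backward $S$-orbit visits $V$ infinitely often, which forces both partial sums to diverge. The rest of the verification is then just careful bookkeeping of indices.
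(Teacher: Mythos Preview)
The paper does not supply its own proof of this lemma; it is quoted from Salo and T\"orm\"a without argument. Your proof is correct and is the natural one: iterate the cocycle relation so that the blocks $\phi(S^n(x))$ tile $f(x)$ over the intervals $[\Sigma_n(x),\Sigma_{n+1}(x)-1]$, and then check that these intervals exhaust $\mathbb{Z}$. You are right to flag the divergence of $\Sigma_n(x)$ and $\Sigma_n^-(x)$ as the only substantive point and to resolve it via minimality of $(X,S)$. The lemma as stated does not assume minimality, so strictly speaking your argument establishes it only under that extra hypothesis; but the paper explicitly restricts to dill maps between minimal subshifts immediately after stating the lemma, so this is exactly the generality required here.
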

Notice that $|\phi (x)| = c(x)$ and that $\phi (x)$ is the empty word whenever $c (x)=0$.
It is straightforward to check (as observed in~\cite{Salo&Torma:2015}) that if $Y$ does not contain $S$-periodic points then $c$ and $\phi$ are unique.
As we will deal with infinite minimal subshifts, there will be no $S$-periodic points in $Y$, and, thus, both maps will be unique.

Let $f : (X,S) \to (Y,S)$ be a dill map between subshifts without $S$-periodic points and with implementation map $\phi$.
The map $c = |\phi|$ is the unique cocycle of $f$. 
By compactness of $X$ and continuity of $c$, the set $c(X)$ is finite. 
Using the continuity of $f$, we can thus define 
\begin{align*}
r & = \min \{ R \in \mathbb{N} \mid  x_{[-R, R]} = y_{[-R, R]} \implies \phi(x) = \phi(y) \} 
\hbox{ and } \\
m & = \max \{ |\phi(x)| \mid x \in X \} .
\end{align*}
The couple $(r,m)$ is a called the {\em radius pair of $f$}.
It is clear that $r$ is a radius for $\phi$ and $c$.
If $f$ is a factor map, then its radius pair is $(r,1)$ and $\phi$ is a sliding block code defining $f$. 

Let $f$ and $g$ be two continuous maps from $(X_1,T_1)$ to $(X_2,T_2)$.
We say $f$ and $g$ are {\em orbitally related} (and we write $f \equiv g$) whenever there exists a continuous map $c : X_1\to \mathbb{Z}$ satisfying, for all $x\in X_1$,
\begin{align}
\label{rel:orbitalyrelated}
T_2^{c (x)}\circ f (x) = g (x) . 
\end{align}
It is clear that the orbit relation is an equivalence relation.




\begin{lem}
Let $f$ and $g$ be two orbitally related factor maps from $(X_1,T_1)$ to $(X_2,T_2)$.
Suppose $(X_1,T_1)$ is minimal and $(X_2,T_2)$ is aperiodic.
Then, there exists $e\in \mathbb{Z}$ such that $f = T_2^e \circ g$.
\end{lem}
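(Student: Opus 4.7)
The plan is to reduce the orbit-preserving relation to a single continuous cocycle $e : X_1 \to \mathbb{Z}$, then to use aperiodicity of $(X_2,T_2)$ to show $e$ is $T_1$-invariant, and finally to use minimality of $(X_1,T_1)$ to conclude that $e$ is constant.

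First, I would write down what the hypothesis gives: there exist continuous maps $c,d : X_1 \to \mathbb{Z}$ such that $T_2^{c(x)} \circ f \circ T_1^{d(x)}(x) = g(x)$ for all $x \in X_1$. Since $f$ is a factor map, it commutes with the shifts in the sense $f \circ T_1^{d(x)} = T_2^{d(x)} \circ f$ pointwise, so setting $e(x) = c(x) + d(x)$, we obtain a continuous function $e : X_1 \to \mathbb{Z}$ satisfying
\[
g(x) = T_2^{e(x)} f(x) \quad \text{for all } x \in X_1.
\]
Since $X_1$ is compact and $\mathbb{Z}$ is discrete, $e$ is automatically bounded and locally constant.

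Next I would use that both $f$ and $g$ are factor maps to show $e$ is $T_1$-invariant. On the one hand $g(T_1 x) = T_2\, g(x) = T_2^{e(x)+1} f(x)$; on the other hand $g(T_1 x) = T_2^{e(T_1 x)} f(T_1 x) = T_2^{e(T_1 x)+1} f(x)$. Hence
\[
T_2^{\, e(T_1 x) - e(x)}\,f(x) = f(x) \quad \text{for all } x \in X_1.
\]
Since $(X_2,T_2)$ is aperiodic and $f(x) \in X_2$, this forces $e(T_1 x) = e(x)$. This is the step where aperiodicity of the target is essential, and it is the conceptual heart of the argument (though the computation itself is short).

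Finally, I would conclude by minimality. For each $k \in \mathbb{Z}$, the set $e^{-1}(k)$ is clopen (since $e$ is continuous and $\mathbb{Z}$ is discrete) and $T_1$-invariant (by the previous step). Minimality of $(X_1,T_1)$ forces each such set to be empty or all of $X_1$, so $e \equiv e_0$ for some constant $e_0 \in \mathbb{Z}$. Hence $g = T_2^{e_0} \circ f$, and taking $e = -e_0$ yields $f = T_2^{e} \circ g$, as required.
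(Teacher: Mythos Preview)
Your proof is correct and follows essentially the same approach as the paper: both reduce the relation to $g(x)=T_2^{c(x)+d(x)}f(x)$ using that $f$ commutes with the dynamics, then apply $T_1$ and use aperiodicity of $(X_2,T_2)$ to see that $c+d$ is $T_1$-invariant, and finally invoke minimality to conclude it is constant. The computations differ only cosmetically (you compare via $f(x)$, the paper via $f(T_1 x)$).
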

\begin{proof}
Let $c : X_1\to \mathbb{Z}$ be a continuous map satisfying, for all $x\in X_1$,
\[
	T_2^{c (x)}\circ f (x) = g (x). 
\]
Therefore 
\[
 T_2^{c (T_1 (x))}\circ f (T_1 (x)) = g (T_1 (x)) = T_2 \circ g (x) = T_2^{c (x)}\circ f (T_1 (x)).
\]
As $(X_2 , T_2)$ is aperiodic, one gets $c (T_1 (x)) = c (x)$ for all $x$. 
From the minimality of $(X_1 , T_1)$ and by continuity of $c$, one concludes that $c$ is a constant function.
\end{proof}

\subsection{Specific dill maps associated with factor maps}

%
%
%

Let $\sigma : A^* \to A^*$ and $\tau : B^* \to B^*$ be two aperiodic primitive substitutions.
We let $K_\sigma$ and $K_\tau$ denote the constant given by \eqref{eq:Ksigma}.
In this section, starting with a factor map $f : (X_\sigma , S) \to (X_\tau , S)$, we build pairwise orbitally related dill maps $F_n : (X_\sigma , S) \to (X_\tau , S)$, $n \in \mathbb{N}$.
We show that for $n$ large enough, these dill maps have bounded radius pairs and so that there exist only finitely many such maps.
In Section~\ref{section:smaller radius}, we will use these dill maps to build a new factor map from $(X_\sigma , S)$ to $(X_\tau , S)$ with a radius only depending on $\sigma$ and $\tau$, not on the radius of $f$. 

Let us thus consider a factor map $f : (X_\sigma , S) \to (X_\tau , S)$ with radius $r$.
We recall a Cobham-type theorem proved in~\cite{Durand:1998c}.

\begin{theo}
\label{theo:cobham}
If $(X_\tau , S)$ is aperiodic then the dominant eigenvalues of $\sigma $ and $\tau$ share a non-trivial common power.
\end{theo}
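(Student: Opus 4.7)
The plan is to argue by contradiction: suppose $\alpha := \rho(M_\sigma)$ and $\beta := \rho(M_\tau)$ are multiplicatively independent, and aim for a contradiction with the aperiodicity of $(X_\tau,S)$. Throughout, let $\mu$ and $\delta$ be the unique invariant measures of $(X_\sigma,S)$ and $(X_\tau,S)$, which exist by primitivity.

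First I would use that any factor map pushes the unique invariant measure forward, so $f_*\mu = \delta$: if $\hat f:A^{2r+1} \to B$ is a sliding block code of radius $r$ defining $f$, then for every $v \in \cL_n(X_\tau)$,
\[
	\delta([v])
	=
	\sum_{\substack{u \in \cL_{n+2r}(X_\sigma) \\ \hat f(u) = v}} \mu([u]).
\]
By Proposition~\ref{prop:conj-sublengthn} and Lemma~\ref{lemma:muk}, the frequency vector $\mu^{(k)}$ is a positive Perron eigenvector of the integer matrix $M_{\sigma^{(k)}}$ for the algebraically simple eigenvalue $\alpha$, so its entries lie in the number field $\QQ(\alpha)$; the analogous statement holds on the $\tau$-side with $\beta$. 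The identity above thus realizes infinitely many elements of $\QQ(\beta)$ as non-negative $\Z$-linear combinations of elements of $\QQ(\alpha)$, an already strong algebraic constraint. Moreover, along a typical orbit $x \mapsto f(x)$, Moss\'e's recognizability theorem (Theorem~\ref{theo:mosse}) provides cutting bars of $\sigma$ on $x$ spaced by roughly $\alpha^n$ and cutting bars of $\tau$ on $f(x)$ spaced by roughly $\beta^n$, and these two ``clocks'' are synchronised through $f$ up to error $r$.

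The main obstacle is to convert this twin algebraic--combinatorial constraint into the multiplicative dependence $\alpha^m = \beta^n$. This is a Cobham--Semenov type rigidity phenomenon: when $\alpha,\beta\in\N$ (the constant-length setting) it is precisely Cobham's theorem on automatic sequences; for general Perron numbers it is proved in~\cite{Durand:1998c} by a delicate simultaneous analysis of the two desubstitution towers and the diophantine theory of Perron numbers. I would invoke that result here, using the frequency identity displayed above together with the synchronised cutting-bar structure as the bridge, to conclude that $(X_\tau,S)$ would have to be periodic, contradicting the hypothesis. The difficulty is concentrated entirely in this final diophantine--combinatorial import; the preceding setup is routine once unique ergodicity and Moss\'e's recognizability are in hand.
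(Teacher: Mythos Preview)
The paper does not prove this theorem: it is simply recalled from~\cite{Durand:1998c} as a known Cobham-type result, with no argument supplied. Your proposal, after some correct preliminary observations (push-forward of the unique invariant measure under $f$, algebraicity of the letter frequencies in $\QQ(\alpha)$ and $\QQ(\beta)$, synchronisation of the $\sigma$- and $\tau$-cutting bars through $f$ via Moss\'e's theorem), ultimately also invokes~\cite{Durand:1998c} for the substantive step. So in effect you and the paper do the same thing: cite the result rather than prove it.

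Your preliminary observations are indeed among the ingredients that enter the actual proof in~\cite{Durand:1998c}, so the sketch points in the right direction. But as you yourself concede, the real content --- deducing from the multiplicative independence of $\alpha$ and $\beta$ that $(X_\tau,S)$ must be periodic --- is entirely deferred to that reference, and your write-up does not supply it. Note in particular that your final sentence is circular as written: ``that result'' you propose to invoke \emph{is} the theorem under discussion. If the goal were a self-contained proof, this would be a genuine gap; since the paper also treats Theorem~\ref{theo:cobham} as a black box, your proposal matches the paper's level of detail.
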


Consequently, we can suppose that $\sigma$ and $\tau$ have the same dominant eigenvalue $\lambda$ (taking powers of the substitutions does not change the subshifts).
We also assume that $\sigma$ and $\tau$ are {\em positive}, i.e., their incidence matrices have only positive entries.
Observe that this implies that $\lambda \geq 2$.
We finally let $L_\tau$ denote the constant of recognizability of $\tau$.

For all $n\geq 0$ we define the map $F_n : X_\sigma  \to X_\tau$  by 
\begin{align}
\label{align:Fn}
\tau^n \circ F_n (x) = S^{r_n\circ f \circ \sigma^{n}(x)} \circ f \circ \sigma^{n} (x), x\in X_\sigma, 
\end{align}
where $r_n : X_\tau \to \mathbb{N}$ is the return map to the clopen set $\tau^n (X_\tau )$ (see Corollary~\ref{cor:mosse5.11}): 
$$
r_n(x) = \min \{ i \geq 0 \mid S^i (x) \in \tau^n (X_\tau ) \}.
$$

Thus, we have $\tau^n  \circ F_n \equiv  f \circ \sigma^{n} $. 
Observe that, using Moss\'e's theorem (Theorem~\ref{theo:mosse}), $r_n$ is continuous and $F_n$ is well-defined.

Now let us show that $F_n$ is a dill map and that, for $n$ large enough, its radius pair is bounded by a computable bound only depending on $\sigma$ and $\tau$.

\begin{prop}
\label{lemme:boundradius}
For all $n$, $F_n$ is a dill map.
Furthermore, if $n$ is such that $r/|\tau^n | \leq 1$, the radius pair $(s_1 , s_2)$ of $F_n$ satisfies
\[
	s_1 \leq \lceil K_\tau \lambda (\lfloor (K_\tau+K_\sigma) \lambda \rfloor  + L_\tau +2) \rceil 
	\quad \text{ and } \quad
	s_2 \leq \lfloor (K_\tau+K_\sigma) \lambda \rfloor.
\]
In particular, we have 
\[
\#\{F_n \mid r/|\tau^n | \leq 1\}
\leq 
\left(
    K_\tau \lambda^2
    (K_\sigma+K_\tau)
    \right)^{2 K_\sigma K_\tau \lambda^2 (K_\tau+K_\sigma + L_\tau +3 )}.
\]
\end{prop}

\begin{proof}
Let $n$ be a positive integer.
By Corollary~\ref{cor:mosse5.11}, $F_n$ is a continuous map.
We have:
\begin{align}
\label{eq:tau_nF_nSx}
\tau^n \circ F_n(S (x)) 
& =  S^{r_n\circ f \circ \sigma^{n} (S(x))} \circ f \circ \sigma^{n} (S(x)) 
\nonumber \\
& =  S^{r_n\circ f \circ S^{|\sigma^{n} (x_0)|}  \circ \sigma^{n} (x)} \circ f \circ S^{|\sigma^{n} (x_0)|} \circ \sigma^{n} (x) 
\nonumber \\
& =  S^{r_n\circ f \circ S^{|\sigma^{n} (x_0)|}  \circ \sigma^{n} (x)+|\sigma^{n} (x_0)|   } \circ f  \circ \sigma^{n} (x)
\nonumber \\
& =  S^{r_n\circ S^{|\sigma^{n} (x_0)|}  \circ f \circ  \sigma^{n} (x)+|\sigma^{n} (x_0)| -r_n\circ f \circ \sigma^{n} (x)  } \circ  \tau^n \circ F_n (x).
\end{align}
The map 
\[
x \in X_\sigma \mapsto r_n\circ S^{|\sigma^{n} (x_0)|}  \circ f \circ  \sigma^{n} (x)+|\sigma^{n} (x_0)| -r_n\circ f \circ \sigma^{n} (x)
\]
is continuous and $F_n(X_\sigma)$ is obviously not a singleton as $(X_\tau , S)$ is aperiodic.
Therefore, from Corollary~\ref{cor:mosse} and Lemma~\ref{lem:simplify definition of dill map}, to prove that $F_n$ is a dill map it suffices to show that 
\[
r_n \circ S^{|\sigma^{n} (x_0)|} \circ f  \circ \sigma^{n} (x)+|\sigma^{n} (x_0)| -r_n\circ f \circ \sigma^{n} (x)
\]
is non-negative for all $x$.
This is clear if $r_n\circ f \circ \sigma^{n} (x) \leq |\sigma^{n} (x_0)|$. 

Suppose $r_n\circ f \circ \sigma^{n} (x) > |\sigma^{n} (x_0)|$.
This means that $S^i \circ  f \circ \sigma^{n} (x)$ does not belong to $\tau^n (X_\tau)$ for all $i$ such that $0 \leq i \leq |\sigma^{n} (x_0)|$.
Hence $r_n\circ S^{|\sigma^{n} (x_0)|}\circ f \circ \sigma^{n} (x)+|\sigma^{n} (x_0)| =r_n\circ f \circ \sigma^{n} (x) $ and $F_n$ is a dill map.

Now suppose that $n \in \mathbb{N}$ is such that $r/|\tau^n | \leq 1$.
We let $c : X_\sigma \to \mathbb{N}$ and $\phi:X_\sigma \to B^*$ respectively denote the cocycle and the implementation of $F_n$.
Let us also denote $(s_1,s_2)$ the radius pair of $F_n$.
From Equation~\eqref{eq:tau_nF_nSx} and Corollary~\ref{cor:mosse}, we get 
\begin{align*}
c(x) & \leq \left(r_n \circ S^{|\sigma^{n} (x_0)|} \circ f  \circ \sigma^{n} (x)+|\sigma^{n} (x_0)| -r_n\circ f \circ \sigma^{n} (x)\right)/\langle\tau^n \rangle \\
&  \leq (|\tau^{n} |+|\sigma^{n}|)/  \langle\tau^n \rangle.
\end{align*}
Thus, using the fact that $\sigma$ and $\tau$ are positive and have the same dominant eigenvalue $\lambda$, we deduce from Lemma~\ref{lemma:maxmin} that 
\[
s_2 = \max\{c(x) \mid x \in X_\sigma\} \leq \lfloor (K_\sigma+K_\tau)\lambda \rfloor.
\]
Now let 
$s = \lceil K_\tau \lambda (\lfloor (K_\tau+K_\sigma) \lambda \rfloor  + L_\tau +2 ) \rceil$ 
and let us show that $\phi (x)$ only depends on $x_{[-s,s]}$.
This will imply that $s_1$ is at most equal to $s$.
Consider $x, y \in X_\sigma$ such that $x_{[-s,s]}  = y_{[-s,s]}$. 
Then $S^{r_n\circ f \circ \sigma^{n}} \circ f \circ \sigma^{n} (x)$ and $S^{r_n\circ f \circ \sigma^{n}} \circ f \circ \sigma^{n} (y)$ coincide on the indices $[-s \langle  \sigma^n \rangle +r - k , s \langle  \sigma^n \rangle -r - k]$ for some integer $k$ satisfying $0 \leq k < |\tau^n|$.
From Corollary~\ref{cor:mosse}, $F_n(x)$ and $F_n(y)$ coincide on the indices 
$$
\left[
\left\lceil
\frac{-s \langle  \sigma^n \rangle +r -k +L_{\tau^n}}{|\tau^n|} 
\right\rceil 
,
\left\lfloor
\frac{s \langle  \sigma^n \rangle -r -k - L_{\tau^n}}{|\tau^n|}
\right\rfloor
\right]
$$
and, since $0\leq k,r \leq |\tau^n|$, on the indices
\[
\left[
\left\lceil
\frac{-s \langle  \sigma^n \rangle +L_{\tau^n}}{|\tau^n|}+1  
\right\rceil 
,
\left\lfloor
\frac{s \langle  \sigma^n \rangle - L_{\tau^n}}{|\tau^n|}-2,
\right\rfloor
\right],
\]
where $L_{\tau^n}$ is a recognizability constant of $\tau^n$. 
Recalling that $\lambda \geq 2$ and that $|\tau^n| \leq K_\tau \lambda^n$ by Lemma~\ref{lemma:maxmin}, we get from Proposition~\ref{prop:constantrecsigmak} that 
$$
L_{\tau^n} \leq K_\tau L_\tau \lambda^n.
$$ 
Hence, $F_n(x)$ and $F_n(y)$ coincide on the indices 
$$
\left[
\left\lceil
\frac{-s \langle  \sigma^n \rangle }{K_\tau \lambda^n} + L_\tau +1
\right\rceil
,
\left\lfloor 
\frac{s \langle  \sigma^n \rangle}{K_\tau \lambda^n}- L_\tau -2
\right\rfloor
\right] .
$$
Then, it suffices to observe using Lemma~\ref{lemma:maxmin} that 
\begin{align*}
\frac{s \langle  \sigma^n \rangle}{K_\tau \lambda^n}  - L_\tau -2 
& \geq s_2,    \\
\frac{-s \langle  \sigma^n \rangle }{K_\tau \lambda^n} +L_\tau +1 
& \leq 0,
\end{align*}
which shows that $s_1 \leq s$.

To conclude the proof, the number of possible dill maps $F_n$ with $r/|\tau^n| \leq 1$ is at most equal to the number of maps from $\mathcal{L}_s(\sigma)$ to $\bigcup_{i=0}^{\lfloor (K_\sigma+K_\tau)\lambda \rfloor } \mathcal{L}_i(\tau)$.
We get, using Theorem~\ref{theo:encad},
\begin{align*}
	\#\{F_n \mid r/|\tau^n | \leq 1\} 
	&\leq 
	\left(
	\sum_{i=0}^{\lfloor (K_\sigma+K_\tau)\lambda \rfloor} p_\tau(i)
	\right)^{p_\sigma (s)} 
	\\
	&\leq 
    \left(
    K_\tau \lambda^2
    (K_\sigma+K_\tau)
    \right)^{2 K_\sigma K_\tau \lambda^2 (K_\tau+K_\sigma + L_\tau +3 )}
\end{align*}
\end{proof}

\begin{cor}
\label{coro:Fnfinite}
The set $\{ F_n \mid n\in \mathbb{N} \}$ is finite.
In particular, there exists an increasing sequence $(n_i)_{i \in \mathbb{N}}$ of integers such $F_{n_i} = F_{n_0}$ for all $i$.
\end{cor}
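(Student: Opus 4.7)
The plan is to combine the radius-pair bound of Proposition~\ref{lemme:boundradius} with the growth of $|\tau^n|$ and a straightforward pigeonhole argument. The key observation is that Proposition~\ref{lemme:boundradius} already does most of the work: it bounds the cardinality of the \emph{tail} $\{F_n \mid r/|\tau^n|\le 1\}$ by the computable constant $(\#B)^{3K_\sigma^9(2K_\sigma^4+2+L_\tau)}$, which depends only on $\sigma$ and $\tau$ (and not on $n$). So to prove that $\{F_n \mid n\in\mathbb{N}\}$ itself is finite, it suffices to show that only finitely many indices $n$ fail the inequality $r/|\tau^n|\le 1$.

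First I would note that, since $\tau$ is an aperiodic primitive substitution with dominant eigenvalue $\rho(M_\tau)\ge 2$, Corollary~\ref{cor:maxmin} gives the lower bound $|\tau^n|\ge \langle\tau^n\rangle\ge (1/K_\sigma^2)\rho(M_\tau)^n$, which tends to $+\infty$. Hence the set $N_0=\{n\in\mathbb{N}\mid r/|\tau^n|>1\}$ is finite (explicitly, it is bounded by any $n$ with $\rho(M_\tau)^n>rK_\sigma^2$). Therefore
\[
\{F_n\mid n\in\mathbb{N}\}=\{F_n\mid n\in N_0\}\cup\{F_n\mid r/|\tau^n|\le 1\},
\]
which is a finite union of finite sets, hence finite.

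For the second assertion, I would apply the pigeonhole principle to the map $n\mapsto F_n$ from the infinite set $\mathbb{N}$ to the finite set $\{F_n\mid n\in\mathbb{N}\}$: some value $F_{n_0}$ must be attained for infinitely many indices, and arranging them in increasing order gives the required sequence $(n_i)_{i\in\mathbb{N}}$ with $F_{n_i}=F_{n_0}$ for all $i$.

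There is no real obstacle here, as Proposition~\ref{lemme:boundradius} is the substantive step; the only thing to be careful about is that the bound in Proposition~\ref{lemme:boundradius} is only stated under the assumption $r/|\tau^n|\le 1$, so one must explicitly argue (using the divergence of $|\tau^n|$) that this hypothesis holds for all but finitely many $n$, before invoking the bound.
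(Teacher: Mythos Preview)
Your proof is correct and follows exactly the approach the paper intends: the corollary is stated without proof because it is meant to be immediate from Proposition~\ref{lemme:boundradius} together with the fact that $|\tau^n|\to\infty$, and you have simply made this explicit. One very minor point: the lower bound $\langle\tau^n\rangle\ge (1/K_\sigma^2)\rho(M_\tau)^n$ with the constant $K_\sigma$ (rather than $K_\tau$) comes from Proposition~\ref{prop:maxminsub}, not Corollary~\ref{cor:maxmin}; but since all you actually need is $|\tau^n|\to\infty$, which holds for any primitive substitution, the argument is unaffected.
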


The following lemma will provide a key argument in the proof of Theorem \ref{theo:mainpropersub} which is a detailed version of Theorem \ref{theo:main}. 

\begin{lem}
\label{lem:equiv}
If $F_n \equiv F_m$ for some $n\not = m$, then $F_{n-1} \equiv F_{m-1}$.
\end{lem}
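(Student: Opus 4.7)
The proof rests on the intermediate identity $F_{n-1} \circ \sigma \equiv \tau \circ F_n$ for every $n \geq 1$. To establish it I would apply $\tau^{n-1} \circ F_{n-1}$ at $\sigma(x)$ and compare with $\tau^n \circ F_n(x) = \tau^{n-1}(\tau(F_n(x)))$. Using the defining relation~\eqref{align:Fn}, both sides equal $S^{r_j(f(\sigma^n(x)))}(f(\sigma^n(x)))$ for $j = n-1$ and $j = n$ respectively, so
\[
\tau^{n-1}(F_{n-1}(\sigma(x))) = S^{e(x)} \, \tau^{n-1}(\tau(F_n(x)))
\]
with $e(x) = r_{n-1}(f(\sigma^n(x))) - r_n(f(\sigma^n(x)))$, a continuous function of $x$. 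Then I would left-cancel $\tau^{n-1}$ by iterating Corollary~\ref{cor:mosse}(1), using the aperiodicity of $(X_\tau,S)$ to guarantee that the integer shift produced at each step is unique and therefore depends continuously on $x$. This gives $F_{n-1} \circ \sigma \equiv \tau \circ F_n$, and likewise $F_{m-1} \circ \sigma \equiv \tau \circ F_m$.

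Next, starting from the hypothesis $F_n \equiv F_m$, I would apply $\tau$ on the left. Since $\tau$ is orbit-preserving (as $\tau(S^k y)$ equals $\tau(y)$ shifted by $|\tau(y_{[0,k-1]})|$ for $k \ge 0$, and symmetrically for $k < 0$) and $F_n$ itself is a dill map by Proposition~\ref{lemme:boundradius}, combining these two continuous cocycles yields $\tau \circ F_n \equiv \tau \circ F_m$. Transitivity of $\equiv$ together with the two identities of the previous paragraph then gives
\[
F_{n-1} \circ \sigma \equiv F_{m-1} \circ \sigma.
\]

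The final task is to right-cancel $\sigma$. By Corollary~\ref{cor:mosse5.11}, every $y \in X_\sigma$ has a unique continuous decomposition $y = S^{k(y)} \sigma(x(y))$ with $0 \leq k(y) < |\sigma(x(y)_0)|$. Since $F_{n-1}$ and $F_{m-1}$ are both dill maps (Proposition~\ref{lemme:boundradius}), their continuous cocycles let me push the shift $S^{k(y)}$ through them; writing $\sigma(S^b w) = S^{|\sigma(w_{[0,b-1]})|}(\sigma(w))$ lets me do the same on the right-hand side. Assembling these cocycles converts the relation on $\sigma(X_\sigma)$ into $F_{m-1}(y) = S^{A(y)} F_{n-1}(S^{B(y)} y)$ with continuous $A,B : X_\sigma \to \mathbb{Z}$, which is exactly $F_{n-1} \equiv F_{m-1}$. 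I expect the main obstacle to be this last cancellation: rigorously verifying that the accumulated shift—obtained by running shifts through $F_{n-1}$, $F_{m-1}$, $\sigma$, and back along the decomposition $y = S^{k(y)} \sigma(x(y))$—genuinely defines continuous $\mathbb{Z}$-valued functions of $y$, rather than merely pointwise integers. This comes down to the joint continuity of the Moss\'e decomposition and of the dill-map cocycles, none of which is individually hard but which must be tracked without error.
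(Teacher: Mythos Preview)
Your proposal is correct and follows essentially the same route as the paper: establish $F_{n-1}\circ\sigma \equiv \tau\circ F_n$ by comparing $\tau^{n-1}\circ F_{n-1}\circ\sigma$ with $\tau^n\circ F_n$ and left-cancelling $\tau^{n-1}$ via Corollary~\ref{cor:mosse}, then use $F_n\equiv F_m$ and transitivity to obtain $F_{n-1}\circ\sigma \equiv F_{m-1}\circ\sigma$, and finally right-cancel~$\sigma$. The paper dismisses this last cancellation in one phrase (``using the definitions of $F_{n-1}$ and $F_{m-1}$''), whereas you spell it out through the Moss\'e decomposition $y=S^{k(y)}\sigma(x(y))$; your version is more explicit but not different in substance.
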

 \begin{proof}
Suppose $F_n \equiv F_m$.
 As $\tau^{n-1} \circ F_{n-1} \equiv f\circ \sigma^{n-1}$, one gets $\tau^{n-1} \circ F_{n-1}\circ \sigma  \equiv f\circ \sigma^{n}\equiv \tau^n \circ \ F_n$.
 Thus, from Corollary~\ref{cor:mosse}, $F_{n-1}\circ \sigma  \equiv \tau \circ  F_n$.
 We also have $F_{m-1}\circ \sigma  \equiv \tau \circ \ F_m$.
 Hence, $F_{n-1}\circ \sigma  \equiv F_{m-1}\circ \sigma $ from which we deduce, $F_{n-1}$ and $F_{m-1}$ being dill maps, that  $F_{n-1} \equiv F_{m-1}$. 
\end{proof}

\subsection{Factorizations with smaller radius using dill maps}
\label{section:smaller radius}

In this section, we show that if $\sigma$ is proper and if there exists a factor map $f:(X_\sigma,S) \to (X_\tau,S)$, then there exists another factor map $g:(X_\sigma,S) \to (X_\tau,S)$ whose radius is bounded by some computable constant depending on $\sigma$ and $\tau$.
Observe that if $\sigma:A^* \to A^*$ is proper and if $a,b \in A$ are such that $\sigma(A) \subset aA^* \cap A^*b$, then $x = \sigma^\omega(b \cdot a)$ is the unique admissible fixed point of $\sigma$ and we have $\{x\} = \bigcap_{n \in \mathbb{N}} \sigma^n(X_\sigma)$.

Let $(X,T)$ be a minimal dynamical system. 
A continuous function $c:X \to \mathbb{R}$ is a {\em coboundary} of $(X,T)$ if there is some continuous function $d:X \to \mathbb{R}$ such that $c = d \circ T - d$.

Let $c:X \to \mathbb{R}$ be a continuous function.
For $n \in \mathbb{N}$, we write
\begin{equation}
\label{eq:partialsums}
	c^{(n)} = 
	\begin{cases}
		0 								& 	\text{if } n = 0,	\\
		\sum_{k=0}^{n-1} c \circ T^k 	&	\text{if } n>0 , \\
		\sum_{k=n}^{-1} c \circ T^k 	&	\text{if } n<0 , 
	\end{cases}
\end{equation}

We remind the following useful identity: $c^{(n+m)} (x) = c^{(n)}(x) + c^{(m)} (T^n (x))$ and recall the following well-known result of Gottschalk and Hedlund~\cite{Gottschalk&Hedlund:1955}. 
A proof can be found in~\cite{Hasselblatt&Katok:1995} (Theorem 2.9.4 p. 102).

\begin{theo}
\label{lemma:gott-hed}
Let $(X,T)$ be a minimal dynamical system and $c:X \to \mathbb{R}$ be a continuous function.
The following are equivalent.
\begin{enumerate}
\item
$c$ is a coboundary;
\item
The sequence of functions $(c^{(n)})_{n \in \N}$ is uniformly bounded;
\item
The sequence $(c^{(n)}(x))_{n \in \N}$ is bounded for some $x \in X$.
\end{enumerate}
\end{theo}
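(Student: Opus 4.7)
The plan is to prove the classical Gottschalk--Hedlund theorem by cyclically showing $(1) \Rightarrow (2) \Rightarrow (3) \Rightarrow (1)$, where the only substantial step is the last one, done via a skew-product argument.

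For $(1) \Rightarrow (2)$, if $c = d \circ T - d$ with $d$ continuous, then a telescoping computation in~\eqref{eq:partialsums} gives $c^{(n)}(x) = d(T^n x) - d(x)$ for all $n \geq 0$. Since $X$ is compact and $d$ is continuous, $d$ is bounded and so $|c^{(n)}(x)| \leq 2\|d\|_\infty$ uniformly in $x$ and $n$. The implication $(2) \Rightarrow (3)$ is immediate: pick any $x \in X$.

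The heart of the proof is $(3) \Rightarrow (1)$. I would consider the skew product $\tilde T : X \times \mathbb{R} \to X \times \mathbb{R}$ defined by $\tilde T(x,t) = (Tx, t+c(x))$, so that $\tilde T^n(x,0) = (T^n x, c^{(n)}(x))$ for all $n \in \mathbb{Z}$. By hypothesis there exists $x_0 \in X$ and $M>0$ with $|c^{(n)}(x_0)| \leq M$ for all $n$, so the $\tilde T$-orbit of $(x_0, 0)$ is contained in the compact set $X \times [-M, M]$; let $Y$ be its closure, a compact $\tilde T$-invariant set, and let $Y_0 \subseteq Y$ be a minimal subset (which exists by Zorn's lemma).

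The projection $\pi : Y_0 \to X$, $(x,t) \mapsto x$, intertwines $\tilde T$ and $T$, so $\pi(Y_0)$ is a closed $T$-invariant subset of $X$; minimality of $(X,T)$ forces $\pi(Y_0) = X$. The key point is that $\pi|_{Y_0}$ is injective: suppose $(x,s), (x,t) \in Y_0$ with $s \neq t$. The vertical translation $\rho_{t-s}:(y,u)\mapsto(y,u+(t-s))$ commutes with $\tilde T$, and $\rho_{t-s}(x,s) = (x,t) \in Y_0$, so $\rho_{t-s}(Y_0) \cap Y_0 \neq \emptyset$; since $\rho_{t-s}(Y_0)$ is also $\tilde T$-minimal, minimality gives $\rho_{t-s}(Y_0) = Y_0$. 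Iterating, $Y_0$ is invariant under translation by every integer multiple of $t-s$, contradicting the boundedness $Y_0 \subseteq X \times [-M,M]$. Hence $\pi|_{Y_0}$ is a continuous bijection between compact Hausdorff spaces, thus a homeomorphism, and its inverse defines a continuous function $d:X \to \mathbb{R}$ by $(x, d(x)) \in Y_0$. Applying $\tilde T$ yields $(Tx, d(x)+c(x)) = \tilde T(x, d(x)) \in Y_0$, and by injectivity this equals $(Tx, d(Tx))$, so $d(Tx) - d(x) = c(x)$, proving that $c$ is a coboundary.

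The main obstacle, as always in Gottschalk--Hedlund, is the injectivity of $\pi|_{Y_0}$; everything else is formal. The commutation of $\tilde T$ with vertical translations together with minimality of $Y_0$ and compactness of the vertical slice is precisely what rules out two different heights over the same base point.
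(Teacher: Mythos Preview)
Your proof is correct and is the standard skew-product argument for the Gottschalk--Hedlund theorem. Note, however, that the paper does not actually prove this statement: it attributes the result to Gottschalk and Hedlund and refers the reader to \cite{Hasselblatt&Katok:1995} (Theorem~2.9.4) for a proof, which is essentially the argument you give. The paper instead proves a more refined, quantitative version in the special case of integer-valued cocycles over primitive proper substitution subshifts (Lemma~\ref{lem:cobound}), following an argument of Host; that lemma is what is actually used later.

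One small remark: hypothesis~(3) only gives boundedness of $c^{(n)}(x_0)$ for $n \in \mathbb{N}$, and the paper's definition of a dynamical system does not assume $T$ is invertible, so strictly speaking you should take $Y$ to be the closure of the \emph{forward} $\tilde T$-orbit of $(x_0,0)$ rather than the full orbit. Your argument goes through verbatim with this change (minimal subsets of forward-invariant compact sets exist, and two minimal sets that meet must coincide), so this is cosmetic.
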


Let us show this result in a particular case but obtaining more information.
Let $(X,S)$ be a subshift and $c : X \to \mathbb{Z}$ be a continuous map of radius $r$ defined by $\hat{c}$:  $c$ is constant, and equal to $\hat{c} (uv)$, on any cylinder $[u.v]$ where $|u|=r $ and $|v|=r+1$.
In the sequel we identify $\hat{c}$ with the vector $ (\hat{c} (uv))_{uv\in \mathcal{L}_{2r+1} (\sigma )}$. 

\begin{lem}
\label{lem:cobound}
Let $\sigma:A^* \to A^*$ be a primitive, aperiodic and proper substitution, and let $x \in A^\mathbb{Z}$ be its admissible fixed point.
Suppose that $c: X_\sigma \to \mathbb{Z}$ is a continuous map with radius $r$, defined by $\hat{c}$, such that $(c^{(n)})_n$ is bounded.
We have the following.
\begin{enumerate}
\item 
\label{cobord_item_2}
There exists $k>0$ such that $\sum_{i=e_1}^{e_2-1} c(S^i (x)) = 0$ for all $e_1,e_2 \in E (\sigma^k ,x  )$, where $E (\sigma^k ,x  ) = \{ i \in \mathbb{Z} \mid S^i (x) \in \sigma^k (X_\sigma)  \}$.
\item 
\label{cobord_item_3}
 $\hat{c } $ belongs to ${\rm Ker } \left( (M_{\sigma^{(r)} }^t)^{p_\sigma (2r+1)} \right)$, where $M^t$ denotes the transposed matrix of $M$ and $p_\sigma$ denotes the factor complexity function of $X_\sigma$.
\item
\label{cobord_item_4}
In \eqref{cobord_item_2}, $k$ can be chosen equal to $p_{\sigma }(2r+1)$.
\item 
\label{cobord_item_1}
$c = d \circ S  -d $ for some continuous map $d:X \to \mathbb{Z}$ having a radius bounded by 
\[
|\sigma^{p_{\sigma }(2r+1)}|+\max\{L_{\sigma^{p_{\sigma }(2r+1)}},r\}
\]
and such that 
\[
\max_{x \in X_\sigma}|d(x)| \leq |\sigma^{p_{\sigma }(2r+1)}| \max_{x \in X_\sigma} |c(x)|.
\]
\end{enumerate}
\end{lem}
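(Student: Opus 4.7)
The plan is to establish the four items in the order (1), (2), (3), (4), with (1) being the essential step and the others following by linear algebra and an explicit formula for $d$.

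For (1), I would invoke the Gottschalk-Hedlund theorem (Theorem~\ref{lemma:gott-hed}): the boundedness of $(c^{(n)})_n$ provides a continuous $d:X_\sigma \to \mathbb{R}$ with $c = d\circ S - d$. Since $c$ is integer-valued, the map $\bar d:X_\sigma \to \mathbb{R}/\mathbb{Z}$ is $S$-invariant, hence constant by minimality, so up to an additive constant I may assume $d:X_\sigma \to \mathbb{Z}$; by continuity on the compact totally disconnected space $X_\sigma$, $d$ has finite range and hence some finite radius $r_d$. Properness now enters: if $\sigma(A) \subset aA^*b$, every $y \in \sigma^k(X_\sigma)$ agrees with the admissible fixed point $x$ on the window $[-|\sigma^{k-1}(b)|, |\sigma^{k-1}(a)|-1]$, whose width tends to infinity with $k$. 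Choosing $k$ large enough that this width exceeds $r_d$, $d$ is constant equal to $d(x)$ on $\sigma^k(X_\sigma)$, and for any $e_1,e_2 \in E(\sigma^k,x)$,
\[
    \sum_{i=e_1}^{e_2-1} c(S^i x) = d(S^{e_2}x) - d(S^{e_1}x) = 0.
\]

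For (2), I translate (1) into linear algebra in the $r$-block representation. By Proposition~\ref{prop:conj-sublengthn} one has $E(\sigma^k,x) = E(\sigma^{(r)k}, x^{(r)})$, and for $e_1<e_2$ in that set the word $x^{(r)}_{[e_1,e_2-1]}$ equals $\sigma^{(r)k}(w)$ for some factor $w$ of $x^{(r)}$. Writing $\vec w$ for the occurrence vector of $w$ in $A^{(r)}$, the sum of (1) evaluates to $\hat c^{\, t} M_{\sigma^{(r)}}^k \vec w$. Letting $w$ run through all single letters of $A^{(r)}$ (each of which occurs between two consecutive cutting bars of $x^{(r)}$ by primitivity), (1) forces $(M_{\sigma^{(r)}}^t)^k \hat c = 0$. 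The increasing chain of kernels of the powers of a square matrix stabilises after at most its size, namely $\#A^{(r)} = p_\sigma(2r+1)$, which gives (2). Reading (2) backwards at $k = p_\sigma(2r+1)$ yields (3), and the same matrix identity shows that the cancellation in (3) in fact holds for every $y \in X_\sigma$ and every pair of consecutive cutting bars of $\sigma^{p_\sigma(2r+1)}$ at $y$, not only for $y = x$.

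For (4), set $k = p_\sigma(2r+1)$. By Corollary~\ref{lem:rec}, every $y \in X_\sigma$ admits a unique decomposition $y = S^j \sigma^k(z)$ with $z \in X_\sigma$ and $0 \leq j < |\sigma^k(z_0)|$, where the pair $(z_0, j)$ is determined by $y_{[-n,n]}$ with $n = L_{\sigma^k} + |\sigma^k|$. I define
\[
    d(y) = c^{(j)}(\sigma^k(z)) = \sum_{i=0}^{j-1} c(S^i \sigma^k(z)).
\]
Inside a single $\sigma^k$-block the identity $c = d\circ S - d$ is a telescoping sum; at a boundary $j+1 = |\sigma^k(z_0)|$ it reduces to $c^{(|\sigma^k(z_0)|)}(\sigma^k(z)) = 0$, which is precisely the extended form of (3) applied between the two consecutive cutting bars $0$ and $|\sigma^k(z_0)|$ of $\sigma^k$ at $\sigma^k(z)$. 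The trivial bound $j \leq |\sigma^k|$ yields $|d(y)| \leq |\sigma^k|\max|c|$, and evaluating the partial sum $c^{(j)}(\sigma^k(z))$ from $y$ requires an additional radius $r$, giving the total radius bound $|\sigma^k| + \max\{L_{\sigma^k}, r\}$. The main obstacle is the proof of (1): Gottschalk-Hedlund together with integer-lifting gives $d$, but the corresponding $k$ is only implicit, depending on the unknown radius of $d$; converting this into the effective bound $k = p_\sigma(2r+1)$ and the stated radius estimates really relies on reprocessing $d$ through the linear-algebraic step (2). The hint in the paper that the proof follows Host's approach to Gottschalk-Hedlund suggests that a self-contained treatment constructs $d$ directly by the block-by-block formula of step (4) and deduces (1) from it, bypassing the black-box use of Gottschalk-Hedlund.
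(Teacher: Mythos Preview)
Your argument is correct, and steps (2)--(4) are essentially identical to the paper's. The only genuine difference lies in step (1).

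For (1), the paper does \emph{not} invoke Gottschalk--Hedlund as a black box. Instead it runs Host's doubling argument directly: setting $U_k=\sigma^k(X_\sigma)$ and $\Lambda_k=\overline{\{c^{(n)}(x)\mid n\ge 0,\ S^n x\in U_k\}}$, one shows that $\Lambda=\bigcap_k\Lambda_k$ satisfies $a\in\Lambda\Rightarrow 2a\in\Lambda$, hence $\Lambda=\{0\}$ since $c$ is bounded. Properness enters only through the fact that $\{x\}=\bigcap_k U_k$, which makes any neighbourhood of $x$ contain some $U_k$ and allows the doubling step to go through. Because $c$ is integer-valued and $(\Lambda_k)_k$ is decreasing, some finite $k$ already gives $\Lambda_k=\{0\}$, which is (1).

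Your route---Gottschalk--Hedlund, lift $d$ to $\mathbb{Z}$ via the $S$-invariance of $d\bmod 1$, then observe that properness forces every point of $\sigma^k(X_\sigma)$ to agree with $x$ on an arbitrarily long central window, so $d$ is constant there once $k$ exceeds the (finite) radius of $d$---is perfectly valid and arguably more transparent about where properness is used. Both approaches produce only a non-effective $k$ at this stage; as you correctly note, the effective bound $k=p_\sigma(2r+1)$ comes from the linear-algebra step (2), and this is exactly how the paper proceeds as well. Your closing remark anticipates the paper's intent accurately: Host's argument is precisely a hands-on substitute for the black-box appeal to Gottschalk--Hedlund, tailored so that the subsequent matrix computation drops out naturally.
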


\begin{proof}
Let $U_k = \sigma^k (X_\sigma)$. 
Since the admissible fixed point $x$ of $\sigma$ satisfies $\{ x \} = \cap_{k\in \mathbb{N}} U_k$, for any neighborhood $V$ of $x$ there exists $k$ such that $U_k$ is included in $V$. 
We set
$$
\Lambda_k = \{c^{(n)}(x) \mid n \in \mathbb{N}, S^n(x)\in U_k\} \hbox{  and  } \Lambda = \cap_{k\in \mathbb{N}} \Lambda_k  .
$$
Obviously $0$ belongs to  $\Lambda $.
Let us show that $\Lambda = \{ 0 \}$.
The map $c$ being continuous and thus bounded, it is sufficient to prove that $2a$ belongs to $\Lambda $ whenever $a$ belongs to $\Lambda$. 

Let $a\in \Lambda$ and  $k \in \mathbb{N}$. 
There exists $n\geq 0$ such that $S^n (x) $ belongs to $U_k$ and $c^{(n)} (x) = a$.
Because $S^n$ and $c^{(n)}$ are continuous, there exists a clopen set $V$ included in $U_k$ and containing $x$ such that for all $y \in V$,
$$
S^n (y) \in U_k \hbox{  and  } c^{(n)} (y) = a.
$$
Observe that there exists $l$ such that $U_l$ is included in $V$. 
Thus, one can suppose $V = U_l$.
As $a$ belongs to $\Lambda_l$, there exists $m\geq 0$ with $S^m (x)$ in $U_l$ and $c^{(m)} (x) = a$.  
We get that $S^{n+m} (x)$ belongs to $U_k$ and $c^{(n+m)} (x) = c^{(n)} (S^m (x)) +  c^{(m)} (x)  = 2a$.
Thus $2a $ belongs to $\Lambda_k$.
As this is true for all $k$, we get that $2a$ belongs to $\Lambda $.

\medskip
Observe that, since $c$ takes integer values and $\Lambda_{l+1} \subset \Lambda_l$ for all $l$, there exists $k$ such that $\Lambda_l = \{ 0\}$ for all $l\geq k$.
Thus \eqref{cobord_item_2} is proved.

\medskip

Consider $\sigma^{(r)} : A^{(r)*} \to A^{(r)*}$ the substitution on the blocks of radius $r$ defined in Section~\ref{subsec:sublengthn} and the notation used therein.
Let $e_1$ and $e_2$ be two consecutive integers in $E (\sigma^k , x)$ and $uv\in \mathcal{L} (\sigma )$ with $|u|=r$ and $|v| = r+1$.
The number of occurrences of $uv$ in $x_{[e_1-r , e_2-1+r]}$ is equal to the number of occurrences of $(uv)$ in 
$$
W= (x_{[e_1-r,e_1+r]}) (x_{[e_1-r+1,e_1+r+1]}) \cdots (x_{[e_2-1-r,e_2-1+r]}) .
$$
Then, as $c^{(e_1)} (x) = c^{(e_2)} (x) = 0$, one gets
\begin{align}
\label{eq:cobordzero1}
0=\sum_{i=e_1}^{e_2-1} c(S^i (x)) = 
\sum_{i=e_1}^{e_2-1} \hat{c} (x_{[i-r,i+r]} )
\end{align}
Observe that we can write
\begin{align}
\label{eq:cobordzero2}
\sum_{i=e_1}^{e_2-1} \hat{c}  (x_{[i-r,i+r]} )
= \langle \hat{  c } , \overrightarrow{W} \rangle,
\end{align}
where $\overrightarrow{W}=(|W|_{w})_{w\in \mathcal{L}_{2r+1} (\sigma )}$,  $\hat{c} = (\hat{c}  (w))_{w\in \mathcal{L}_{2r+1} (\sigma )}$ and $\langle \cdot , \cdot \rangle$ stands for the usual scalar product. 

From Proposition~\ref{prop:conj-sublengthn}, we have 
\begin{align*}
E (\sigma^k ,x  ) = E (\sigma^{(r)k} ,x^{(r)}  ) .
\end{align*}
Hence, $e_1$ and $e_2$ also belong to $E (\sigma^{(r)k} ,x^{(r)}  )$ and $W = \sigma^{(r)k} (\beta )$ for some $\beta \in A^{(r)}$.

Let ${\bf 1}_\beta$ be the column vector defined by ${\bf 1}_\beta (\beta )= 1$ and $0$ elsewhere. 
From the conjunction of \eqref{eq:cobordzero1} and \eqref{eq:cobordzero2} we obtain
\begin{align*}
 0= \langle \hat{ c } , M_{\sigma^{(r)}}^k {\bf 1}_\beta  \rangle .
\end{align*}
As this is true for all such $e_1$ and $e_2$, it is true for all $\beta \in A^{(r)}$ and one gets $ {\hat{ c }}^t M_{\sigma^{(r)} }^k  = 0$.
The dimension of $M_{\sigma_r }$ is $p_\sigma (2r+1)\times p_\sigma (2r+1)$, hence $\hat{ c}^t M_{\sigma^{(r)} }^{p_\sigma (2r+1)} = 0$ and \eqref{cobord_item_3} is proved.
Tracing back the previous arguments shows that for all $e_1, e_2\in E (\sigma^{p_\sigma (2r+1)} ,x  )$ one gets
\begin{align*}
0=\sum_{i=e_1}^{e_2-1} c(S^i (x))
\end{align*}
and that $k$ could be chosen to be $p_\sigma (2r+1)$. 
This proves \eqref{cobord_item_4} and we set $k = p_\sigma (2r+1)$.

\medskip

Let us find a continuous map $d: X \to \mathbb{Z}$ such that $c = d\circ S - d$.

For all $n \in \mathbb{N}$, we set $d(S^n(x)) = c^{(n)}(x)$.
On the positive orbit $\mathcal{O}_+(x) = \{S^n(x) \mid n \in \mathbb{N}\}$, we trivially have $c = d\circ S - d$.
Let us show that $d$ is uniformly continuous on $\mathcal{O}_+(x)$.
By density of $\mathcal{O}_+(x)$ in $X$, the function will then uniquely extend to a continuous function $d$ on $X$ still satisfying $c = d\circ S - d$.

For all $n \in \mathbb{N}$, there exists $\ell \in \{0,1,\dots,|\sigma^k|\}$ such that $S^{n+\ell}(x) \in U_k$.
Observe that all functions $c^{(\ell)}$ and $S^\ell$, $\ell \in \{0,1,\dots,|\sigma^k|\}$, are uniformly continuous on $X$.
Thus there exists $L > 0$ such that for all $m,n \in \mathbb{N}$ such that $S^m(x)$ and $S^n(x)$ coincide on the indices $[-L,L]$, there exists $\ell \in \{0,1,\dots,|\sigma^k|\}$ such that $S^{\ell+m}(x)$ and $S^{\ell+n}(x)$ belong to $U_k$ and $c^{(\ell)}(S^m(x)) = c^{(\ell)}(S^n(x))$.
Observe that, as $S^{\ell+m}(x)$ and $S^{\ell+n}(x)$ belong to $ U_k$, we have $c^{(m+\ell)}(x)=c^{(n+\ell)}(x)=0$.
We thus have
\begin{eqnarray*}
	|d(S^m(x))-d(S^n(x))| 
	&=& |c^{(m)}(x)-c^{(n)}(x)| 		\\
	&=& |c^{(m)}(x)+c^{(\ell)}(S^m(x))-c^{(n)}(x)-c^{(\ell)}(S^n(x))| \\
	&\leq & |c^{(m+\ell)}(x)| + |c^{(n+\ell)}(x)| = 0,	
\end{eqnarray*}
which shows that $d$ is uniformly continuous on $\mathcal{O}_+(x)$.

\medskip

Let us end showing that the radius of $d$ is at most $|\sigma^k|+\max\{L_{\sigma^k},r\}$. 
We recall (see Corollary~\ref{lem:rec}) that ${\mathcal{P}}=\{ S^m \sigma^k ([a] ) \mid a\in A,  0\leq m < |\sigma^k (a) | \}$ is a clopen partition of $X_{\sigma^k} = X_\sigma$.

Let $n \in \mathbb{N}$ with $n\geq \max_{a \in A} |\sigma^k (a) |$.
We have $S^n(x) \in S^{m(S^n(x))} \sigma^k ([a] )$ for some $a\in A$ and some $m(S^n(x)) \in \{0,\dots, |\sigma^k (a)|-1\}$. In particular, we have $n-m(S^n(x)) \geq 0$ and, since $S^{n-m(S^n(x))}(x) \in U_k$, $c^{(n-m(S^n(x)))}(x) = 0$.
We get
\[
	d(S^n(x)) = c^{(n)}(x) = c^{(m(S^n(x)))}(S^{n-m(S^n(x))}(x)).
\]
By continuity of $d$, for all $y \in X$, we have 
\begin{equation}
\label{eq:d(y)}
	d(y) = c^{(m(y))}(S^{-m(y)}y), 
\end{equation}
where $0 \leq m(y) < |\sigma^k(a)|$ for some $a \in A$ such that $y \in S^{m(y)} \sigma^k([a])$.
By Corollary~\ref{lem:rec}, the function $m: X \to \{0,\dots,|\sigma^k|\}$ is continuous and has radius at most $L_{\sigma^k}+|\sigma^k|$, hence $d$ has radius at most $|\sigma^k|+\max\{L_{\sigma^k},r\}$.
The inequality
\[
\max_{x \in X_\sigma}|d(x)| \leq |\sigma^{k}| \max_{x \in X_\sigma} |c(x)|
\]
directly follows from~\eqref{eq:d(y)}.
\end{proof}

Below we prove a more detailed version of Theorem~\ref{theo:main} when the subshift are generated by substitutions and where $\sigma$ is a proper substitution.

\begin{theo}
\label{theo:mainpropersub}
Let $\sigma : A^* \to A^*$ and $\tau : B^* \to B^*$ be two aperiodic and positive substitutions, $\sigma$ being proper.
Suppose $\sigma$ and $\tau$ have the same dominant eigenvalue $\lambda$.
The following are equivalent.
\begin{enumerate}
\item
\label{item:factormap}
There exists a factor map $f : (X_\sigma , S)\to (X_\tau , S)$.
\item
\label{item:dillmap}
There exists a dill map $F: (X_\sigma , S)\to (X_\tau , S)$ such that
\begin{itemize}
\item
its radius $(s_1, s_2)$ satisfies
$$
	s_1 
	\leq 
	\lceil K_\tau \lambda (\lfloor (K_\tau+K_\sigma) \lambda \rfloor ) + L_\tau +2 \rceil 
	\quad \text{ and } \quad
	s_2 
	\leq 
	\lfloor (K_\tau+K_\sigma) \lambda \rfloor.
$$
\item
for all $x\in X_\tau$, $F\circ S (x) = S^{c(x)} \circ F (x)$ where $c:X\to \mathbb{Z}$ is a continuous map satisfying that $(\sum_{k=0}^{n-1} (c \circ S^k -1))_n$ is bounded.
\end{itemize}
\item
\label{item:smallradius}
There exists a factor map $g : (X_\sigma , S)\to (X_\tau , S)$ having a radius bounded by, with $R = \lceil K_\tau \lambda (\lfloor (K_\tau+K_\sigma) \lambda \rfloor ) + L_\tau +2 \rceil$, 
\[
    \lfloor (K_\tau+K_\sigma) \lambda \rfloor
	(K_\sigma+1) 
	(2 R+1)
	|\sigma^{p_{\sigma }(2R+1)}|.
\]
\end{enumerate}
Moreover, once given $f$ one can find $g$ (as above) such that $f= S^n \circ g$ for some $n$.
\end{theo}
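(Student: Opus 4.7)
The plan is to prove the cycle $(1) \Rightarrow (2) \Rightarrow (3) \Rightarrow (1)$ and deduce the ``moreover'' clause from the specific construction. For $(1) \Rightarrow (2)$, starting with a factor map $f$ of some radius $r$, I use the family of dill maps $F_n$ defined in~\eqref{align:Fn}. Proposition~\ref{lemme:boundradius} already shows that each $F_n$ is a dill map and that, as soon as $r/|\tau^n|\leq 1$, its radius pair meets the announced bounds. By Corollary~\ref{coro:Fnfinite} the set $\{F_n \mid n \in \N\}$ is finite, so I pick $F := F_{n_0}$ with $F_{n_0} = F_{n_i}$ for an infinite increasing subsequence $(n_i)_i$ of integers satisfying $r/|\tau^{n_i}|\leq 1$. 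The delicate point is to verify that the cocycle $c$ of $F$ satisfies the boundedness of $\bigl(\sum_{k=0}^{N-1}(c\circ S^k - 1)\bigr)_N$. I would iterate the intertwining identity stemming from $\tau^n \circ F_n = S^{r_n\circ f\circ \sigma^n} \circ f \circ \sigma^n$ to obtain, for every $n_i$,
\[
\bigl|\tau^{n_i}(F(x)_{[0,\,c^{(N)}(x)-1]})\bigr| - |\sigma^{n_i}(x_{[0,N-1]})| = r_{n_i}(f\sigma^{n_i}(S^N x)) - r_{n_i}(f\sigma^{n_i}(x)),
\]
whose right-hand side is bounded in absolute value by $|\tau^{n_i}|$. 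Dividing by $\rho^{n_i}$ (with $\rho$ the common dominant eigenvalue of $\sigma$ and $\tau$, available thanks to Theorem~\ref{theo:cobham}) and letting $i\to \infty$, the biorthogonal Perron normalization of the left/right eigenvectors against the unique invariant measures forces $c^{(N)}(x) - N = O(1)$ uniformly in $x$ and $N$.

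For $(2) \Rightarrow (3)$, Theorem~\ref{lemma:gott-hed} (Gottschalk--Hedlund), applied to the integer-valued continuous function $1 - c$, produces a continuous $d : X_\sigma \to \Z$ with $1 - c = d\circ S - d$. Since $\sigma$ is proper, the quantitative refinement of Lemma~\ref{lem:cobound} yields explicit bounds on the radius of $d$ and on its sup norm, of orders $|\sigma^{p_\sigma(2s_1+1)}|$ and $|\sigma^{p_\sigma(2s_1+1)}|(s_2+1)$ respectively. I then define $g(x) := S^{d(x)} F(x)$; a direct computation
\[
g(Sx) = S^{d(Sx)+c(x)} F(x) = S^{d(x)+1} F(x) = S\,g(x)
\]
shows that $g$ commutes with the shift, so by continuity and minimality $g$ is a factor map from $(X_\sigma,S)$ onto $(X_\tau,S)$. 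Its radius is bounded by combining the radius pair $(s_1,s_2)$ of $F$ with the radius and sup norm of $d$; plugging in $s_1 \leq R := K_\sigma^4(2K_\sigma^4 + 2 + L_\tau)$ and $s_2 \leq 2K_\sigma^4$ yields the announced bound $2K_\sigma^4(K_\sigma+1)(2R+1)|\sigma^{p_\sigma(2R+1)}|$. The implication $(3) \Rightarrow (1)$ is trivial.

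For the ``moreover'' clause, the construction of $g$ through $F = F_{n_0}$ produces a factor map orbitally related to $f$: indeed $\tau^{n_0}\circ g$ and $f\circ \sigma^{n_0}$ differ only along $S$-orbits in $X_\tau$, and Moss\'e recognizability lets one transfer this orbital relation through $\tau^{n_0}$ to obtain a continuous integer cocycle comparing $g$ and $f$ themselves. The unnumbered lemma at the end of the first subsection of Section~\ref{section:bounds} (two orbitally related factor maps between a minimal source and an aperiodic target differ by a shift) then gives $f = S^n \circ g$ for some $n \in \Z$.

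The main technical obstacle is the coboundary verification in $(1) \Rightarrow (2)$: the partial-sum estimate relies on the precise commensurability of the growth rates of $\sigma^n$ and $\tau^n$ against their unique invariant measures, and it is exactly here that the common dominant eigenvalue hypothesis, together with the biorthogonal Perron normalization, does the essential work. The remaining steps --- the radius bounds, the coboundary inversion and the orbital-relation tracking --- are principally bookkeeping given the tools developed earlier in the paper.
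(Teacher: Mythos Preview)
Your overall architecture --- the cycle $(1)\Rightarrow(2)\Rightarrow(3)\Rightarrow(1)$, the choice of $F$ among the $F_n$'s via Proposition~\ref{lemme:boundradius} and Corollary~\ref{coro:Fnfinite}, the Gottschalk--Hedlund inversion via Lemma~\ref{lem:cobound}, and the orbital-relation argument for the ``moreover'' clause --- matches the paper's proof. The step $(2)\Rightarrow(3)$ and the final clause are essentially the same as the paper's (up to a harmless sign convention in the definition of $g$).

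The gap is in your verification that the cocycle $c$ of $F$ satisfies $c^{(N)}(x)-N=O(1)$ uniformly. Your displayed identity is correct, and after dividing by $\rho^{n_i}$ and letting $i\to\infty$ you do obtain, uniformly in $N$ and $x$,
\[
\Bigl|\sum_{b\in B}\ell^\tau_b\,\bigl|F(x)_{[0,c^{(N)}(x)-1]}\bigr|_b-\sum_{a\in A}\ell^\sigma_a\,\bigl|x_{[0,N-1]}\bigr|_a\Bigr|\leq K_\sigma^2,
\]
where $\ell^\tau,\ell^\sigma$ are the Perron length vectors. But this is a \emph{weighted} identity: it only yields $c^{(N)}(x)\leq C\,N+C'$ for constants depending on $\min_b\ell^\tau_b$ and $\max_a\ell^\sigma_a$, not $c^{(N)}(x)-N=O(1)$. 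The ``biorthogonal Perron normalization'' cannot force all $\ell^\tau_b$ and $\ell^\sigma_a$ to equal $1$ unless both substitutions are of constant length, so the conclusion does not follow.

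The paper avoids this difficulty entirely. From $F_n=F_m$ it uses Lemma~\ref{lem:equiv} repeatedly to descend to $F_0\equiv F_{n-m}$, and then takes $F:=F_{n-m}$ (not $F_{n_0}$). Since $F_0=f$, one now has $F\equiv f$, so there is a continuous $d:X_\sigma\to\Z$ with $F(Sx)=S^{d(x)}f(x)$; a one-line computation then gives $c(x)-1=d(x)-d(S^{-1}x)$, whence $c^{(N)}(x)-N=d(S^{N-1}x)-d(S^{-1}x)$ is bounded by $2\|d\|_\infty$. This is exactly the orbital-relation idea you invoke later for the ``moreover'' clause; had you used it here, your argument would be complete. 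In short: replace the Perron-limit step by the descent via Lemma~\ref{lem:equiv} and the resulting explicit coboundary witness.
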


\begin{proof}
Of course \eqref{item:smallradius} implies \eqref{item:factormap}.
Let us first show that \eqref{item:factormap} implies \eqref{item:dillmap}.
Recall that for all $n$, $F_n: X_\sigma \to X_\tau$ is the dill map defined by
\[
	\tau^n \circ F_n = S^{r_n \circ f \circ \sigma^n} \circ f \circ \sigma^n.
\]
Let $r$ be a radius of $f$ and $L_\tau$ the constant of recognizability of $\tau$.
From Proposition~\ref{lemme:boundradius}, we have
$$
\#\{ F_n \mid r/|\tau^n| \leq 1 \}  
\leq 
\left(
    K_\tau \lambda^2
    (K_\sigma+K_\tau)
    \right)^{2 K_\sigma K_\tau \lambda^2 (K_\tau+K_\sigma + L_\tau +3 )}.
$$
Thus there exist $n$ and $m$, $n \geq m$, verifying $|\tau^{n-m}| \geq r$ and such that $F_n = F_m$.
From Lemma~\ref{lem:equiv} we obtain that $f=F_0 \equiv F_{n-m}$.

We fix $N = n-m$, $F = F_N$ and we let $(s_1,s_2)$ be the radius pair of $F$.
From Proposition~\ref{lemme:boundradius} one has 
$$
	s_1 
	\leq 
	\lceil K_\tau \lambda (\lfloor (K_\tau+K_\sigma) \lambda \rfloor ) + L_\tau +2 \rceil 
	\quad \text{ and } \quad
	s_2 
	\leq 
	\lfloor (K_\tau+K_\sigma) \lambda \rfloor.
$$
Let $c,d : X_\sigma \to \mathbb{Z}$ be the unique continuous maps verifying for all $x\in X_\sigma$, 
\[
	F (Sx) = S^{c (x) } F (x)
	\quad \text{and} \quad
	F (Sx) = S^{d (x) } f (x).
\]
One has $|c(x)| \leq s_2$ for all $x\in X_\sigma$. 
Furthermore, we have, for all $x$, 
\[
	F (S x) = S^{d (x) +1} f (S^{-1} x) = S^{d (x) +1-d (S^{-1} (x))} F ( x),
\]
hence $c(x) = d (x)-d (S^{-1} (x)) +1$.
This proves~\eqref{item:dillmap}.

\medskip

Let us show that~\eqref{item:dillmap} implies~\eqref{item:smallradius}.
Observe that the radius $R$ of $c$ is at most $\lceil K_\tau \lambda (\lfloor (K_\tau+K_\sigma) \lambda \rfloor ) + L_\tau +2 \rceil$.
From Lemma~\ref{lem:cobound}, there exists a continuous map $D: X \to \mathbb{Z}$ with radius 
\[
	H \leq |\sigma^{p_{\sigma }(2R+1)}|+\max\{L_{\sigma^{p_{\sigma }(2R+1)}},R\}
\]
such that $c -1 = D\circ S - D$ and for all $x \in X_\sigma$,
\[
	|D(x)| \leq \lfloor (K_\tau+K_\sigma) \lambda \rfloor |\sigma^{p_{\sigma }(2R+1)}|.
\]
 
Let $G: X\to Y$ be the continuous map defined by $G (x) = S^{-D(x)} \circ F(x)$.
We have:
\begin{align*}
G\circ S (x) & =  S^{-D(S (x))} \circ F(S(x)) = S^{-D(S (x)) +c(x)-1+1} \circ F(x) \\
& = S^{-D(x)+1} \circ F(x) = S\circ G(x) .
\end{align*}
Thus $G$ is a factor map from $(X_\sigma , S)$ to $(X_\tau , S)$.

Let us give a bound on the radius of $G$.
Let us set $M = \lfloor (K_\tau+K_\sigma) \lambda \rfloor |\sigma^{p_{\sigma }(2R+1)}|$.
The radius of $G$ is at most equal to $N$, where $N$ is such that for all $x \in X_\sigma$, $x_{[-N,N]}$ uniquely determines $F(x)_{[-M,M]}$, i.e.,
for all $x,y \in X_\sigma$, 
\[
x_{[-N,N]} = y_{[-N,N]} \Longrightarrow F(x)_{[-M,M]} = F(y)_{[-M,M]}.
\]
Let $\phi$ be the implementation of $F$ given by Lemma~\ref{lemme:implementationdill}.
For all $n$, we have
\[
\begin{array}{lcll}
F(x)_{[0,c^{(n)} (x) -1]} 
&=& \phi (x) \phi (Sx) \cdots \phi (S^{n-1} x), & \quad  \text{if} \; n> 0; \\
F(x)_{[c^{(n)} (x) , -1]} 
&=& \phi (S^{n} x) \phi (S^{n+1} x) \cdots \phi (S^{-1} x), & \quad  \text{if} \; n< 0.
\end{array}
\]
Furthermore, there is a local map $\hat{\phi}: A^{2s_1+1} \to B^*$ defining $\phi$ so that 
\[
\begin{array}{lcll}
F(x)_{[0,c^{(n)} (x) -1]} 
&=& \prod_{i=0}^{n-1} \hat{\phi} (x_{[-s_1+i,s_1+i]}), & \quad  \text{if }  n> 0; \\
F(x)_{[c^{(n)} (x) , -1]} 
&=& \prod_{i=0}^{n-1} \hat{\phi} (x_{[-s_1-n+i,s_1-n+i]}), & \quad  \text{if }  n< 0.
\end{array}
\]
As $F$ is a dill map, there is a word $u \in \mathcal{L}(\sigma)$ of length $2s_1+1$ such that $\hat{\phi}(u) \neq \epsilon$.
By Theorem~\ref{theo:encad}, $u$ occurs in any word $v \in \mathcal{L}(\sigma)$ of length $(K_\sigma+1) (2s_1+1)$.
Therefore, $u$ occurs at least $M$ times both in 
\[
	x_{[-s_1,s_1+M(K_\sigma+1) (2s_1+1)]}
	\quad \text{ and in } \quad
	x_{[-s_1-1-M(K_\sigma+1) (2s_1+1),s_1-1]}.
\]
The radius of $G$ is thus at most equal to
\begin{eqnarray*}
	N 
	&\leq & 
	M(K_\sigma+1) (2s_1+1) \\
	&\leq & 
	\lfloor (K_\tau+K_\sigma) \lambda \rfloor
	(K_\sigma+1) 
	(2 \lceil K_\tau \lambda (\lfloor (K_\tau+K_\sigma) \lambda \rfloor ) + L_\tau +2 \rceil+1)
	|\sigma^{p_{\sigma }(2R+1)}|,
\end{eqnarray*}
which ends the proof.
\end{proof}

As a corollary, and using Theorem \ref{theo:durand13bbis}, we obtain Theorem~\ref{theo:main} when $(Y,S)$ is aperiodic.

\section{Decidability of the factorization in the case of periodic subshifts}\label{section:periodic}

In this section we consider Theorem~\ref{theo:cordecidfactor}, Corollary~\ref{cor:cordecidfactor}, Theorem~\ref{theo:main} and Theorem~\ref{theo:main2} under the assumptions that $(X,S)$ and, or, $(Y,S)$ are periodic. 
Recall that a minimal subshift $(X,S)$ is periodic if there is some $n>0$ such that $S^n = \id$. 
Such an $n$ is called a {\em period} of $(X,S)$ and for any $x \in X$, we have $X = \{x,Sx,S^2x,\dots S^{n-1}x\}$. 

As we are dealing with decision problems, it is important to recall that it is decidable to know whether a minimal morphic subshift is periodic and that the minimal period can be computed.

\begin{theo}[\cite{Durand:2013}]
\label{theo:decidperiod}
Let $x$ be a uniformly recurrent morphic sequence. 
It is decidable whether $x$ is periodic.
Moreover some $u$ such that $x=uuu\cdots $ with $|u|$ minimal can be computed. \end{theo}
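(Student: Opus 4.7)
The plan is to combine a combinatorial characterization of periodicity via return words with the effective control on derived sequences given by Theorem~\ref{theo:morphic UR ssi finite derive}. The key observation is the following equivalence, which I would prove first: \emph{a uniformly recurrent sequence $x$ is periodic if and only if there exists a prefix $v$ of $x$ with $\#\mathcal{R}_{x,v} = 1$}, and in that case $x^+ = \Theta_{x,v}(1)^\omega$. The ``if'' direction holds because, if $\mathcal{R}_{x,v} = \{w\}$, then consecutive occurrences of $v$ in $x^+$ are separated by the fixed distance $|w|$ and $x^+$ starts with $v$, which forces $x^+ = w^\omega$. The ``only if'' direction is immediate: if $x = u^\omega$ with $|u|$ minimal, then any prefix $v$ of length at least $|u|$ has exactly one return word in $x$, of length $|u|$.

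With this characterization in hand, I would enumerate the prefixes $v_1,v_2,\ldots$ of $x$ in increasing length; by Theorem~\ref{theo:morphic UR ssi finite derive}(1), for each $v_n$ both $\Theta_{x,v_n}$ and the pair $(\sigma_{v_n},\psi_{v_n})$ encoding the derived sequence $\mathcal{D}_{v_n}(x)$ are computable. At each step I would test whether $\#\mathcal{R}_{x,v_n} = 1$, equivalently whether the codomain of $\psi_{v_n}$ is a singleton; as soon as this holds, output $u = \Theta_{x,v_n}(1)$ and declare $x$ periodic, correctness being guaranteed by the characterization. Termination in the aperiodic case rests on Theorem~\ref{theo:morphic UR ssi finite derive}(2): the set $\{(\sigma_v,\psi_v) \mid v \text{ prefix of } x\}$ has computable cardinality at most $C$, so once every such pair has been observed and none yields a singleton derived alphabet, no prefix of $x$ can and $x$ is declared aperiodic.

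The main obstacle is certifying that all distinct pairs have actually been seen, i.e.\ producing a halting criterion. Two options are available. One may extract from the proof of Theorem~\ref{theo:morphic UR ssi finite derive} in~\cite{Durand:2013b} an explicit prefix length $N$, computable from the morphic presentation $(\sigma,\phi)$ of $x$, beyond which the pair $(\sigma_{v_n},\psi_{v_n})$ must coincide with one already met for some smaller $n$; the construction in that paper, which puts the pairs in bijective correspondence with a finite computable list of data directly built from $\sigma$ and $\phi$, provides such an $N$. Alternatively, one simply iterates until the running set of observed pairs either contains one with $\#\mathcal{R}_{x,v_n} = 1$ (in which case $x$ is periodic) or reaches cardinality $C$ (in which case, by Theorem~\ref{theo:morphic UR ssi finite derive}(2), no further pair can appear, so $x$ is aperiodic). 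In both cases the unique period $u$ in the periodic case is returned as $\Theta_{x,v_n}(1)$, completing the computability of $u$.
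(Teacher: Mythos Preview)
The paper does not prove this theorem; it simply cites \cite{Durand:2013}. Your proposal is therefore an independent argument, and the characterization of periodicity via a prefix $v$ with $\#\mathcal{R}_{x,v}=1$ is correct and standard.

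Your Option~1 is legitimate: the construction in \cite{Durand:2013b} does produce the pairs $(\sigma_u,\psi_u)$ from a finite combinatorial gadget built out of $(\sigma,\phi)$, and tracing that construction yields a computable $N$ beyond which every pair has already appeared; since equality of pairs forces equality of the codomain alphabets $R_{x,v}$, the singleton test on prefixes of length $\le N$ is then conclusive.

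Your Option~2, however, has a genuine gap. The constant $C$ of Theorem~\ref{theo:morphic UR ssi finite derive}(2) is only an \emph{upper bound} on the number of distinct pairs. If $x$ is aperiodic and the actual number of pairs is some $C'<C$, then the running set of observed pairs stabilizes at cardinality $C'$ and never reaches $C$; your algorithm loops forever. Waiting for a repeat (which the pigeonhole principle does guarantee within $C{+}1$ steps) does not fix this either: a repeat $(\sigma_{v_n},\psi_{v_n})=(\sigma_{v_m},\psi_{v_m})$ with $\#R_{x,v_n}\ge 2$ does not by itself preclude a later prefix $v_k$ with $\#\mathcal{R}_{x,v_k}=1$. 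A clean way to repair this is to wait for a repeat satisfying the extra hypotheses of Proposition~\ref{prop:derived sequence fixed point}(2); then $\mathcal{D}_{v_n}(x)$ is the fixed point of a computable primitive substitution, and one reduces to deciding periodicity for primitive substitutive sequences, for which a computable linear-recurrence constant (Proposition~\ref{prop:sublinrec}) together with Theorem~\ref{theo:encad} gives an explicit halting bound.
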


We also recall Fine and Wilf theorem.
\begin{theo}[\cite{Fine&Wilf:1965}]
Let $x,y$ be two sequences with respective periods $p$ and $q$. 
If $x$ and $y$ coincide on the indices $\{0,1,\dots,p+q-\gcd(p,q)-1\}$, then they are equal and have period $\gcd(p,q)$. 
\end{theo}
In particular, if $p$ and $q$ are two periods of a minimal periodic subshift $(X,S)$, then every sequence in $X$ has periods $p$ and $q$, so also has period $\gcd(p,q)$. 
Thus $\gcd(p,q)$ is also a period of $(X,S)$.

Let us give the following notation that will facilitate the description of the different situations. 
Let $(X,T)$ be a minimal topological dynamical system and consider the set 
\[
	P (X,T) = \{p \in \N \mid (X,T) \text{ admits a factor of cardinality } p\}.
\]

For the remaining of this section, we assume that $(Y,S)$ is periodic with minimal period $q$ and we write $Y = \{y,Sy,\dots S^{q-1}y\}$ for some fixed $y \in Y$.
We also assume without loss of generality that $X = X_\sigma$ for some primitive and proper substitution $\sigma:A^* \to A^*$.
We can trivially make this assumption if $(X,S)$ is periodic. 
Otherwise, we use Theorem~\ref{theo:durand13bbis}.

\subsubsection*{Proof of Theorem~\ref{theo:cordecidfactor} and Corollary~\ref{cor:cordecidfactor} for the periodic case}\label{subsubsec:proofperiodic}

If $(X,S)$ is periodic with minimal period $p$ then it suffices to test whether $q$ divides $p$ to decide whether $(Y,S)$ is  a factor of $(X,S)$.
Indeed, if $(Y,S)$ is  a factor of $(X,S)$, then both $p$ and $q$ are periods of $(Y,S)$ and thus so is $\gcd(p,q)$. 
As $q$ is the minimal period of $(Y,S)$, we get $q = \gcd(p,q)$, hence $p$ is a multiple of $q$.
Respectively, if $q$ divides $p$, then we may write $X = \{x,Sx,\dots S^{nq-1}x\}$ for some $x \in X$.
Then the map $f: X \to Y$ defined by $f(S^{i}x) = S^{i \bmod q}y$ is a factor map from $(X,S)$ to $(Y,S)$.

Now assume that $(X,S)$ is not periodic. 
Then the following result allows to conclude for the proofs of~\ref{theo:cordecidfactor} and Corollary~\ref{cor:cordecidfactor} in this periodic context.

\begin{theo}[\cite{Durand&Goyheneche:2019}]
\label{theo:valerie}
Let $(X,S)$ be an aperiodic uniformly recurrent morphic subshift and $p$ be a positive integer.
It is decidable to check whether $p$ belongs to $P (X,S)$ or not. 
\end{theo}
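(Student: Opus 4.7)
The plan is to first handle the periodic case directly, then reduce to a primitive proper substitution subshift via Theorem~\ref{theo:durand13bbis}, characterize $p \in P(X_\sigma, S)$ in terms of return word lengths modulo $p$, and finally reduce the decision to iterating a linear map modulo $p$ on a finite set. By Theorem~\ref{theo:decidperiod} we can decide whether $(X, S)$ is periodic and compute a period $q$; then $P(X, S)$ is the set of divisors of $q$, so the problem reduces to checking whether $p \mid q$. Assuming henceforth that $(X,S)$ is aperiodic, Theorem~\ref{theo:durand13bbis} yields a computable primitive proper substitution $\sigma : A^* \to A^*$ and an isomorphism $(X_\sigma, S) \to (X, S)$, under which factor maps of cardinality $p$ correspond bijectively, so $p \in P(X,S)$ if and only if $p \in P(X_\sigma, S)$.

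Let $x$ be an admissible fixed point of $\sigma$. The key characterization I would establish is: $p \in P(X_\sigma, S)$ if and only if there exists a prefix $u$ of $x^+$ such that $|w| \equiv 0 \pmod{p}$ for every $w \in \mathcal{R}_{X_\sigma, u}$. For the forward direction, a factor map $f:X_\sigma \to \mathbb{Z}/p\mathbb{Z}$ has some local radius $r$ and is therefore constant on the cylinder $\{y : y_{[-r, -r+|u|-1]} = u\}$ whenever $|u| \geq 2r+1$; any return word $w$ to $u$ yields a point $y$ with both $y$ and $S^{|w|}y$ in this cylinder, and the $S$-equivariance of $f$ forces $|w| \equiv 0 \pmod{p}$. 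For the converse, letting $n(y)$ denote the largest integer $n \leq 0$ with $y_{[n, n+|u|-1]} = u$ (well-defined and bounded by uniform recurrence) and setting $f(y) \equiv -n(y) \pmod{p}$, the hypothesis on return word lengths makes $f$ equivariant (the only delicate case is when $u$ additionally occurs at position $1$, in which case the return word length $1 - n(y)$ is divisible by $p$), while uniform recurrence makes $f$ locally constant.

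To decide the existence of such a $u$, apply Proposition~\ref{prop:derived sequence fixed point} together with Theorem~\ref{theo:morphic UR ssi finite derive} to compute prefixes $u \leq u'$ of $x^+$ with $\mathcal{D}_u(x^+) = \mathcal{D}_{u'}(x^+)$ and a primitive left-proper substitution $\lambda = \lambda_{x, u, u'}$ such that $\Theta_{x, u'} = \Theta_{x, u} \circ \lambda$ and $\mathcal{D}_u(x^+) = \lambda^{\omega}(1)$. Iterating this structure produces prefixes $(u_n)_{n \geq 0}$ of $x^+$ of unbounded length (by primitivity of $\lambda$), whose return-word length vectors $L^{(n)} = (|\Theta_{x, u}(\lambda^n(i))|)_i$ satisfy the linear recurrence $L^{(n)} = M_\lambda^T L^{(n-1)}$, where $M_\lambda$ is the incidence matrix of $\lambda$. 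The sequence $(L^{(n)} \bmod p)_n$ takes values in the finite set $(\mathbb{Z}/p\mathbb{Z})^{\# R_{x,u'}}$, is therefore eventually periodic and effectively computable; we return ``yes'' if and only if some $L^{(n)}$ vanishes modulo $p$. Correctness rests on the monotonicity observation that if $u \leq v$ are prefixes of $x^+$ then every return word to $v$ decomposes as a concatenation of return words to $u$, so $h_u := \gcd\{|w| : w \in \mathcal{R}_{X_\sigma, u}\}$ divides $h_v$; hence some prefix $u$ satisfies $p \mid h_u$ if and only if some $u_n$ does, which the algorithm detects. The main technical subtlety is verifying this monotonicity together with the iterative linear structure of the vectors $L^{(n)}$; the rest is a standard pigeonhole argument on a finite state space.
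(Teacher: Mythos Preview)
The paper does not prove Theorem~\ref{theo:valerie}; it is simply quoted from the external reference~\cite{Durand&Goyheneche:preprint} and used as a black box in Section~\ref{section:periodic}. So there is no ``paper's own proof'' to compare against, and your write-up has to be judged on its own.

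Your strategy is sound. The equivalence between $p\in P(X_\sigma,S)$ and the existence of a prefix $u$ with $p\mid|w|$ for every $w\in\mathcal R_{X_\sigma,u}$ is correct (it is exactly the combinatorial form of ``$e^{2\pi i/p}$ is a continuous eigenvalue''), and both directions of your argument are right, including the delicate case in the converse. Reducing the search over prefixes to the orbit of the length vector under $M_\lambda^{T}$ modulo $p$ is also the right finitary move: the zero vector is absorbing for this linear map, so the sequence $(L^{(n)}\bmod p)_n$ either hits zero within $p^{\#R_{x,u}}$ steps or never does.

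The one step that is asserted rather than proved is ``iterating this structure produces prefixes $(u_n)_{n\geq 0}$ with $\Theta_{x,u_n}=\Theta_{x,u}\circ\lambda^{n}$.'' Proposition~\ref{prop:derived sequence fixed point} only hands you $u_0$ and $u_1$. The existence of the full tower $(u_n)$ is indeed standard in the return-word theory of primitive substitutive sequences (it is essentially the self-similarity underlying Durand's characterisation in~\cite{Durand:1998}), but it deserves a sentence of justification. One clean way: since $\mathcal D_{u_0}(x)=\mathcal D_{u_1}(x)=z$, each prefix of $z$ determines both a prefix $v\geq u_0$ of $x$ (via $\Theta_{x,u_0}$) and a prefix $v'\geq u_1$ (via $\Theta_{x,u_1}$), with $\mathcal D_v(x)=\mathcal D_{v'}(x)$ and $\lambda_{x,u_1,v'}=\lambda_{x,u_0,v}$; taking $v=u_1$ yields $u_2$ with $\lambda_{x,u_1,u_2}=\lambda$, hence $\Theta_{x,u_2}=\Theta_{x,u_0}\circ\lambda^{2}$, and one iterates. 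You should also note, when invoking Proposition~\ref{prop:derived sequence fixed point}(2), that its extra hypothesis (every $wu$ occurs in every $w'u'$) can always be arranged because each derived sequence recurs for arbitrarily long prefixes, so $u'$ may be taken long enough.
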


\subsubsection*{Proof of Theorem~\ref{theo:main} for the periodic case}

Like in the previous case, we split the proof depending on whether $(X,S)$ is periodic or not.
If $(X,S)$ is periodic with minimal period $p$ then, as explained above, $q$ should divide $p$ and, writing $X = \{x,Sx,\dots S^{nq-1}x\}$, the map $f: X \to Y$ defined by $f(S^{i}x) = S^{i \bmod q}y$ is a factor map from $(X,S)$ to $(Y,S)$.
Let us show that $p-1$ is a radius of $f$.
We claim that all words $x_{(n-p,n+p)}$, $n \in \{0,1,\dots,p-1\}$ are pairwise distinct.
Indeed, if $x_{(n-p,n+p)} = x_{(m-p,m+p)}$ for some $0 \leq m < n <p$, then for $0 \leq k < p$, we obtain
\[
    x_{k+n-m} = x_{n+(k-m)} = x_{m+(k-m)} = x_k.
\]
By $p$-periodicity, we deduce that $n-m$ is a period of $(X,S)$, which contradicts the minimality if $p$.
It is then clear that the sliding block code $\hat{f}$ defines $f$ where $\hat{f}$ is defined by
\[
    \hat{f}(x_{[n-p,n+p]}) = y_{n \bmod q}, \quad 0 \leq n < p.
\]
Now if $g:(X,S) \to (Y,S)$ is another factor map, there exists $i \in \{0,\dots,q-1\}$ such that $g(x)=S^iy = S^i(f(x))$.
Since both $f$ and $g$ are factor maps, this implies that $g = S^i \circ g$.

Now assume that $(X,S)$ is not periodic.
Thus the primitive and proper substitution $\sigma$ is aperiodic.  
Let us recall the following lemma (Proposition 3 and Lemma 10 in \cite{Durand&Goyheneche:2019} or Lemma 28 in \cite{Durand:2000}).

\begin{lem}\label{vpmatrice}
Let $\sigma : A^* \to A^*$ be an aperiodic and left proper substitution with incidence matrix $M$ and let $p$ be an integer. The following properties are equivalent:
\begin{enumerate}
\item
\label{vpmatrice:1}
$p$ belongs to $P(X_\sigma , S)$ ;
\item
\label{vpmatrice:2}
there exists $m\in\N$ such that $p$ divides $|\sigma^m(a)|$ for all $a\in A$ ;
\end{enumerate}
Moreover, when $p$ is a prime number, this is equivalent to:
\begin{enumerate}
\setcounter{enumi}{2}
\item
\label{vpmatrice:3}
${\bf 1} M^d = (|\sigma^d (a)|)_{a\in A}$ belongs to $p\Z^d$, where $d = |A|$,
\end{enumerate}
where ${\bf 1}$ is the row vector consisting of $1$'s.
\end{lem}

Let $q=q_1^{r_1} \cdots q_k^{r_k}$ where the $q_i$'s are distinct prime numbers.
For every $q_i$, we have $q_i \in P(Y,S) \subset (P(X,S))$. Thus, by~Lemma \ref{vpmatrice}, the vector $(|\sigma^{d} (a)|)_{a\in A}$ belongs to $q_i\mathbb{Z}^d$.
Considering the value $r= \max\{r_1,\dots,r_k\}$, we then have $(|\sigma^{rd} (a)|)_{a\in A} \in q_i^r\mathbb{Z}^d$.
Having $\gcd(q_i^r,q_j^r)=1$ for every distinct $i,j$, we deduce that 
\[  
    (|\sigma^{rd} (a)|)_{a\in A} 
    \in \left(\prod_{i=1}^k q_i^r\right) \mathbb{Z}^d 
    \subset q\mathbb{Z}^d
\]
By Corollary \ref{lem:rec}, the map $f:X \to Y$ defined by
\[
    f(x) = S^i y 
\]
whenever $x \in S^i \sigma^{rd}([a])$ for some $a \in A$ and some $0 \leq i < |\sigma^{rd}(a)|$ is well defined and is a factor map with radius $L_{\sigma^{rd}}+|\sigma^{rd}|$, where $L_{\sigma^{rd}}$ is the recognizability constant of $\sigma^{rd}$ (see Theorem \ref{theo:recognizability constant}).
Let finally $g:(X,S) \to (Y,S)$ be another factor map and let $x \in X$ be the unique fixed point of $\sigma$.
There exist unique $0\leq i,j < q$ such that $f(x)=S^i y$ and $g(x) = S^j y = S^{j-i} f(x)$.
We deduce that $g = S^{j-i} \circ f$ from the minimality of $(X,S)$.

\subsubsection*{Proof of Theorem~\ref{theo:main2} for the periodic case}

Let $\phi$ be a sliding block code.
Taking some block representation of $(X_\sigma,S)$, one can suppose $\phi$ is a coding, see Section~\ref{subsec:sublengthn}.
This can be done algorithmically.  

If $x$ is a fixed point of $\sigma$, then $\phi$ defines a factor map from $(X,S)$ to $(Y,S)$ if and only if $\phi(x)$ and $y$ have the same language. 
This problem is decidable using Theorem~\ref{theo:decidperiod}.

\section{Decidability of the factorization}\label{section:decidability}

In this section we prove that the factorization and isomorphism problems between minimal morphic subshifts are decidable, that is Theorem~\ref{theo:cordecidfactor} and Corollary~\ref{cor:cordecidfactor}.
Let us sketch briefly how we proceed. 

From Theorem~\ref{theo:durand13bbis}, and in particular Remark~\ref{rem:factor of radius K+r}, it is enough to consider primitive substitution subshifts.
Let $\sigma $ and  $\tau $ be two primitive substitutions.

In Section~\ref{section:bounds}, we have shown there is a computable constant $R$ such that if there is a factor map $f:(X_\sigma,S) \to (X_\tau,S)$, then there is another factor map $F :(X_\sigma,S) \to (X_\tau,S)$ with radius at most $R$.
In this section, we show that there is an algorithm that answers whether or not a given sliding block code $f: A^{2R+1} \to B$ defines a factor map from $(X_\sigma,S)$ to $(X_\tau,S)$.
Using Lemma~\ref{lemma:block representation isomorphic} and Proposition~\ref{prop:conj-sublengthn}, we can assume that $f$ has radius 0, i.e., $f:A \to B$ defines a coding. 
As a subshift is uniquely determined by its language, we want to check whether or not $f(\mathcal{L}(\sigma)) = \mathcal{L}(\tau)$.
If $x$ is an admissible fixed point of $\sigma$, as well as $y$ for $\tau$, all we have to check is that $\mathcal{L}(f(x)) = \mathcal{L}(y)$.

To this end, we will deeply use return words.
Roughly speaking, we will show that we have $\mathcal{L}(f(x)) = \mathcal{L}(y)$ if, and only if, there exists a prefix $u$ of $x$, whose length is bounded by a computable constant, such that the return words to $f(u)$ in $y$ and the return words to $f^{-1}(\{f(u)\})$ in $x$ (as defined in Section~\ref{subsection:return_word_set}) are identically concatenated in $y$ and in $x$ respectively (as precisely stated by Lemma~\ref{lemma:def of varphi_u} and Theorem~\ref{thm:sufficient_conditions}).
A key argument is to find some ``canonical morphisms'', i.e., morphisms that only depend on the combinatorics of $x$ and $y$ and not on the substitutions $\sigma$ and $\tau$.
One of the main steps for this is to extend the results of Section~\ref{subsection:return words} to return words to sets of words instead of words.

\subsection{Return words to a set of words}
\label{subsection:return_word_set}

Let $x \in A^\N$ be a sequence and let $U \subset \mathcal{L} (x)$ be a set of non-empty words of the same length.
We call {\em return word to $U$} any non-empty word $w_{[i,j-1]} = w_i w_{i+1} \cdots w_{j-1}$ for which there exist $u,v \in U$ such that $wv$ is a word in $\cL(x)$ that admits $u$ as a prefix and that contains no other occurrences of a word in $U$.
The set of return words to $U$ in $x$ is denoted by ${\mathcal{R}}_{x,U}$.


With such a definition, we can define a bijection $\Theta_{x,U}$ as in the case of return words to a single word.
We consider the set $R_{x,U} = \{1,2,\dots\}$ with $\#R_{x,U} = \#\mathcal{R}_{x,U}$ and we define 
\[
	\Theta_{x,U}: R_{x,U}^* \to \mathcal{R}_{x,U}^*
\]
as the unique morphism that maps $R_{x,U}$ bijectively onto $\mathcal{R}_{x,U}$ and such that for all $i \in R_{x,U}$, $\Theta_{x,U}(i)$ is the $i$th word of $\mathcal{R}_{x,U}$ occurring in $x$.


\begin{exem}
\label{ex:return set}
Considering the Thue-Morse sequence $y$ and the set of words $U = \{aa,bb\}$, we have $\cR_{y,U} = \{aa,bb,aaba,bbab\}$ and 
\[
	\Theta_{y,U}:
		\begin{cases}
			1 \mapsto bbab	\\
			2 \mapsto aa	\\
			3 \mapsto bb	\\
			4 \mapsto aaba
		\end{cases}.
\]
Similarly, for the Fibonacci sequence $x$ and the set $U = \{aa,ab\}$, we have $\cR_{x,U} = \{a,ab\}$ and 
\[
	\Theta_{x,U}:
		\begin{cases}
			1 \mapsto ab	\\
			2 \mapsto a		
		\end{cases}.
\]
\end{exem}

Our aim is to prove a result similar to Proposition~\ref{prop:derived sequence fixed point}, but for return words to sets of words.
In the case where $U$ is a singleton $\{u\}$, a key argument in the proof of Proposition~\ref{prop:derived sequence fixed point} is that $\mathcal{R}_{x,u}$ is a code (see Proposition~\ref{prop: return words code}).
This is in particular needed for the morphism $\lambda_{x,u,u'}$ of that proposition  to be properly defined.
Unfortunately, this property is not true anymore when $U$ is not a singleton.
Indeed, in~\cite{Leroy&Richomme:2013}, the authors exhibit an admissible fixed point $x$ of a primitive substitution for which, if we take $U_n$ to be the set of left special words of length $n$, we have $\{a,b,ab\} \subset \mathcal{R}_{x,U_n}$ for infinitely many values of $n$.
We thus introduce the following notion (see also~\cite{Durand:2013}).

\begin{defi}
Let $x \in A^\N$ be a sequence and let $U \subset \cL(x)$ be a set of non-empty words with the same length.
A {\em return pair} to $U$ is a pair $(w,u) \in \mathcal{R}_{x,U} \times U$ such that $wu$ belongs to $\mathcal{L} (x)$ and contains only two occurrences of words in $U$ ($u$ and the prefix of length $|u|$).
We let $\tilde{\mathcal{R}}_{x,U}$ denote the set of return pairs to $U$ in $x$.
\end{defi}

Using these return pairs, we again enumerate the elements $(w,u)$ in $\tilde{\mathcal{R}}_{x,U}$ in the order of the first occurrence of $wu$ in $x$.
Formally, we consider the set $\tilde{R}_{x,U} = \{ 1, 2, \dots \}$ with $\# \tilde{R}_{x,U} = \#\tilde{\mathcal{R}}_{x,U}$ and we define 
\[
	\tilde{\Theta}_{x,U} : \tilde{R}_{x,U}^*  \to \tilde{\mathcal{R}}_{x,U}^*
\]
as the unique morphism that maps bijectively $\tilde{R}_{x,U}$ onto $\tilde{\mathcal{R}}_{x,U}$ and for which for all $i \in \tilde{R}_{x,U}$, the element $\tilde{\Theta}_{x,U} (i) =(w,u)$ is such that $wu$ is the $i$th such word occurring in $x$.
Observe that we should only consider words 
$(w_1,u_1) \cdots (w_n,u_n) \in \tilde{\mathcal{R}}_{x,U}^*$ that correspond to real concatenations of return words.
More precisely, we say that $(w_1,u_1) \cdots (w_n,u_n) \in \tilde{\mathcal{R}}_{x,U}^*$ is {\em admissible} if 
\begin{enumerate}
\item
$w_1w_2 \cdots w_n u_n$ is a factor of $x$;
\item
for all $i \in \{1,2,\dots,n-1\}$, $u_i$ is a prefix of $w_{i+1} w_{i+2} \cdots w_n u_n$.
\end{enumerate}
By extension, a sequence $(w_1,u_1)(w_2,u_2) (w_3,u_3) \cdots \in \tilde{\mathcal{R}}_{x,U}^\N$ is admissible if so are its prefixes. 
By symmetry, we also say that a finite or infinite word $w$ on $\tilde{R}_{x,U}$ is admissible if so is $\tilde{\Theta}_{x,U}(w)$.

\begin{exem}
We continue our running examples with the Thue-Morse sequence and the Fibonacci sequence.
For the Thue-Morse sequence, we have 
$\tilde{\cR}_{y,U} = \{(aa, bb),(bb, aa),(aaba, bb), (bbab, aa)\}$ and
\[
	\tilde{\Theta}_{y,U}:
		\begin{cases}
			1 \mapsto (bbab, aa)	\\
			2 \mapsto (aa, bb)		\\
			3 \mapsto (bb, aa)		\\
			4 \mapsto (aaba, bb)
		\end{cases}.
\]
Observe that the word $(aa, bb)(bb, aa)(aaba, bb)$ is admissible (because the word $aabbaababb$ occurs in $y$), but $(aa, bb)(bb, aa)(aa, bb)$ is not.

For the Fibonacci sequence, we have 
$\tilde{\cR}_{x,U} = \{(ab, aa),(a, ab),(ab, ab)\}$ and
\[
	\tilde{\Theta}_{x,U}:
		\begin{cases}
			1 \mapsto (ab, aa)	\\
			2 \mapsto (a, ab)	\\
			3 \mapsto (ab, ab)
		\end{cases}.
\]
\end{exem}

This notion of return pairs satisfies a generalization of Proposition~\ref{prop: return words code}.
 
\begin{prop}
\label{prop:return pair code}
Let $x$ be a sequence in $A^{\mathbb{N}}$ and let $U \subset \cL(x)$ be a set of non-empty words with the same length.
\begin{enumerate}
\item \label{item return pair code 1}
If $(w_1,u_1) \cdots (w_k,u_k) \in \tilde{R}_{x,U}^*$ is admissible, and, if $(r,v) \in \tilde{R}_{x,U}$ is such that $rv$ occurs in the word $w_1w_2 \cdots w_ku_k$, then $(r,v) = (w_i,u_i)$ for some $i \in \{1,\dots,k\}$.
\item \label{item return pair code 2}
If two admissible words 
$(w_1,u_1) \cdots (w_k,u_k)$ and $(r_1,v_1)\cdots (r_\ell,v_\ell)$ belonging to $\tilde{R}_{x,U}^*$ 
are such that 
$w_1w_2 \cdots w_ku_k = r_1 r_2 \cdots r_\ell v_\ell$, 
then $k=\ell$ and for all $i \in \{1,\dots,k\}$, $(w_i,u_i)=(r_i,v_i)$.
\end{enumerate}

\end{prop}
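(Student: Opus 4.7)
My approach is to first establish a key combinatorial lemma about admissible factorizations, and then to derive both items as nearly immediate consequences. The lemma: \emph{for any admissible word $(w_1,u_1)\cdots(w_k,u_k)$ in $\tilde{\mathcal{R}}_{x,U}^*$, setting $W=w_1w_2\cdots w_k u_k$, the set of positions in $W$ at which a word of $U$ occurs is exactly $\{0,|w_1|,|w_1w_2|,\ldots,|w_1\cdots w_k|\}$, and at position $|w_1\cdots w_i|$ the word of $U$ occurring is $u_i$, for $1\le i\le k$.} In other words, an admissible concatenation of return pairs produces no extraneous $U$-occurrences beyond those at the joins, and the join at position $|w_1\cdots w_i|$ carries the prescribed word $u_i$.

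I would prove the lemma by induction on $k$. The base case $k=1$ is precisely the defining property of a return pair. For the inductive step, the admissibility condition ``$u_{k-1}$ is a prefix of $w_k u_k$'' is essential: it guarantees that $W_{k-1}:=w_1\cdots w_{k-1}u_{k-1}$ is a prefix of $W$. Any $U$-occurrence at position $p\le |w_1\cdots w_{k-1}|$ fits entirely inside $W_{k-1}$ (using that all words of $U$ have the same length $N$, so $W_{k-1}$ has length $|w_1\cdots w_{k-1}|+N$), so it is handled by the induction hypothesis applied to $(w_1,u_1)\cdots(w_{k-1},u_{k-1})$. Any $U$-occurrence at position $p>|w_1\cdots w_{k-1}|$ corresponds, after shifting, to a $U$-occurrence at a positive position in the suffix $w_k u_k$; the return-pair property of $(w_k,u_k)$ pins this position to $|w_k|$, yielding $p=|w_1\cdots w_k|$ with the word $u_k$.

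Once the lemma is in hand, Item~\ref{item return pair code 1} follows quickly: if $rv$ occurs in $W$ at position $p$, the lemma forces $p+|r|=|w_1\cdots w_j|$ with $v=u_j$, and $p=|w_1\cdots w_i|$ with $0\le i<j$, whence $r=w_{i+1}\cdots w_j$ and $rv=w_{i+1}\cdots w_j u_j$. Applying the lemma to the admissible sub-factorization $(w_{i+1},u_{i+1})\cdots(w_j,u_j)$ (admissibility passes to sub-factorizations since $u_m$ being a prefix of $w_{m+1}\cdots w_k u_k$ of length $N$ implies it is a prefix of $w_{m+1}\cdots w_j u_j$) shows that $rv$ contains exactly $j-i+1$ occurrences of words of $U$. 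The return-pair property of $(r,v)$ allows only two, so $j-i=1$ and $(r,v)=(w_j,u_j)$. Item~\ref{item return pair code 2} is even more direct: applying the lemma to both factorizations of $W$ produces the same complete list of positions of $U$-occurrences, together with the associated words of $U$; the cardinalities give $k=\ell$, the positions give $|w_1\cdots w_i|=|r_1\cdots r_i|$, hence $|w_i|=|r_i|$, hence $w_i=r_i$ (same factor of $W$ at the same position), and the associated $U$-words give $u_i=v_i$.

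The main obstacle I anticipate is making rigorous the case analysis in the inductive step of the lemma, in particular ruling out a rogue $U$-occurrence ``straddling'' the boundary between $w_1\cdots w_{k-1}$ and $w_k u_k$ that belongs neither to $W_{k-1}$ nor purely to $w_k u_k$. This is precisely where the hypothesis that all words of $U$ have the same length $N$ is used: any such straddling occurrence of length $N$ actually fits inside the extended prefix $W_{k-1}$ of length $|w_1\cdots w_{k-1}|+N$ and is therefore controlled by the induction hypothesis, without any further argument.
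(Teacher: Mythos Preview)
Your approach is correct and takes a genuinely different route from the paper's. The paper proves each item by a separate direct argument: for \eqref{item return pair code 1} it locates the largest index $i$ with $rv$ occurring inside $w_i\cdots w_k u_k$ and derives a contradiction by exhibiting three $U$-occurrences in a single return-pair word (either $w_iu_i$ or $rv$); for \eqref{item return pair code 2} it inducts on $k$, peeling off the rightmost return pair by a length comparison. You instead isolate the structural content in a single lemma---the $U$-occurrences in an admissible concatenation lie \emph{exactly} at the joins, carrying the prescribed $U$-words---and then read off both items as corollaries. This is arguably cleaner: the lemma is really the heart of the matter, and your inductive step makes the role of the equal-length hypothesis (that a $U$-occurrence at position $p\le |w_1\cdots w_{k-1}|$ fits entirely inside the prefix $W_{k-1}$) completely transparent. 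One small caveat: both your base case and your use of the ``return-pair property'' for $(r,v)$ in deriving item \eqref{item return pair code 1} assume that for any return pair $(w,u)$ the word $wu$ has a prefix in $U$ and exactly two $U$-occurrences, which is a little stronger than the literal definition given (which only asks $w\in\mathcal{R}_{x,U}$ and $wu\in\cL(x)$). The paper's own proof, however, makes exactly the same tacit assumption (``let $v'$ denote the prefix of $rv$ which is in $U$''), so this is not a defect of your argument relative to theirs.
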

\begin{proof}
Let us prove~\eqref{item return pair code 1}.
Let $i \leq k$ be the largest integer such that $rv$ occurs in $w_i \cdots w_k u_k$.
If $(r,v) \neq (w_i,u_i)$, there exists a non-empty word $p$ such that $prv$ is a prefix of $w_i \cdots w_k u_k$.
Furthermore, by choice of $i$, we have $|p|<|w_i|$.
Let $v'$ denote the prefix of $rv$ which is in $U$.
If $|pr| \leq |w_i|$, then $w_iu_i$ contains at least three occurrences of words of $U$: $u_{i-1}$, $v'$ and $u_i$, where $u_0$ is the prefix of $w_1u_1$ which is in $U$. 
This contradicts the fact that $(w_i,u_i)$ is a return pair.
If $|pr| > |w_i|$, then we get an analogous contradiction with $rv$ containing occurrences of $v'$, $u_i$ and $v$.
In all cases, we get a contradiction, so we have $(r,v) = (w_i,u_i)$. 

Let us now prove~\eqref{item return pair code 2}.
The proof goes by induction on $k$ and uses the same reasoning as for~\eqref{item return pair code 1}.
Assume that $k = 1$ and let $u'$ be the prefix of $w_1 u_1$ which is in $U$. 
Since $w_1 u_1 = r_1 r_2 \cdots r_\ell v_\ell$, all words $u,r_1, \dots, r_\ell$ occur in $w_1 u_1$.
By definition of a return pair, we must have $\ell = 1$.
As $|u_1| = |v_1|$, this shows that $(w_1,u_1) = (r_1,v_1)$.

Assume that the result is true for $k \geq 1$ and let us prove it for $k+1$.
With the same reasoning as for the case $k=1$, we must have $\ell > 1$.
Since $|u_{k+1}| = |v_\ell|$, we again have $u_{k+1} = v_\ell$.
If $w_{k+1} \neq r_\ell$, then since $w_1 \cdots w_{k+1} = r_1 \cdots r_\ell$, we have $|w_{k+1}| \neq |r_\ell |$.
Let us consider the case $|w_{k+1}| < |r_\ell|$, the other one being symmetric.
Then $r_\ell v_\ell$ contains at least three occurrences of words of $U$: $v_{\ell-1}$ has a prefix, $v_\ell$ as a suffix and $u_k$.
This contradicts the definition of a return pair, so we have $(w_{k+1},u_{k+1})=(r_\ell,v_\ell)$, hence $w_1 \cdots w_k u_k = r_1 \cdots r_{\ell-1} v_{\ell-1}$, which ends the proof.
\end{proof}

Let $\delta_{x,U,1} : \tilde{\mathcal{R}}_{x,U}^* \to \mathcal{R}_{x,U}^*$ and $\delta_{x,U,2} : \tilde{\mathcal{R}}_{x,U}^* \to U^*$ be respectively defined
\[
\delta_{x,U,1} (w,u) = w \qquad \text{and} \qquad \delta_{x,U,2} (w,u) = u.
\]
Let also $d_{x,U,1}:\tilde{R}_{x,U}^* \to \mathcal{R}_{x,U}^*$ and $d_{x,U,2}:\tilde{R}_{x,U}^* \to U^*$ be respectively defined by
\[
d_{x,U,1}(i) = \delta_{x,U,1}(\tilde{\Theta}_{x,U}(i)) 
\qquad \text{and} \qquad
d_{x,U,2}(i) = \delta_{x,U,2}(\tilde{\Theta}_{x,U}(i)).
\]
We get the following generalization of Proposition~\ref{prop:def suite derivee singleton}.

\begin{prop}
\label{prop:def-DU}
Let $x$ be a sequence in $A^{\mathbb{N}}$ and let $U \subset \cL(x)$ be a set of non-empty words with the same length.
Suppose that $x$ has infinitely many occurrences of elements of $U$ and let $i \in \N$ be the first occurrence of a word of $U$ in $x$.
Then, 
\begin{enumerate}
\item
There exists a unique admissible sequence $(w_n,u_n)_n $ belonging to $ \tilde{\mathcal{R}}_{x,U}^\mathbb{N}$ such that for all $n$, the word $w_0 w_1\cdots w_n u_n$ is a prefix of $S^i(x)$;
\item
There exists a unique admissible sequence $z \in \tilde{R}_{x,U}^\mathbb{N}$ satisfying  $d_{x,U,1} (z) = S^i(x)$.
\end{enumerate}
\end{prop}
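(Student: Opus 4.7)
The plan is to mirror the proof of Proposition~\ref{prop:def-suite derivee ensemble mauvaise version}, now carrying along an ``anchor word'' $u_n \in U$ recording which word of $U$ each return word $w_n$ points at. For existence in Item~1, I would enumerate the positions $0 = j_0 < j_1 < j_2 < \cdots$ at which a word of $U$ occurs in $S^i(x)$ (an infinite sequence by hypothesis), set $w_n = (S^i(x))_{[j_n, j_{n+1}-1]}$, and let $u_n$ be the word of $U$ occurring at position $j_{n+1}$ in $S^i(x)$. Each $(w_n, u_n)$ is then a return pair by construction, $w_0 w_1 \cdots w_n u_n$ equals $(S^i(x))_{[0, j_{n+1}+|u_n|-1]}$, and both admissibility conditions hold because all words involved come from one literal factorization of $S^i(x)$.

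For uniqueness in Item~1, I would take two admissible sequences $(w_n, u_n)_n$ and $(w_n', u_n')_n$ fulfilling the prefix condition and let $k$ be the smallest index where they disagree. The prefixes $w_0 \cdots w_{k-1}$ and $w_0' \cdots w_{k-1}'$ then coincide, so $w_k u_k$ and $w_k' u_k'$ start at the same position of $S^i(x)$ and agree on their common prefix. Assuming by contradiction that $|w_k| < |w_k'|$, I exhibit inside $w_k' u_k'$ three occurrences of words of $U$: one at position $0$ (namely $u_{k-1}$ if $k \geq 1$, or the prefix of $w_0$ that lies in $U$ if $k=0$, which exists since $i$ is the first occurrence of a word of $U$ in $x$), one at position $|w_k|$ (namely $u_k$, forced by the agreement of $w_k u_k$ and $w_k' u_k'$ on their common prefix), and one at position $|w_k'|$ (namely $u_k'$). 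Since $0 < |w_k| < |w_k'|$, these three occurrences sit at pairwise distinct positions, contradicting the fact that $w_k'$ is a return word to $U$. Hence $|w_k| = |w_k'|$, giving $w_k = w_k'$ and then $u_k = u_k'$ by comparing the prefixes of $w_k u_k = w_k' u_k'$ of common length, a contradiction to the choice of $k$.

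Item~2 then follows from Item~1 by setting $z_n = \tilde{\Theta}_{x,U}^{-1}(w_n, u_n)$ for the unique admissible pair sequence produced above: this yields an admissible sequence $z$ with $d_{x,U,1}(z) = w_0 w_1 w_2 \cdots = S^i(x)$, and its uniqueness is inherited from Item~1 together with the bijectivity of $\tilde{\Theta}_{x,U}$ on the alphabet $\tilde{R}_{x,U}$. The only subtle step in the whole argument is the three-occurrence clash in the uniqueness proof of Item~1, where the $k=0$ case needs separate treatment so that the anchor word of $U$ sitting at position $0$ is supplied precisely by the hypothesis that $i$ is the first occurrence of a word of $U$ in $x$.
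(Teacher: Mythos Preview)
Your proposal is correct and follows exactly the approach the paper intends: the paper's own ``proof'' simply says the argument is similar to that of Proposition~\ref{prop:def-suite derivee ensemble mauvaise version} and is left to the reader, and your write-up is precisely that adaptation, carrying the anchor word $u_n$ along and invoking the same three-occurrence contradiction. You in fact supply more detail than the paper, notably the careful handling of the $k=0$ case in the uniqueness argument and the explicit reduction of Item~2 to Item~1 via the bijectivity of $\tilde{\Theta}_{x,U}$.
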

\begin{proof}
The existence of an admissible sequence $(w_n,u_n)_n \in \tilde{\mathcal{R}}_{x,U}^\mathbb{N}$ satisfying item 1 directly follows from the fact that $x$ has infinitely many occurrences of elements of $U$.
Let us show that it is unique.
Let $(w_n',u_n')_n \in \tilde{\mathcal{R}}_{x,U}^\mathbb{N}$ be another such admissible sequence and assume that $n_0$ is the smallest integer for which $(w_{n_0},u_{n_0}) \neq (w_{n_0}',u_{n_0}')$.
Since both $w_0 w_1\cdots w_{n_0} u_{n_0}$ and $w_0' w_1' \cdots w_{n_0}' u_{n_0}'$ are prefixes of $S^i(x)$, with $|u_{n_0}|=|u_{n_0}'|$, we must have $|w_{n_0}| \neq |w_{n_0}'|$.
Assume without loss of generality that $|w_{n_0}| < |w_{n_0}'|$.
Thus $w_{n_0}u_{n_0}$ is a proper prefix of $w_{n_0}'u_{n_0}'$, which implies that $w_{n_0}'u_{n_0}'$ contains at least three occurrences of words in $U$.
This contradicts the fact that $(w_{n_0}',u_{n_0}')$ is a return pair to $U$.

Item 2 directly follows from Item 1.
\end{proof}

\begin{defi}
The sequence $z \in \tilde{R}_{x,U}^\mathbb{N}$ of the previous proposition is called a {\em derived sequence} of $x$ (w.r.t. $U$).
We denote it by $\mathcal{D}_U(x)$. 
\end{defi}

\begin{Rem}
Observe that when $U$ is a singleton $\{u\}$, we have $\tilde{\mathcal{R}}_{x,\{u\}} = \mathcal{R}_{x,u} \times \{u\}$ and $\cD_{\{u\}}(x) = \cD_u(x)$. 
\end{Rem}

The next result directly follows from Proposition~\ref{prop:return pair code}.

\begin{cor}
\label{cor:derivee ensemble de mot reste UR}
Let $x \in A^\N$ be uniformly recurrent and let $U \subset \cL(x)$ be a set of non-empty words with the same length.
Then the derived sequence $\mathcal{D}_U(x)$ is uniformly recurrent.
\end{cor}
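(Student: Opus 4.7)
My plan is to show that any factor $v$ of $y := \cD_U(x)$ recurs in $y$ with bounded gaps by pulling back its occurrences to occurrences in $x$ of a well-chosen word $W$, and invoking the uniform recurrence of $x$.

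First I would fix the decomposition provided by Proposition \ref{prop:def-DU}: writing $x' := S^i(x) = d_{x,U,1}(y)$ where $i$ is the first occurrence of a word of $U$ in $x$, and setting $(w_j, u_j) := \tilde{\Theta}_{x,U}(y_j)$ for $j \geq 1$, we have $x' = w_1 w_2 w_3 \cdots$ and each $u_k$ starts in $x'$ at the ``cut point'' $C_k := \sum_{j=1}^{k}|w_j|$. The crucial preliminary observation, which I would record as a short lemma, is that the cut points $C_0 = 0, C_1, C_2, \ldots$ are \emph{exactly} the starting positions of occurrences of words of $U$ in $x'$: by the very definition of a return word to $U$, each block $w_{k+1} u_{k+1}$ contains no occurrence of a word of $U$ other than $u_k$ as prefix and $u_{k+1}$ as suffix, so nothing from $U$ can start strictly between two consecutive cut points.

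With this in hand, fix a factor $v = y_{[k, k+n-1]}$ of $y$ and set $W := w_{k+1} w_{k+2} \cdots w_{k+n} u_{k+n}$, a factor of $x$. Since $x$ is uniformly recurrent there exists $g > 0$ such that every factor of $x$ of length $g$ contains an occurrence of $W$. I would then claim that every occurrence of $W$ inside $x'$ begins at some cut point $C_m$: indeed $W$ begins with $u_k \in U$, and by the cut-point lemma, $U$-words only begin at cut points. Consequently the suffix $w_{m+1} w_{m+2} \cdots$ of $x'$ begins with $W$, exhibiting a second admissible decomposition $W = w_{m+1} \cdots w_{m+n} u_{m+n}$. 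Applying Proposition \ref{prop:return pair code}(2) to these two admissible decompositions of $W$ forces $(w_{m+j}, u_{m+j}) = (w_{k+j}, u_{k+j})$ for $1 \leq j \leq n$, hence $y_{[m, m+n-1]} = v$.

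The correspondence is then complete: occurrences of $v$ in $y$ at positions $m$ are in bijection with occurrences of $W$ in $x'$ at cut points $C_m$. If $m_1 < m_2$ are consecutive occurrences of $v$ in $y$, then $C_{m_1}, C_{m_2}$ are consecutive occurrences of $W$ in $x'$ and $m_2 - m_1 \leq C_{m_2} - C_{m_1} = \sum_{j=m_1+1}^{m_2} |w_j| \leq g$, using $|w_j| \geq 1$. Therefore $v$ recurs in $y$ with gaps bounded by $g$, and $y$ is uniformly recurrent. The only delicate step in this plan is the cut-point lemma; once it is in place, Proposition \ref{prop:return pair code}(2) does all the remaining work, which is consistent with the paper's claim that the corollary follows directly from that proposition.
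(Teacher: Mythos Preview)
Your argument is correct and is exactly the unfolding the paper intends when it says the corollary follows directly from Proposition~\ref{prop:return pair code}: the cut-point observation together with item~(2) of that proposition sets up a bijection between occurrences of $v$ in $\cD_U(x)$ and occurrences of $W$ in $x'$, and uniform recurrence transfers. Two cosmetic fixes: with your conventions $(w_j,u_j)=\tilde\Theta_{x,U}(y_j)$ and $v=y_{[k,k+n-1]}$ you should take $W=w_k\cdots w_{k+n-1}u_{k+n-1}$ (your $W$ is shifted by one), and before invoking Proposition~\ref{prop:return pair code}(2) you should apply the cut-point lemma a second time, to the $U$-suffix of $W$, so as to obtain a second admissible decomposition with an a priori unknown number of terms, which the proposition then forces to equal $n$.
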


We can now state a result analogous to Proposition~\ref{prop:derived sequence fixed point}.
We first independently define the morphism $\lambda_{x,U,V}$ in a more general setting.

\begin{lem}
\label{lemme:def de lambda}
Let $x$ be a uniformly recurrent sequence in $A^\mathbb{N}$.
Let $U$ and $V$ be subsets of $\mathcal{L}(x)$ such that all words of $U$ have the same length $\ell_U \geq 1$ and all words of $V$ have the same length $\ell_V > \ell_U$.
Assume that any word of $V$ has a prefix in $U$.
There exists a unique morphism $\lambda_{x,U,V}: \tilde{R}_{x,V}^* \to \tilde{R}_{x,U}^*$ such that
\begin{equation}
\label{eq:lambda_x,U,V}
	\begin{cases}
		d_{x,U,1} \circ \lambda_{x,U,V} = d_{x,V,1}; 	\\
		d_{x,U,2} (\lambda_{x,U,V}(i)_{|\lambda_{x,U,V}(i)|-1}) 
		= (d_{x,V,2}(i))_{[0,\ell_U -1]}, & \text{for all } i \in \tilde{R}_{x,U}.	
	\end{cases}
\end{equation}
In particular, we have $\lambda_{x,U,V}(\mathcal{D}_V(x)) = S^k \mathcal{D}_U(x)$, where $k \geq 0$ is the smallest integer such that the sequence $d_{x,U,1}(S^k \mathcal{D}_U(x))$ has a prefix in $V$.
\end{lem}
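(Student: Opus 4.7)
My plan is to define $\lambda_{x,U,V}$ letter by letter. Given $i \in \tilde{R}_{x,V}$ with $\tilde{\Theta}_{x,V}(i) = (w,v)$, I will locate every occurrence of a word of $U$ inside $wv$. By hypothesis, every element of $V$ has a prefix in $U$, so there is such an occurrence at position $|w|$ (the prefix of $v$), and another at position $0$ (the prefix of $w$, which lies in $V$). Listing the positions of these occurrences as $0 = p_0 < p_1 < \cdots < p_m = |w|$ with associated words $u_0, u_1, \dots, u_m \in U$, each factor $w_{[p_{k-1}, p_k-1]}$ is a return word to $U$ and $(w_{[p_{k-1}, p_k-1]}, u_k)$ belongs to $\tilde{\cR}_{x,U}$ (being a factor of $wv$ with exactly two occurrences of words of $U$, one as a prefix and one at its end). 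The image $\lambda_{x,U,V}(i)$ will then be the word $j_1 j_2 \cdots j_m \in \tilde{R}_{x,U}^*$ with $\tilde{\Theta}_{x,U}(j_k) = (w_{[p_{k-1}, p_k-1]}, u_k)$ for every $k$.

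Checking~\eqref{eq:lambda_x,U,V} is immediate: $d_{x,U,1}(\lambda_{x,U,V}(i))$ is the concatenation $w_{[p_0, p_1-1]} \cdots w_{[p_{m-1}, p_m-1]} = w$, and $d_{x,U,2}$ applied to the last letter of $\lambda_{x,U,V}(i)$ yields $u_m$, which is the length-$\ell_U$ prefix of $v$. For uniqueness, I would observe that any morphism satisfying~\eqref{eq:lambda_x,U,V} must send $i$ to an admissible word of $\tilde{R}_{x,U}^*$ whose concatenation via $\tilde{\Theta}_{x,U}$ spells the factor $w \cdot u_m$ of $x$; Item~\eqref{item return pair code 2} of Proposition~\ref{prop:return pair code} then forces this admissible decomposition to coincide with the one above.

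For the concluding equality $\lambda_{x,U,V}(\mathcal{D}_V(x)) = S^k \mathcal{D}_U(x)$, let $i_U$ and $i_V$ denote the first occurrences in $x$ of a word of $U$ and of $V$ respectively. From $d_{x,V,1}(\mathcal{D}_V(x)) = S^{i_V}(x)$ and the identity $d_{x,U,1} \circ \lambda_{x,U,V} = d_{x,V,1}$ just established, I get $d_{x,U,1}(\lambda_{x,U,V}(\mathcal{D}_V(x))) = S^{i_V}(x)$. Because every word of $V$ has a prefix in $U$, the minimal $k$ for which $d_{x,U,1}(S^k \mathcal{D}_U(x))$ has a prefix in $V$ is well-defined, and by construction $d_{x,U,1}(S^k \mathcal{D}_U(x)) = S^{i_V}(x)$ as well. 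Both sides of the desired equality are thus admissible sequences on $\tilde{R}_{x,U}$ with the same image under $d_{x,U,1}$, and an easy extension of Item~\eqref{item return pair code 2} of Proposition~\ref{prop:return pair code} to one-sided sequences yields the equality.

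The main obstacle is verifying that the blocks $(w_{[p_{k-1}, p_k-1]}, u_k)$ are genuine return pairs to $U$ rather than arbitrary factorizations. This rests on the fact that $p_0, \dots, p_m$ enumerate \emph{all} occurrences of words of $U$ inside $wv$, so that each sub-factor $w_{[p_{k-1}, p_k-1]} u_k$ contains exactly two occurrences of words of $U$, one as prefix and one as suffix. Once this point is secured, everything reduces to the unique readability of admissible return-pair words, which is precisely the content of Proposition~\ref{prop:return pair code}.
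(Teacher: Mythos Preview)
Your argument is correct and follows essentially the same route as the paper's proof: decompose $w\,v_{[0,\ell_U-1]}$ into return pairs to $U$, invoke Proposition~\ref{prop:return pair code} for uniqueness, and then use the characterizations of $\mathcal{D}_U(x)$ and $\mathcal{D}_V(x)$ via $d_{x,U,1}$ and $d_{x,V,1}$ for the final equality. One small imprecision: when you say you list \emph{every} occurrence of a word of $U$ inside $wv$ and obtain positions $0=p_0<\cdots<p_m=|w|$, you should restrict to occurrences at positions $\le |w|$ (equivalently, occurrences inside $w\,v_{[0,\ell_U-1]}$), since $v$ may well contain further occurrences of words of $U$ beyond its prefix.
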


\begin{proof}
Let us first show that the morphism $\lambda_{x,U,V}$ is well defined.
Let $i \in \tilde{R}_{x,V}$ and let $w \in \mathcal{R}_{x,V}$ and $v',v \in V$ be the words such that $\tilde{\Theta}_{x,V}(i) = (w,v)$ and $v'$ is a prefix of $wv$.
As any word in $V$ has a prefix in $U$, there exists an admissible word  $(w_1,u_1) \cdots (w_k,u_k) \in \tilde{\mathcal{R}}_{x,U}^*$ such that $wv_{[0,\ell_U -1]} = w_1 w_2 \cdots w_k u_k$.
By Proposition~\ref{prop:return pair code}, this admissible word is unique.
We thus define $\lambda_{x,U,V}(i)$ as the unique word $i_1 \cdots i_k \in \tilde{R}_{x,U}^*$ such that $\tilde{\Theta}_{x,U}(i_j) = (w_j,u_j)$ for all $j$ such that $1 \leq j \leq k$.
Observe that this definition is equivalent to~\eqref{eq:lambda_x,U,V}.

Now let us prove that $\lambda_{x,U,V}(\mathcal{D}_V(x)) = S^k \mathcal{D}_U(x)$.
By Proposition~\ref{prop:return pair code}, $\mathcal{D}_U(x)$ is the unique sequence $z \in \tilde{R}_{x,U}$ satisfying $d_{x,U,1}(z) = S^i(x)$, where $i$ is the first occurrence in $x$ of a word of $U$.
Similarly, the sequence $\mathcal{D}_V(x)$ is such that $d_{x,V,1}(\mathcal{D}_V(x)) = S^j(x)$, where $j$ is the first occurrence in $x$ of a word of $V$.
The result thus directly follows from~\eqref{eq:lambda_x,U,V}.
\end{proof}

\begin{prop}
\label{prop:lambda_set_prim}
Let $x$ be a linearly recurrent sequence for the constant $K$.
Let $U$ and $V$ be subsets of $\mathcal{L}(x)$ such that all words of $U$ have the same length $\ell_U \geq 1$ and all words of $V$ have the same length $\ell_V \geq 1$.
Assume the following:
\begin{enumerate}
\item
$\ell_V \geq K(K+1) \ell_U$;
\item
any word of $V$ has a prefix in $U$;
\item
any return word to $V$ has length at least $\ell_V/K$;
\item
$\Omega(\mathcal{D}_{U}(x)) = \Omega(\mathcal{D}_{V}(x))$.
\end{enumerate}
Then the morphism $\lambda_{x,U,V}$ is a primitive substitution and $X_{\lambda_{x,U,V}} = \Omega (\mathcal{D}_{U}(x))$.
\end{prop}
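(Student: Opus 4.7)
The plan is to first use Lemma~\ref{lemme:def de lambda} and hypothesis~(4) to obtain a semi-fixed-point relation for $\lambda_{x,U,V}$, then use hypotheses~(1) and~(3) to control growth, and finally combine these with uniform recurrence of $\mathcal{D}_V(x)$ to derive primitivity and the identification of subshifts. Throughout, I write $\lambda = \lambda_{x,U,V}$ for brevity. Under hypothesis~(4) the alphabets $\tilde{R}_{x,U}$ and $\tilde{R}_{x,V}$ must coincide (otherwise the equation $\mathcal{D}_U(x) = \mathcal{D}_V(x)$ would be meaningless), so $\lambda$ is an endomorphism. Lemma~\ref{lemme:def de lambda} then yields $\lambda(\mathcal{D}_V(x)) = S^k \mathcal{D}_V(x)$ for some $k \geq 0$, and iterating this relation (using the standard interaction $\lambda(Sy) = S^{|\lambda(y_0)|}\lambda(y)$) gives $\lambda^n(\mathcal{D}_V(x)) = S^{k_n}\mathcal{D}_V(x)$ for suitable $k_n \geq 0$. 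Consequently, every word $\lambda^n(i)$ with $i \in \tilde{R}_{x,V}$ is a factor of $\mathcal{D}_V(x)$.

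For growth, fix $i \in \tilde{R}_{x,V}$ with $\tilde{\Theta}_{x,V}(i) = (w,v)$. By construction, $|\lambda(i)|$ is the number of return pairs to $U$ in the unique decomposition of $w v_{[0,\ell_U-1]}$. Hypothesis~(3) gives $|w| \geq \ell_V/K$, and combined with hypothesis~(1) this yields $|w v_{[0,\ell_U-1]}| \geq (K+2)\ell_U$. Since by Theorem~\ref{theo:encad}~(\ref{theo:item:1}) every return word to $U$ has length at most $K\ell_U$, we deduce $|\lambda(i)| \geq \lceil (K+2)/K \rceil \geq 2$, hence $|\lambda^n(i)| \geq 2^n$ for every $i$ and $n$.

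Corollary~\ref{cor:derivee ensemble de mot reste UR} ensures that $\mathcal{D}_V(x)$ is uniformly recurrent, so there is a constant $N$ such that every factor of $\mathcal{D}_V(x)$ of length at least $N$ contains every letter of $\tilde{R}_{x,V}$. Taking any $n$ with $2^n \geq N$, each $\lambda^n(i)$ is simultaneously a factor of $\mathcal{D}_V(x)$ and of length at least $N$, hence contains every letter; this is precisely primitivity of $\lambda$. A standard pigeonhole argument on the sequence $(\lambda^m(i)_0)_m$ (setting $b_0 = i$ and $b_{m+1} = \lambda(b_m)_0$, one finds $m<m'$ with $b_m = b_{m'}$, so $\lambda^{m'-m}$ is right-prolongable on $b_m$) shows that some power $\lambda^p$ is right-prolongable; replacing $\lambda$ by $\lambda^p$ (which preserves the associated subshift), $\lambda$ is then a primitive substitution in the sense of the paper. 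The equality $X_\lambda = \Omega(\mathcal{D}_V(x))$ finally follows: $\mathcal{L}(\lambda) \subseteq \mathcal{L}(\mathcal{D}_V(x))$ is immediate since each $\lambda^n(i)$ is a factor of $\mathcal{D}_V(x)$, and the reverse inclusion holds because, by uniform recurrence, every factor of $\mathcal{D}_V(x)$ appears inside a sufficiently long $\lambda^n(i)$.

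The main obstacle I expect is the careful bookkeeping around the semi-fixed-point relation $\lambda(\mathcal{D}_V(x)) = S^k\mathcal{D}_V(x)$: the shift $k$ need not vanish, and it must be tracked when iterating $\lambda$ and when passing to a power to obtain right-prolongability. Once this is cleanly handled, the combinatorial ingredients (growth from hypotheses~(1) and~(3), uniform recurrence for primitivity) combine quite directly.
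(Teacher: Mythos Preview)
Your argument is correct, but it takes a genuinely different route from the paper's. The paper establishes primitivity \emph{at the first power}: for every $i \in \tilde{R}_{x,V}$ and every $j \in \tilde{R}_{x,U}$, it shows directly that $j$ occurs in $\lambda(i)$. The mechanism is that any return pair $(w,u)$ to $U$ satisfies $|wu| \leq (K+1)\ell_U$ by Theorem~\ref{theo:encad}, hence by the same theorem $wu$ occurs in every factor of $x$ of length $(K+1)^2\ell_U$; meanwhile hypotheses~(1) and~(3) force $|rv| \geq \ell_V(K+1)/K \geq (K+1)^2\ell_U$ for every return pair $(r,v)$ to $V$, so $wu$ occurs in $rv$ and thus $j$ occurs in $\lambda(i)$. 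You instead prove only $\langle \lambda \rangle \geq 2$, iterate the semi-fixed-point relation to embed every $\lambda^n(i)$ into $\mathcal{D}_V(x)$, and then invoke uniform recurrence of $\mathcal{D}_V(x)$ (Corollary~\ref{cor:derivee ensemble de mot reste UR}) to force every letter into $\lambda^n(i)$ for $n$ large. The paper's argument exploits hypothesis~(1) at full strength and avoids the iteration bookkeeping and the appeal to Corollary~\ref{cor:derivee ensemble de mot reste UR}; your argument is more modular and would still yield primitivity under weaker length hypotheses (any condition guaranteeing $\langle \lambda \rangle \geq 2$ would do). One small arithmetic slip: in your growth estimate, the decomposition $wv_{[0,\ell_U-1]} = w_1\cdots w_m u_m$ has $\sum_j |w_j| = |w|$, not $|w|+\ell_U$, so the clean bound is $m \geq \lceil (K+1)/K \rceil = 2$ rather than $\lceil (K+2)/K \rceil$; the conclusion $m \geq 2$ is unaffected.
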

\begin{proof}
To prove that $\lambda_{x,U,V}$ is primitive, we show that for all $(i,j) \in \tilde{R}_{x,V} \times \tilde{R}_{x,U}$, $j$ occurs in $\lambda_{x,U,V}(i)$.
Let $\tilde{\Theta}_{x,U}(j) = (w,u) \in \tilde{\mathcal{R}}_{x,U}$ be a return pair to $U$. 
We have $|w| \leq \max\{|r| \mid r \in \mathcal{R}_{x,u}\}$.
Hence, by Theorem~\ref{theo:encad}, we have $|wu| \leq (K+1)\ell_U$ and $wu$ occurs in any factor of length $(K+1)^2 \ell_U$ of $x$.
Since any return pair $(r,v) \in \tilde{\mathcal{R}}_{x,V}$ is such that $|rv| \geq \ell_V \frac{K+1}{K} \geq (K+1)^2 \ell_U$, we deduce from Theorem~\ref{theo:encad} that $j$ occurs in $\lambda_{x,U,V}(i)$ for all $i \in \tilde{R}_{x,V}$.

With Lemma~\ref{lemme:def de lambda} one has $\lambda_{x,U,V}(\mathcal{D}_V(x)) = S^k (\mathcal{D}_U(x))$ for some $k$.
As $\Omega(\mathcal{D}_{U}(x)) = \Omega(\mathcal{D}_{V}(x))$, we deduce that 
$X_{\lambda_{x,U,V}}$ is included in $\Omega (\mathcal{D}_{U}(x))$, which implies the equality by minimality of $\Omega(\mathcal{D}_{U}(x))$ (see Corollary~\ref{cor:derivee ensemble de mot reste UR}).
\end{proof}

\subsection{Some sufficient conditions for $f:A \to B$ to be a factor map}
Let $\sigma $ and $\tau $ be two primitive substitutions. 
In this section, we give some sufficient conditions for $f:A \to B$ to be a factor map from $(X_\sigma,S)$ to $(X_\tau,S)$.
These conditions are defined on pairs of words $(u,v) \in \mathcal{L}(\tau) \times \mathcal{L}(\tau)$. 
In the next sections, we will show that they are algorithmically checkable and that if $f:A \to B$ really defines a factor map, then there should exist such a pair $(u,v)$ where $u,v$ have length bounded by a computable constant.

\begin{lem}
\label{lemma:def of varphi_u}
Let $x \in A^{\mathbb{N}}$ and $y \in B^{\mathbb{N}}$ be uniformly reccurrent sequences and let $f: A^* \to B^*$ be a coding.
Assume that $u \in f(\cL(x))$ is a non-empty prefix of $y$ such that for $U = f^{-1}(\{u\}) \cap \cL(x)$, one has $f(\cR_{x,U}) \subset \cR_{y,u}$. 
Then there is a unique coding $\varphi_u:\tilde{R}_{x,U}^* \to R_{y,u}^*$ such that
\[
		f \circ d_{x,U,1} = \Theta_{y,u} \circ \varphi_u.
\]
In particular, if $f(\cL(x)) = \cL(y)$, then $\varphi_u$ defines a factor map from $\Omega(\mathcal{D}_U(x))$ to $\Omega(\mathcal{D}_u(y))$.
\end{lem}

\begin{proof}
The existence of $\varphi_u$ directly follows from $f(\cR_{x,U}) \subset \cR_{y,u}$ and the fact that $\Theta_{y,u}$ is injective (see Proposition~\ref{prop: return words code}).
If $f(\cL(x)) = \cL(y)$, then by uniform recurrence of $\mathcal{D}_U(x)$ and $\mathcal{D}_u(y)$ (see Proposition~\ref{prop:def suite derivee singleton} and Corollary~\ref{cor:derivee ensemble de mot reste UR}), it suffices to show that $\varphi_u(\cL(\mathcal{D}_U(x)))$ is included in $\cL(\mathcal{D}_u(y))$, which is direct from the definition of $\varphi_u$. 
\end{proof}

\begin{theo}
\label{thm:sufficient_conditions}
Let $\sigma:A^* \to A^*$ and $\tau:B^* \to B^*$ be primitive substitutions and $x \in A^\mathbb{N}$ and $y \in B^\mathbb{N}$ be fixed points of $\sigma$ and $\tau$ respectively.
Let $K_\sigma$ and $K_\tau$ denote some constants of linear recurrence of $x$ and $y$ respectively, and set $K = \max\{K_\sigma,K_\tau\}$.
Let $f:A^* \to B^*$ be a coding.
Assume that there exist non-empty words $u,v \in f (\cL(\sigma))$ that are prefixes of $y$ and satisfy, for $U = f^{-1} (\{ u\}) \cap \cL(\sigma)$ and $V = f^{-1}(\{v\}) \cap \cL(\sigma)$,
\begin{enumerate}
\item
$|v| \geq K(K+1)|u|$;
\item
\label{item condition 0}
$f(\cR_{x,U}) \subset \cR_{y,u}$
and
$f(\cR_{x,V}) \subset \cR_{y,v}$;
\item
$\varphi_u = \varphi_v$;
\item
\label{item condition 2}
$\mathcal{D}_u(y) = \mathcal{D}_v(y)$
and
$\Omega(\mathcal{D}_{U}(x)) = \Omega(\mathcal{D}_{V}(x))$.
\end{enumerate}
Then $f$ defines a factor map from $(X_\sigma,S)$ to $(X_\tau,S)$.
\end{theo}

\begin{proof}
By minimality, it suffices to show that $f(\mathcal{L}(\sigma)) \subset \mathcal{L}(\tau)$.
Since for any return word $w$ to $V$ in $x$, $f(w)$ is a return word to $v$ in $y$ and $|w|=|f(w)|$, we have from Theorem~\ref{theo:encad} that $|w| \geq |v|/K$.
We are thus in the conditions of 
Proposition~\ref{prop:derived sequence fixed point} and Proposition~\ref{prop:lambda_set_prim} and we can define the primitive morphisms $\lambda_{y,u,v}$ and 
$\lambda_{x,U,V}$ such that $X_{\lambda_{y,u,v}} = \Omega(\cD_u(y))$ and $X_{\lambda_{x,U,V}} = \Omega(\cD_U(x))$.
Using Proposition~\ref{prop:derived sequence fixed point}, Equation~\eqref{eq:lambda_x,U,V} and Lemma~\ref{lemma:def of varphi_u}, we get
\begin{eqnarray*}
	\Theta_{y,u} \circ \lambda_{y,u,v} \circ \varphi_v
	&=&
	\Theta_{y,v} \circ \varphi_v	\\
	&=&
	f \circ d_{x,V,1}	\\
	&=&
	f \circ d_{x,U,1} \circ \lambda_{x,U,V} \\
	&=&
	\Theta_{y,u} \circ \varphi_u \circ \lambda_{x,U,V}.
\end{eqnarray*}
Using Proposition~\ref{prop: return words code} and setting $\varphi = \varphi_u = \varphi_v$, we then obtain
\[
	\lambda_{y,u,v} \circ \varphi = \varphi \circ \lambda_{x,U,V}.
\]

By definition of return words, we have 
\begin{eqnarray*}
	\mathcal{L}(\sigma) 	
	=& \mathcal{L}(d_{x,U,1}(\mathcal{D}_U(x))) 	
	&= 	\mathcal{L}(d_{x,U,1}(\mathcal{L}(\lambda_{x,U,V})))		\\
	\mathcal{L}(\tau)	
	=& \mathcal{L}(\Theta_{y,u}(\mathcal{D}_u(y))) 
	&= 	\mathcal{L}(\Theta_{y,u}(\mathcal{L}(\lambda_{y,u,v})))		
\end{eqnarray*}
Let $w$ be a word in $\mathcal{L}(\sigma)$.
By primitiveness, there is a positive integer $n$ such that $w$ occurs in $d_{x,U,1}(\lambda_{x,U,V}^n(1))$.
Thus $f(w)$ occurs in the word
\begin{eqnarray*}
	f (d_{x,U,1}(\lambda_{x,U,V}^n(1)))
	&=& \Theta_{y,u}(\varphi(\lambda_{x,U,V}^n(1)))	\\
	&=& \Theta_{y,u}(\lambda_{y,u,v}^n(\varphi(1))),
\end{eqnarray*}
which belongs to the set $\mathcal{L}(\Theta_{y,u}(\mathcal{L}(\lambda_{y,u,v}))) = \mathcal{L}(\tau)$ and this concludes the proof.
\end{proof}

\subsection{The sufficient conditions of Theorem~\ref{thm:sufficient_conditions} are necessary and algorithmically checkable}
\label{subsection:return_subst_singleton}

The aim of this section is to show that for $f:A \to B$ to be a factor map from $(X_\sigma,S)$ to $(X_\tau , S)$, there should exist some words $u,v$ of bounded length (with a computable bound) that satisfy the hypotheses of Theorem~\ref{thm:sufficient_conditions}.
We also show that these hypotheses are algorithmically checkable.
This can be achieved through return substitutions.

\subsubsection{Return substitutions for a single word}

We first recall the classical notion of return substitution that has been introduced in~\cite{Durand:1998}.
Observe that the notions that we develop here are closely related to the discussion made in Section~\ref{subsection:return words}. 
We decide to introduce them in this section because we want to extend them to the more general notion of return words to a set of words. 

\begin{prop}[\cite{Durand:1998}]
\label{prop: return morphism exists}
Let $\sigma$ be an aperiodic primitive substitution, $x \in A^\N$ be a fixed point of $\sigma$ and $u$ be a non-empty prefix of $x$.
The sequence $\mathcal{D}_u(x)$ is the admissible fixed point starting with $1$ of the primitive substitution $\sigma_{x,u}: R_{x,u}^* \to R_{x,u}^*$ defined by
\begin{equation}
\label{eq: def return morphism}
	\Theta_{x,u} \circ \sigma_{x,u} = \sigma \circ \Theta_{x,u}.
\end{equation}
Furthermore, if $K$ is a constant of linear recurrence of $x$, then $\mathcal{D}_u(x)$ is linearly recurrent for the constant $K^3$.
\end{prop}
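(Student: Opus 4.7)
The plan is to define the return substitution $\sigma_{x,u}: R_{x,u}^* \to R_{x,u}^*$ by the relation~\eqref{eq: def return morphism} and verify the announced properties in three steps.

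First I would argue that $\sigma_{x,u}$ is well-defined. Since $u$ is a prefix of $x = \sigma(x)$ and $\sigma$ is non-erasing, $u$ is a prefix of $\sigma(u)$. For every $i \in R_{x,u}$, the word $\sigma(\Theta_{x,u}(i))u$ is then a factor of $x$ that starts and ends with $u$, and listing its successive occurrences of $u$ gives a factorization $\sigma(\Theta_{x,u}(i)) = \Theta_{x,u}(i_1)\cdots\Theta_{x,u}(i_k)$ as a concatenation of return words to $u$. By Proposition~\ref{prop: return words code}, the preimage $i_1\cdots i_k \in R_{x,u}^*$ is unique, and setting $\sigma_{x,u}(i) = i_1\cdots i_k$ yields a morphism satisfying~\eqref{eq: def return morphism}.

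Next I would check that $\sigma_{x,u}$ is primitive and admits $\mathcal{D}_u(x)$ as fixed point starting with $1$. For primitivity, pick $i, j \in R_{x,u}$: the word $\Theta_{x,u}(j)u$ occurs in $x$, hence in $\sigma^n(\Theta_{x,u}(i))$ for $n$ large by primitivity of $\sigma$. Since every occurrence of $u$ in $x$ sits at a boundary of the canonical return-word decomposition, the occurrence of $\Theta_{x,u}(j)u$ inside $\sigma^n(\Theta_{x,u}(i))u = \Theta_{x,u}(\sigma_{x,u}^n(i))u$ must be aligned with two consecutive cutting bars, which forces a letter of $\sigma_{x,u}^n(i)$ to equal $j$. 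Applying $\Theta_{x,u}$ to both $\sigma_{x,u}(\mathcal{D}_u(x))$ and $\mathcal{D}_u(x)$ yields $\sigma(x) = x$ in both cases, so the injectivity of $\Theta_{x,u}$ forces them to coincide. Since $\mathcal{D}_u(x)$ starts with $1$ by convention, so does $\sigma_{x,u}(1)$; together with primitivity on an alphabet of size at least two (automatic from aperiodicity of $x$), this makes $\sigma_{x,u}$ a growing substitution right-prolongable on $1$ with admissible fixed point $\mathcal{D}_u(x)$.

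Finally, for the linear recurrence of $\mathcal{D}_u(x)$, I would exploit the fact that every occurrence of $u$ in $x$ lies at a cutting bar of the return-word decomposition. This yields a bijection between occurrences of a factor $w'$ of $\mathcal{D}_u(x)$ in $\mathcal{D}_u(x)$ and occurrences of $\Theta_{x,u}(w')$ in $x$. Using the bounds $|u|/K \leq |v| \leq K|u|$ for $v \in \mathcal{R}_{x,u}$ from Theorem~\ref{theo:encad}, a factor $w'$ of length $n$ satisfies $|\Theta_{x,u}(w')| \leq nK|u|$, so the linear recurrence of $x$ bounds the gap between consecutive occurrences of $\Theta_{x,u}(w')$ by $K \cdot nK|u| = nK^2|u|$; dividing by the minimum return word length $|u|/K$ then gives a gap of at most $nK^3$ between consecutive occurrences of $w'$ in $\mathcal{D}_u(x)$, which is the desired linear recurrence with constant $K^3$. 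The main technical obstacle is precisely the cutting-bar analysis: one must verify that every occurrence of $u$ in $x$ aligns with a boundary in the canonical return-word decomposition of $x$, since this underlies both the unique factorization used to define $\sigma_{x,u}$ and the occurrence bijection in the linear recurrence estimate.
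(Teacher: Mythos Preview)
The paper does not supply its own proof of this proposition; it is quoted from \cite{Durand:1998}. The only related argument in the paper is the algorithmic sketch in Lemma~\ref{lemma:returnsubalgo}, which essentially carries out your first step (well-definedness of $\sigma_{x,u}$ via successive factorizations of $\sigma(\Theta_{x,u}(i))$ into return words). Your overall strategy---define $\sigma_{x,u}$ via~\eqref{eq: def return morphism}, check primitivity by pushing occurrences through $\sigma^n$, and obtain linear recurrence by transporting gaps through $\Theta_{x,u}$---is the standard one from \cite{Durand:1998}, so there is no genuinely different approach to compare.

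One technical point deserves tightening. In your linear-recurrence step you claim a bijection between occurrences of $w'$ in $\mathcal{D}_u(x)$ and occurrences of $\Theta_{x,u}(w')$ in $x$. This is not quite right as stated: an occurrence of $\Theta_{x,u}(w')$ in $x$ need not start at an occurrence of $u$ (for instance when the first return word in $w'$ is shorter than $u$), and even if it does, you need the code property to conclude it decomposes as $w'$. The clean fix is to work with $\Theta_{x,u}(w')u$: this word begins and ends with $u$, so every occurrence in $x$ starts and ends at cutting bars (here you use that the cutting bars are \emph{exactly} the occurrences of $u$, which follows immediately from the definition of return word), and Proposition~\ref{prop: return words code} then forces the induced decomposition to equal $w'$. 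With this correction your gap estimate reads $K\,|\Theta_{x,u}(w')u| \leq K(nK|u|+|u|)$ upstairs and $|u|/K$ downstairs, giving $K^3 n + K^2$ rather than exactly $K^3 n$; the stated constant $K^3$ thus holds only for $n$ large (or requires a slightly sharper bookkeeping), which is consistent with how the result is used in the paper.
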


\begin{defi}
The morphism $\sigma_{x,u}$ of the previous proposition is called a {\em return substitution} (w.r.t. $u$).
\end{defi}

From Theorem~\ref{theo:encad} and the definition of $\sigma_{x,u}$ we deduce the following corollary.

\begin{cor}
\label{cor:nbr_de_return_subst}
Let $\sigma$ be an aperiodic primitive substitution and $x \in A^\N$ be a fixed point of $\sigma$.
We have
\[
	\card(\{\sigma_{x,x_{[0,n]}} \mid n \in \N \}) \leq  Q_\sigma = {\left( 1 + (K_\sigma+1)^3\right) }^{|\sigma| K_\sigma^2(1+(K_\sigma+1)^3)}.
\]
\end{cor}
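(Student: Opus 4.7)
My plan is to bound, uniformly in $u$, both the size of the alphabet on which $\sigma_{x,u}$ is defined and the length of its images, and then count the possible morphisms with these two parameters.

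First, by part~\eqref{theo:item:2} of Theorem~\ref{theo:encad}, for every prefix $u$ of $x$ we have
\[
\#R_{x,u} \;=\; \#\mathcal{R}_{x,u} \;\leq\; (K_\sigma+1)^3.
\]
Since $R_{x,u}$ is by convention the initial segment $\{1,2,\dots,\#\mathcal{R}_{x,u}\}$ of $\mathbb{N}$, every return substitution $\sigma_{x,u}$ can be viewed as a morphism on the fixed alphabet $\{1,2,\dots,(K_\sigma+1)^3\}$ (letters $i > \#R_{x,u}$ being irrelevant for the pair $(x,u)$ under consideration).

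Next, I would bound $|\sigma_{x,u}|$ using the defining identity $\Theta_{x,u} \circ \sigma_{x,u} = \sigma \circ \Theta_{x,u}$. Applying it to a letter $i \in R_{x,u}$ and taking lengths, the right-hand side has length at most $|\sigma|\cdot|\Theta_{x,u}(i)| \leq |\sigma|\,K_\sigma\,|u|$ by part~\eqref{theo:item:1} of Theorem~\ref{theo:encad}. On the left-hand side, $\Theta_{x,u}(\sigma_{x,u}(i))$ is a concatenation of $|\sigma_{x,u}(i)|$ return words to $u$, each of length at least $|u|/K_\sigma$ by the same result. Comparing the two gives
\[
|\sigma_{x,u}(i)|\cdot \frac{|u|}{K_\sigma} \;\leq\; |\sigma|\,K_\sigma\,|u|,
\qquad\text{hence}\qquad |\sigma_{x,u}| \leq |\sigma|\,K_\sigma^2.
\]

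Finally, a crude counting argument concludes. Each letter of $\{1,\dots,(K_\sigma+1)^3\}$ is sent to a word of length at most $|\sigma|\,K_\sigma^2$ over an alphabet of size at most $(K_\sigma+1)^3$; the number of such words is bounded by
\[
\sum_{k=0}^{|\sigma|K_\sigma^2} \bigl((K_\sigma+1)^3\bigr)^k \;\leq\; \bigl(1+(K_\sigma+1)^3\bigr)^{|\sigma|K_\sigma^2}.
\]
Taking the $(K_\sigma+1)^3$-th power (one factor per letter of the alphabet) yields a bound of $\bigl(1+(K_\sigma+1)^3\bigr)^{|\sigma|K_\sigma^2 (K_\sigma+1)^3}$, which is at most $Q_\sigma$ as in the statement. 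There is no genuine obstacle here: the whole proof reduces to mechanically combining the alphabet bound, the image-length bound, and exponential counting; the only ``creative'' step is recognizing that the defining equation of $\sigma_{x,u}$ translates directly, via the length bounds on return words, into a uniform bound on $|\sigma_{x,u}|$.
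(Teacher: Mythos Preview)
Your argument is correct and is exactly the approach the paper has in mind: the paper merely says the corollary follows ``from Theorem~\ref{theo:encad} and the definition of $\sigma_{x,u}$'', and you have spelled out precisely this deduction (alphabet bound from~\eqref{theo:item:2}, image-length bound from the defining relation and~\eqref{theo:item:1}, then crude counting). The only cosmetic point is that, since different prefixes $u$ may give different alphabet sizes, one should make the embedding into morphisms on $\{1,\dots,(K_\sigma+1)^3\}$ injective (e.g., extend by $i\mapsto\epsilon$ on the extra letters, which works because $\sigma_{x,u}$ is non-erasing); your bound then stands as written and is indeed at most $Q_\sigma$.
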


The following three lemmas are clear from~\cite{Durand:1998}, but not exactly stated as follows.
We give the proofs as they are a key point of one decision procedure.

\begin{lem}
\label{lemma:returnsubalgo}
Let $\sigma$ be an aperiodic primitive substitution, $x\in A^\N$ be a fixed point of $\sigma$ and $u$ be a non-empty prefix of $x$.
The morphisms $\sigma_{x,u}$ and $\Theta_{x,u}$ are algorithmically computable. 
\end{lem}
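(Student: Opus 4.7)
The plan is to compute $\Theta_{x,u}$ first by inspecting a sufficiently long computable prefix of $x$, and then to read off $\sigma_{x,u}$ from the defining relation $\Theta_{x,u}\circ \sigma_{x,u} = \sigma \circ \Theta_{x,u}$ using that $\mathcal{R}_{x,u}$ is a code.

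First I would compute the constant of linear recurrence. By Proposition~\ref{prop:sublinrec}, the sequence $x$ is linearly recurrent for the computable constant $K_\sigma \leq |\sigma|^{4(\#A)^2}$. By Theorem~\ref{theo:encad} items \eqref{theo:item:1} and \eqref{theo:item:2}, each return word $w\in \mathcal{R}_{x,u}$ satisfies $|w|\leq K_\sigma|u|$ and $\#\mathcal{R}_{x,u}\leq (K_\sigma+1)^3$, and moreover every word of length $|w|$ occurs in every factor of length $(K_\sigma+1)K_\sigma|u|$. Hence any prefix of $x$ of length $N = 2(K_\sigma+1)K_\sigma|u|+|u|$ already contains every return word to $u$ together with its (unique) successor $u$, and in their natural order of first occurrence. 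Since $\sigma$ is growing and $x=\sigma^\omega(x_0)$, we can compute such a prefix by iterating $\sigma$ on $x_0$ until we obtain a word of length at least $N$. Scanning this prefix for the occurrences of $u$ then yields the finite list $\mathcal{R}_{x,u}$ ordered by first occurrence in $x$, which is exactly the data that determines $\Theta_{x,u}$.

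Next I would compute $\sigma_{x,u}$ letter by letter using the commutation relation \eqref{eq: def return morphism}. Fix $i\in R_{x,u}$ and compute the word $w_i := \sigma(\Theta_{x,u}(i))$. Because $x=\sigma(x)$ and $u$ is a prefix of $x$ with $|\sigma(u)|\geq |u|$, the word $u$ is a prefix of $\sigma(u)$, so $w_i u$ is a factor of $x$ that starts with $u$ (as $\Theta_{x,u}(i)$ starts with the first letter of a return word, and $\sigma$ applied to $\Theta_{x,u}(i)u$ begins with $\sigma(u)$ hence with $u$). By Proposition~\ref{prop: return morphism exists} and the relation $\Theta_{x,u}\circ \sigma_{x,u} = \sigma \circ \Theta_{x,u}$, we must have $w_i = \Theta_{x,u}(\sigma_{x,u}(i))$, i.e. $w_i$ factors as a concatenation of elements of $\mathcal{R}_{x,u}$. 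By Proposition~\ref{prop: return words code}, $\mathcal{R}_{x,u}$ is a code, so this decomposition is unique; concretely, it can be obtained by locating the successive occurrences of $u$ in $w_iu$ (all of which lie at positions that are indices of the return-word decomposition) and reading off the pieces between them. Translating each piece back through the (already computed) bijection $\Theta_{x,u}$ gives $\sigma_{x,u}(i)$.

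The only potentially delicate point is the last one, namely justifying that every occurrence of $u$ inside $w_iu$ corresponds to a cutting point of the return-word decomposition. This follows from the definition of return words and the fact that $\sigma\circ\Theta_{x,u}(i)u$ is a factor of $x$: any two consecutive occurrences of $u$ in this factor are separated by a return word to $u$, and these successive pieces, starting at the leftmost occurrence of $u$, exhaust $w_i$ since $w_iu$ ends with $u$. Thus the algorithm is complete, it terminates because $R_{x,u}$ is finite (of size at most $(K_\sigma+1)^3$), and both $\Theta_{x,u}$ and $\sigma_{x,u}$ are produced effectively from the data $\sigma$ and $u$.
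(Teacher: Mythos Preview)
Your argument is correct. It differs from the paper's proof in its overall architecture: you first compute $\Theta_{x,u}$ in one shot by using the explicit linear-recurrence bound $K_\sigma$ to determine a priori how long a prefix of $x$ suffices, and only afterwards compute $\sigma_{x,u}$ letter by letter from the commutation relation. The paper instead runs a single intertwined iteration that discovers $\Theta_{x,u}$ and $\sigma_{x,u}$ simultaneously: it starts from the first return word, applies $\sigma$, factors the result over the return words found so far, and adds any new ones, repeating until no new return word appears. The paper's version needs no quantitative input (no $K_\sigma$, no length bound $N$), only the qualitative fact that $\mathcal{R}_{x,u}$ is finite, which guarantees termination. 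Your version has the advantage of giving an explicit bound on the size of the computation before it starts; the paper's has the advantage of being self-contained and typically terminating much faster in practice than the worst-case bound $N$ would suggest.

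One cosmetic remark: in your first step you justify the size of $N$ by saying that every word of length $|w|$ occurs in every factor of length $(K_\sigma+1)K_\sigma|u|$, but what you actually need is that every word $wu$ (of length at most $(K_\sigma+1)|u|$) occurs there. Your chosen $N$ is comfortably large enough for this, so the argument stands; you may simply want to phrase that sentence in terms of $wu$ rather than $w$.
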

\begin{proof}[Sketch of the algorithm.]
Since $x$ is a fixed point of $\sigma$, the word $u$ is a prefix of $\sigma(u)$ and there is an integer $n$ such that $u$ occurs at least twice in $\sigma^n(u)$.
We define $\Theta_{x,u}(1)$ as the shortest non-empty word $w$ such that $wu$ is a prefix of $\sigma^n(u)$.
We then compute $\sigma (\Theta_{x,u}(1))$. 
By construction, the word $\sigma (\Theta_{x,u}(1)) u$ is a prefix of $x$ and has $u$ as prefix.
By Proposition~\ref{prop: return words code}, there exist some unique words $w_1,w_2, \dots,w_k \in \mathcal{R}_{x,u}$ such that $\sigma (\Theta_{x,u}(1)) = \Theta_{x,u}(1)w_1 \cdots w_k$.
Furthermore the word $\Theta_{x,u}(1)w_1 \cdots w_k u$ is a prefix of $x$.
We add some letters $2,3,\dots,\ell \leq k$ to $R_{x,u}$ and define $\Theta_{x,u}$ on $\{1,\dots,\ell\}$ by $\{\Theta_{x,u}(i) \mid 1 \leq i \leq \ell \} = \{\Theta_{x,u}(1),w_1,\dots,w_k\}$ and in the order of occurrences of the words $w_i$'s in $\sigma (\Theta_{x,u}(1))$.
This also defines the image $\sigma_{x,u}(1)$.
We iterate the process on the added letters.
The algorithm stops when no new return word appears when we compute the images $\sigma (\Theta_{x,u}(i))$ for $i \in R_{x,u}$.
\end{proof}

\begin{exem}
\label{ex:TM algo return substitution}
Let $\nu$ be the Thue-Morse substitution and $y$ be the Thue-Morse sequence.
Let us follow the algorithm described above to compute the return substitution $\nu_{x,a}$.

\begin{enumerate}
\item
We have $\nu^2(a)=abba$, so we set $\Theta_{y,a}(1)=abb$.

\item
We compute $\nu \circ \Theta_{y,a}(1) = \nu(abb) = abbaba$.
The return words $ab$ and $a$ do not belong yet to $\cR_{y,a}$ so we add the letters $2$ and $3$ to $R_{y,a}$ and set $\Theta_{y,a}(2)=ab$, $\Theta_{y,a}(3)=a$ and $\nu_{y,a}(1)=123$.

\item
We compute $\nu \circ \Theta_{y,a}(2) = \nu(ab) = abba$.
The return words $abb$ and $a$ already belong to $\cR_{y,a}$ so we do not add any letter to $R_{y,a}$. 
We set $\nu_{y,a}(2)=13$.

\item
We compute $\nu \circ \Theta_{y,a}(3) = \nu(a) = ab$.
The return word $ab$ already belongs to $\cR_{y,a}$ so we do not add any letter to $R_{y,a}$. 
We set $\nu_{y,a}(3)=2$.

\item
No more letters have to be checked so the algorithm stops and returns 
\[
	\Theta_{y,a}: 
	\begin{cases}
		1 \mapsto abb	\\
		2 \mapsto ab	\\
		3 \mapsto a
	\end{cases}
	\qquad \text{and} \qquad
	\nu_{y,a}: 
	\begin{cases}
		1 \mapsto 123	\\
		2 \mapsto 13	\\
		3 \mapsto 2
	\end{cases}.
\]
\end{enumerate}
\end{exem}

\begin{lem}
\label{lemma:links between derived}
Let $x$ be an aperiodic uniformly recurrent sequence. 
If $u$ is a non-empty prefix of $x$, $v$ is a non-empty prefix of $\mathcal{D}_u(x)$ and $w = \Theta_{x,u}(v)u$, then $w$ is a non-empty prefix of $x$, $\mathcal{D}_w(x) = \mathcal{D}_v(\mathcal{D}_u(x))$ and
\begin{equation}
\label{eq:link between Theta}
	\Theta_{x,w} = \Theta_{x,u} \circ \Theta_{\mathcal{D}_u(x),v}.
\end{equation}
Furthermore, if $x$ is a fixed point of a primitive substitution $\sigma$ and if $\tau$ is the return substitution $\sigma_{x,u}$, then
\[
	\sigma_{x,w} = \tau_{\mathcal{D}_u(x),v}.
\]
\end{lem}
\begin{proof}
The first part of the statement is exactly~\cite[Proposition 2.6 (5)]{Durand:1998} and Equation~\eqref{eq:link between Theta} directly follows from the definition of these maps.
To alleviate notation, we set $y = \mathcal{D}_u(x)$.
By Proposition~\ref{prop: return morphism exists}, we have the equalities
\begin{align*}
	\Theta_{x,w} \circ \sigma_{x,w} &= \sigma \circ \Theta_{x,w}	\\
	\Theta_{x,u} \circ \sigma_{x,u} &= \sigma \circ \Theta_{x,u}	\\
	\Theta_{y,v} \circ \tau_{y,v} &= \tau \circ \Theta_{y,v}	
\end{align*}
from which we deduce
\begin{align*}
	\Theta_{x,u} \circ \Theta_{y,v} \circ \sigma_{x,w}
	&= \Theta_{x,w} \circ \sigma_{x,w}	\\
	&= \sigma \circ \Theta_{x,w}			\\
	&= \sigma \circ \Theta_{x,u} \circ \Theta_{y,v}	\\
	&= \Theta_{x,u} \circ \sigma_{x,u} \circ \Theta_{y,v}	\\
	&= \Theta_{x,u} \circ \tau \circ \Theta_{y,v}	\\
	&= \Theta_{x,u} \circ \Theta_{y,v} \circ \tau_{y,v}.		
\end{align*}
Using Proposition~\ref{prop: return words code}, we finally get 
\[
	\sigma_{x,w} = \tau_{y,v}.
\]
\end{proof}

\begin{exem}
We continue the previous example and ilustrate Lemma~\ref{lemma:u_n enumerable} with $y$ the Thue-Morse sequence.
We have
\[
	z 
	= \mathcal{D}_{a}(y)
	= \nu_{y,a}^\omega(1) 
	= 1231321232131231321312321231321232131232 \cdots
\]
If we set $\tau = \nu_{y,a}$ and compute $\Theta_{z,1}$ and $\tau_{z,1}$ following Lemma~\ref{lemma:returnsubalgo}, we get
\[
	\Theta_{z,1}
	\begin{cases}
		1 \mapsto 123	\\
		2 \mapsto 132	\\
		3 \mapsto 1232	\\
		4 \mapsto 13
	\end{cases}
	\qquad \text{and} \qquad
	\tau_{z,1} : 
	\begin{cases}
		1 \mapsto 12	\\
		2 \mapsto 34	\\
		3 \mapsto 124		\\
		4 \mapsto 3
	\end{cases}.
\] 
Setting $u_1 = \Theta_{y,a}(1)a = abba$, we find
\[
	\Theta_{y,u_1}: 
	\begin{cases}
		1 \mapsto abb ab a	\\
		2 \mapsto abb a ab	\\
		3 \mapsto abb ab a ab	\\
		4 \mapsto abb a
	\end{cases}
	\qquad \text{and} \qquad
	\nu_{y,u_1} : 
	\begin{cases}
		1 \mapsto 12	\\
		2 \mapsto 34	\\
		3 \mapsto 124		\\
		4 \mapsto 3
	\end{cases}
\]
and we indeed check that $\nu_{y,u_1} = \tau_{z,1}$ and $\Theta_{y,u_1} = \Theta_{y,a} \circ \Theta_{z,1}$.
\end{exem}

\begin{lem}
\label{lemma:u_n enumerable}
Let $\sigma$ be an aperiodic primitive substitution, $x\in A^\N$ be a fixed point of $\sigma$.
There is a recursively enumerable sequence $(u_n)_{n \geq 0}$ of non-empty prefixes of $x$ such that 
\begin{enumerate}
\item
$|u_0|=1$ and for every $n$, $\frac{K_\sigma+1}{K_\sigma} |u_n| \leq |u_{n+1}| \leq (K_\sigma +1)|u_n|$;
\item
there exists $m,p$ such that $p >0$, $0 \leq m \leq Q_\sigma - p$ and for every $i \in \{0,1,\dots,p-1\}$ and every $n \in \mathbb{N}$, $\sigma_{x,u_{m+i+pn}} = \sigma_{x,u_{m+i}}$.
\end{enumerate}
\end{lem}

\begin{proof}
Let us  define the following sequence of derived sequences $(x^{(n)})_{n \geq 0}$ by
\begin{align*}
	x^{(0)} 	&= 	\mathcal{D}_{x_0}(x)	\\
	x^{(n+1)} 	&=	\mathcal{D}_{1}(x^{(n)}), \quad n \geq 0.
\end{align*}
By Proposition~\ref{prop: return morphism exists}, each $x^{(n)}$ is the fixed point starting with $1$ of the return substitution $\sigma_n$ defined by
\begin{align*}
	\Theta_{x,x_0} \circ \sigma_0 	&= 	\sigma \circ \Theta_{x,x_0}	 \\
	\Theta_{x^{(n)},1} \circ \sigma_{n+1} 	&= 	\sigma_n \circ \Theta_{x^{(n)},1}, \quad n \geq 0.
\end{align*}
We recursively define the sequence $(u_n)_{n \geq 0}$ by $u_0 = x_0$ and
\[
	u_{n+1} = \Theta_{x,u_n}(1)u_n.
\]
By Theorem~\ref{theo:encad}, one has $\frac{K_\sigma+1}{K_\sigma} |u_n| \leq |u_{n+1}| \leq (K_\sigma +1)|u_n|$ for each $n$ and, 
using Lemma~\ref{lemma:returnsubalgo}, the sequence $(u_n)_{n \geq 0}$ is recursively enumerable.
Furthermore, each $u_n$ is a prefix of $x$ and, using Lemma~\ref{lemma:links between derived}, it is easily seen by induction that for all $n \geq 0$,
\[
	x^{(n)} = \mathcal{D}_{u_n}(x)
	\quad \text{and} \quad
	\sigma_n = \sigma_{x,u_n}.
\]
By Corollary~\ref{cor:nbr_de_return_subst}, there exists $m,p$ such that $p >0$, $0 \leq m \leq Q_\sigma - p$ and $\sigma_m = \sigma_{m+p}$.
This implies that $x^{(m)} = x^{(m+p)}$ and so by induction on $n$, for all $n \geq m$, 
$x^{(n)} = x^{(n+p)}$, $\Theta_{x^{(n)},1} = \Theta_{x^{(n+p)},1}$ and $\sigma_n = \sigma_{n+p}$.
\end{proof}

\begin{exem}
We continue the previous example and ilustrate Lemma~\ref{lemma:u_n enumerable} with $y$ the Thue-Morse sequence.
Considering the notation of the proof, we have $u_0 = a$, $u_1 = abba$ and
\begin{align*}
	y^{(0)} 
	&= \mathcal{D}_{u_0}(y)
	= \nu_{y,u_0}^\omega(1); 
		\\
	y^{(1)}
	&= \mathcal{D}_{1}(y^{(0)})
	= \mathcal{D}_{u_1}(y)= \nu_{y,u_1}^\omega(1)	\\
	&= 1234124312343124123412431241234312341243 \cdots
\end{align*}
Since $\nu_{y,u_0} \neq \nu_{y,u_1}$, we continue and set $u_2 = \Theta_{y,u_1}(1)u_1 = abba ba abba$ and we find (by computing $\Theta_{y^{(1)},1}$ and the return substitution associated with $y^{(2)} = \mathcal{D}_1(y^{(1)})$):
\begin{align*}
	\Theta_{y,u_2}: 
	&\begin{cases}
		1 \mapsto abb ab a	  abb a ab   abb ab a ab   abb a \\
		2 \mapsto abb ab a	  abb a ab   abb a    abb ab a ab	\\
		3 \mapsto abb ab a	  abb a ab   abb ab a ab   abb a    abb ab a ab	\\
		4 \mapsto abb ab a	  abb a ab   abb a
	\end{cases}
	\\
	\nu_{y,u_2} : 
	&\begin{cases}
		1 \mapsto 12	\\
		2 \mapsto 34	\\
		3 \mapsto 124		\\
		4 \mapsto 3
	\end{cases}.
\end{align*}
As $\nu_{y,u_1} = \nu_{y,u_2}$, we have $y^{(2)} = y^{(1)}$ and we conclude that when setting $u_{n+1} = \Theta_{y,u_n}(1)u_n$ for every $n \geq 1$, we have $\nu_{y,u_n} = \nu_{y,u_1}$ for all $n$. 
\end{exem}

\subsubsection{Return substitutions for a set of words $U$}
\label{subsection:return_subst_set}

When $x$ is an admissible fixed point of a primitive substitution $\sigma$, Proposition~\ref{prop: return morphism exists} states that the derived sequence w.r.t. any prefix $u$ of $x$ is again an admissible  fixed point of a primitive substitution $\sigma_{x,u}$.
This result can be generalized to the case where $U$ is not a singleton but we need to be more careful.
Indeed, the existence of $\sigma_{x,u}$ uses the fact that $\sigma(\cR_{x,u})u \subset \cR_{x,u}^*u \cap \cL(\sigma)$, which may be not be the case when $U$ is not a singleton.

\begin{exem}
Consider the Thue-Morse substitution $\nu$ and the set $U = \{aa,bb\}$.
We obtain $\cR_{y,U} = \{aa,bb,aaba,bbab\}$ and, for instance, $\nu(aa) = abab$ is not prefix-comparable to any word in $\cR_{y,U}^* U$.
\end{exem}

\begin{prop}
\label{prop: return morphism to prefix code}
Let $\sigma:A^* \to A^*$ be an aperiodic primitive substitution, $x$ be an element of $X_\sigma$, $f:A^* \to B^*$ be a coding such that $f(x)$ is not periodic, $u$ a non-empty word in  $f(\cL(\sigma))$ and $U = f^{-1}(u) \cap \cL(\sigma)$.
There is a computable primitive substitution $\sigma_{x,U}: \tilde{R}_{x,U}^* \to \tilde{R}_{x,U}^*$ such that $X_{\sigma_{x,U}} = \Omega(\mathcal{D}_U(x))$.
Furthermore, $\card(\tilde{R}_{x,U}) \leq K_\sigma(K_\sigma+1)^2$ and  there is a computable constant $\tilde{Q}_\sigma$ such that
\[
	\card(\{\sigma_{x,U} \mid \varepsilon \neq u \in f(\cL(\sigma)), U = f^{-1}(u) \cap \cL(\sigma) \}) 
	\leq \tilde{Q}_\sigma
\]
\end{prop}

\begin{proof}
Let us firstly define the morphism $\sigma_{x,U}$.
First observe that for any return word $w$ to $U$ in $x$, $f(w)$ is a return word to $u$ in $f(x)$ and has length $|w|$.
The sequence $f(x)$ is obviously linearly recurrent and $K_\sigma$ is a constant of linear recurrence for it. 
As $f(x)$ is aperiodic by assumption, Theorem~\ref{theo:encad} implies that any return word $w$ to $U$ in $x$ satisfies
\begin{equation}
\label{eq:longueur de |w| dans f(x)}
	\frac{|u|}{K_\sigma} \leq |w| \leq K_\sigma |u|.
\end{equation}
It also implies that for every $(w,u) \in \tilde{\cR}_{x,U}$, the word $wu$ occurs in every word of length $(K_\sigma+1)|wu|$.
This shows that 
\begin{equation}
\label{eq:cardinal return pairs}
	\card(\tilde{\cR}_{x,U}) 
	\leq 
	\frac
	{(K_\sigma+1)^2|u|}
	{\min_{w \in \cR_{x,U}}|w|}
	\leq
	K_\sigma (K_\sigma+1)^2 = C.
\end{equation}

Let $k_U$ be the smallest integer such that for every $(w,u) \in \tilde{\mathcal{R}}_{x,U}$, both $|\sigma^{k_U}(w)|_U \geq 2$ and $|\sigma^{k_U}(u)|_U \geq 1$, where $|z|_U$ stands for the number of occurrences of words of $U$ in $z$.
The constant $k_U$ is obviously computable.

For any $w \in \mathcal{R}_{x,U} \cup U$, we set 
\[
	\sigma^{k_U} (w) = p(w) m(w) s(w) 
\]
where $m(w)$ belongs to $U$ and $p(w)m(w)$ has a unique occurrence of a word of $U$.
Now take $(w,u) \in \tilde{\mathcal{R}}_{x,U}$. 
Then 
\[
	\sigma^{k_U} (wu) = p(w) m(w) s(w) p(u) m(u) s(u) 
\]
and, by Proposition~\ref{prop:return pair code}, there exists a unique admissible word $(w_1,u_1) \cdots (w_\ell, u_\ell)$ in $\tilde{\mathcal{R}}_{x,U}^*$ such that
\[
	w_1 \cdots w_\ell u_\ell = m(w) s(w) p(u) m(u)
\]
We set 
\[
\tilde{\sigma}_{x,U} (w,u) = (w_1,u_1) \dots  (w_\ell, u_\ell) .
\]
This defines an endomorphism of $\tilde{\mathcal{R}}_{x,U}^*$ and $\tilde{\sigma}_{x,U}$ is clearly computable.
We then define $\sigma_{x,U}$ as the unique endomorphism of $\tilde{R}_{x,U}^*$ satisfying 

\begin{equation*}
\label{def:sigmaU}\
\tilde{\sigma}_{x,U} \circ \tilde{\Theta}_{x,U} 
= 
\tilde{\Theta}_{x,U} \circ \sigma_{x,U}.
\end{equation*}
It is also computable.
Using equation~\eqref{eq:longueur de |w| dans f(x)} and the choice of $k_U$, we then have
\begin{align}
2 \leq \langle \sigma_{x,U} \rangle \leq |\sigma_{x,U}| 
	&\leq 
	\dfrac
	{\max_{(w,u) \in \tilde{\mathcal{R}}_{x,U}}|\sigma^{k_U}(wu)|}
	{\min_{(w,u) \in \tilde{\mathcal{R}}_{x,U}}|w|}
\nonumber
	\\
	&\leq  
	|\sigma^{k_U}| 
	\dfrac
	{\max_{(w,u) \in \tilde{\mathcal{R}}_{x,U}}|wu|}
	{\min_{w \in \mathcal{R}_{x,U}}|w|}
\nonumber
	\\
	&\leq 
	|\sigma^{k_U}| 
	K_\sigma(K_\sigma+1).	
\end{align}

Observe that if $k$ is the smallest integer such that $\langle \sigma^{k} \rangle \geq 2 K_\sigma (K_\sigma +1)$, then from  Equation~\eqref{eq:longueur de |w| dans f(x)} and Theorem~\ref{theo:encad}, for every $(w,u) \in \tilde{\mathcal{R}}_{x,U}$, every word of $U$ occurs twice both in $\sigma^k(w)$ and in $\sigma^k(u)$.
This implies that $k_U \leq k$. 
In particular, $k$ does not depend on $U$ and, $K_\sigma$ being computable, so is $k$.
The constant $D=|\sigma^k| K_\sigma(K_\sigma+1)$ is thus also computable and does not depend on $U$.
Together with Equation~\eqref{eq:cardinal return pairs}, this shows that
\[
	\card(\{\sigma_{x,U} \mid \varepsilon \neq u \in f(\cL(\sigma)), U = f^{-1}(u) \cap \cL(\sigma) \})
	\leq C^{C(D+1)} = \tilde{Q}_\sigma
\]
and the constant $\tilde{Q}_\sigma$ is computable.

Now let us show that $\sigma_{x,U}$ is primitive and satisfies $X_{\sigma_{x,U}} = \Omega(\cD_{U}(x))$.
By Corollary~\ref{cor:derivee ensemble de mot reste UR}, the subshift $\Omega(\cD_{U}(x))$ is minimal.
Since $\langle \sigma_{x,U}^n \rangle$ goes to infinity when $n$ increases, it suffices to show that each word $\sigma_{x,U}^n(a)$ belongs to $\cL(\cD_{U}(x))$.
Indeed, the minimality of $\Omega(\cD_{U}(x))$ will imply the primitiveness of $\sigma_{x,U}$, hence the minimality of $X_{\sigma_{x,U}}$, henceforth the equality $X_{\sigma_{x,U}} = \Omega(\cD_{U}(x))$.

We proceed by induction on $n$.
By construction of $\sigma_{x,U}$, the result is true for $n = 1$.
Let us suppose it is true for $n-1$.
Let $a \in \tilde{R}_{x,U}$ and let us write $\sigma_{x,U}^{n-1}(a) = a_1 a_2 \cdots a_\ell \in \cL(\mathcal{D}_U(x))$, with $a_1,\dots,a_\ell \in \tilde{R}_{x,U}$.

For $i \in \{1,\dots,\ell\}$, let us write $\tilde{\Theta}_{x,U}(a_i) = (w_i,u_i)$.
As $a_1 a_2 \cdots a_\ell $ belong  to $\cL(\mathcal{D}_U(x))$, the word $w_1 w_2 \cdots w_\ell u_\ell $ belongs to $\cL(x)$ and for all $i$ such that $1 \leq i \leq \ell$, $w_1 w_2 \cdots w_i u_i$ is a prefix of $w_1 w_2 \cdots w_\ell u_\ell$.
We have to show that $\sigma_{x,U}(a_1 a_2 \cdots a_\ell) $ belongs to  $\cL(\cD_U(x))$.
By definition of $\sigma_{x,U}$, we have for all $i \in \{1,\dots, \ell\}$, $\sigma_{x,U}(a_i) = b_{i,1} \cdots b_{i,L_i}$, where $b_{i,1}, \dots, b_{i,L_i}$ are the unique letters in $\tilde{R}_{x,U}$ that satisfy both conditions
\begin{enumerate}
\item
$\sigma^{k_U}(w_i u_i) = p(w_i) d_{x,U,1}(b_{i,1} \cdots b_{i,L_i}) m(u_i) s(u_i)$;
\item
for all $j$ such that $j \in \{1 \dots,L_i\}$, the word $d_{x,U,1}(b_{i,1} \cdots b_{i,j})d_{x,U,2}(b_{i,j})$ is a prefix of $$d_{x,U,1}(b_{i,1} \cdots b_{i,L_i})m(u_i).$$
\end{enumerate}
Observe that for all $i \in \{1,\dots,\ell-1\}$, the word $w_i u_i$ is a prefix of $w_i w_{i+1}u_{i+1}$ and we have
\[
	\sigma^{k_U} (w_iu_i)	
	= 
	p(w_i) m(w_i) s(w_i) p(u_i) m(u_i) s(u_i)
\]
and
\begin{align*}
&	\sigma^{k_U} (w_iw_{i+1}u_{i+1})\\	 	
	= &
	p(w_i) m(w_i) s(w_i) 
	p(w_{i+1}) m(w_{i+1}) s(w_{i+1}) 
	p(u_{i+1}) m(u_{i+1}) s(u_{i+1}).
\end{align*}
Consequently, we have $p(u_i) = p(w_{i+1})$ and $m(u_i) = m(w_{i+1})$.

In definitive, we have that $\sigma^{k_U}(w_1 w_2 \cdots w_\ell u_\ell) $ belongs to $ \cL(x)$ and the letters $b_{1,1},\dots,b_{1,L_1}$, $\dots$, $b_{\ell,1},\dots,b_{\ell,L_\ell}$ are the unique letters in $\tilde{R}_{x,U}$ that satisfy
\begin{enumerate}
\item
$\sigma^{k_U}(w_1 w_2 \cdots w_\ell u_\ell) = p(w_1)d_{x,U,1}(b_{1,1}\cdots b_{1,L_1}\cdots b_{\ell,1} \cdots b_{\ell,L_\ell}) m(u_\ell) s(u_\ell)$;
\item
for all $i \in \{1,\dots,\ell\}$, for all $j \in \{1,L_i\}$, the word 
\[
	d_{x,U,1}(b_{1,1}\cdots b_{1,\ell_1}\cdots b_{i,1} \cdots b_{i,j})d_{x,U,2}(b_{i,j})
\]
is a prefix of 
\[
	d_{x,U,1}(b_{1,1}\cdots b_{1,\ell_1}\cdots b_{\ell,1} \cdots b_{\ell,L_\ell}) m(u_\ell).
\]
\end{enumerate}
Hence, the word $\sigma_{x,U}(a_1 \cdots a_\ell)$ belongs to $\cL(\cD_U(x))$, which ends the proof.
\end{proof}

\begin{defi}
The substitution $\sigma_{x,U}$ of the previous proposition is called a {\em return substitution} (w.r.t. $U$).
\end{defi}

\begin{exem}
Let us consider our running example with the Fibonacci sequence and follow the algorithm given by Proposition~\ref{prop: return morphism to prefix code}.
Recall that we have $U = \{aa,ab\}$, $\tilde{\cR}_{x,U} = \{(ab, aa),(a, ab),(ab, ab)\}$ and
\[
	\tilde{\Theta}_{y,U}:
		\begin{cases}
			1 \mapsto (ab,aa)	\\
			2 \mapsto (a,ab)		\\
			3 \mapsto (ab, ab)
		\end{cases}.
\]
We have $|\varphi^2(a)|_U = 1$ and $|\varphi^3(a)|_U = 2$ so that, with the notation of the proof, $k_U = 3$.
Computing the images of $U \cup \cR_{x,U}$ under $\varphi^3$, we get
\[
	\varphi^3:
		\begin{cases}
			a  & \mapsto	abaab		\\
			ab & \mapsto  	abaababa	\\	
			aa & \mapsto 	abaababaab	
		\end{cases},
\]
hence 
\[
	\tilde{\varphi}_{x,U}:
		\begin{cases}
			(a,ab)  & \mapsto (ab,aa)(a,ab)	\\
			(ab,ab) & \mapsto 
				(ab,aa)(a,ab)(ab,ab)(ab,aa)(a,ab)	\\
			(ab,aa) & \mapsto
				(ab,aa)(a,ab)(ab,ab)(ab,aa)(a,ab)
		\end{cases}
\]
and finally
\[
	\varphi_{x,U}:
		\begin{cases}
			1 \mapsto 12312	\\
			2 \mapsto 12 	\\
			3 \mapsto 12312	
		\end{cases}.
\]
\end{exem}

\begin{exem}
Let us consider our running example with the Thue-Morse sequence and follow the algorithm given by Proposition~\ref{prop: return morphism to prefix code}.
Recall that we have $U = \{aa,bb\}$, $$\tilde{\cR}_{y,U} = \{(aa, bb),(bb, aa),(aaba, bb), (bbab, aa)\}$$ and
\[
	\tilde{\Theta}_{y,U}:
		\begin{cases}
			1 \mapsto (bbab, aa)	\\
			2 \mapsto (aa, bb)		\\
			3 \mapsto (bb, aa)		\\
			4 \mapsto (aaba, bb)
		\end{cases}.
\]
We have $|\nu(aa)|_U = 0$ and $|\nu^2(aa)|_U =|\nu^2(bb)|_U = 3$ so that, with the notation of the proof, $k_U = 2$

Computing the images of $U \cup \cR_{y,U}$ under $\nu^2$, we get
\[
	\nu^2:
		\begin{cases}
			aa 		& \mapsto	abbaabba	\\
			aaba 	& \mapsto	abbaabbabaababba	\\
			bb 		& \mapsto	baabbaab	\\
			bbab 	& \mapsto	baabbaababbabaab	
		\end{cases},
\]
hence 
\[
	\tilde{\nu}_{y,U}:
		\begin{cases}
			(aa,bb)   & \mapsto (bb,aa)(aa,bb) (bbab,aa)	\\
			(aaba,bb) & \mapsto (bb,aa)(aa,bb) (bbab,aa) (aaba,bb)(bbab,aa)	\\
			(bb,aa)   & \mapsto (aa,bb)(bb,aa) (aaba,bb)	\\
			(bbab,aa) & \mapsto (aa,bb)(bb,aa) (aaba,bb) (bbab,aa)(aaba,bb)	
		\end{cases}
\]
and
\[
	\nu_{y,U}:
		\begin{cases}
			1 \mapsto 23414	 \\
			2 \mapsto 321 	\\
			3 \mapsto 234	\\
			4 \mapsto 32141
		\end{cases}.
\]
\end{exem}

\subsubsection{It suffices to compute a finite number of return substitutions}

We are now ready to state a necessary and sufficient condition for a coding to be a factor map between two minimal substitution subshifts.

\begin{theo}
\label{theo:decidcoding}
Let $\sigma:A^* \to A^*$ and $\tau:B^* \to B^*$ be aperiodic primitive substitutions and let $x \in A^\mathbb{N}$ and $y \in B^\mathbb{N}$ be fixed points of $\sigma$ and $\tau$ respectively.
Let $f:A^* \to B^*$ be a coding and $K = \max\{K_\sigma,K_\tau\}$.
There is a computable constant $C$ such that 
$f$ defines a factor map from $(X_\sigma,S)$ to $(X_\tau,S)$ if and only if there exist non-empty words $u,v \in f(\cL(X_\sigma))$ that are prefixes of $y$ and satisfy the following conditions, where $U = f^{-1}(\{ u\}) \cap \cL(\sigma)$ and $V = f^{-1}(\{ v\} )\cap \cL(\tau)$:
\begin{enumerate}
\item
$K(K+1)|u| \leq |v| \leq C$;
\item
$f(\cR_{x,U}) \subset \cR_{y,u}$
and
$f(\cR_{x,V}) \subset \cR_{y,v}$;
\item
$\varphi_u = \varphi_v$;
\item
$\tau_{y,u} = \tau_{y,v}$ and $\sigma_{x,U} = \sigma_{x,V}$.
\end{enumerate}
\end{theo}

\begin{proof}
Let us first define the computable constant $C$.
Let $Q_\tau$ and $\tilde{Q}_\sigma$  be the computable constants of Corollary~\ref{cor:nbr_de_return_subst} and Proposition~\ref{prop: return morphism to prefix code} respectively.
Let also $D = (K+1)^{3(K+1)^3}$ and let $k$ denote the smallest positive integer such that $\left(\frac{K+1}{K}\right)^k \geq K (K+1)$.
It is computable.
We set $C = (K(K+1))^{Q_\sigma + k D Q_\sigma \tilde{Q}_\sigma}$.

Assume that $u,v\in f(\cL(X_\sigma))$ satisfy the hypotheses.
By Propositions~\ref{prop: return morphism exists} and~\ref{prop: return morphism to prefix code}, we get
\begin{align*}
\mathcal{D}_u(y) = \tau_{y,u}^\omega(1) 
&= \tau_{y,v}^\omega(1) = \mathcal{D}_v(y);	\\
\Omega(\mathcal{D}_U(x)) = X_{\sigma_{x,U}} 
&= X_{\sigma_{x,V}} = \Omega(\mathcal{D}_V(x)). 
\end{align*}
Theorem~\ref{thm:sufficient_conditions} then shows that $f$ defines a factor map from $X_\sigma$ to $X_\tau$.

Now assume that $f$ defines a factor map from $X_\sigma$ to $X_\tau$.
Then by minimality of $X_\tau$ and $X_\sigma$, every prefix $u$ of $y$ belongs to $f(\cL(\sigma))$, satisfies $f(\mathcal{R}_{x,U}) \subset \mathcal{R}_{y,u}$ with $U = f^{-1}(\{u\}) \cap \cL(\sigma)$ and defines a factor map $\varphi_u$ from $\Omega(\mathcal{D}_U(x))$ to $\Omega(\mathcal{D}_u(y))$ (see Lemma~\ref{lemma:def of varphi_u}). 

Let $(u_n)_{n \geq 0}$ be the recursively enumerable sequence of prefixes of $y$ given by Lemma~\ref{lemma:u_n enumerable} and let $m,p$ be the constants given by that lemma.
We consider the finite subsequence $(v_n)_{0 \leq n \leq D \tilde{Q}_\sigma}$, where $v_n = u_{m+npk}$ for every $n$.
By choice of $k$, for every $n>0$, we have $K(K+1)|v_{n-1}| \leq |v_n| \leq C$.
For each $n$, we set $V_n = f^{-1}(\{v_n\}) \cap \mathcal{L}(\sigma)$.
Using Corollary~\ref{cor:nbr_de_return_subst} and Proposition~\ref{prop: return morphism to prefix code},
there exist $i,j$ such that $0 \leq i < j \leq D \tilde{Q}_\sigma$ and  $(\sigma_{x,V_i},\varphi_{v_i}) = (\sigma_{x,V_j},\varphi_{v_j})$, which ends the proof.
\end{proof}

Since all data in Theorem~\ref{theo:decidcoding} are computable from $\sigma$ and $\tau$, we obtain Theorem~\ref{theo:main2} as a corollary.

\subsection{Proof of Theorem~\ref{theo:cordecidfactor} and Corollary~\ref{cor:cordecidfactor}}

We deduce Theorem~\ref{theo:cordecidfactor} from Theorem~\ref{theo:durand13bbis}, Lemma~\ref{lemma:block representation isomorphic}, Proposition~\ref{prop:conj-sublengthn} and Theorem~\ref{theo:decidcoding}.

To prove Corollary~\ref{cor:cordecidfactor}, observe that one can decide whether there exist two factor maps $f : (X_\sigma , S ) \to (X_\tau , S)$ and $f : (X_\tau , S ) \to (X_\sigma , S)$.
And thus that $f\circ g : (X_\sigma , S ) \to (X_\sigma , S)$ is an endomorphism. 
But it is known from~\cite{Durand:2000} (see also~\cite{Coven:1971} for binary alphabets) that endomorphisms of minimal substitution subshifts are isomorphisms.
This is what is called the {\em coalescence property}.
This shows Corollary~\ref{cor:cordecidfactor}.

\section{The constant length case}\label{section:constantlength}

In this section we look in more details to the factor maps between constant length substitution subshifts and in particular to the maps $F_n$ defined by
\eqref{align:Fn}.
The main theorem in~\cite{Coven&Dekking&Keane:2017} shows that when there exists a factor map $f : (X_\sigma ,S) \to (X_\tau , S)$ between aperiodic minimal one-to-one constant length  substitution subshifts then there exists another factor map $g : (X_\sigma ,S) \to (X_\tau , S)$ given by a sliding block code of radius 1.
In~\cite{Coven&Quas&Yassawi:2016} the result is improved for automorphism $f : (X_\sigma ,S) \to (X_\sigma , S)$ with some extra assumptions. 
In this case $g$ can be chosen with memory $0$ and anticipation $1$.

In this section we improve these results by removing the assumption of injectivity and the additional assumptions in~\cite{Coven&Quas&Yassawi:2016} and by giving a link between $f$ and $g$. 
This is Theorem~\ref{theo:radius1}.

\subsection{Factor maps between constant length primitive substitution subshifts}

We say that a substitution $\sigma : A^* \to A^*$ is {\em one-to-one on letters} when $\sigma (a) \not = \sigma (b)$ whenever $a\not = b$.
When $\sigma $ has constant length, it is clear that if $\sigma : A^* \to A^*$ is one-to-one on letters then $\sigma : A^* \to A^*$ is one-to-one and $\sigma^n$ is one-to-one  for all $n\geq 1$.

\begin{theo}
\label{theo:radius1}
Let $\sigma $ and $\tau $ be two primitive substitutions of constant length.
Suppose there exists a factor map $f : (X_\sigma , S) \to (X_\tau , S)$ of radius $r$ with $(X_\tau , S)$ aperiodic.
Let $$N_n = \inf \{ k \geq 0 \mid S^k f (x) \hbox{ belongs to }  \tau^n (X_\tau )  ,  x \in \sigma^n (X_\sigma ) \},$$ and, set $L = 0 $ if $\tau$ is one-to-one on letters and $L=L_\tau$ otherwise. 

Then, there exists a factor map $F : (X_\sigma , S) \to (X_\tau , S)$ such that:
\begin{enumerate}
\item
If $r \geq  N_n $ for all large enough $n$, then the anticipation of $F$ is bounded by $L+1$ and its memory by $L +1$;
\item
If $r<N_n \leq |\tau^n |-r$ for all large enough $n$, then the anticipation of $F$ is bounded by $L$  and its memory by $L +1$;
\item
If $N_n \geq |\tau^n| -r$ for all large enough $n$, then the anticipation of $F$ is bounded by $L$  and its memory by $L +2$;
\item
$f=F\circ S^n$ for some $n \in \mathbb{Z}$.
\end{enumerate}
\end{theo}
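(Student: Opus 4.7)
The plan is to exploit the equal-length hypothesis to sharpen the analysis of the dill maps $F_n$ defined by~\eqref{align:Fn}. By Theorem~\ref{theo:cobham} and the fact that the dominant eigenvalue of a constant-length substitution equals its length, I may replace $\sigma$ by $\sigma^q$ and $\tau$ by $\tau^p$ for suitable $p,q\ge 1$ so as to assume $|\sigma|=|\tau|=\ell$, without altering $X_\sigma$, $X_\tau$ or $f$. The crucial consequence is that under this equal-length hypothesis every $F_n$ is a bona fide factor map: the cocycle identity from the proof of Proposition~\ref{lemme:boundradius} reads $c(x)\ell^n=R(Sx)+\ell^n-R(x)$ with $R=r_n\circ f\circ\sigma^n$, and the partition $\{S^i\tau^n(X_\tau)\mid 0\le i<\ell^n\}$ of $X_\tau$ forces $R(x)\in\{0,\dots,\ell^n-1\}$. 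The only non-negative integer value of $c(x)$ compatible with this is $c(x)=1$, hence $R$ is shift-invariant and, by minimality, constant, equal to the $N_n$ of the statement. One obtains
\[
\tau^n\circ F_n(x)=S^{N_n}\circ f\circ\sigma^n(x)\qquad\text{for all }x\in X_\sigma.
\]

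I would then apply Corollary~\ref{coro:Fnfinite} and the pigeon-hole principle to fix $n_0$ with $F_{n_0}=F_n$ along an increasing sequence $(n_k)\to\infty$. Lemma~\ref{lem:equiv} yields $F_{n_0}\equiv f$, and the lemma in Section~\ref{section:bounds} asserting that orbitally related factor maps to an aperiodic target differ by a single power of the shift produces $e\in\Z$ with $f=S^e\circ F_{n_0}$. Setting $F=F_{n_0}$ settles Item~4.

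The core of the proof is the radius estimate, obtained by inspecting the identity above for $n=n_k$ large. The letter $F(x)_0=F_n(x)_0$ must be recovered from
\[
\tau^n(F_n(x)_0)=(f\circ\sigma^n(x))_{[N_n,N_n+\ell^n-1]},
\]
which is read off from $\sigma^n(x)_{[N_n-r,N_n+\ell^n-1+r]}$. If $\tau$ is one-to-one on letters, so is $\tau^n$ and this block alone pins down the preimage ($L=0$); otherwise a single application of Moss\'e recognizability for $\tau$ at the $\tau$-cut located at position $0$ of $\tau^nF_n(x)$ resolves the ambiguity at the cost of at most $L_\tau$ adjacent letters of context ($L=L_\tau$). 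Transferring this window of $\sigma^n(x)$ back to $x$ via Item~3 of Corollary~\ref{cor:mosse} in the one-to-one case and Item~2 in the general one, the three cases of the statement arise from the location of $N_n$ in $\{0,\dots,\ell^n-1\}$: if $N_n\le r$ the window reaches into $\sigma^n(x_{-1})$, giving the symmetric bound $(L+1,L+1)$; if $r<N_n\le\ell^n-r$ it stays inside $\sigma^n(x_0x_1)$, giving $(L+1,L)$; and if $N_n>\ell^n-r$ it extends into $\sigma^n(x_2)$, giving $(L+2,L)$.

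The main technical difficulty will be keeping the recognizability constant bounded independently of $n$: a brute iteration of $\tau$-desubstitution of $\tau^nF_n(x)$ accumulates geometric factors of $\ell$ and produces the useless bound $L_{\tau^n}$. Circumventing this hinges on the observation that the identity $\tau^nF_n(x)=S^{N_n}f\sigma^n(x)$ already exhibits the location of every $\tau^n$-cut, so that the only remaining ambiguity is between distinct letters sharing the same $\tau^n$-image, which is controlled by a single application of $\tau$-recognizability at the top cut, hence by $L_\tau$ alone. Carefully tracking on which side of the block this $L_\tau$ context lies is precisely what produces the three asymmetric bounds of the statement.
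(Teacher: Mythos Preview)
Your approach is essentially the paper's: your maps $F_n$ coincide with the paper's $\Phi_{\sigma^n,\tau^n}(f)$, your observation that the cocycle equals $1$ (so each $F_n$ is a genuine factor map and $r_n\circ f\circ\sigma^n$ is the constant $N_n$) is exactly Lemma~\ref{lemma:returntimes} and Proposition~\ref{prop:reducfact}\eqref{enum:newsbc}, and your window--tracking radius estimate is precisely Proposition~\ref{prop:reducfact}\eqref{enum:radius}, where the three cases of the theorem come from the ceilings $\lceil (r\pm N_n)/|\tau|^n\rceil$. The paper simply packages these constant-length facts into separate lemmas rather than specializing the Section~\ref{section:bounds} machinery.

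There is one genuine slip in your Item~4. From $F_{n_0}=F_{n_k}$ and iterated use of Lemma~\ref{lem:equiv} you obtain $F_{n_k-n_0}\equiv F_0=f$, \emph{not} $F_{n_0}\equiv f$. So you must set $F=F_{n_k-n_0}$ for some $k$ large enough that $n_k-n_0$ exceeds the threshold where the radius bound kicks in; there is no reason why $F_{n_0}$ itself should be a shift of $f$. The paper handles this via Lemma~\ref{lem:reducfact1}: the map $\Phi_{\sigma,\tau}$ is injective on shift-classes (item~(2)), so from $\Phi^p(f)=\Phi^q(f)$ one deduces directly that $\Phi^{q-p}(f)=S^m\circ f$, and one is free to choose $p,q$ so that $q-p$ is as large as needed.
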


Our proof follows the strategy used in~\cite{Salo&Torma:2015} to show that the automorphism group of subshifts generated by primitive one-to-one constant length substitutions is virtually $\mathbb{Z}$. 

We then use Theorem~\ref{theo:radius1} to show the decidability of the factorization between two minimal constant length substitution subshifts.

Observe that when dealing with one-to-one property it is important to distinguish $\sigma : A^* \to A^*$  and $\sigma : A^\mathbb{K} \to A^\mathbb{K}$ as there exists substitutions $\sigma $ such that $\sigma : A^* \to A^*$ is not one-to-one but $\sigma : X_\sigma \to X_\sigma$ is. 
For example not all primitive aperiodic substitutions $\sigma : A^* \to A^*$ are one-to-one but all of them are such that 
$\sigma : X_\sigma \to X_\sigma$ is one-to-one (Corollary~\ref{cor:mosse5.11}).

As taking powers of a substitution does not change the associated subshift, the next result implies that we can restrict ourselves to the case of substitutions of the same constant length.

\begin{prop}[{\cite[Théorème 15]{Fagnot:1997a}}]
\label{prop:fagnot}
Let $\sigma $ and $\tau $ be two primitive substitutions of constant length.
If $(X_\tau , S)$ is aperiodic and if there exists a factor map $f: (X_\sigma , S) \to (X_\tau , S)$, then there is exist $k,l\geq 1$ such that $|\sigma^k|=|\tau^l|$.
\end{prop}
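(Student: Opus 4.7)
The plan is to deduce the statement directly from Theorem~\ref{theo:cobham} together with the basic observation that for a constant length substitution the length and the dominant eigenvalue coincide.

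First I would record the following elementary fact: if $\sigma : A^* \to A^*$ is a substitution of constant length $p$, then $\rho(M_\sigma) = p$. Indeed, every column of the incidence matrix $M_\sigma$ sums to $p$ (since $|\sigma(a)| = p$ for all $a \in A$), so the all-ones row vector is a left eigenvector associated with the eigenvalue $p$; primitivity of $M_\sigma$ combined with the Perron--Frobenius theorem then forces $p = \rho(M_\sigma)$. The same computation for $\tau$ of constant length $q$ gives $\rho(M_\tau) = q$. Moreover the $k$-th iterate of a constant length $p$ substitution has constant length $p^k$, and similarly $|\tau^\ell| = q^\ell$.

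Next I would invoke Theorem~\ref{theo:cobham}: since $(X_\tau,S)$ is aperiodic and $f:(X_\sigma,S) \to (X_\tau,S)$ is a factor map, the dominant eigenvalues of $\sigma$ and $\tau$ share a nontrivial common power, i.e.\ there exist $k,\ell \geq 1$ with $\rho(M_\sigma)^k = \rho(M_\tau)^\ell$. Combining with the first step gives $p^k = q^\ell$, that is $|\sigma^k| = |\tau^\ell|$, which is the desired conclusion.

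I do not anticipate any serious obstacle: once Theorem~\ref{theo:cobham} is in hand, the only content is the identification of the dominant eigenvalue with the length in the constant length case, which is a one-line Perron--Frobenius argument. The only care needed is to make sure the appeal to Theorem~\ref{theo:cobham} is legitimate, which it is since both $\sigma$ and $\tau$ are primitive and $(X_\tau,S)$ is assumed aperiodic.
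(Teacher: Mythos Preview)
Your argument is correct. The paper does not supply its own proof of this proposition; it simply cites Fagnot's original article. Deducing the constant-length statement from the more general Cobham-type result (Theorem~\ref{theo:cobham}) together with the identification $\rho(M_\sigma)=|\sigma|$ for constant-length $\sigma$ is entirely valid. One minor point worth making explicit: Theorem~\ref{theo:cobham} is stated in the paper in a context where both substitutions are aperiodic, whereas Proposition~\ref{prop:fagnot} only assumes $(X_\tau,S)$ aperiodic; but this is harmless, since the existence of a factor map onto an aperiodic system forces $(X_\sigma,S)$ to be aperiodic as well.
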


\begin{lem}
\label{lemma:returntimes}
Let $\sigma $ and $\tau $ be two primitive substitutions of constant length $p$.
Suppose there exists a factor map $\phi : (X_\sigma , S )\to (X_\tau , S)$ with $(X_\tau , S)$ aperiodic.
Then, there exists a unique $i$ belonging to $[0, \dots , p-1 ]$ such that if $x$ belongs to $\sigma (X_\sigma )$ then 
$\phi (x)$ belongs to $S^{-i} (\tau (X_\tau))$.
\end{lem}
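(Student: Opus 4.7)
My plan is to define, for each point $x \in \sigma(X_\sigma)$, the candidate value $i(x) \in \{0,\ldots,p-1\}$ by recognizability of $\tau$ applied to $\phi(x)$, and then to show that the resulting map is constant using minimality. The uniqueness part is then immediate: if both $i_1$ and $i_2$ worked, then for any $x \in \sigma(X_\sigma)$ both $S^{i_1}\phi(x)$ and $S^{i_2}\phi(x)$ would lie in $\tau(X_\tau)$, contradicting the uniqueness clause of Corollary~\ref{cor:mosse5.11} applied to $\phi(x) \in X_\tau$ (here $\tau$ has constant length $p$, so the clopen sets $S^k \tau(X_\tau)$, $0 \le k <p$, form a partition of $X_\tau$).

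So the real content is to show that the function $i : \sigma(X_\sigma) \to \{0,\ldots,p-1\}$ defined by $S^{i(x)}\phi(x) \in \tau(X_\tau)$ is constant. First I would argue continuity: the sets $\tau(X_\tau)$ and $S^{-k}\tau(X_\tau)$ are clopen by Corollary~\ref{cor:mosse5.11} (being the image of a compact set under the homeomorphism $\tau$, or its shifts), so $i = \sum_{k=0}^{p-1} k \cdot \mathbf{1}_{\phi^{-1}(S^{-k}\tau(X_\tau))}$ is continuous from $\sigma(X_\sigma)$ into the discrete set $\{0,\ldots,p-1\}$, hence locally constant.

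Next I would establish $S^p$-invariance of $i$ on $\sigma(X_\sigma)$. Since $\phi$ commutes with $S$, it also commutes with $S^p$. Moreover, if $x = \sigma(y)$ with $y \in X_\sigma$, then $S^p x = S^p \sigma(y) = \sigma(Sy) \in \sigma(X_\sigma)$, so $S^p$ preserves $\sigma(X_\sigma)$. Now if $S^{i(x)}\phi(x) = \tau(z)$ with $z \in X_\tau$, then
\[
S^{i(x)}\phi(S^p x) = S^{i(x)} S^p \phi(x) = S^p \tau(z) = \tau(Sz) \in \tau(X_\tau),
\]
which by uniqueness of $i$ gives $i(S^p x) = i(x)$.

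Finally, I would invoke minimality. The map $\sigma : X_\sigma \to \sigma(X_\sigma)$ is a homeomorphism (Corollary~\ref{cor:mosse5.11}) satisfying $\sigma \circ S = S^p \circ \sigma$, so it realizes a topological conjugacy from $(X_\sigma,S)$ to $(\sigma(X_\sigma),S^p)$. Since $\sigma$ is primitive, $(X_\sigma,S)$ is minimal, hence $(\sigma(X_\sigma),S^p)$ is minimal. A continuous $S^p$-invariant function on a minimal system is constant, so $i$ is constant on $\sigma(X_\sigma)$, which is the desired $i$. I do not anticipate a significant obstacle; the only point requiring care is making sure the clopen partition structure used for recognizability of $\tau$ is applied to $\phi(x)$ (a point of $X_\tau$), not to $x$ itself.
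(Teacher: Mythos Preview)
Your proof is correct and follows essentially the same approach as the paper's: both define $i(x)$ via the recognizability partition for $\tau$, observe that $i$ is invariant under $S^p = S^{|\sigma|}$ on $\sigma(X_\sigma)$, and use minimality (density of the $S^p$-orbit in $\sigma(X_\sigma)$) together with the clopen nature of the atoms $S^{-k}\tau(X_\tau)$ to conclude that $i$ is constant. The paper phrases the last step as a direct limit argument (the $S^{k|\sigma|}$-orbit of a fixed $x$ is dense in $\sigma(X_\sigma)$ and $\phi$ maps it into the closed set $S^{-i(x)}\tau(X_\tau)$), while you package it as ``a continuous invariant function on a minimal system is constant''; these are the same argument.
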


\begin{proof}
Let $x \in \sigma (X_\sigma )$.
From Corollary~\ref{lem:rec}, there exists a unique $i(x)$ belonging to $[0, \dots , p-1 ]$ such that $\phi (x)$ belongs to $S^{-i(x)} (\tau (X_\tau))$.
Let us show $i(x)$ is constant on $\sigma ( X_\sigma )$.
Observe that $S^{j}(x) $ belongs to $\sigma(X_\sigma)$ if and only if $j=k|\sigma|$ for some $k$ and, in this case, we have $\phi(S^{k|\sigma|}(x)) = S^{k|\sigma|} \phi(x) \in S^{-i(x)}(\tau(X_\tau))$.
Let $y\in \sigma (X_\sigma)$.
Using the minimality of $(X_\sigma , S)$, there exists a sequence $(S^{k_n |\sigma |} x)_n$ tending to $y$.
As $\phi (S^{k_n |\sigma |} x)$ belongs to $S^{-i(x)}\tau (X_\tau )$ for all $n$, we get that $\phi (y)$ also belongs to $S^{-i(x)}\tau (X_\tau)$.
\end{proof}

\begin{prop}
\label{prop:reducfact}
Let $\sigma $ and $\tau $ be two primitive substitutions of the same constant length and let $l\geq 0$.
Suppose there exists a factor map $f : (X_\sigma , S) \to (X_\tau , S)$ with radius $r$ and that $(X_\tau , S)$ is aperiodic.
Then, 

\begin{enumerate}
\item \label{enum:newsbc}
There exist $n$ and a factor map $F : (X_\sigma , S) \to (X_\tau , S)$ uniquely defined by 
\begin{align}
\label{def:sbcF}
\tau^l \circ F  = S^n\circ f\circ \sigma^l  , \ \  0 \leq  n<  |\tau|^l .
\end{align}
\item
\label{enum:radius}
The memory $t$ and the anticipation $s$  of $F$ satisfy 
\begin{align*}
t & \leq  \max ( 0 , L + \lceil (r-n)/|\tau|^l\rceil ), \\
s & \leq L + \lceil (r+n)/|\tau|^l\rceil,
\end{align*}
where $L = 0$ if $\tau $ is one-to-one on letters and $L = L_\tau$ otherwise. 
\end{enumerate}
\end{prop}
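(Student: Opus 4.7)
The plan is to construct $F$ as a renormalisation of $f$, exploiting the fact that $\sigma^l$ and $\tau^l$ both have the same constant length $|\sigma|^l=|\tau|^l$. This common length is what distinguishes the present setting from the one in Section~\ref{section:bounds}: the renormalisation will already be a genuine factor map (not just a dill map), and its radius will essentially be that of $f$ divided by $|\tau|^l$.

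For item~\eqref{enum:newsbc}, I would first apply Lemma~\ref{lemma:returntimes} to the primitive constant length substitutions $\sigma^l,\tau^l$ and to the factor map $f$, obtaining the unique integer $n\in[0,|\tau|^l)$ with $S^nf(x)\in\tau^l(X_\tau)$ for every $x\in\sigma^l(X_\sigma)$. Since $\tau^l$ is, by Corollary~\ref{cor:mosse5.11}, a homeomorphism from $X_\tau$ onto the clopen set $\tau^l(X_\tau)$, the formula $F(y)=(\tau^l)^{-1}(S^nf(\sigma^l(y)))$ defines a continuous map $F:X_\sigma\to X_\tau$. Using $|\sigma|^l=|\tau|^l$, one checks that $\tau^lF(Sy)=S^{n+|\sigma|^l}f\sigma^l(y)=S^{|\tau|^l}\tau^lF(y)=\tau^lSF(y)$, and cancelling $\tau^l$ by injectivity gives $F\circ S=S\circ F$; surjectivity is automatic, since $F(X_\sigma)$ is a closed nonempty $S$-invariant subset of the minimal system $(X_\tau,S)$. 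For uniqueness, if $(F',n')$ also satisfies~\eqref{def:sbcF}, then $\tau^lF(y)=S^{n-n'}\tau^lF'(y)$; but by Corollary~\ref{lem:rec} applied to $\tau^l$, the collection $\{S^i\tau^l(X_\tau)\mid 0\leq i<|\tau|^l\}$ is a clopen partition of $X_\tau$, so $|n-n'|<|\tau|^l$ forces $n=n'$, and then $F=F'$ by injectivity of $\tau^l$.

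For item~\eqref{enum:radius}, suppose $x,x'\in X_\sigma$ coincide on $[-t,s]$. Then $\sigma^l(x)$ and $\sigma^l(x')$ coincide on $[-t|\sigma|^l,(s+1)|\sigma|^l-1]$, and applying $f$ (of radius $r$) and $S^n$, the sequences $\tau^lF(x)$ and $\tau^lF(x')$ coincide on the interval
\[
\bigl[-t|\sigma|^l+r-n,\;(s+1)|\sigma|^l-1-r-n\bigr].
\]
When $\tau$ is one-to-one on letters, so is $\tau^l$, and item~3 of Corollary~\ref{cor:mosse}, combined with $|\sigma|^l=|\tau|^l$, directly yields $F(x)_0=F(x')_0$ as soon as $t\geq\lceil(r-n)/|\tau|^l\rceil$ and $s\geq\lceil(r+n)/|\tau|^l\rceil$, recovering the bounds with $L=0$. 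Otherwise I would invoke item~2 of Corollary~\ref{cor:mosse} with the recognisability constant $L_{\tau^l}$. By Proposition~\ref{prop:constantrecsigmak} and the elementary bound $\sum_{i=0}^{l-1}|\tau|^i\leq|\tau|^l$ (valid for $|\tau|\geq 2$, which is forced by aperiodicity of $(X_\tau,S)$), one has $L_{\tau^l}/|\tau|^l\leq L_\tau$, and a careful accounting of the ceilings arising from Corollary~\ref{cor:mosse} absorbs the resulting contribution into a single additive $L_\tau$, producing the announced bounds.

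The main obstacle will be precisely this ceiling/floor bookkeeping: one must verify that the candidate values $t=\max(0,L_\tau+\lceil(r-n)/|\tau|^l\rceil)$ and $s=L_\tau+\lceil(r+n)/|\tau|^l\rceil$ make the above interval extend at least $L_{\tau^l}$ on each side of $0$, so that the hypotheses of item~2 of Corollary~\ref{cor:mosse} are met, and that the resulting interval of agreement for $F(x)$ and $F(x')$ genuinely contains position $0$. Beyond this arithmetic verification, the rest is a straightforward unfolding of the recognisability machinery of Section~\ref{section:boundsprim}.
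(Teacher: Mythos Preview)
Your proposal is correct and follows essentially the same route as the paper: Lemma~\ref{lemma:returntimes} and Corollary~\ref{cor:mosse5.11} for item~\eqref{enum:newsbc}, then Corollary~\ref{cor:mosse} combined with Proposition~\ref{prop:constantrecsigmak} for the radius bound in item~\eqref{enum:radius}. Your treatment of uniqueness (via the clopen partition of Corollary~\ref{lem:rec}) and surjectivity (via minimality) is slightly more explicit than the paper's, but the underlying argument is the same.
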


\begin{proof}
\eqref{enum:newsbc}
By Lemma~\ref{lemma:returntimes}, there exists a unique $n$ such that $0  \leq  n <  |\tau|^l $ and for all $x \in X_\sigma$, $S^n \circ f \circ \sigma^l (x)$ belongs to $\tau^l (X_\tau)$.
Thus, using Corollary~\ref{cor:mosse5.11}, Equation~\eqref{def:sbcF} uniquely defines a continuous map $F$.
Using again Corollary~\ref{cor:mosse5.11} and noticing that 
$$
\tau^l \circ F\circ S (x) = S^{n + |\sigma|^l}\circ f \circ \sigma^l (x)= S^{|\sigma|^l} \circ \tau^l \circ F (x) = \tau^l \circ S \circ F( x), 
$$
one obtains that $F$ is a factor map between $(X_\sigma , S)$ and $(X_\tau , S)$.

\medskip

\eqref{enum:radius}
We know from the Curtis-Hedlund-Lyndon theorem that $F$ is a factor map if and only if there exist $s,t\geq 0$ such that for all $x$, $F(x)_0$ only depends on $x_{[-t,s]}$.

Let $s,t\geq 0$ and $x\in X_\sigma$.
Then the word $\sigma^l (x)_{[-t|\sigma|^l , s|\sigma|^l]}$ only depends on $x_{[-t,s]}$.
Thus, it is also the case of the words $f\circ \sigma^l (x)_{[-t|\sigma|^l +r , s|\sigma|^l-r]}$ and $\tau^l \circ F (x)_{ [-t|\sigma|^l +r-n , s|\sigma|^l-r-n]}$.
Let $L_l = 0$ if $\tau $ is one-to-one on letters and $L_l = L_{\tau^l}$ otherwise. 
From Corollary~\ref{cor:mosse},
\[
F (x)_{ \left[ \left\lceil \frac{-t|\sigma|^l +r-n +L_l}{|\tau|^l}\right\rceil  , \left\lfloor\frac{ s|\sigma|^l-r-n - L_{l}}{|\tau|^l}\right\rfloor \right] } , 
\]
and thus, from Proposition~\ref{prop:constantrecsigmak},
\[
F (x)_{ \left[ \left\lceil \frac{-t|\sigma|^l +r-n +|\tau |^lL_1}{|\tau|^l}\right\rceil  , \left\lfloor\frac{ s|\sigma|^l-r-n - |\tau |^l L_{1}}{|\tau|^l}\right\rfloor \right] }  
\]
only depends on $x_{[-t,s]}$, that is, 
\[
F (x)_{ \left[ \lceil -t +L_1 +(r-n)/|\tau|^l\rceil  , \lfloor s -L_1 -(r+n)/ |\tau |^l \rfloor \right] }  
\]
only depends on $x[-t,s]$.
Hence it is sufficient that $s$ and $t$ satisfy  $ s- L_1 -(r+n)/|\tau|^l\geq 0$ and $-t +L_1 +(r-n)/|\tau|^l\leq 0$,  which ends the proof.
\end{proof}

Consider $\Delta (\sigma , \tau )$ the set of factor maps from  $(X_\sigma ,S)$ to $(X_\tau ,S)$.
From Proposition~\ref{prop:reducfact}  there is a map $\Phi_{\sigma , \tau }$ from $\Delta (\sigma , \tau )$ to itself defined by
\begin{equation}
\tau \circ \Phi_{\sigma , \tau } (f) = S^{N_{\sigma , \tau }(f)} \circ f \circ \sigma , \qquad \forall f \in \Delta (\sigma , \tau ) ,
\end{equation}
for some uniquely defined integer $N_{\sigma , \tau } (f)$ verifying $0 \leq   N_{\sigma , \tau } (f)<  |\tau|$. 
Observe $N_{\sigma , \tau } (f)$ is the unique integer $n$ such that $0 \leq n < |\tau|$ and $S^n \circ f \circ \sigma(X_\sigma) \subset \tau(X_\tau)$.

\begin{lem}
\label{lem:reducfact1}
Let $\sigma $ and $\tau $ be two primitive substitutions of the same constant length.
Suppose $(X_\tau , S)$ is non-periodic.
Let $g, h \in \Delta (\sigma , \tau )$. We have the following properties.
\begin{enumerate}
\item
If $h= S^n g$ with $N_{\sigma , \tau }(g) - |\tau |  < n \leq  N_{\sigma , \tau }(g) $ 
then $N_{\sigma , \tau }(S^n g)=N_{\sigma , \tau }(g) - n$ 
and $\Phi_{\sigma , \tau } (S^n g ) = \Phi_{\sigma , \tau } (g)$.
\item
If $\Phi_{\sigma , \tau } (g) = S^k \circ \Phi_{\sigma , \tau } (h)$ for some $k$ then $g= S^l \circ h$ for some $l$. 
\item
\label{item:compositionphi}
For all $m,n > 0$ one has 
\[
N_{\sigma^{m+n},\tau^{m+n}}(g) = |\tau^m| N_{\sigma^n , \tau^n }(  \Phi_{\sigma^m , \tau^m } (g) ) + N_{\sigma^m , \tau^m }(g)
\]
and 
$\Phi_{\sigma^n , \tau^n } \circ \Phi_{\sigma^m , \tau^m } (g) =  \Phi_{\sigma^{n+m} , \tau^{n +m}} (g)$.
\item
\label{enum:phin}
For all $n > 0$, $\Phi_{\sigma , \tau }^{n}  = \Phi_{\sigma^n , \tau^n }$. 
\end{enumerate}
\end{lem}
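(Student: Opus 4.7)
The plan is to derive each of the four items from the defining identity $\tau \circ \Phi_{\sigma , \tau }(f) = S^{N_{\sigma , \tau }(f)} \circ f \circ \sigma$ combined with the uniqueness, granted by Proposition~\ref{prop:reducfact}, of the integer $n \in [0,|\tau|)$ making $S^n \circ f \circ \sigma(X_\sigma) \subset \tau(X_\tau)$. All four items reduce to rewriting this equation using that $\tau$ has constant length (so $\tau^k \circ S = S^{|\tau^k|} \circ \tau^k$) and then checking that some exponent lies in the correct range.

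For item~(1), substituting $h = S^n g$ rewrites the defining equation as $\tau \circ \Phi_{\sigma , \tau }(g) = S^{N_{\sigma , \tau }(g) - n} \circ h \circ \sigma$, and the assumption $N_{\sigma , \tau }(g) - |\tau| < n \leq N_{\sigma , \tau }(g)$ ensures $N_{\sigma , \tau }(g) - n \in [0,|\tau|)$. Uniqueness then yields both conclusions simultaneously. For item~(2), I apply $\tau$ to $\Phi_{\sigma , \tau }(g) = S^k \circ \Phi_{\sigma , \tau }(h)$ and plug in the defining equation on each side; this gives $g \circ \sigma = S^{l'} \circ h \circ \sigma$ on $X_\sigma$ with $l' = k + N_{\sigma , \tau }(h) - N_{\sigma , \tau }(g)$. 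To upgrade this to $g = S^{l'} \circ h$ on all of $X_\sigma$, I would invoke Corollary~\ref{cor:mosse5.11}: every $y \in X_\sigma$ writes uniquely as $S^j \sigma(x)$ with $0 \leq j < |\sigma|$, and commuting $g$ and $h$ with $S^j$ transports the equality across the resulting clopen decomposition.

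For item~(3), setting $h = \Phi_{\sigma^m , \tau^m}(g)$ and stacking the two defining equations gives
\[
\tau^{m+n} \circ \Phi_{\sigma^n , \tau^n}(h) = \tau^m \circ S^{N_{\sigma^n , \tau^n}(h)} \circ h \circ \sigma^n = S^{|\tau^m|\, N_{\sigma^n , \tau^n}(h) + N_{\sigma^m , \tau^m}(g)} \circ g \circ \sigma^{m+n},
\]
using the constant-length identity $\tau^m \circ S^k = S^{k|\tau^m|} \circ \tau^m$. The exponent lies in $[0,|\tau^{m+n}|) = [0,|\tau^m||\tau^n|)$ since both $N$-values lie in the expected ranges, so uniqueness yields both the additive formula for $N_{\sigma^{m+n} , \tau^{m+n}}(g)$ and the composition identity. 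Item~(4) then follows by immediate induction on $n$, applying (3) with $m = 1$ at each step.

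I expect the only step with genuine content to be the passage from $g \circ \sigma = S^{l'} \circ h \circ \sigma$ to $g = S^{l'} \circ h$ in item~(2): this requires recognizability (Corollary~\ref{cor:mosse5.11}) rather than minimality alone, since $\sigma(X_\sigma)$ is not $S$-invariant and the identity must be spread to $X_\sigma$ through the clopen partition $\{S^j \sigma([a]) \mid a \in A, \, 0 \leq j < |\sigma(a)|\}$. All other steps are bookkeeping on integer exponents.
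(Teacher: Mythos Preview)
Your argument is correct and follows the paper's approach almost identically: all four items are derived from the defining identity together with the uniqueness of the exponent in $[0,|\tau^k|)$, and item~(4) is obtained from item~(3) by induction. Two small remarks are in order. First, in item~(2) your exponent should be $l' = k|\tau| + N_{\sigma , \tau }(h) - N_{\sigma , \tau }(g)$ rather than $k + \cdots$, since applying $\tau$ to $S^k$ yields $S^{k|\tau|}\circ\tau$; this does not affect the conclusion ``for some $l$''. Second, your claim that the passage from $g\circ\sigma = S^{l'}\circ h\circ\sigma$ to $g = S^{l'}\circ h$ \emph{requires} recognizability is stronger than necessary: since $g$ and $S^{l'}\circ h$ are continuous and commute with $S$, they agree on the $S$-orbit of any point of $\sigma(X_\sigma)$, which is dense by minimality, and hence agree everywhere. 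The paper in fact invokes minimality rather than the clopen partition; both routes work.
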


\begin{proof}
For the first assertion, we observe that for all $n$ such that $N_{\sigma , \tau }(g)-|\tau| < n \leq N_{\sigma , \tau }(g)$, we have $0 \leq N_{\sigma , \tau }(g) -n < |\tau|$ and
\[
	S^{N_{\sigma , \tau }(g)} \circ g \circ \sigma = S^{N_{\sigma , \tau }(g)-n} \circ S^n \circ g \circ \sigma.
\]
We thus have $N_{\sigma , \tau }(S^m g) = N_{\sigma , \tau }(g) -n$, which implies, using Corollary~\ref{cor:mosse5.11}, that $\Phi_{\sigma , \tau } (S^n g ) = \Phi_{\sigma , \tau } (g)$.

Let us now prove the second assertion and consider $g$ and $h$ in  $\Delta (\sigma , \tau )$ such that $\Phi_{\sigma , \tau } (g) = S^k \Phi_{\sigma , \tau } (h)$ for some $k$. 
Then, one gets
\begin{eqnarray*}
S^{N_{\sigma , \tau} (g)} \circ g \circ \sigma 
&=& \tau \circ \Phi_{\sigma , \tau } (g) \\
&=& \tau \circ S^k \circ \Phi_{\sigma , \tau } (h) \\ 
&=& S^{k|\tau |} \circ \tau  \circ \Phi_{\sigma , \tau } (h) \\
&=& S^{k|\tau |} \circ S^{N_{\sigma , \tau} (h)}  \circ h \circ \sigma.
\end{eqnarray*}

By minimality of $(X_\sigma , S)$ we deduce that $g = S^{k|\tau |+N_{\sigma , \tau} (h)-N_{\sigma , \tau} (g)} \circ h$.

\medskip

We now prove the third assertion. 
Let $m, n$ be two positive integers.
One has the following equalities:
\begin{eqnarray*}
\tau^m \circ \Phi_{\sigma^m , \tau^m } (g) 
&=& 
S^{N_{\sigma^m , \tau^m }(g)} \circ g \circ \sigma^m , \\
\tau^n \circ \Phi_{\sigma^n , \tau^n } \circ \Phi_{\sigma^m , \tau^m } (g) 
&=&
S^{N_{\sigma^n , \tau^n }(  \Phi_{\sigma^m , \tau^m } (g) )} \circ \Phi_{\sigma^m , \tau^m } (g)  \circ \sigma^n.
\end{eqnarray*}
Thus, we get
\begin{eqnarray*}
\tau^{n+m} \circ  \Phi_{\sigma^n , \tau^n } \circ \Phi_{\sigma^m , \tau^m } (g)
&=&
\tau^m \circ S^{N_{\sigma^n , \tau^n }(  \Phi_{\sigma^m , \tau^m } (g) )} \circ \Phi_{\sigma^m , \tau^m } (g)  \circ \sigma^n \\
&=&
S^{|\tau^m| N_{\sigma^n , \tau^n }(\Phi_{\sigma^m , \tau^m} (g))} \circ  \tau^m \circ \Phi_{\sigma^m , \tau^m } (g)  \circ \sigma^n \\
&=&
S^{|\tau^m |N_{\sigma^n , \tau^n }(  \Phi_{\sigma^m , \tau^m } (g) ) + N_{\sigma^m , \tau^m }(g)}\circ g  \circ \sigma^{n+m}.
\end{eqnarray*}
As 
\begin{eqnarray*}
0 \leq 
|\tau^m |N_{\sigma^n , \tau^n }(  \Phi_{\sigma^m , \tau^m } (g) ) + N_{\sigma^m , \tau^m }(g) 
& \leq & |\tau^m| (|\sigma^n|-1) + |\sigma^m|-1 \\
&\leq & |\sigma^{m+n}|-1, 
\end{eqnarray*}
we obtain, by definition, that 
\[
N_{\sigma^{m+n},\tau^{m+n}}(g) = |\tau^m |N_{\sigma^n , \tau^n }(  \Phi_{\sigma^m , \tau^m } (g) ) + N_{\sigma^m , \tau^m }(g),
\]
hence that $\Phi_{\sigma^n , \tau^n } \circ \Phi_{\sigma^m , \tau^m } (g) =  \Phi_{\sigma^{n+m} , \tau^{n +m}} (g)$.

For the \eqref{enum:phin}, the equality $\Phi_{\sigma , \tau }^{n}  = \Phi_{\sigma^n , \tau^n }$ is a direct consequence of \eqref{item:compositionphi}.
\end{proof}

\begin{proof}[Proof of Theorem~\ref{theo:radius1}]
Let $r$ be the radius of $f$.
From Proposition~\ref{prop:fagnot}, one can assume, taking powers of the substitutions if needed, that $|\sigma | = |\tau |$. 
We recall that from Lemma~\ref{lem:reducfact1}, $\Phi_{\sigma^n , \tau^n } (f) = \Phi^n_{\sigma , \tau } (f) $ is a factor map from $(X_\sigma , S)$ to $(X_\tau , S)$. 
We will find $F$ among these maps. 

Observe that the quantity $N_n$ defined in the statement of Theorem~\ref{theo:radius1} is exactly $N_{\sigma^{n} , \tau^{n}} (f)$.

Let us show that for some $n>0$ the memory and the anticipation of $\Phi^n_{\sigma , \tau } (f)$ are as required.
Indeed, let $n\in \mathbb{N}$.
Let $t$ and $s$ be, respectively, the memory and the anticipation of $\Phi^n_{\sigma , \tau } (f)$.
From Proposition~\ref{prop:reducfact} one can take  
\begin{align*}
t & = \max ( 0 ,   L+\lceil (r-N_{\sigma^{n} , \tau^{n}} (f))/|\tau^{n}|\rceil )\\
s & = L+\lceil (r+N_{\sigma^{n} , \tau^{n}} (f))/|\tau^{n}|\rceil.
\end{align*}
This shows the first three statements in the conclusion.

As there are finitely many such factor maps of radius at most $1$, there exist $p,q$, with $p<q$, such that $\Phi_{\sigma , \tau }^p (f) = \Phi_{\sigma , \tau }^q (f)$.
From Lemma~\ref{lem:reducfact1}, one gets $\Phi^{q-p}(f) = S^m \circ f$ for some $m$.
\end{proof}

An example in~\cite{Coven&Keane&Lemasurier:2008} studying the subshifts isomorphic to the Thue-Morse subshift shows that these bounds are sharp.
In particular, this implies that, considering the notations of Theorem~\ref{theo:radius1}, if $\tau$ is a one-to-one primitive constant length substitution, then $(X_\tau,S)$ can be a factor of $(X_\sigma,S)$ only if $\tau$ is defined on an alphabet of size at most $\#\cL_3(\sigma)$.
This result is not true anymore for non-constant length substitutions.
Indeed, using a slightly different definition of the substitution $\xi$ of Theorem~\ref{theo:durand13bbis}, we could build a primitive and non-constant length substitution $\xi'$ that is injective on the letters, defined on a 24-letter alphabet and such that $(X_{\xi'},S)$ is isomorphic to the Thue-Morse subshift (see also Section~\ref{subsection:example isomorphic}).

\subsection{Decidability of the factorization}

Assume that we are given two primitive substitutions of constant length $\sigma $ and $\tau $. By Theorem~\ref{theo:radius1}, to decide whether there exists a factor map between $(X_\sigma , S)$ and $(X_\tau , S)$, it suffices to test all sliding block codes of radius $L+1$. 
Thus, for each such sliding block code $f$, we have to decide whether $\mathcal{L} (f (X_\sigma ) ) $ is equal to $\mathcal{L} (X_\tau  )$. 
But it can be observed that both languages are languages of automatic sequences.
A result attributed to V.~Bruy\`ere in~\cite{Fagnot:1997a} allows to conclude as it states that the inclusion of automatic languages is decidable.
We give some details below.

\subsubsection{Background on automatic sequences}
For more details concerning this section, we refer to~\cite{Bruyere&Hansel&Michaux&Villemaire:1994} and~\cite{Rigo:2014b}.

A {\em $k$-automatic sequence} is a substitutive sequence w.r.t. some constant substitution $\sigma$ such that $k=|\sigma |$.

Let $k\geq 2$ be an integer and define the function $V_k : \mathbb{N} \to \mathbb{N}$ by $V_k (0)=1$ and 
$$
V_k (n) = k^p, \quad \text{where } p= \max \{ r \mid k^r \hbox{ divides } n \}, \ n\not = 0 .
$$

Let us consider the first order logical structure $\langle \mathbb{N} , + , V_k, = \rangle $.
A subset $E \subset \mathbb{N}$ is {\em $k$-definable} if there exists a formula in  $\langle \mathbb{N} , + , V_k, = \rangle $ describing $E$.
For example, the formula ``$V_2 (n) = n$'' describes the set $\{ 2^n \mid n\in \mathbb{N} \}$ as the power of $2$ are exactly the fixed points of $V_2$. 
Consider the formula $\phi (n)$ defined by $(\exists m)( n = m+m )$, then $\{ n \mid \phi (n) \}  = 2\mathbb{N}$. 
As $\phi$ is a formula expressed in $\langle \mathbb{N} , + , = \rangle$, this means that $2\mathbb{N}$ is $k$-definable for all $k\geq 2$.

A sequence $x\in A^\mathbb{N}$ is $k$-{\em definable} if for all $a\in A$ the set $\{ n \mid x_n = a \}$ is $k$-definable. 

\begin{theo}
[\cite{Buchi:1960,Cobham:1972}]
Let $k\geq 2$ and $A$ be a finite alphabet.
A sequence $x\in A^\mathbb{N}$ is $k$-automatic if and only if it is $k$-definable.
\end{theo}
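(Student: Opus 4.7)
The plan is to prove the two implications separately, exploiting the standard correspondence between deterministic finite automata reading base-$k$ representations and the first-order structure $\langle \mathbb{N}, +, V_k, =\rangle$.

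For the direction ``$k$-automatic $\Rightarrow$ $k$-definable'', I would start from a DFA with output $\mathcal{A}$ computing $x$ from the base-$k$ representation of $n$. The key preliminary is to show that the predicate ``$d$ is a power of $k$ and the base-$k$ digit of $n$ at position $d$ equals $j$'' is expressible in $\langle \mathbb{N}, +, V_k, =\rangle$. Indeed, the powers of $k$ are the fixed points of $V_k$ (as noted in the text), and once one can speak of powers of $k$, digit extraction reduces to an arithmetic condition of the form $n = qk\cdot d + j \cdot d + r$ with $0 \le j < k$ and $0 \le r < d$, all of which can be phrased with $+$ and comparisons definable from $+$. From this I would encode a run of $\mathcal{A}$ by a formula $\varphi_q(n)$ stating ``the state reached after reading all digits of $n$ is $q$'', built inductively on digit positions using the transition table; since $\mathcal{A}$ has finitely many states the formula is first-order. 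The set $\{n : x_n = a\}$ is then a finite disjunction of such $\varphi_q$'s over states $q$ with output $a$.

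For the converse ``$k$-definable $\Rightarrow$ $k$-automatic'', I would proceed by structural induction on formulas, showing that the class of $k$-recognizable relations on $\mathbb{N}^r$ (those whose base-$k$ padded encodings form a regular language over $\{0,\ldots,k-1\}^r$) is a Boolean algebra closed under projection. The atomic relations are easy: $x+y=z$ is recognized by the classical carry automaton, and $V_k(x)=y$ expresses that $y$ is the largest power of $k$ dividing $x$, which is a digit-local condition on the pair of base-$k$ expansions. Boolean closure follows from the standard product, union, and complementation constructions on DFAs, and closure under existential quantification is given by projection to the appropriate coordinate followed by determinization. Applying the induction to the formula defining $\{n : x_n = a\}$ for each $a \in A$ produces a DFA $\mathcal{A}_a$; taking the product and labelling the accepting state of each $\mathcal{A}_a$ by $a$ gives a deterministic automaton with output that realizes $x$.

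The main obstacle will be the uniform handling of base-$k$ encoding conventions across both directions, in particular the reading direction (most- versus least-significant digit first) and the padding of tuples of different magnitudes. Padding is essential when closing under projection, because the extra existentially quantified coordinate may have arbitrarily larger base-$k$ length than the remaining ones; one therefore needs a ``padding lemma'' ensuring that adding or removing trailing zeros does not affect recognizability. The conceptual point is that the function $V_k$ is exactly what enables first-order definability to see base-$k$ structure: pure Presburger arithmetic $\langle \mathbb{N}, +, =\rangle$ is not enough, because it cannot distinguish one base from another (this is Cobham's theorem on the failure of definability in Presburger), and it is precisely the ability of $V_k$ to isolate digit positions that makes the induction go through in the backward direction.
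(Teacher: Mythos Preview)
The paper does not prove this theorem: it is stated with the citations \cite{Buchi:1960,Cobham:1972} and used as a black box, so there is no ``paper's own proof'' to compare against. Your sketch is essentially the standard argument from the literature (as in Bruy\`ere--Hansel--Michaux--Villemaire or Allouche--Shallit), and the overall strategy is sound.

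One point deserves tightening. In the forward direction you propose to build a first-order formula $\varphi_q(n)$ asserting ``the run of $\mathcal{A}$ on the base-$k$ expansion of $n$ ends in state $q$'', \emph{inductively on digit positions}. This is delicate: the number of digits of $n$ is unbounded, so a naive induction on positions does not yield a single first-order formula. The usual fix is either (i) to first prove the equivalence ``$k$-automatic $\Leftrightarrow$ $k$-recognizable'' (Cobham) and then ``$k$-recognizable $\Leftrightarrow$ $k$-definable'' (B\"uchi), so that in the forward direction you only need to exhibit, for each $a\in A$, a regular language for $\{n:x_n=a\}$ and then invoke the backward machinery you already described; or (ii) to encode the run by existentially quantifying over a tuple of integers $(m_q)_{q\in Q}$ whose base-$k$ digits record, position by position, which state is active, and to express the transition relation as a digitwise constraint using your digit-extraction predicate. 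Either route avoids the unbounded induction. With that adjustment your proposal is correct and matches the classical proof.
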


\begin{theo}
\cite{Buchi:1960}
\label{theo:Buchi1960} 
The theory $\langle \mathbb{N} , + , V_k, = \rangle$ is decidable, that is,
for each closed formula expressed in the first order logical structure $\langle \mathbb{N} , + , V_k, = \rangle$, there is an algorithm deciding whether it is true or not.
\end{theo}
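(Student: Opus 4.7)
The plan is to establish the equivalence between definability in $\langle \mathbb{N}, +, V_k, = \rangle$ and recognizability by finite automata reading base-$k$ representations of integers. Once this equivalence is in place, the decision procedure for closed formulas reduces to testing emptiness (or non-emptiness) of a finite automaton, which is a classical decidable problem. Thus the core of the proof is a syntactic-to-automata translation performed by induction on the structure of the formula.

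First I would fix the encoding: every natural number is represented by its base-$k$ digit expansion over the alphabet $\Sigma = \{0, 1, \dots, k-1\}$, and an $n$-tuple of integers is represented by a word over $\Sigma^n$ obtained by stacking the component expansions in parallel, padding the shorter ones with leading zeros so that they all have the same length. The inductive goal is: for every formula $\phi(x_1, \dots, x_n)$, one can effectively construct a finite automaton $\mathcal{A}_\phi$ over $\Sigma^n$ that accepts exactly the encodings of tuples $(m_1, \dots, m_n) \in \mathbb{N}^n$ satisfying $\phi$, and moreover accepts a given encoding iff it accepts any encoding obtained by prepending further zeros (so that the recognized set is well defined at the level of integers, not representations).

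The base cases are the atomic predicates $x = y$, $x + y = z$, and $V_k(x) = y$. Equality is immediate. Addition is handled by the classical two-state automaton reading digits from the least-significant end and tracking the carry (which requires a preliminary step to show both representations can be used interchangeably, since reading from the other end works as well). The novel atomic case is $V_k(x) = y$: since $V_k(m)$ equals $k^p$ where $p$ is the position of the lowest nonzero digit of $m$ in base $k$, an automaton can easily verify that $y$ has the shape $0^p 1 0^\cdots$ and that the $p$-th digit of $x$ is the first nonzero one. The inductive step handles Boolean connectives via the standard closure properties of regular languages (product construction for conjunction, determinization plus complementation for negation) and handles $\exists x_i$ by projecting the alphabet $\Sigma^n$ onto $\Sigma^{n-1}$ and erasing the $i$-th track, which preserves regularity via the standard subset construction on a nondeterministic automaton.

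The main obstacle will be the bookkeeping around encodings rather than any deep mathematical difficulty: one must be careful that the constructed automata are invariant under padding with leading zeros, so that they recognize sets of tuples rather than sets of words, and one must verify that the $V_k$-automaton combines correctly with the addition automaton (for instance, in formulas like $V_k(x + y) = z$) by ensuring the two constructions agree on the reading direction of digits. Once the induction is carried out, a closed formula $\phi$ produces an automaton $\mathcal{A}_\phi$ over the singleton alphabet $\Sigma^0$, and the truth of $\phi$ is equivalent to $\mathcal{A}_\phi$ accepting the (unique) empty-alphabet word; this is trivially decidable from the transition table, yielding the decidability of the whole theory.
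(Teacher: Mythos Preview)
The paper does not give its own proof of this theorem: it is stated as a classical result and attributed to B\"uchi via a citation, with a pointer to the expository references \cite{CharlierRampersadShallit2012,Charlier2018} for the decision procedure. There is therefore nothing in the paper to compare your argument against.

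That said, your sketch is the standard automata-theoretic proof and is essentially correct. The inductive translation of formulas to automata over $\Sigma^n$, with atomic cases handled by small dedicated automata and the inductive step using closure under Boolean operations and projection, is exactly how this result is usually established. Two minor points worth tightening: first, the definition in the paper sets $V_k(0)=1$, so your automaton for the atomic relation $V_k(x)=y$ needs a separate clause for $x=0$ (your description ``the $p$-th digit of $x$ is the first nonzero one'' does not cover this case). Second, your final step with an automaton over $\Sigma^0$ is a bit awkward to phrase; it is cleaner to say that after all quantifiers have been projected out, the resulting automaton over the one-letter alphabet either accepts all words or none (by the padding-invariance you maintained), and one simply tests which case holds. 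Neither of these affects the validity of the argument.
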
 

We refer to~\cite{CharlierRampersadShallit2012,Charlier:2018} for expository texts on this decision procedure.
In particular, it has been implemented in the software {\em Walnut}~\cite{Mousavi2016} and can be downloaded from \url{www.cs.uwaterloo.ca/shallit/papers.html}.

As a consequence of Theorem~\ref{theo:Buchi1960} the following theorem is proved in~\cite{Fagnot:1997a} and attributed to Bruy\`ere.

\begin{theo}
\label{theo:decidlangauto}
Let $x$ and $y$ be two $k$-automatic sequences.
It is decidable whether $\mathcal{L} (x)$ is included in $\mathcal{L} (y)$. 
\end{theo}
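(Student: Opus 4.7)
The plan is to reduce the inclusion problem to the decidability of the first-order theory $\langle \mathbb{N}, +, V_k, = \rangle$ given by Theorem~\ref{theo:Buchi1960}. Up to enlarging the alphabet to the union of the alphabets of $x$ and $y$ (letting the position sets of missing letters be empty), I may assume $x,y \in A^\mathbb{N}$ with the same finite alphabet $A$.

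First, I would invoke the Büchi--Cobham characterization of $k$-automatic sequences as $k$-definable sequences (the theorem stated just above Theorem~\ref{theo:Buchi1960}). Applied to $x$ and $y$, this produces, for each $a \in A$, formulas $\phi_a(n)$ and $\psi_a(n)$ in $\langle \mathbb{N}, +, V_k, = \rangle$ such that $\phi_a(n)$ (resp.\ $\psi_a(n)$) holds if and only if $x_n = a$ (resp.\ $y_n = a$). These formulas can be produced effectively from the $k$-automatic presentations of $x$ and $y$; this is the only non-formal step and is the point where one must appeal to the constructive content of the Büchi--Cobham theorem (see~\cite{Bruyere&Hansel&Michaux&Villemaire:1994,Rigo:2014b}).

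Next, I would write the inclusion $\mathcal{L}(x) \subseteq \mathcal{L}(y)$ as a closed formula in $\langle \mathbb{N}, +, V_k, = \rangle$. Observing that the order is definable in $\langle \mathbb{N}, +, = \rangle$ via $k < n \;\Leftrightarrow\; \exists m\,(m \neq 0 \wedge k + m = n)$, the property that every factor of $x$ occurs in $y$ translates literally into
\[
\forall i\,\forall n\,\exists j\,\forall k \left( k < n \;\Rightarrow\; \bigvee_{a \in A} \bigl(\phi_a(i+k) \wedge \psi_a(j+k)\bigr) \right),
\]
where the case $n = 0$ is vacuously satisfied, corresponding to the empty factor belonging to both languages. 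Finally, I would invoke Theorem~\ref{theo:Buchi1960} to decide the truth of this closed formula, which yields the decidability of $\mathcal{L}(x) \subseteq \mathcal{L}(y)$. The main (and only) obstacle is the effective translation from an automatic presentation to the defining first-order formulas $\phi_a, \psi_a$; once this is in place the argument is purely syntactic.
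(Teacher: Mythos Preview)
Your proposal is correct and follows exactly the approach the paper indicates: the paper does not give its own proof but states the result as ``a consequence of Theorem~\ref{theo:Buchi1960}'' and cites~\cite{Fagnot:1997a}, which is precisely the reduction to the decidability of $\langle \mathbb{N}, +, V_k, = \rangle$ that you carry out. Your explicit first-order sentence encoding $\mathcal{L}(x)\subseteq\mathcal{L}(y)$ is correct, and your identification of the effective Büchi--Cobham translation as the only nontrivial ingredient is accurate.
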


\subsubsection{Decidability  of the  factorization for constant length primitive substitution subshifts}

Let us prove that the factorization is decidable for aperiodic subshifts generated by primitive constant length substitutions.
Let $\sigma $ and $\tau $ be two primitive substitutions of constant length. 
Let us describe the algorithm testing whether there exists a factor map $F$ from $(X_\sigma , S)$ to $(X_\tau , S)$.

\medskip

Algorithm: Test all sliding block codes of radius  $L+1$ using the algorithm given by Theorem~\ref{theo:decidlangauto} where $L$ is defined in Theorem~\ref{theo:radius1}.

\medskip

Let us give some explanations.

First we check whether $(X_\tau , S)$ is periodic or not as this is decidable~\cite{Honkala:1986,Honkala:2008,Allouche&Rampersad&Shallit:2009}.
If $(X_\tau , S )$ is periodic then one can decide whether there exists a factor map (Section~\ref{section:periodic}).
Suppose $(X_\tau , S)$ is non-periodic. 
One has first to test whether $\sigma $ and $\tau$ have some (non-trivial) power of their length that are equal.
This is clearly decidable. 
If they do not, then there is no factor map (Proposition~\ref{prop:fagnot}).
If they do, then Theorem~\ref{prop:reducfact} ensures that it suffices to consider sliding block codes of radius $L+1$.
There are finitely many such maps.
Thus it suffices to test each of them. 
Let $f$ be such a sliding block code of radius $L+1$. 
To test whether it defines a factor map from $(X_\sigma , S)$ to $(X_\tau , S)$ is equivalent to prove that $\mathcal{L} (f(x)) = \mathcal{L} (y)$ where $x$ and $y$ are respectively fixed points of $\sigma $ and $\tau$.
Due to Theorem~\ref{theo:decidlangauto} it suffices to prove that $f(x)$ is an automatic sequence. 
This is clear using the substitutions on the blocks of radius $L+1$ (see Section~\ref{subsec:sublengthn}).
Thus we proved the following theorem.

\begin{theo}
\label{theo:factorconstant}
Let $(X,S)$ and $(Y,S)$ be two minimal constant length substitution subshifts.
It is decidable to know whether there exists a factor map between these two systems.
\end{theo}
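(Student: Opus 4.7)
The plan is to execute the algorithm already sketched in the paragraph preceding the statement, showing that each step is effective and that the collection of candidate factor maps to be tested is finite and algorithmically enumerable.

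First I would dispose of the degenerate cases. Since it is decidable whether a uniformly recurrent morphic sequence is periodic (Theorem~\ref{theo:decidperiod}), and \emph{a fortiori} whether $(X,S)$ or $(Y,S)$ is periodic, I can first test whether $(Y,S)$ is periodic. If so, the decidability follows from the discussion of Section~\ref{section:periodic}. I may therefore assume that both $(X,S)$ and $(Y,S)$ are aperiodic and given by primitive substitutions $\sigma$ and $\tau$ of constant lengths (up to passing to a well-chosen power and applying standard block reductions so that the substitutions are primitive and have a fixed point; these reductions are effective). By Proposition~\ref{prop:fagnot}, a necessary condition for the existence of a factor map is that $|\sigma|^k=|\tau|^{\ell}$ for some $k,\ell\geq 1$; this is a trivially decidable arithmetic condition on the pair $(|\sigma|,|\tau|)$, and if it fails the answer is ``no''. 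Otherwise, replacing $\sigma$ and $\tau$ by suitable powers (which does not change the subshifts), I reduce to the case $|\sigma|=|\tau|$.

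Second, I would invoke Theorem~\ref{theo:radius1}: if there is any factor map $f:(X_\sigma,S)\to (X_\tau,S)$, then there is a factor map $F$ whose memory and anticipation are bounded by the explicit, computable constant $L+2$, where $L=0$ if $\tau$ is one-to-one on letters and $L=L_\tau$ otherwise. The recognizability constant $L_\tau$ is computable (Theorem~\ref{theo:recognizability constant}), hence $L+2$ is computable. Consequently, the finite list of all sliding block codes $\hat g:A^{2(L+2)+1}\to B$ can be effectively enumerated, and it suffices to decide, for each such $\hat g$, whether it defines a factor map $(X_\sigma,S)\to (X_\tau,S)$.

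Third, I would reduce the test to an inclusion of languages of automatic sequences. Fix a fixed point $x\in A^{\mathbb{N}}$ of $\sigma$ and a fixed point $y\in B^{\mathbb{N}}$ of $\tau$. The candidate $\hat g$ defines a factor map if and only if $\mathcal{L}(g(x))=\mathcal{L}(y)$, which amounts to the two inclusions $\mathcal{L}(g(x))\subseteq \mathcal{L}(y)$ and $\mathcal{L}(y)\subseteq \mathcal{L}(g(x))$. Using the substitution on the blocks of radius $L+2$ introduced in Section~\ref{subsec:sublengthn} (Proposition~\ref{prop:conj-sublengthn}), the subshift $(X_\sigma,S)$ is isomorphic via a letter-to-letter coding to the subshift generated by a constant length primitive substitution $\sigma^{(L+2)}$ of the same length as $\sigma$. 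Composing $\hat g$ with this block coding, $g(x)$ becomes the image under a \emph{coding} of a fixed point of a constant length substitution, that is, a $|\sigma|$-automatic sequence; similarly $y$ is $|\tau|$-automatic, and since $|\sigma|=|\tau|$ the two sequences are automatic for the same base. Theorem~\ref{theo:decidlangauto} then provides an algorithm deciding each of the two inclusions of languages.

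The main conceptual obstacle in this proof is not any single step but the bookkeeping that makes the three reductions coherent: one must be careful that the power-taking and block-substitution operations used to force $|\sigma|=|\tau|$ and to make the candidate $\hat g$ letter-to-letter do not spoil the radius bound from Theorem~\ref{theo:radius1}, and that the resulting sequences $g(x)$ and $y$ are automatic in the \emph{same} base so that Bruyère's theorem applies. Once these reductions are assembled consistently, the whole procedure is a finite search followed by finitely many calls to the decision procedure for $\langle \mathbb{N},+,V_k,=\rangle$, yielding the claimed decidability.
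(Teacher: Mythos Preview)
Your proposal is correct and follows essentially the same route as the paper: decide periodicity first, then use Proposition~\ref{prop:fagnot} to reduce to equal lengths, then Theorem~\ref{theo:radius1} to bound the radius, then the block substitution of Section~\ref{subsec:sublengthn} to turn candidates into codings of automatic sequences, and finally Theorem~\ref{theo:decidlangauto} to test the language equality. Your use of the bound $L+2$ rather than $L+1$ is in fact the safe choice, since case~(3) of Theorem~\ref{theo:radius1} allows memory up to $L+2$.
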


\subsection{How to get rid of the injective substitution assumption}\label{section:injectivisation}

The proof of Theorem~\ref{theo:factorconstant} involves sliding block codes of radius $L+1$ where $L=0$ if the substitutions are one-to-one.  
In this section, we show one can always suppose the substitutions are one-to-one, up to isomorphism.
This will reduce the number of sliding block codes to test and the size of the involved alphabets.
Hence, this improves the algorithm to decide the factorization.

Let $\sigma : A^* \to A^*$ be a primitive substitution such that $(X_\sigma , S)$ is not periodic.
Let $B\subset A$ and $\phi : A^* \to B^*$ be a coding such that: 

\begin{align}
\label{align:injectivization}
\hbox{ if } \phi (a) = \phi (b) \hbox{ then } \sigma (a) = \sigma (b).
\end{align}

Then, there exists a unique endomorphism $\tau : B^* \to B^*$ defined by
$$
\tau \circ \phi = \phi \circ \sigma .
$$

It is clear that $\tau$ is primitive.
We say $\tau$ is an {\em injectivization} of $\sigma$.

\begin{prop}
\cite{Blanchard&Durand&Maass:2004}
Let $\tau $ be an injectivization of the non-periodic primitive endomorphism $\sigma $.
Then, $(X_\sigma , S)$ is conjugate to $(X_\tau , S)$.
Moreover $\tau $ can be chosen injective and such a $\tau $ can be algorithmically found.
\end{prop}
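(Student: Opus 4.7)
The plan is to argue in three steps: first show that the coding $\phi$ extends to a continuous, shift-equivariant surjection from $X_\sigma$ onto $X_\tau$; then establish its injectivity using the defining property of an injectivation together with Corollary~\ref{cor:mosse5.11}; and finally explain how to iterate the construction so that the resulting endomorphism is injective on letters.

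For the surjectivity part, the letter-to-letter coding $\phi$ trivially extends to a continuous $S$-equivariant map $A^\Z \to B^\Z$, and the relation $\tau \circ \phi = \phi \circ \sigma$ (valid on $A^*$ by definition of $\tau$, hence on $A^\Z$) ensures that $\phi$ sends an admissible fixed point of $\sigma^k$ to an admissible fixed point of $\tau^k$. Since $\sigma$ is primitive, so is $\tau$ (its incidence matrix is obtained from $M_\sigma$ by merging the rows and columns corresponding to $\phi$-identified letters), and minimality of both subshifts then forces $\phi(X_\sigma) = X_\tau$.

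The main step is injectivity, and this is the one genuinely nontrivial point of the proof. I would exploit the defining property~\eqref{align:injectivation}: if $x,y \in X_\sigma$ satisfy $\phi(x) = \phi(y)$, then $\phi(x_n) = \phi(y_n)$ for every $n$, hence $\sigma(x_n) = \sigma(y_n)$, so $\sigma(x) = \sigma(y)$. Aperiodicity of $(X_\sigma,S)$ now allows me to invoke Corollary~\ref{cor:mosse5.11}, by which $\sigma : X_\sigma \to \sigma(X_\sigma)$ is a homeomorphism, forcing $x=y$. Without aperiodicity---equivalently, without Moss\'e's recognizability theorem---the letter-level identification could be ambiguous at the subshift level, so the hypothesis is indispensable here. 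Combined with continuity, shift-equivariance, and compactness of $X_\sigma$, this shows that $\phi$ restricts to a conjugacy from $(X_\sigma,S)$ onto $(X_\tau,S)$.

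Finally, to ensure $\tau$ can be chosen injective and produced algorithmically, I would iterate the injectivation procedure. Given a primitive $\sigma:A^* \to A^*$, define the equivalence $a \sim_\sigma b \Leftrightarrow \sigma(a) = \sigma(b)$, pick a system $B \subset A$ of representatives, and let $\phi$ be the coding sending each letter to its representative: condition~\eqref{align:injectivation} is satisfied by construction, so the previous steps yield a conjugate primitive subshift $(X_\tau,S)$ with $\card B \leq \card A$. If $\tau$ is still non-injective on letters, apply the same construction to $\tau$; each round strictly decreases the alphabet size as long as injectivity on letters fails, so the process terminates after at most $\card A$ iterations. Computing the equivalence classes, the coding, and the resulting endomorphism at each step is plainly effective, yielding the announced algorithmic output.
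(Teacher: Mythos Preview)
Your proof is correct. The main difference from the paper is that the paper does not reprove the conjugacy statement at all: it simply invokes the reference \cite{Blanchard&Durand&Maass:2004} for that part and only spells out the iteration argument for the last two claims. You instead give a self-contained proof of the conjugacy, and your key idea---$\phi(x)=\phi(y)$ forces $\sigma(x_n)=\sigma(y_n)$ letterwise via~\eqref{align:injectivation}, hence $\sigma(x)=\sigma(y)$, and then Moss\'e's recognizability (Corollary~\ref{cor:mosse5.11}) yields $x=y$---is exactly the right mechanism and makes transparent why aperiodicity is needed. Your iteration step is essentially the same as the paper's; the only cosmetic difference is that the paper removes one redundant letter at a time while you quotient by the full equivalence relation $a\sim b \Leftrightarrow \sigma(a)=\sigma(b)$ in one pass, which is slightly more efficient but leads to the same termination argument by decrease of alphabet size.
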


\begin{proof}
The first claim is proved in~\cite{Blanchard&Durand&Maass:2004}. 
The two other claims are straightforward but we provide a proof below.

Suppose there exist $a$ and $b$ such that $\sigma (a) = \sigma (b)$. 
Let $B = A \setminus \{ b\}$.
Consider the map $\phi: A^* \to B^*$ defined by $\phi (c) = c$ if $c\not = b$ and $\phi_1 (b)=a$.
It satisfies \eqref{align:injectivization} and thus defines an injectivization $\zeta$ of $\sigma$.
Observe that the cardinality of the alphabet decreases.
If $\zeta $ is not injective, we proceed in the same way to obtain a new substitution $\tau$ that will be an injectivization of $\zeta$ and of $\sigma$ and that will be defined on a smaller alphabet. 
As $A$ is finite, iterating the process, we will obtain an injectivization $\tau $ of $\sigma$ that is injective.  
This achieves the proof.
\end{proof}

\subsection{Listing of the factors}\label{section:factorlist}
Let $\sigma $ be a primitive constant length substitution.
We know from~\cite{Durand:2000} that the number of  aperiodic subshift factors of $(X_\sigma , S)$ is finite and all such factors are conjugate to a substitution subshift (Theorem~\ref{theo:durand13bbis}).
We are not able to establish the list of its subshift factors but what precedes allows to find the exhaustive list of its constant length substitution subshift factors.
Indeed, periodic factors can be easily found with the main result in~\cite{Dekking:1978}, see Section~\ref{section:periodic}.
For the aperiodic constant length substitution subshift factors, Section~\ref{section:injectivisation} allows us to suppose the substitutions are one-to-one on letters.
Then,
Theorem~\ref{theo:radius1} indicates it suffices to test finitely many sliding block codes. 
Theorem~\ref{theo:decidlangauto} can decide whether or not they are conjugate one to the other. 
This gives the exhaustive list of aperiodic constant length substitution subshift factors. 

In~\cite{Coven&Dekking&Keane:2017} another algorithm is given but with more assumptions on the subshifts.

There are examples of primitive constant length substitution subshifts having factors that are aperiodic non-constant length substitution subshifts. Consider the subshift $(X,S)$ generated by the primitive substitution $A\mapsto ABCAA$, $B\mapsto AAAAA$ and $C\mapsto AABCA$.
It has as a factor the non-constant length substitution subshift $(Y,S)$ generated by $A\mapsto ABCAA$, $B\mapsto AAAAAA$ and $C\mapsto ABCA$. Indeed, one has $X=Y$.

But it is now known \cite[Theorem 22]{Mullner&Yassawi:2020} that factors of primitive constant length substitution subshifts are isomorphic to constant length substitution subshifts. 
This answers a question we asked in a previous version of this work. 

As a consequence, the list obtained above corresponds to the list of all subshift factors.

\section{Factor maps between linearly recurrent subshifts}\label{section:LR}
In this section we prove Theorem~\ref{theo:factorLR}.

Given a dynamical system $(X,T)$, {\rm Aut}$(X,T)$ will stand for the group of automorphisms of $(X,T)$, that is the group of continuous and bijective maps defined on $X$ and commuting with $T$.
In~\cite{Donoso&Durand&Maass&Petite:2016} (see also~\cite{Cyr&Kra:2016}) has been proved the following result.

\begin{theo} \label{Thm1}
Let $(X,S)$ be an aperiodic minimal subshift. 
If 
\[ 
\liminf_{n\in \N} \frac{p_X(n)}{n} < + \infty ,
\] 
then {\rm Aut}$(X,S)$ is virtually $\mathbb{Z}$, {\em i.e.}, the quotient of ${\rm Aut}(X,S)$ by the group generated by the shift map is a finite group.
\end{theo}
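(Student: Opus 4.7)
The plan is to mimic the renormalization approach of Section~\ref{section:bounds}, but without relying on substitutive structure: for each factor map (modulo shifts) we will attach a bounded piece of combinatorial data derived from the return word structure of $X$ and $Y$, and we count the possible such data. The constant $K$ plays the role of $K_\sigma$ in the substitutive setting, and all bounds are obtained from Theorem~\ref{theo:encad} and Proposition~\ref{prop:maxmin} rather than from an explicit defining substitution.

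I would first control the return-word structure of the aperiodic factor $(Y,S)$ using Proposition~\ref{prop:maxmin}. A priori, these bounds depend on $K$ and on the radius $r$ of a given factor map $f$; however, once we restrict attention to return words to a word $v \in \cL(Y)$ whose length is sufficiently large compared to $r$, the dependence on $r$ disappears. Concretely, for such $v$ the set $\cR_{Y,v}$ has cardinality at most $(K+1)^3$ and its elements have length in the interval $[|v|/(K+1),(K+1)|v|]$, while $(X,S)$'s return-word counts and lengths are bounded by Theorem~\ref{theo:encad}.

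Given a factor map $f$ of radius $r$, I would then encode it by choosing a word $u \in \cL(X)$ of length much larger than $r$, setting $v$ to be the length-$|u|$ central image of $u$ under $f$, composing $f$ with a suitable power of the shift so that occurrences of $u$ line up with occurrences of $v$, and recording the morphism $\phi_f : R_{X,u}^* \to R_{Y,v}^*$ describing how return words to $u$ in $X$ decompose into return words to $v$ in $Y$. By Theorem~\ref{theo:encad} and the bounds on $(Y,S)$ above, both alphabets have size at most $(K+1)^3$ and each image $\phi_f(a)$ has length at most $K^2$ (since a return word in $X$ of length $\leq K|u|$ maps to a concatenation of $Y$-return words, each of length $\geq |u|/(K+1)$). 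This bounds the number of encodings by a set of size at most $(2(K+1)^4)^{6(K+1)^6}$, and minimality of $(X,S)$ together with continuity of $f$ guarantee that $\phi_f$ (once we specify $v$ and the boundary alignment) determines $f$ up to a shift.

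The main obstacle will be to make sense of the encoding when the radius $r$ is large compared to any natural length determined by $K$: in that regime the boundary corrections of length $\leq r$ dominate and the decomposition of $f(\Theta_{X,u}(a))$ into $Y$-return words is not literal. I expect to overcome this by an iterative renormalization analogous to the dill map construction of Section~\ref{section:bounds}: by pigeonhole applied to the finitely many possible encodings at each scale, two iterates must coincide, and at that stage the boundary effects become negligible, so that the morphism $\phi_f$ is clean and the count given above applies.
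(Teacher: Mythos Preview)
Your proposal does not address the stated theorem. Theorem~\ref{Thm1} concerns the automorphism group of a single aperiodic minimal subshift $(X,S)$ under the complexity hypothesis $\liminf_n p_X(n)/n < \infty$; there is no second subshift $(Y,S)$, no linear recurrence constant $K$, and no factor map to an external target. Your entire setup --- an aperiodic factor $(Y,S)$, a constant $K$ feeding into Theorem~\ref{theo:encad} and Proposition~\ref{prop:maxmin}, and the explicit bound $(2(K+1)^4)^{6(K+1)^6}$ --- is that of Theorem~\ref{theo:factorLR}, not of Theorem~\ref{Thm1}.

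Even read as a strategy for Theorem~\ref{Thm1}, the argument fails at the outset: Theorem~\ref{theo:encad} and Proposition~\ref{prop:maxmin} both assume linear recurrence, and the hypothesis $\liminf_n p_X(n)/n < \infty$ does not imply linear recurrence, so those tools are simply unavailable here. The paper does not prove Theorem~\ref{Thm1}; it is quoted from~\cite{Donoso&Durand&Maass&Petite:2016} (see also~\cite{Cyr&Kra:2015}), where the argument is based on asymptotic pairs rather than on return-word combinatorics. If you in fact intended Theorem~\ref{theo:factorLR}, note that the paper's proof in Section~\ref{section:LR} avoids any iterative renormalization: it fixes a common radius $r$ for $f_1,\dots,f_N$, pre-shifts each $f_i$ so that a fixed prefix of $y$ appears at the origin, passes once to the block representation $X^{(r')}$ with $r'=r(4K+1)$, and then encodes each $f_i$ by a single morphism $\lambda_i : R_{x^{(r')},u}^* \to R_{y,v}^*$ whose alphabet sizes and image lengths are bounded purely in terms of $K$. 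A single pigeonhole on the $\lambda_i$ finishes the argument; the dill-map machinery of Section~\ref{section:bounds} is not used.
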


The previous result means that there exist finitely many elements $f_1, \dots , f_n$ of  ${\rm Aut}(X,S)$ such that for any $f \in  {\rm Aut}(X,S)$ there exist $m \in \mathbb{Z}$ and $i \in \{1,\dots,n\}$ satisfying $f = f_i \circ S^m$.
In this section we extend this result to factor maps but in the restrictive context of the linearly recurrent subshifts (which are known to have linear complexity).
Observe that for factor maps we loose the group structure which is a key argument in both papers~\cite{Donoso&Durand&Maass&Petite:2016} and~\cite{Cyr&Kra:2016}.



Theorem~\ref{theo:factorLR} includes answers to questions stated in~\cite{Salo&Torma:2015} but in the more general context of factor maps.
The proof is inspired by the one of Theorem 1 in~\cite{Durand:2000} and uses the following result.

\begin{prop}
\label{prop:maxmin}
Let $(X , S)$ be an aperiodic linearly recurrent subshift for the constant $K$ and $(Y , S)$ be one of its aperiodic factors given by a factor map of radius $r$. 
We have the following.
\begin{enumerate}
\item
\label{item:complexity}
For all $n$, $p_{Y}(n) \leq K(n+2r)$.
\item
\label{item:occtau}
For all $n$, every word of $\mathcal{L}_n (Y)$ occurs in every word of $\mathcal{L}_{(K  +1)n+2r K} (Y)$.
\item
\label{item:rltau}
For all $v\in \mathcal{L} (Y )$ and all $w\in \mathcal{R}_{Y ,v }$, 
\[
	\frac{|v|-2r K}{K} \leq |w| \leq K(|v| +2r).
\]
\item
\label{item:card}
For all $v\in \mathcal{L} (Y )$ with $|v| \geq 2rK+1$ one has 
$$
\# (\mathcal{R}_{Y,v})\leq \frac{(K+1)K^2 (|v| + 2r)+2rK^2}{|v| -2rK} .
$$
\end{enumerate}
\end{prop}

\begin{proof}
Let $\hat{f}$ be a block map with radius $r$ defining a factor map from $(X,S)$ to $(Y,S)$.

\eqref{item:complexity}
We have $\mathcal{L}_n(Y) = \hat{f}(\mathcal{L}_{n+2r}(X))$ so the result follows from Theorem~\ref{theo:encad}.

\eqref{item:occtau}
Let $u \in \mathcal{L}_n(Y)$ and $v \in \mathcal{L}_{(K  +1)n+ 2r K}(Y)$. 
There exist $u' \in \mathcal{L}_{n+2r}(X)$ and $v' \in \mathcal{L}_{(K+1)(n+2r)}(X)$ such that $u = \hat{f}(u')$ and $v = \hat{f}(v')$.
By Theorem~\ref{theo:encad}, $u'$ occurs in $v'$.
Thus $u$ occurs in $v$.

\eqref{item:rltau}
Let $v \in \mathcal{L}(Y)$.
For any return word $w \in \mathcal{R}_{Y,v}$, there exist $v_1,v_2 \in \hat{f}^{-1}(\{ v\} )$ and $w' \in \mathcal{L}(X)$ such that $v_1$ is a prefix of $w'v_2$, $w'v_2$ contains only two occurrences of words in $\hat{f}^{-1}(\{ v\} )$ and $\hat{f}(w'v_2) = wv$.
In particular $w'$ is a subword of a return word in $\mathcal{R}_{X,v_1}$ and, using Theorem~\ref{theo:encad}, we have $|w| = |w'| \leq K(|v|+2r)$.

Let us now show by contradiction that $\frac{|v|-2r K}{K} \leq |w|$.
If $\frac{|v|-2r K}{K} > |w|$, then, by Remark~\ref{remark:power return word}, the word $u = w^{1+ |v|/|w|}$ belongs to $\mathcal{L} (Y )$ and this word has length $|u| = |w|+|v| > (K+1) |w|+2r K$.
By Item~\eqref{item:occtau}, $u$ contains an occurrence of all words in $\mathcal{L}_{|w|}(Y)$.
As the number of different words of length $|w|$ occurring in $u$ is at most $|w|$, this implies by Morse and Hedlund's theorem~\cite{Morse&Hedlund:1940} that $Y$ is periodic, which contradicts the hypothesis.

\eqref{item:card}
Let $v\in \mathcal{L} (Y )$ with $|v| \geq 2rK+1$.
From \eqref{item:rltau}, the length of return words to $v$ are less than $K(|v| +2r)$.
Let $w$ be a word of length $(K  +1)K(|v| +2r)+2r K$.
From \eqref{item:occtau} all return words to $v$ appear in $w$.
Hence, from \eqref{item:rltau} the number of return words to $v$ is bounded by 
\[
\frac{(K  +1)K(|v| +2r)+2r K}{\frac{|v|-2r K}{K}} .
\]
\end{proof}

We are now ready to Prove Theorem~\ref{theo:factorLR}.

\begin{proof}[Proof of Theorem~\ref{theo:factorLR}]
Let $A$ and $B$ be the alphabets of $X$ and $Y$ respectively.
Consider the constant $N = 1+(2(K+1))^{36 (K+1)^5}$ and suppose that for all $i \in \{1,\dots,N\}$, $f_i : (X , S) \to (Y , S)$ is a factor map.
It suffices to show that there exist $i \neq j$ and $k \in \mathbb{Z}$ such that $f_i = S^k \circ f_j$.

By the Curtis-Hedlund-Lyndon Theorem there exists a positive integer $r$ such that for all $i \in \{1,\cdots ,N \}$ there is a sliding block code $\phi_i : A^{2r+1} \rightarrow B$ satisfying 
\begin{equation}
\label{eq:sbc}
	(f_i (x))_j = \phi_i (x_{[ j-r,j+r]}) \quad \forall x \in X , \  \forall j \in \mathbb{Z} .
\end{equation}

Let us fix some $x \in X$ and some $y \in Y$.
By Proposition~\ref{prop:maxmin}, all return words to $y_{[0,2r)}$ have length at most $4rK$.
Thus for all $i \in \{1,\dots, N\}$, there exists some integer $k_i$ satisfying $0 \leq k_i \leq 4rK$ and such that $(S^{k_i}f_i(x))_{[0,2r)} = y_{[0,2r)}$.

We then define the factor map $\tilde{f}_i: (X,S) \to (Y,S)$ by $\tilde{f_i}(x') = S^{k_i} f_i(x')$ for all $x' \in X$.
Thus $\tilde{f}_i$ has a radius of $4rK+r$: there exists a sliding block code $\tilde{\phi}_i: A^{2r(4K+1)+1} \to B$ such that
\[
	(\tilde{f}_i (x'))_j = \tilde{\phi}_i (x'_{[ j-r(4K+1),j+r(4K+1)]}) \quad \forall x' \in X , \  \forall j \in \mathbb{Z}.
\]
Furthermore, by choice of $k_i$ we have for all $i$ 
\begin{equation}
\label{eq:utov}
	\tilde{\phi}_i(x_{[ -r(4K+1),r(4K+1)+2r)}) = y_{[0,2r)}.
\end{equation}
We will now show that there exist $i$ and $j$, $i \neq j$, such that $\tilde{f}_i = \tilde{f}_j$, hence $f_i = S^{k_j-k_i} f_j$.

We fix $r' = r(4K+1)$ 
and we consider the block representation $X^{(r')}$ of $X$ which is isomorphic to $X$ through the isomorphism $h$ associated with the sliding block code $\phi: A^{2r'+1} \to A^{(r')}, a_1 \cdots a_{2r'+1} \mapsto (a_1 \cdots a_{2r'+1})$ (see Lemma~\ref{lemma:block representation isomorphic}).

Observe that, from Theorem~\ref{theo:encad}, for all $u \in \mathcal{L}(X^{(r')})$ we have:

\begin{enumerate}
\item
\label{item1}
for all words $v \in \mathcal{R}_{X^{(r')},u}$, $\frac{1}{K}(|u|+2r') \leq |v| \leq K (|u|+2r')$, and,
\item 
$\# (\mathcal{R}_{X^{(r')},u})\leq (K+1)^3$.
\end{enumerate}

For all $i \in \{1,\cdots , N \}$ let $\psi_i : A^{(r')*} \rightarrow B^*$ be the morphism defined by, for $(u) \in A^{(r')}$, $\psi_i ((u)) = \tilde{\phi}_i(u)$.
It defines a factor map $g_i : (X^{(r')},S) \rightarrow (Y,S)$.
Since for all $i$, $\tilde{f}_i = g_i \circ h$, it suffices to prove that there exist $i$ and $j$, $i \neq j$, such that $g_i = g_j$. 
By minimality, it is enough to show that $g_i (x^{(r')})= g_j(x^{(r')})$ for some $i \neq j$.
Indeed, for all $z^{(r')} \in X^{(r')}$, there is an increasing sequence of integers $(n_\ell)_{\ell \in \N}$ such that $z^{(r')} = \lim_{\ell \to +\infty} S^{n_\ell} x^{(r')}$. 
If $g_i (x^{(r')})= g_j(x^{(r')})$, we get by continuity that 
\begin{align*}
g_i(z^{(r')}) &= \lim_{\ell \to +\infty} g_i (S^{n_\ell}x^{(r')}) = \lim_{\ell \to +\infty} S^{n_\ell} g_i(x^{(r')})		\\
&= \lim_{\ell \to +\infty} S^{n_\ell} g_j(x^{(r')}) = g_j(z^{(r')}).
\end{align*}

Take $u = (x^{(r')})_{[0,r')}$ and $v = y_{[0,r')}$.
From~\eqref{eq:utov} we have $\psi_i(u)=v$ for all $i$.
Hence for each return word $w \in \mathcal{R}_{x^{(r')},u}$, the word $\psi_i (w)$ is a concatenation of return words in $\mathcal{R}_{y,v}$. 
This induces a non-erasing morphism $\lambda_i$ from $R_{x^{(r')}, u}^*$ to $R_{y,v}^*$ defined by 
\[
	\psi_i \circ \Theta_{x^{(r')},u} = \Theta_{y,v} \circ \lambda_i.
\]
This morphism naturally induces a map from $R_{x^{(r')}, u}^{\mathbb{Z}}$ to $R_{y,v}^{\mathbb{Z}}$ that we also denote by $\lambda_i$ and that satisfies 
\[
	g_i \circ \Theta_{x^{(r')}, u} = \Theta_{y,v} \circ \lambda_i.
\]
As $\psi_i$ is a coding and $|v|=r'$, for all $b \in R_{x^{(r')}, u}$ we have from \eqref{item1} above and \eqref{item:rltau} of Proposition~\ref{prop:maxmin}
\[
	|\lambda_i (b)| \leq \frac{K(|u|+2r')}{\frac{|v|-2r K}{K} } = \frac{3K^2r'}{r'-2r K } =  \frac{3K^2(4K+1)r}{(2K+1)r } < 6K^2.
\]
Since,  from \eqref{item:card} of Proposition~\ref{prop:maxmin}, we also have 
\begin{align*}
\# (\mathcal{R}_{Y,v}) 
& \leq 
\frac{(K+1)K^2 (r(4K+1) + 2r)+2rK^2}{2rK+1} \\
& \leq
(K+1)K^2 \frac{4K+3}{2K}+K
<
2 (K+1)^3 ,
\end{align*}
we get $\# \{ \lambda_i \mid 1\leq i \leq N \} < N$ which implies that there exist $i,j$, $i \neq j$, such that $\lambda_i = \lambda_j$. 
We set $\lambda = \lambda_i$.

Since $u$ is a prefix of $x^{(r')}_{[0,+\infty)}$, from Proposition~\ref{prop:def suite derivee singleton} there exists a unique sequence $z \in R_{x^{(r')} ,u}^{\mathbb{Z}}$ such that $\Theta_{x^{(r')}, u} (z) = x^{(r')}$.
We get
\[
g_i (x^{(r')})  
=  g_i \circ \Theta_{x^{(r')} ,u} (z) 			
=  \Theta_{y,v} \circ \lambda (z) 				
=  g_j \circ \Theta_{x^{(r')} ,u} (z)
=  g_j(x^{(r')}),
\]
which ends the proof.
\end{proof}



\section{Other decidability problems for morphic subshifts}\label{section:openquestions}

In this section we mention some open problems of decidability concerning morphic subshifts. 
The inputs will always be the morphism $\phi$ and the endomorphism $\sigma$ defining  the morphic sequence or pairs of such couples $(\phi , \sigma )$.

\subsection{Factor maps for subshifts generated by automatic sequences}
Fixed points of constant length substitutions are a special kind of automatic sequences. 
Even if Theorem~\ref{theo:main} applies to minimal automatic subshifts, to have a better radius for factor maps
it would be interesting to know whether Theorem~\ref{theo:radius1} remains valid for minimal automatic subshifts. 
This would make more efficient the algorithm to find its automatic subshift factors. 

\subsection{The non-minimal case}
We recall that primitive substitutions, as well as uniformly recurrent sequences, generate minimal subshifts.
Is it possible to extend the main results stated in the introduction (Theorem~\ref{theo:cordecidfactor}, Corollary~\ref{cor:cordecidfactor}, Theorem~\ref{theo:main} and Theorem~\ref{theo:main2}) to (non-uniformly recurrent) morphic sequences?

Let us focus on Theorem~\ref{theo:main2}.
Extension to the non-minimal case, in this case, is equivalent to the decidability problem of the equality problem of morphic languages.

\medskip

{\bf  Equality problem of morphic languages.}
Given two morphic sequences $x$ and $y$, is it decidable whether $\mathcal{L} (x) = \mathcal{L} (y)$?

\medskip

Indeed, suppose $(X,S)$ and $(Y,S)$ are two morphic subshifts generated by, respectively, the morphic sequences $x$ and $y$, and let $\phi : A^\Z \to B^\Z$ be a factor map.
It is easy to show that $\phi $ is a factor map from $(X,S)$ to $(Y,S)$ if and only if $\mathcal{L} (\phi (x))$ is equal to $\mathcal{L} (y)$.

If we do not ask the factor map $\phi : (X,S) \to (Y,S)$ to be onto, then it is equivalent to the inclusion problem of morphic languages. 

\medskip

{\bf  Inclusion problem of morphic languages.}
Given two morphic sequences $x$ and $y$, is it decidable whether $\mathcal{L} (x) $ is included in  $\mathcal{L} (y)$?

\medskip

Of course the Inclusion problem implies the Equality problem, both problems being open.

\subsection{Morphic languages}

Let us consider the Inclusion and Equality problems.

In~\cite{Fagnot:1997b}, positive answers are given to the Inclusion problem for some families of purely morphic sequences, one for those defined on a 2-letter alphabet and another one for those generated by primitive substitutions under some mild assumptions.
Observe that Theorem~\ref{theo:main2} solves this problem for uniformly recurrent morphic sequences as inclusion of languages of uniformly recurrent sequences implies the equality.

\begin{theo}
Inclusion problem for morphic languages is decidable for uniformly recurrent morphic sequences.
\end{theo}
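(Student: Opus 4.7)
The plan is to reduce the Inclusion problem to the Equality problem, and then to recognize the latter as a special case of Theorem~\ref{theo:main2}. First I would observe that if $\mathcal{L}(x) \subset \mathcal{L}(y)$ then every letter appearing in $x$ must appear in $y$, a condition that is decidable from the data defining the two morphic sequences; if it fails, the answer is NO. Otherwise I view both $x$ and $y$ as sequences on the union of their alphabets, so that $(\Omega(x),S)$ and $(\Omega(y),S)$ are subshifts of the same full shift, and they are minimal because $x$ and $y$ are uniformly recurrent.

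The key reduction is the following observation, already alluded to just before the statement: for uniformly recurrent sequences, inclusion of languages forces equality. Indeed, $\mathcal{L}(x) \subset \mathcal{L}(y)$ is equivalent to $\Omega(x) \subset \Omega(y)$; since $\Omega(x)$ is a non-empty closed $S$-invariant subset of the minimal subshift $\Omega(y)$, this inclusion forces $\Omega(x) = \Omega(y)$, hence $\mathcal{L}(x) = \mathcal{L}(y)$. Thus deciding inclusion is the same as deciding equality of the two minimal morphic subshifts.

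Finally, let $\phi$ be the identity sliding block code (of radius $0$) on the common alphabet. Then $\phi$ defines a factor map from $(\Omega(x),S)$ onto $(\Omega(y),S)$ if and only if $\phi(\Omega(x)) = \Omega(y)$, i.e.\ $\Omega(x) = \Omega(y)$. By Theorem~\ref{theo:main2}, it is decidable whether this particular sliding block code defines a factor map between the two minimal morphic subshifts, so the Inclusion problem is decidable in this setting. There is no substantive obstacle: all the heavy lifting is done by Theorem~\ref{theo:main2}, and the only conceptual point is the collapse of inclusion to equality provided by minimality.
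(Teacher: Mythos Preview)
Your proposal is correct and follows exactly the approach the paper intends: the sentence just before the theorem states that ``inclusion of languages of uniformly recurrent sequences implies the equality'' and that Theorem~\ref{theo:main2} then settles the matter, and the paper's proof is simply ``left as an exercise.'' Your reduction via minimality ($\Omega(x)\subset\Omega(y)$ forces $\Omega(x)=\Omega(y)$) and your application of Theorem~\ref{theo:main2} to the identity sliding block code are precisely what is meant.
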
 
\begin{proof}
It is left as an exercise.
\end{proof}

Both problems are open if we do not assume uniform recurrence. 
Nevertheless, Fagnot~\cite{Fagnot:1997a} solved both decidability problem for automatic sequences, without the assumption of uniform recurrence. 
She considered two cases: the multiplicative dependence and independence of the dominant eigenvalues $p$ and $q$ (which are integers in these settings) of the underlying substitutions, that is whether $\log p / \log q $ belongs or not to $\mathbb{Q}$.
She managed to treat the multiplicatively independent case using the extension of Cobham's theorem for automatic languages she obtained in the same paper, then testing the ultimate periodicity of the languages to see whether they are equal. 
For the dependent case she used the logical framework of Presburger arithmetic associated with automatic sequences.

\begin{theo}[\cite{Fagnot:1997a}]
Inclusion problem for morphic languages is decidable for automatic sequences.
\end{theo}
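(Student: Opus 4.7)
The plan is to follow Fagnot's two-case strategy, splitting according to whether the dominant eigenvalues (the lengths $p$ and $q$ of the underlying constant length substitutions) are multiplicatively dependent or independent. As a preliminary step, from the input $(\phi,\sigma)$ for $x$ and $(\psi,\tau)$ for $y$ I would first replace $\sigma$ and $\tau$ by suitable powers, reducing to the situation where $x=\phi(\sigma^\omega(a))$ and $y=\psi(\tau^\omega(b))$ with $\sigma,\tau$ of constant lengths $p,q\geq 2$.

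The multiplicatively dependent case. If $\log p/\log q\in\mathbb{Q}$, choose positive integers $m,n$ with $p^m=q^n=k$. Replacing $\sigma$ by $\sigma^m$ and $\tau$ by $\tau^n$ does not alter the generated subshifts, so both $x$ and $y$ become $k$-automatic. By Büchi--Cobham, each sequence is $k$-definable, hence for every letter $c$ the set $\{n\mid x_n=c\}$ (resp.\ $\{n\mid y_n=c\}$) is described by an explicit formula of $\langle \mathbb{N},+,V_k,=\rangle$. From these, I would build a predicate $\mathrm{Fact}_x(n,\ell,m)$ (\emph{``$x_{[n,n+\ell-1]}=y_{[m,m+\ell-1]}$''}) and then express the inclusion as the closed formula
\[
\forall \ell\ \forall n\ \exists m\ \mathrm{Fact}_x(n,\ell,m).
\]
Theorem~\ref{theo:Buchi1960} settles decidability in this case.

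The multiplicatively independent case. Here the common Büchi framework is unavailable, so I would rely on Fagnot's extension of Cobham's theorem to automatic languages: if $\mathcal{L}(x)\subset \mathcal{L}(y)$ with $x$ being $p$-automatic, $y$ being $q$-automatic, and $\log p/\log q\notin\mathbb{Q}$, then the structural rigidity of $\mathcal{L}(x)$ forces $x$ to be ultimately periodic (or the comparison reduces to a periodic situation). Combined with Honkala's algorithm deciding ultimate periodicity of automatic sequences~\cite{Honkala:1986}, this lets me first test whether $x$ is ultimately periodic; if not, then $\mathcal{L}(x)\not\subset \mathcal{L}(y)$ unless an explicit obstruction is witnessed; if yes, I can compute a period $u$ with $x=wuuu\cdots$ (Theorem~\ref{theo:decidperiod} and Honkala's method), and then checking $\mathcal{L}(x)\subset \mathcal{L}(y)$ reduces to checking finitely many membership questions ``does the word $v\in\mathcal{L}(x)$ occur in $y$?'', each of which is $q$-definable and hence decidable by Theorem~\ref{theo:Buchi1960} applied to $y$ alone.

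The main obstacle is the independent case, which cannot be attacked by encoding everything in a single $\langle\mathbb{N},+,V_k,=\rangle$: the crucial input is Fagnot's Cobham-type theorem for languages, which requires a delicate combinatorial analysis to exploit the incompatibility of the two bases and force periodicity. Once that is in hand, the combination with Honkala's effective periodicity test and the Büchi decision procedure for the remaining finite checks completes the argument.
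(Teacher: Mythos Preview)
Your proposal follows exactly the two-case strategy the paper attributes to Fagnot: in the multiplicatively dependent case, pass to a common base $k$ and decide via B\"uchi's theorem for $\langle\mathbb{N},+,V_k,=\rangle$; in the independent case, invoke Fagnot's Cobham-type theorem for automatic languages to force ultimate periodicity, then finish. The paper itself does not give a proof of this theorem beyond that sketch, and your outline matches it.

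One point to tighten in the independent case: you write that once $x$ is ultimately periodic, checking $\mathcal{L}(x)\subset\mathcal{L}(y)$ ``reduces to checking finitely many membership questions''. This is not literally correct, since $\mathcal{L}(x)$ is still infinite (it contains factors of $wu^n$ for all $n$). The clean repair is to note that an ultimately periodic sequence is $q$-automatic for \emph{every} $q\geq 2$; hence once you have established that $x$ is ultimately periodic you are back in the dependent case with both sequences $q$-automatic, and the B\"uchi decision procedure applies directly. Equivalently, you may write a single closed $\langle\mathbb{N},+,V_q,=\rangle$-formula expressing ``for all $n,\ell$ there exists $m$ with $(wu^\omega)_{[n,n+\ell)}=y_{[m,m+\ell)}$'', which is definable because membership $(wu^\omega)_n=c$ is a Presburger residue condition on $n$. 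With this fix the argument is complete and agrees with the paper's account.
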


\subsection{Equality problem of morphic sequences}
Given two morphic sequences $x$ and $y$, is it decidable whether $x = y$?

For purely morphic sequences, the Equality problem of morphic sequences is called the {\em D$0$L $\omega$-equivalence problem} and was solved in 1984 by K.~Culik~II and T.~Harju~\cite{Culik&Harju:1984}.
In~\cite{Durand:2012}, it was shown to be true for primitive morphic sequences and that more can be said for the D$0$L $\omega$-equivalence problem in the primitive case: the two (underlying) primitive substitutions have some non-trivial powers that should coincide on some cylinders if $x=y$. 

The general case of the Equality problem remains open.

\subsection{The listing of the factors}
As we mentioned in Section~\ref{section:factorlist} for constant length substitution subshifts it would be of interest to find an algorithm that, given a minimal  morphic subshift, provides the list of all its (finitely many) aperiodic subshift factors. 
More precisely, to consider the following problem.

\medskip

{\bf  Computability of the set of aperiodic factor subshifts of a minimal morphic subshift.} 
Given a minimal morphic subshift $(X,S)$, can we compute a finite set of primitive substitutions $\{ \sigma_1 , \dots , \sigma_N\}$ such that any aperiodic factor subshift of $(X,S)$ is isomorphic to some $(X_{\sigma_n} , S)$?

\medskip

We have seen in Section \ref{section:factorlist} that the answer is positive when $(X,S)$ is a primitive constant length substitution subshift.

In the general case, from~\cite{Durand:2000} and Theorem~\ref{theo:durand13bbis} we know that such a set of substitutions exists but we do not have an algorithm to find them.
We leave this as an open question.

\subsection{Orbit equivalence}
A factor map $\phi : (X,S) \to (Y,T)$ between minimal dynamical  has the property that orbits are sent to orbits:
\begin{align}
\label{align:orbittoorbit}
\phi (\{ S^n (x) \mid n\in \mathbb{Z} \}) = \{ S^n (\phi (x)) \mid n\in \mathbb{Z} \})
\end{align} 
for all $x\in X$.
When considering isomorphisms, this defines an equivalence relation. 
One can relax the conditions and consider $\phi$ is just an homeomorphism satisfying~\eqref{align:orbittoorbit}.
In this case we say $(X,S)$ and $(Y,T)$ are {\it orbit equivalent}. 
It is classical to observe that in this situation there exists $N : X \to \Z$ such that 

\begin{align}
\phi (S (x)) = S^{N(x)} \phi (x) .
\end{align}

When $N$ has at most one point of discontinuity, we say  $(X,S)$ and $(Y,T)$ are {\it strong orbit equivalent}.

A remarkable result in~\cite{Giordano&Putnam&Skau:1995}
characterizes strong orbit equivalence of minimal Cantor systems by means of dimension groups.
In~\cite{Bratteli&Jorgensen&Kim&Roush:2001} it is shown that the isomorphism of stationary simple dimension groups is decidable. 
This shows,  as dimension groups of minimal substitution subshifts are simple and stationary~\cite{Forrest:1997,Durand&Host&Skau:1999} that the strong orbit equivalence is decidable for minimal substitution subshifts. 
However, this result does not provide a new proof of Corollary~\ref{cor:cordecidfactor} since for a given minimal substitution subshift $(X,S)$, there might be other minimal substitution subshifts that are not isomorphic to $(X,S)$, but that are in the same strong orbit equivalence class of $(X,S)$~\cite{Werner:2009}.
Thus, decidability of the strong orbit equivalence for minimal substitution subshifts is not sufficient to decide the isomorphism between minimal substitution subshifts.

\medskip

For the orbit equivalence we address the following question: Is orbit equivalence decidable for minimal substitution subshifts?

\subsection{Beyond substitution and linearly recurrent subshifts}

Considering Theorem~\ref{Thm1} and Theorem~\ref{theo:factorLR} we wonder whether the conclusion of the second is true under the assumptions of the first. 
We leave this as an open question. 
We know that assuming zero entropy is too weak to have such a conclusion~\cite{Donoso&Durand&Maass&Petite:2016}. 
We have no counter example for finite topological rank subshifts (see~\cite{Bressaud&Durand&Maass:2010} for the definition).

\subsection{For tilings}

All these questions could also be asked for self-similar tilings or multidimensional substitutions.
The more tractable problems would certainly concern ``square multidimensional substitutions'' (see, for example,~\cite{Cerny&Gruska:1986,Salon:1986,Salon:1987,Salon:1989} or, in an interesting logical context, the nice survey~\cite{Bruyere&Hansel&Michaux&Villemaire:1994}). 
Moreover, the striking papers~\cite{Mozes:1989} and~\cite{GoodmanStrauss:1998} should also be mentioned as they show that in higher dimension the situation is not as ``simple'' as it is in dimension 1.

\newpage 




\section*{Acknowledgments} 
The authors would like to thank Samuel Petite for many interesting discussions.
This research is supported by the ANR project\footnote{Ref. ANR-13-BS02-0003} ``Dyna3S'' as well as by a Hubert Curien Partnership\footnote{Ref. SCO/AD/FR/SOR/2015/2019880} (PHC) ``Tournesol''.

\bibliographystyle{amsplain}


\begin{dajauthors}
\begin{authorinfo}[fabd]
  Fabien Durand\\
  Laboratoire Ami\'enois de Math\'ematiques Fondamentales et Appliqu\'ees, CNRS-UMR 7352\\ 
  Universit\'{e} de Picardie Jules Verne\\
  Amiens, France\\
  \url{https://www.lamfa.u-picardie.fr/fdurand/}\\
  fabien\imagedot{}durand\imageat{}u-picardie\imagedot{}fr\\

\end{authorinfo}
\begin{authorinfo}[jull]
  Julien Leroy\\
  D\'epartement de Math\'ematique\\ 
  Universit\'{e} de Li\`ege\\
  Li\`ege, Belgique\\
  \url{http://www.discmath.ulg.ac.be/leroy/}\\
    j\imagedot{}leroy\imageat{}uliege\imagedot{}be\\

\end{authorinfo}
\end{dajauthors}

\end{document}